\documentclass[11pt,a4paper]{amsart} 
\usepackage[foot]{amsaddr}

\usepackage{microtype}
\usepackage{indentfirst}
\usepackage{amsmath, amsthm, amssymb, mathtools}
\usepackage{tikz}
\usepackage{tikzscale}
\usetikzlibrary{arrows.meta,calc,positioning,decorations.pathmorphing,patterns}
\usepackage{xcolor}
\usepackage{xspace}
\usepackage{enumitem}
\usepackage{graphicx}

\usepackage{hyperref} 
\hypersetup{
  colorlinks=true,
  citecolor=black!50!red,
  linkcolor=black!50!red,
  linktoc=all
}
\usepackage[all]{hypcap} 
\usepackage[capitalise]{cleveref}
\usepackage[font=footnotesize,labelfont=bf]{caption}
\usepackage{subcaption}
\reversemarginpar

\newcounter{constant}
\newcommand{\newconstant}[1]{\refstepcounter{constant}\label{#1}}
\newcommand{\uc}[1]{c_{\textnormal{\tiny \ref{#1}}}}
\newcounter{bigconstant}
\newcommand{\newbigconstant}[1]{\refstepcounter{bigconstant}\label{#1}}
\newcommand{\ubc}[1]{C_{\textnormal{\tiny \ref{#1}}}}
\newtheorem{theorem}{Theorem}[section]
\newtheorem{proposition}[theorem]{Proposition}
\newtheorem{lemma}[theorem]{Lemma}

\newtheorem{corollary}[theorem]{Corollary}

\numberwithin{equation}{section} 

\theoremstyle{definition}
\newtheorem{definition}[theorem]{Definition}

\newtheorem{remark}[theorem]{Remark}

\newtheorem{problem}[theorem]{Problem}

\AddToHook{env/lemma/begin}{\crefalias{theorem}{lemma}}
\AddToHook{env/corollary/begin}{\crefalias{theorem}{corollary}}
\AddToHook{env/proposition/begin}{\crefalias{theorem}{proposition}}
\AddToHook{env/definition/begin}{\crefalias{theorem}{definition}}
\AddToHook{env/remark/begin}{\crefalias{theorem}{remark}}
\AddToHook{env/problem/begin}{\crefalias{theorem}{problem}}

\newcommand{\Eheal}{\mathcal{E}_{\mathrm{heal}}}
\newcommand{\Eout}{\mathcal{E}_{\mathrm{out}}}
\newcommand{\Ecluster}{\mathcal{E}_{\mathrm{cluster}}}
\newcommand{\Ein}{\mathcal{E}_{\mathrm{disp}}}

\renewcommand{\P}{\mathbb{P}}

\newcommand{\E}{\mathbb{E}}
\newcommand{\R}{\mathbb{R}}

\newcommand{\N}{\mathbb{N}}
\newcommand{\Z}{\mathbb{Z}}
\renewcommand{\L}{\mathcal{L}}

\newcommand{\charf}[1]{\mathbf{1}_{#1}}

\newcommand{\A}{\mathcal{A}}
\newcommand{\B}{\mathcal{B}}

\newcommand{\T}{\mathcal{T}}
\renewcommand{\S}{\mathcal{S}}

\newcommand{\dd}{\mathop{}\!d}
\DeclareMathOperator{\dist}{d}
\DeclareMathOperator{\diam}{diam}

\DeclareMathOperator{\poisson}{Poisson}
\DeclareMathOperator{\ber}{Bernoulli}
\DeclareMathOperator{\bin}{Binomial}

\DeclareMathOperator{\var}{Var}
\DeclareMathOperator{\cov}{Cov}

\DeclareMathOperator{\In}{In}
\DeclareMathOperator{\Out}{Out}

\DeclarePairedDelimiter{\abs}{\lvert}{\rvert}
\DeclarePairedDelimiter{\norm}{\lVert}{\rVert}
\DeclarePairedDelimiter\ceil{\lceil}{\rceil}
\DeclarePairedDelimiter\floor{\lfloor}{\rfloor}
\DeclarePairedDelimiter{\parens}{\lparen}{\rparen}
\DeclarePairedDelimiter{\braces}{\lbrace}{\rbrace}

\DeclarePairedDelimiterX{\inner}[1]{\langle}{\rangle}{\inneraux#1}

\newcommand{\normI}[1]{\norm{#1}_{\infty}}

\begin{document}

\title{Epidemic Phase Transitions in the Zero-Range Process}

\date{July 24, 2026}
\author{Rangel Baldasso$^1$}
\address[1]{Department of Mathematics, PUC-Rio, Rua Marquês de São Vicente 225, Gávea, 22451-900 Rio de Janeiro, RJ - Brazil.}
\email{rangel@puc-rio.br}
\author{Marcelo Hilário$^2$}
\email{mhilario@mat.ufmg.br}
\author{Ian Ornelas$^2$}
\email{ian.orn@gmail.com}
\address[2]{Department of Mathematics, Universidade Federal de Minas Gerais, Av.\ Antonio Carlos 6627, 31270-901 Belo Horizonte, MG - Brazil.}

\begin{abstract}
  We consider a model for the spread of an infection within an interacting particle system on $\mathbb{Z}^d$, generalizing a framework introduced by Kesten and Sidoravicius to a zero-range process in equilibrium with density $\rho > 0$.
  In our model, at any time, individuals are either healthy or infected.
  The infection spreads instantaneously whenever infected and healthy particles occupy the same site, while infected particles heal independently at rate $\delta > 0$.
  We investigate the extinction-survival phase diagram of this process starting from a configuration where only the particles at the origin are infected.
  For every fixed positive healing rate, we prove that the infection becomes extinct almost surely if the density $\rho$ is sufficiently small and survives with positive probability if $\rho$ is sufficiently large, establishing the existence of a non-trivial critical density.
  At sufficiently high densities, survival occurs even under instantaneous healing.
  We also show that, for every positive density, the infection survives when the healing rate is sufficiently small. 
\end{abstract}

\keywords{infection processes, zero-range process, phase transition}

\subjclass[2020]{Primary 60K35; Secondary 60K37}

\maketitle

\section{Introduction}
\label{sec:intro}

In an impressive series of works~\cite{KestenSidoravicius05, KestenSidoravicius06, KestenSidoravicius08}, Kesten and Sidoravicius studied epidemic processes propagating within a moving population modeled as a Poisson system of independent random walks (IRW) on $\mathbb{Z}^d$.
In the absence of recovery, they first established linear bounds on the speed of propagation in~\cite{KestenSidoravicius05} and ultimately proved a full shape theorem in~\cite{KestenSidoravicius08}.
The inclusion of a healing mechanism in~\cite{KestenSidoravicius06} led to an epidemic model akin to an SIS contact process in a moving population, for which they proved a phase transition between extinction (immunity) and survival (contagion) phases as the healing rate varies.

In the present work, we advance the study in this framework by considering a moving population subjected to zero-range interactions.
Individuals, or particles, initially distributed across the sites of the $d$-dimensional integer lattice $\mathbb{Z}^d$, move according to a zero-range process (ZRP) in equilibrium at a given density $\rho > 0$.
At any time $t \geq 0$, each particle is either healthy (susceptible) or infected.
The infection mechanism is instantaneous: whenever a healthy and an infected particle occupy the same lattice site, the healthy particle immediately becomes infected.
Furthermore, infected particles recover independently at a constant rate $\delta > 0$.
However, recovery can only occur if an infected particle is isolated at a site; if it shares a site with other infected particles, any recovery attempt is overridden due to instantaneous reinfection.

We start with all particles healthy, except for those at the origin (if any), which are infected.
We say that the infection dies out (or becomes extinct) almost surely (a.s.) if, with probability one, there exists a time after which every particle is healthy.
If, at all times, there exists at least one infected particle, we say the infection survives (globally).
We prove the existence of a phase transition for global survival as the density varies at any fixed healing rate.
We also prove survival at every positive density when the healing rate is sufficiently small.
For large enough values of the density, survival holds for every healing rate, including instantaneous healing.

Before stating our main results precisely, let us detail the nature of the environment under consideration.
The zero-range process in $\mathbb{Z}^d$, first introduced by Spitzer in~\cite{Spitzer70}, is a continuous-time Markov process with state space $\mathbb{N}_0^{\mathbb{Z}^d}$ evolving  as follows. 
If at time $t$ a site $x$ is occupied by $n$ particles, then at rate $g(n)$ one particle moves out of $x$ toward a site chosen uniformly at random among its $2d$ nearest neighbors.
Here, we assume that the rate function $g : \mathbb{N}_0 \to \mathbb{R}_{+}$ is kept fixed, and that it satisfies $g(0) = 0$ and
\begin{equation}
    \Gamma_{-} \leq g(n)-g(n-1) \leq \Gamma_{+}, \qquad n \in \mathbb{N}
\end{equation}
for constants $0 < \Gamma_{-} \leq 1 \leq \Gamma_{+}$.
Under these assumptions, the dynamics are well-posed, and the process has a one-parameter family of spatial product invariant measures $\mu_{\rho}$, where $\rho > 0$ denotes the density of particles, as proved in~\cite{Andjel82} (see also~\cite{KipnisLandim99}).
In \cref{sec:zr_invariant_measures}, we provide further properties of the zero-range process that will be useful throughout the paper.

We now state our main results.
Our first theorem establishes that the infection dies out if the particle density is sufficiently low.
\begin{theorem}
\label{thm:immunity_low_rho}
  For any fixed recovery rate $\delta > 0$, there exists $\bar\rho = \bar\rho(\delta) > 0$ such that, for every $\rho \in (0, \bar\rho]$, the infection initialized at the origin and evolving on top of the zero-range process in equilibrium with density $\rho$ and recovery rate $\delta$ dies out almost surely.
\end{theorem}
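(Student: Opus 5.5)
Our plan is a multiscale renormalization argument. It rests on the heuristic that at low density infected particles are typically isolated, so each one recovers at rate $\delta$ before meeting another particle.

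\textbf{Reduction.} First, note that once all particles at the origin recover without having met anyone, the infection is extinct. The danger is that an infected particle reaches a site occupied by a healthy one before healing. We dominate the set of infected particles by a branching-type structure. An infected particle "produces offspring" only when it meets another particle (or a cluster) before its healing clock rings. Since healing needs isolation, we treat a cluster of particles that are mutually in contact as a single infected entity. That entity can only start healing once some particle is alone.

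\textbf{Single-scale estimate.} Fix a time scale $T = T(\delta)$ large enough that an isolated particle heals within $T/2$ with probability at least $1 - e^{-\delta T/2}$. Fix a box of side $L$ around the infected particle. Under $\mu_\rho$ the probability that the box $[-L,L]^d$ contains any other particle is at most $C\rho L^d$. This uses the product structure of $\mu_\rho$ and the fact that $\mu_\rho(\eta(0)\ge 1)\le C\rho$ for small $\rho$ (\cref{sec:zr_invariant_measures}). The jump rates are bounded by $\Gamma_+ n$. A tagged particle therefore moves at most distance $L$ by time $T$ except with probability $e^{-cL}$ once $L \gg \Gamma_+ T$. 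Particles from outside the box reaching the origin's neighbourhood are controlled the same way. We then use finite speed of propagation: the ZRP is attractive and comparable to a system of particles with bounded jump rates on bounded occupation. Combining these, the probability that the initial infection survives past time $T$ and spreads is at most
\[
p(\rho) \le C\rho L^d + e^{-cL} + e^{-\delta T/2},
\]
which can be made smaller than any $\epsilon$ by choosing $T$, then $L$, then $\rho$.

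\textbf{Iteration.} We then set up a multiscale scheme with scales $L_{k+1} = L_k^{a}$ and $T_{k+1} = T_k^{a}$. We declare a space-time box "bad" if, in it, the infection can be transmitted from its bottom boundary across the box. Such a crossing requires two disjoint bad boxes at scale $k$. Decoupling for the ZRP in equilibrium then gives $p_{k+1}\le C L_{k+1}^{c} p_k^2 + \text{error}$. Since $\mu_\rho$ is stationary and product, and by attractiveness, monotone couplings with product measures of slightly higher density are available for sprinkling. Thus $p_k \to 0$ superexponentially. A Borel–Cantelli argument then shows the infected region stays bounded and eventually empties a.s. The healing clocks are independent, so a bounded infected region with finitely many particles dies out a.s.

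\textbf{Main obstacle.} The hardest step is the decoupling/sprinkling inequality for the zero-range environment. Unlike independent random walks, particles interact, so information propagates. We would first try to use the finite speed of propagation (Harris graphical construction with Poisson clocks at rate bounded by $\Gamma_+\eta(x)$) together with the monotone coupling of $\mu_\rho \le \mu_{\rho'}$. This would show that events in well-separated space-time boxes are nearly independent up to a density increase $\rho\to\rho(1+L_k^{-\gamma})$. The summable sprinkling keeps the final density below $2\rho$.
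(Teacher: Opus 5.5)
The architecture you propose matches the paper's. It has crossing events, a cascading step, horizontal decoupling plus sprinkled vertical decoupling, densities increasing toward the base scale, and a low-density trigger. \textbf{The gap is that your trigger cannot be combined with the sprinkling, and reconciling the two is the heart of the proof.} Your base-scale bound $C\rho L^d+e^{-cL}+e^{-\delta T/2}$ is small only when a base box is essentially empty: you fix $T$, then $L\gg\Gamma_+T$, then $\rho\ll L^{-d}$. Your reduction paragraph mentions clusters, but the estimate you actually use is this emptiness one.

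Sprinkling needs the opposite. The error in \cref{thm:vertical_decoupling} is of order $\exp\{-c\rho\varepsilon^2\dist_V^{1/3}\}$ with $\varepsilon<1$. Heuristically, any such coupling needs $\rho\varepsilon^2V\gg1$ on coupling regions of volume $V$, because an excess of $\varepsilon\rho V$ particles must beat fluctuations of order $\sqrt{\rho V}$. At the first scale the time separation is at most $T_1=T_0^a$. Moreover $a<2$ is needed if the entropy factor in $p_{k+1}\le CL_{k+1}^cp_k^2$ is to be absorbed. Hence $\rho\varepsilon^2\dist_V^{1/3}\le\rho T_0^{2/3}\le\rho L_0^{d}\,T_0^{2/3-d}\ll1$. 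So the decoupling error at the bottom scales is of order one and the recursion never starts. Taking $\varepsilon$ large, or starting the induction at a higher scale, only moves the problem: you would then need the bad probability to be small in boxes containing many particles, which emptiness cannot give. The paper flags exactly this tension just before \cref{def:clusters_good_event}.

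The missing idea is a trigger that works in a crowded box. The paper ties the density to the base scale, $\rho_0\asymp\ell_0^{-1/10}$, with additive increments $\ell_k^{-1/10}$. This gives $\rho_{k+1}\varepsilon^2\ell_k^{1/3}\ge c\,\rho_0^{-1}\ell_k^{2/15}\to\infty$, so a base box of width $2\ell_0^2$ and height $\ell_0$ holds about $\ell_0^{19/10}$ particles. To kill temporal crossings, the paper shows three things:
\begin{itemize}
\item with high probability every cluster (a chain of particles with gaps $<\ell_0^{1/40}$) meeting the box has fewer than $40$ particles (\cref{lem:big_cluster});
\item distinct clusters do not interact up to time $\ell_0^{1/80}$ (\cref{lem:particle_confinement,lem:outside_particles});
\item a cluster of at most $m$ infected particles survives to time $t$ with probability at most $e^{-ct}$, because in each unit time interval its particles separate and all heal with probability bounded below (\cref{lem:local_cluster_healing}).
\end{itemize}
The third point is where $\delta>0$ enters. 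It also covers several particles starting together at the origin, which cannot heal until separated and which your single-isolated-particle estimate ignores. Spatial crossings of the wide box are excluded separately by path counting with occupations at most $\log^2\ell_0$.

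Three smaller points also need repair.
\begin{itemize}
\item A ``bottom-to-across'' event does not cascade, since a crossing path enters most sub-boxes from the side. The paper instead uses temporal half-crossings together with spatial half-crossings started from the central line (\cref{def:half_crossing,lem:cascading}).
\item Vertical decoupling does not follow from finite speed of propagation plus a monotone coupling of initial measures. Conditioning on the earlier box distorts the configuration in the same spatial region, so you need a dynamical coupling in which particles of the two systems meet and merge.
\item A bounded infected region need not die out, since infinitely many particles pass through it. Use instead that survival past time $\ell_k/2$ forces a half-crossing of $B_k$ (\cref{lem:survival_implies_H}).
\end{itemize}
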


Our second result concerns the high-density regime.
We show that above a certain threshold density $\rho_{\star} < \infty$, the infection survives with positive probability regardless of the healing rate $\delta$.
Indeed, survival holds even if $\delta=\infty$ (meaning that particles heal instantaneously when sitting alone at a site).
\begin{theorem}
\label{thm:contagion_high_rho}
  There exists $\rho_{\star} < \infty$ such that for every $\rho \ge \rho_{\star}$, the infection initialized at the origin and evolving on top of the zero-range process in equilibrium with density $\rho$ and recovery rate $\delta \in [0,\infty]$ survives with positive probability.
\end{theorem}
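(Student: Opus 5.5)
We will prove survival with positive probability by a multiscale renormalization that compares the infection with supercritical oriented site percolation on a coarse-grained space-time lattice. It suffices to treat $\delta=\infty$. With instantaneous healing, infected particles only survive by sharing sites with other particles. The process with $\delta<\infty$ can be coupled to dominate the one with $\delta=\infty$, since healing only happens to isolated particles. So from now on an infected particle heals as soon as it is alone.

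The key observation is that at high density, occupied sites carry many particles, and a site with at least two particles keeps its infection as long as it contains an infected particle. Fix a large box side $L$ and a time scale $T$ (say $T \asymp L^2$). Tile space-time into blocks $B_{(k,n)} = (kL + [0,L)^d)\times[nT,(n+1)T)$. Call a block \emph{good} if during the enlarged block $(kL+[-3L,4L)^d)\times[nT,(n+2)T)$ two things hold. First, every site of the enlarged spatial box is occupied by at least two particles at every time. Second, whenever some particle in $kL+[0,L)^d$ is infected at time $nT$, then by time $(n+1)T$ every particle in each neighbouring box $(k\pm e_i)L+[0,L)^d$ is infected.

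Given the first property, the infection never heals inside the enlarged box, because no infected particle is ever alone. It then spreads deterministically. Every jump of an infected particle to a neighbouring site infects that site's particles, and that site now holds an infected particle together with at least two others. So infection propagates along any path of jumps. To get the second property we only need that, within time $T$, jumps occur across every edge of the box. This holds with high probability since the jump rate at each site is at least $g(2)\ge 2\Gamma_-$. The infected set grows by at least graph distance one per jump event, so a time of order $L$ suffices.

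The main obstacle is the first property: showing that no site in a box of volume $L^d$ ever drops below two particles during a time interval of length $T$, with probability tending to $1$ as $\rho\to\infty$, uniformly over $L$ in a suitable range. For this we will use the properties of $\mu_\rho$ from \cref{sec:zr_invariant_measures}. Under $\mu_\rho$, $\mu_\rho(\eta(x)\le 1)\to 0$ as $\rho\to\infty$, with a bound that is presumably of order $1/\rho$ given the linear growth of $g$. Stationarity then gives that at each fixed time, every site is heavily occupied with high probability. To control all times in $[0,T]$, we discretize time into steps of length $\epsilon$. During a step, a site with $m$ particles loses at most a Poisson$(\Gamma_+ m\epsilon)$ number of particles. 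Sites holding $\ge \rho/2$ particles therefore cannot drop to $1$ within time $\epsilon$ except with probability exponentially small in $\rho$. A union bound over $L^d\cdot T/\epsilon$ space-time points gives a failure probability of order $L^{d+2}\epsilon^{-1}\mu_\rho(\eta(0)<\rho/2)$. This tends to $0$ as $\rho\to\infty$ for fixed $L$ once we have a large-deviation bound $\mu_\rho(\eta(0)<\rho/2)\to 0$, which follows from the explicit product form of $\mu_\rho$ with fugacity tending to the radius of convergence.

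Finally, goodness of a block depends only on the process in the enlarged block. The ZRP is not finite-range in time, however, so we cannot directly claim finite dependence. We will use the graphical construction with Poisson clocks and attractiveness/monotonicity of the ZRP, which holds since $g$ is nondecreasing. This lets us dominate the block events by a $K$-dependent field with marginals close to $1$. Alternatively, we can argue with a sprinkling/decoupling inequality for the ZRP if one is stated earlier in the paper. By Liggett–Schonmann–Stacey the good blocks then dominate supercritical oriented percolation on $\Z^d\times\N$. With positive probability the origin is occupied, say by at least two particles, and its block is the root of an infinite open oriented path. Along that path the infection is transmitted from block to block forever, which gives survival.
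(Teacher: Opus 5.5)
Several parts of your plan are sound: the reduction to $\delta=\infty$, the observation that on $\{\eta\ge 2\}$ infected sites stay infected and every jump out of an infected site transmits the infection, and your control of $\{\inf_t \eta_t(x)\ge 2\}$ via stationarity plus a time discretization (essentially \cref{lem:occupation_bounds} with \cref{lem:tail_bounds}). The proof breaks where you pass from good blocks to oriented percolation. Neither the graphical construction nor attractiveness can make your good-block field dominate a $K$-dependent field with marginals close to $1$. By Liggett--Schonmann--Stacey, such a field would dominate a Bernoulli product field with some parameter $p>0$, and this is false for every choice of $L$, $T$ and $\rho$ because particles are conserved. Concretely, the ball of radius $R$ is initially empty with probability $\mu_\rho(\eta(0)=0)^{(2R+1)^d}=e^{-c(\rho)R^d}$. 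The outside configuration is independent of this event, and the per-particle displacement bound holds for any configuration. So by \cref{lem:crossing_bound}, with conditional probability close to $1$ no particle enters the ball of radius $R/2$ before time $R$. On this event all of the order $R^{d+1}$ blocks in that space-time region are bad. Domination of a Bernoulli($p$) field would bound the probability of this decreasing event by $(1-p)^{cR^{d+1}}$, which is much smaller than $e^{-c(\rho)R^d}$ for large $R$. A sparse region stays sparse for a diffusive time, so temporal correlations are long-ranged. Attractiveness only orders processes with ordered initial data and does nothing about this dependence. No single-scale comparison with independent or finitely dependent percolation is available.

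What is missing is the paper's multiscale renormalization with sprinkling. There, a scale-$k$ block is good when at least $3/4$ of its input sub-interfaces are each connected, through chains of good scale-$(k-1)$ blocks, to at least $3/4$ of the sub-interfaces of every output interface. A bad scale-$(k+1)$ block contains two bad scale-$k$ blocks at time distance $\ge T_k$ or spatial distance $\ge \frac12\ell_k^{1/2}L_k$ (\cref{lem:defect_separation}). These two are decoupled by \cref{thm:horizontal_decoupling}, or by \cref{thm:vertical_decoupling} at the price of raising the density, $\rho_{k+1}=\rho_k(1+L_k^{-s})$. This gives $p_{k+1}\le C\ell_k^{2d+2}(p_k^2+\varepsilon_k)$, and the only input needed is $p_0\le L_0^{-5d}$, never independence. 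Your fallback of invoking a sprinkling inequality cannot work at a single scale. Each application handles only two events and costs a density increment, so it cannot be iterated along an unbounded oriented path. It also forces a different order of quantifiers. The vertical error contains the term $\rho\, e^{-c\,\dist_V^{1/3}}$, so you cannot fix the block size and send $\rho\to\infty$. The paper instead takes $L_0$ large and $\rho_0=\sqrt{L_0}$. This makes the occupation failure of order $L_0^{d+2}e^{-c\sqrt{L_0}}$ and the decoupling errors small at the same time, which is exactly where the $\rho$-uniform constants of \cref{thm:vertical_decoupling} are needed. It yields $\rho_\star=\rho_\infty(L_0)\le 2\sqrt{L_0}$. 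Within that scheme, your first-passage spreading argument on $\{\eta\ge 2\}$ could reasonably replace the seeding lemma at the base scale.
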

\cref{thm:contagion_high_rho} generalizes~\cite[Theorem 1.4]{BaldassoStauffer21} where the authors prove the analogous result for the infection on top of the IRW environment relying on the Lipschitz surface techniques developed in~\cite{GracarStauffer19_2}.
In the present work we instead employ an oriented percolation approach as developed in \cref{sec:survival}.

By the monotonicity of the survival probability as a function of the density $\rho$, \cref{thm:contagion_high_rho,thm:immunity_low_rho}  establish a phase transition for fixed $\delta$ at a density threshold $\rho_c(\delta)$, as stated next.
\begin{theorem}
\label{thm:phase_transition_rho}
  For each fixed $\delta \in (0,\infty]$, there exists a critical density threshold $\rho_c(\delta) \in (0,\infty)$ such that: for every $\rho \in (0,\rho_c(\delta))$, the infection initialized at the origin and evolving on top of the zero-range process in equilibrium with density $\rho$ dies out a.s., whereas it survives with positive probability for every $\rho > \rho_c(\delta)$.
\end{theorem}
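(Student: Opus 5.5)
The plan is to combine \cref{thm:immunity_low_rho} and \cref{thm:contagion_high_rho} through a monotonicity property of the survival probability in $\rho$, and then set
\[
\rho_c(\delta) := \inf\{\rho > 0 : \text{the infection survives with positive probability at density } \rho \text{ and rate } \delta\}.
\]

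\textbf{Step 1: monotonicity.} The key ingredient is that $\theta(\rho,\delta)$, the probability of survival at density $\rho$ and healing rate $\delta$, is non-decreasing in $\rho$. I would prove this by a coupling.

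First, I would use the attractiveness of the zero-range process, which holds because $g$ is non-decreasing under our assumption $g(n)-g(n-1)\ge\Gamma_->0$. The product measures $\mu_\rho$ are stochastically ordered in $\rho$, since their one-site marginals form an exponential family that is increasing in the fugacity. Hence for $\rho<\rho'$ there is a basic coupling of two zero-range processes $\eta_t\le\eta'_t$ for all $t$, with $\eta_0\sim\mu_\rho$ and $\eta'_0\sim\mu_{\rho'}$.

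Next, I need this coupling at the level of labelled particles. In the basic coupling, whenever a site $x$ has $n\le n'$ particles, the jumps of the ``lower'' particles are matched with jumps of ``upper'' ones. I would label the particles of $\eta$ as a subset of those of $\eta'$ so that each particle of $\eta$ follows exactly its partner in $\eta'$. Healing clocks are attached to particles.

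Finally, I would check by induction over the (locally finite) sequence of events that the infected set in the $\rho$-system is contained in the infected set in the $\rho'$-system. The following cases arise.
\begin{itemize}
\item Initial infection: particles at the origin in $\eta$ are also at the origin in $\eta'$.
\item Contagion: a contact in the smaller system is also a contact in the larger one.
\item Healing: a recovery in the $\rho'$-system requires the particle to be isolated there. Isolation in $\eta'$ implies isolation in $\eta$, since $\eta\le\eta'$ sitewise. So an infected particle of the small system heals only if its partner also heals or is already healthy. The extra particles in $\eta'$ can only block recoveries or create infections.
\end{itemize}
Thus survival at $\rho$ implies survival at $\rho'$. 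The case $\delta=\infty$ is handled identically: particles heal exactly when isolated.

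\textbf{Step 2: conclusion.} By \cref{thm:immunity_low_rho}, $\theta(\rho,\delta)=0$ for $\rho\le\bar\rho(\delta)$. By \cref{thm:contagion_high_rho}, $\theta>0$ for $\rho\ge\rho_\star$. Together these give $\bar\rho\le\rho_c\le\rho_\star$. Monotonicity then yields extinction below $\rho_c$ and survival above it. For $\delta=\infty$, extinction at low density follows from the case $\delta=1$ (say): the same particle coupling with shared clocks gives monotonicity in $\delta$, since instantaneous healing heals no later than rate-$1$ healing.

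\textbf{Main obstacle.} The delicate point is the particle-level version of the attractive coupling. The zero-range process with general $g$ is not a system of independent walkers, so the labelling must be chosen so that lower particles follow upper ones. I would implement it with Poisson clocks on sites. At each site, use a rate-$g(n')$ clock; a ring is a jump of the lower system with probability $g(n)/g(n')$, and the moving particle is the same labelled one in both systems. Each system then has the correct marginal law, and the set of infected particles is always preserved inside the larger system. Well-posedness of this graphical construction in infinite volume would be taken from the standard Andjel construction.
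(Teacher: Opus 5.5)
Your proposal is correct and is essentially the paper's argument: monotonicity of the survival probability in $\rho$ (together with monotonicity in $\delta$, to transfer extinction from a finite $\delta$ to $\delta=\infty$), combined with \cref{thm:immunity_low_rho,thm:contagion_high_rho}; since the paper attaches healing marks to sites rather than to particles, it only needs $\eta_t\le\eta'_t$ with every jump of $\eta$ also a jump of $\eta'$ (as in \cref{lem:monotonicity}), and the labelled coupling you flag as the main obstacle is in any case available directly from the priority dynamics of \cref{prop:order_dependence}, by giving the particles of the lower configuration priority $1$. One slip: the conclusion of your healing bullet is stated backwards---what ``isolation in $\eta'$ implies isolation in $\eta$'' actually gives, and what you need, is that a particle of the $\rho'$-system heals only if its partner in the $\rho$-system (if it has one) heals as well.
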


Note that the phase transition takes place even for $\delta=\infty$: the critical threshold $\rho_c(\infty)$ belongs to the interval $(0,\rho_{\star}]$.

We also investigate survival as the healing rate $\delta$ varies.
By monotonicity, the infection survives for every $\delta \in [0,\infty]$ if $\rho > \rho_c(\infty)$. Furthermore, at every positive density, survival holds provided that the healing rate is sufficiently low:
\begin{theorem}
\label{thm:contagion_low_delta}
  For every $\rho >0$, there exists $\delta_{\star}(\rho) \in (0,\infty)$ such that, for every $\delta\le\delta_{\star}(\rho)$, the infection initialized at the origin and evolving on top of the zero-range process in equilibrium with density $\rho$ and recovery rate $\delta$ survives with positive probability.
\end{theorem}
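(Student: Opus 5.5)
We prove \cref{thm:contagion_low_delta} by a renormalization (block) argument that compares the infection with a supercritical oriented site percolation on $\mathbb{Z}\times\mathbb{Z}_+$, where space and time are rescaled. The key observation is that at any fixed density $\rho>0$ the zero-range process in equilibrium still carries particles that meet one another regularly. Two particles sitting at a common site keep each other infected, and an isolated infected particle needs an exponential time of mean $1/\delta$ to heal. So for $\delta$ small, an infected particle stays infected for a very long time, long enough to carry the infection over a macroscopic space-time box. Once $\rho$ is fixed, all spatial and temporal scales are chosen independently of $\delta$. Only at the very end do we send $\delta\to0$, to make the probability of a ``good'' box close to one.

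\textbf{Step 1 (scales).} Fix $\rho>0$. Take a spatial scale $L$ and a time scale $T=L^2$, and consider space-time boxes $B(i,n)=(iL e_1+[-L,L]^d)\times[nT,(n+1)T]$. A box is \emph{good} if three things hold:
\begin{enumerate}[label=(\roman*)]
\item every particle present in the box at time $nT$ stays within distance $L/10$ of its box during $[nT,(n+1)T]$, or at least a tagged particle from the central region does;
\item some tagged particle starting near the centre of $B(i,n)$ reaches, by time $(n+1)T$, the central regions of the neighbouring boxes $B(i\pm1,n+1)$;
\item no Poisson healing clock rings on the relevant particle trajectories during $[nT,(n+1)T]$.
\end{enumerate}
Because the environment has density $\rho>0$, we can use the product invariant measures $\mu_\rho$ of \cref{sec:zr_invariant_measures}: with probability tending to one as $L\to\infty$, the central region of each box contains at least one particle at time $nT$. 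Tagged-particle motion in the ZRP has jump rates bounded between $\Gamma_-$ and $\Gamma_+$ times the occupancy increments. This gives diffusive bounds on the displacement, so a single tagged particle covers distance $L$ in time $L^2$ with probability bounded below, say by $p_0>0$. Since a single particle can only reach one neighbour, we require a particle to reach each of the two neighbouring central regions. It suffices that the first particle meets other particles, infecting them, and these spread out. Simplest is to use the instantaneous infection rule, under which every particle met becomes infected.

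\textbf{Step 2 (healing).} The healing clocks are independent of the ZRP. If the infection inside a box is carried by at most $K=K(L)$ particles over time $T$, then the probability that some clock rings is at most $KT\delta$. Here $K$ can be bounded with high probability using equilibrium tail bounds on occupation numbers. Once $L$ and $K$ are fixed, this is smaller than any $\varepsilon$ provided $\delta\le\delta_\star(\rho)$.

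\textbf{Step 3 (comparison).} The events concerning the environment are not independent across boxes, because the ZRP has spatial interactions. This is the main obstacle. We would handle it by a decoupling or finite-range-dependence argument. One option uses the graphical construction of the ZRP together with attractiveness: the rate increments are bounded, so the process is monotone. We then aim to show that a good event depends essentially only on Poisson clocks in an enlarged box $B'(i,n)$, up to an error that is exponentially small in $L$, by controlling the speed of information propagation in the graphical construction. This gives a $k$-dependent oriented percolation. By Liggett--Schonmann--Stacey domination, it dominates a supercritical Bernoulli oriented percolation once $\P(\text{good})\ge1-\varepsilon$. The main difficulty is lowering the environment failure probability to $\varepsilon$, which is independent of $\delta$. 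Here we expect to use the large-$L$ limit and the fact that a box failing only because particles are absent has probability going to $0$ with $L$.

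Finally, on the event that the origin holds an infected particle, which has positive probability $\mu_\rho(\eta(0)\ge1)>0$, and with a positive-probability local event connecting it to a good box, the infinite oriented cluster yields infected particles at all times. Hence the infection survives.
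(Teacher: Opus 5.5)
The genuine gap is in Step~3, which is also where all the difficulty lies. The parts of your outline that do match the paper are the order of quantifiers (all scales depend only on $\rho$, and $\delta_\star$ is chosen last so that no healing mark touches a base block, at cost $O(\delta)$ times the block volume) and the idea of amplifying one infected particle into many through meetings.

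\textbf{Where Step~3 fails.} The goodness of $B(i,n)$ is not, even approximately, a function of the clocks in an enlarged box $B'(i,n)$. It depends on the configuration $\eta_{nT}$ around the box. That configuration is determined by the initial configuration and by all clocks in a growing space--time cone over $[0,nT]$.
\begin{itemize}
\item A bound on the speed of propagation localizes dependence \emph{in space} over a bounded time window; this is essentially what the priority construction in the paper's horizontal decoupling does. It cannot localize dependence \emph{in time}.
\item The zero-range process is conservative, so the environment has long memory. A depleted region of diameter $R$ has probability of order $e^{-cR^d}$ under $\mu_\rho$, yet it takes time of order $R^2$ to refill. It therefore makes of order $(R/L)^{d+2}$ boxes bad simultaneously.
\item Consequently the box process is not $k$-dependent for any $k$. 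The uniform conditional bound $\P(\text{good}\mid\text{all boxes at distance}>k)\ge 1-\varepsilon$ needed for Liggett--Schonmann--Stacey domination fails.
\item A Peierls count fails as well: the probability that all $n$ boxes in such a region are bad is only stretched-exponential in $n$, not $\varepsilon^n$.
\item Even spatially, for boxes at bounded box-distance with $T=L^2$, the error $\exp\{-c\,d_H\log(1+d_H/T)\}$ with $d_H\asymp L$ is of order one. It is not exponentially small in $L$.
\end{itemize}

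\textbf{The missing idea} is a multi-scale renormalization with sprinkling, as in the paper.
\begin{itemize}
\item The base good event is increasing in the environment. It requires $J$-paths inside $B_0$ from \emph{every} site of the input interface to each of the $3^d$ output interfaces, a nonempty input interface, and no healing marks in $B_0$.
\item A scale-$(k+1)$ block is good if most input sub-interfaces connect to most output sub-interfaces through good scale-$k$ blocks.
\item A bad block contains at least $C_d\ell_k^d$ bad sub-blocks. Hence it contains two bad ones separated vertically by $T_k$ or horizontally by $\frac12\ell_k^{1/2}L_k$.
\item Their joint failure probability is bounded by $p_k^2$ plus a small error. The horizontal decoupling handles horizontal separation. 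The vertical decoupling handles vertical separation, but it holds only for monotone events and only after changing the density from $\rho_k$ to $\rho_{k+1}=\rho_k(1+L_k^{-s})$.
\item This yields $p_{k+1}\le C\ell_k^{2d+2}(p_k^2+\varepsilon_k)$ and then $p_k\le L_k^{-5d}$. One starts at $\rho_0<\rho$ with $\rho_\infty\le\rho$ and concludes by monotonicity in the density.
\end{itemize}

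\textbf{Two smaller points.}
\begin{itemize}
\item Your condition (ii) must hold from every possible entry point with probability close to one. A single tagged particle succeeding with probability $p_0$ is not enough. The paper obtains this by first accumulating of order $L_0^{1/4}$ infected particles near the seed, then letting them diffuse as nearly independent messengers.
\item The clause in (i) that every particle stays within $L/10$ of its box fails with high probability when $T=L^2$. It is also decreasing in the configuration, so it is incompatible with sprinkling.
\end{itemize}
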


Let us define
\begin{equation}
    \begin{aligned}
    \rho_c(\infty-)
    &=\inf\big\{\rho>0:\text{the infection survives for every }\delta<\infty\big\}\\
    &=\sup_{\delta<\infty}\rho_c(\delta).
    \end{aligned}
\end{equation}
\cref{thm:immunity_low_rho,thm:contagion_high_rho} imply that $\rho_c(\infty-)$ is bounded away from $0$ and $\infty$, while \cref{thm:contagion_low_delta} together with monotonicity implies that for every density $\rho \in (0, \rho_c(\infty-))$ the model undergoes a non-trivial phase transition at a well-defined critical healing threshold $\delta_c(\rho) \in (0,\infty)$ as we synthesize in the next theorem.

\begin{theorem}
\label{thm:phase_transition_delta}
 For each fixed $\rho \in (0,\rho_c(\infty-))$, there exists a critical healing threshold $\delta_c(\rho) \in (0,\infty)$ such that: for every $\delta \in (0,\delta_c(\rho))$, the infection initialized at the origin and evolving on top of the zero-range process in equilibrium with density $\rho$ survives with positive probability, whereas it dies out a.s.\ for every $\delta > \delta_c(\rho)$.
\end{theorem}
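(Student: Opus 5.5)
We deduce \cref{thm:phase_transition_delta} from \cref{thm:immunity_low_rho,thm:contagion_low_delta} together with monotonicity of the survival event in $\delta$, in the same way \cref{thm:phase_transition_rho} follows from \cref{thm:immunity_low_rho,thm:contagion_high_rho}. The first step is to record the monotone coupling in $\delta$. Run a single zero-range environment with a single infection graphical construction, and attach to each particle a Poisson clock of rate $1$ together with independent uniform marks $U\in[0,1]$. A healing attempt at healing rate $\delta$ corresponds to a ring of the rate-$\delta$ clock. To handle every $\delta\in(0,\infty)$ at once, give each particle a rate-$1$ Poisson process on $[0,\infty)\times[0,\infty)$ and let the healing events at rate $\delta$ be the points with second coordinate at most $\delta$. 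With this coupling, the set of healing attempts grows with $\delta$. By induction over the locally finite set of events, the infected set at rate $\delta'$ is contained in the infected set at rate $\delta$ whenever $\delta\le\delta'$, since infection is monotone in the configuration of infected particles. Hence $\theta(\rho,\delta):=\P(\text{survival})$ is non-increasing in $\delta$. The same kind of coupling, adding extra particles, gives monotonicity in $\rho$; that is the fact already used to define $\rho_c(\delta)$ and to write $\rho_c(\infty-)=\sup_{\delta<\infty}\rho_c(\delta)$.

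Next, fix $\rho\in(0,\rho_c(\infty-))$ and define
\begin{equation*}
\delta_c(\rho)=\sup\{\delta\ge 0:\theta(\rho,\delta)>0\}.
\end{equation*}
By \cref{thm:contagion_low_delta}, $\theta(\rho,\delta)>0$ for $\delta\le\delta_\star(\rho)$, so $\delta_c(\rho)\ge\delta_\star(\rho)>0$. For the upper bound, $\rho<\rho_c(\infty-)=\sup_{\delta<\infty}\rho_c(\delta)$ gives some $\delta_0<\infty$ with $\rho<\rho_c(\delta_0)$. By \cref{thm:phase_transition_rho}, the infection then dies out a.s.\ at $(\rho,\delta_0)$, that is, $\theta(\rho,\delta_0)=0$. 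Monotonicity in $\delta$ gives $\theta(\rho,\delta)=0$ for all $\delta\ge\delta_0$, so $\delta_c(\rho)\le\delta_0<\infty$.

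The dichotomy then follows from monotonicity. If $\delta<\delta_c(\rho)$, there is some $\delta'\in(\delta,\delta_c(\rho)]$ with $\theta(\rho,\delta')>0$, and therefore $\theta(\rho,\delta)\ge\theta(\rho,\delta')>0$. If $\delta>\delta_c(\rho)$, then $\theta(\rho,\delta)=0$ by the definition of the supremum, which means extinction occurs a.s.

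The only real obstacle is the careful justification of the monotone coupling in $\delta$ on the infinite lattice, since the model has infinitely many particles. I expect it to be handled by the same graphical construction the paper uses for monotonicity in $\rho$. Finite speed of infection propagation on finite time intervals reduces the comparison to finitely many events, and on finitely many events the induction argument applies.
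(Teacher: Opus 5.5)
Your proof is correct and is essentially the paper's argument. The bound $\delta_c(\rho)>0$ comes from \cref{thm:contagion_low_delta} plus monotonicity in $\delta$, and $\delta_c(\rho)<\infty$ comes from choosing $\delta_0<\infty$ with $\rho<\rho_c(\delta_0)$ via the definition of $\rho_c(\infty-)$. The obstacle you flag is not a real one: the paper couples all $\delta$ through site-indexed marks on $\R_+\times\R_+$, and extra healing marks only make condition (iii) of \cref{def:jump_path} more restrictive, so monotonicity in $\delta$ is immediate without any induction over events.
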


By monotonicity $\rho_c(\infty-)\leq\rho_c(\infty)$, and we conjecture that these two quantities coincide, as in \cref{fig:phase_diagram_left}.
Equivalently, we believe that a non-degenerate  threshold $\delta_c(\rho)$ should exist for every $\rho$ in the interval $(0,\rho_c(\infty))$ beyond which $\delta_c(\rho)$ becomes infinite.

The other possibility, namely that the strict inequality $\rho_c(\infty-)<\rho_c(\infty)$ holds, would imply that for every $\rho \in (\rho_c(\infty-), \rho_c(\infty))$ the infection would die out for $\delta=\infty$ while it would survive as soon as $\delta<\infty$, as illustrated in \cref{fig:phase_diagram_right}.

In addition, we believe that $\rho_c(\delta) < \rho_c(\infty-)$ for every $\delta\in(0,\infty)$, or equivalently, that $\lim_{\rho\to\rho_c(\infty-)^-} \delta_c(\rho) = \infty$ as in \cref{fig:phase_diagram_left}.
The other possibility in which $\rho_c(\delta)$ hits $\rho_c(\infty-)$ for some $\delta < \infty$ is also depicted in \cref{fig:phase_diagram_right}.

\begin{figure}[tpb]
  \centering
  \begin{subfigure}[b]{0.49\textwidth}
    \centering
    \includegraphics[width=\linewidth]{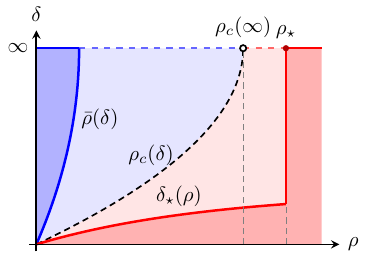}
    \caption{The case \(\rho_c(\infty-)=\rho_c(\infty)\).}
    \label{fig:phase_diagram_left}
  \end{subfigure}
  \hfill
  \begin{subfigure}[b]{0.49\textwidth}
    \centering
    \includegraphics[width=\linewidth]{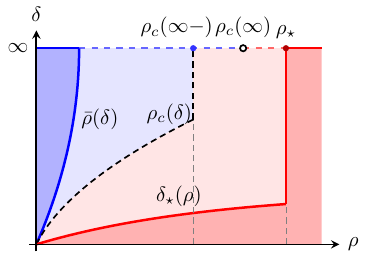}
    \caption{The case \(\rho_c(\infty-)<\rho_c(\infty)\).}
    \label{fig:phase_diagram_right}
  \end{subfigure}
  \caption{We illustrate two possible phase diagrams compatible with our results.
  Blue and red indicate, respectively, the extinction and survival phases. 
  Darker shades represent the extinction and survival regions established by our theorems; the lighter shades illustrate possible extrapolations up to the critical curve $\rho_c(\delta)$.
  We expect that the actual phase diagram should resemble the one illustrated on the left panel, with a continuous and strictly increasing critical curve.
  In the right-hand side panel, densities lying strictly between $\rho_c(\infty-)$ and $\rho_c(\infty)$ yield survival for all finite healing rates ($\delta<\infty$) but extinction at instantaneous healing ($\delta = \infty$).}
  \label{fig:phase_diagram}
\end{figure}  

Transitioning from a system of independent random walks as in~\cite{KestenSidoravicius06} to the zero-range process poses major challenges.
Because jump rates depend on local site occupancy, environment particles dynamically obstruct or accelerate one another, destroying the independent path decompositions and complicating the use of techniques that are central to the analysis of the IRW, such as Poisson thinning.

Moreover, beyond the change from the IRW to the ZRP medium, our setting differs from the one in~\cite{KestenSidoravicius06} in its microscopic transmission rule: there, the infection can only be transmitted at the exact moment an environment particle jumps.
This allows for particles to heal even when they share a site with other infected particles.
Under this rule, the authors establish a non-trivial phase transition in the healing rate for every density (see~\cite[Theorem 1]{KestenSidoravicius06} therein).
This cannot hold for our model, due to \cref{thm:contagion_high_rho}, so it is natural that \cref{thm:phase_transition_delta} holds for a bounded interval of densities.

Although the change in the infection mechanism may appear to be a minor detail, as noted in their work, this technical restriction is critical for their proof of extinction at low healing rates, and a careful inspection reveals that it is indispensable for their renormalization procedure.
Indeed, in~\cite[page 3]{KestenSidoravicius06} the authors address the difficulty of adopting the alternative mechanism by stating: ``\emph{This creates some sort of singularity in the model which we are unable to handle at the moment}.''
We employ a multi-scale approach capable of handling the setting where transmission occurs instantaneously whenever an infected and a healthy particle coincide, bypassing this difficulty when the density belongs to $(0,\rho_c(\infty-))$, with the additional advantage of considering a more general environment.
Moreover, it is worth mentioning that our techniques could also be applied to their setting, allowing one to extend~\cite[Theorem 1]{KestenSidoravicius06} to the infection model on the ZRP.

Another novelty in our work consists of enhanced versions of decoupling inequalities for the zero-range process.
They are fundamental tools for the multi-scale renormalization scheme used to prove the above results, as we discuss further in \cref{sec:sketch}.
While certain versions have become available recently in the literature~\cite{BaldassoTeixeira20, ABHS25}, they fall short of being applicable to our setting.
We expect that our new versions can be applied to the study of other models where the ZRP plays the role of a dynamic random environment (such as random walks and detection problems).

Before we state our decoupling results, first we establish some notation regarding space-time boxes. 
Given two space-time boxes $B^1$ and $B^2$, their horizontal and vertical distances are defined as
\begin{equation}
\label{eq:distances}
  \begin{split}
    \dist_V(B^1, B^2) & = \inf \Big\{  |s-t|:
      \begin{array}{c} (x,s) \in B^1 \text{ and } (y,t) \in B^2, \\ \text{ for some } x,y \in \Z^d
    \end{array} \Big\} \\
    \dist_{H}(B^1, B^2) & = \inf \Big\{  \normI{x-y}:
      \begin{array}{c} (x,s) \in B^1 \text{ and } (y,t) \in B^2, \\ \text{ for some } s,t \in \R
    \end{array} \Big\}.
  \end{split}
\end{equation}

Let $\eta = \left(\eta_t(x) \colon (x,t) \in \mathbb{Z}^d \times \mathbb{R}_+\right)$, where each $\eta_t(x)$ belongs to $\mathbb{N}_0$, denote a trajectory.
Assume $t \mapsto \eta_t \in \mathbb{N}_0^{\mathbb{Z}^d}$ is càdlàg.
For a fixed rate function $g$, we denote by $\mathbb{P}_\rho$ the probability measure on the canonical Skorokhod space under which $\eta$ is distributed as a zero-range process initialized according to the density-$\rho$ invariant product measure $\mu_\rho$.
We write $\mathbb{E}_{\rho}$ and $\cov_{\rho}$ to denote the corresponding expectation and covariance with respect to $\mathbb{P}_\rho$.

Following~\cite[Equation (1.5)]{BaldassoTeixeira20}, we formalize the notion of support as follows:
A function $f: \mathbb{N}_0^{\mathbb{Z}^d \times \mathbb{R}_+} \to \mathbb{R}$ is said to have
\emph{support} in a space-time region $B \subset \mathbb{Z}^d \times \mathbb{R}_+$ if 
\begin{equation}
    \label{eq:support}
\text{for all $\eta, \bar\eta$, $f(\eta) = f(\bar\eta)$ whenever $\eta_t(x) = \bar\eta_t(x)$ for all $(x, t) \in B$.}
\end{equation}

We begin with the horizontal decoupling inequality, which provides a covariance bound for functions of the zero-range process supported on space-time boxes that are well-separated spatially. Intuitively, this ensures that events occurring in regions with a sufficient spatial separation evolve almost independently.

\newconstant{c:horiz_decoupling}
\newbigconstant{C:horiz_decoupling}
\begin{theorem}[Horizontal decoupling]\label{thm:horizontal_decoupling} There are constants
  $\ubc{C:horiz_decoupling}, \uc{c:horiz_decoupling}>0$ which depend only on $\Gamma_+$ and the dimension, such that the following holds.

  Let $B^1, B^2 \subset \Z^d \times [0, T]$ be space-time boxes with spatial projections $S_1, S_2$
  satisfying $\dist_H = \dist_H(B^1,B^2) \ge 8$ and $\diam(S_j) \leq w$ for $j = 1, 2$ and some positive constant $w$.
  Let $f_1, f_2: \N_0^{\Z^d \times \R_+} \to [0, 1]$ be measurable functions with supports in $B^1$ and $B^2$, respectively. Then, for all $\rho > 0$, $T \ge 1$,
  \begin{equation}\label{eq:horiz_decoupling}
    \abs{\cov_\rho(f_1(\eta), f_2(\eta))} \le \ubc{C:horiz_decoupling} w^{d - 1} \rho T^{d/2}
    e^{-\uc{c:horiz_decoupling} \dist_H \log(1+\dist_H/T)}.
  \end{equation}
\end{theorem}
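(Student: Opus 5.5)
The plan is a coupling argument: couple the true process with one in which the dynamics inside two disjoint spatial regions evolve independently, and show that the two processes agree on $B^1\cup B^2$ except on an event of the stated small probability.

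\textbf{Setup.} Let $L=\dist_H$, and let $\bar S_j$ be the $\lfloor L/2\rfloor$-neighbourhood (in $\normI{\cdot}$) of $S_j$, $j=1,2$. These neighbourhoods are disjoint.

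\textbf{The coupling.} Realise the zero-range process by the graphical construction, with Poisson clocks attached to each site and each particle level. Using the monotonicity $g(n)-g(n-1)\ge\Gamma_->0$, a particle at a site with $n$ particles jumps when the clock of level $k\le n$ rings, at rate $g(k)-g(k-1)\le\Gamma_+$. Start from $\eta_0\sim\mu_\rho$ and use the same clocks everywhere. Build an auxiliary process $\tilde\eta$ with the same initial configuration, in which regions $\bar S_1$ and $\bar S_2$ are driven by independent copies of the clock system outside their complement. Concretely, $\tilde\eta$ restricted to a neighbourhood of $B^j$ is a function of $\eta_0|_{\bar S_j}$ and of clocks in $\bar S_j$ only, with particles exiting $\bar S_j$ absorbed or ignored.

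Because $\mu_\rho$ is a product measure, $f_1(\tilde\eta)$ and $f_2(\tilde\eta)$ are independent. Moreover each $f_j(\tilde\eta)$ has the same law as $f_j(\eta)$ up to the discrepancy event. Therefore
\[
\abs{\cov_\rho(f_1,f_2)}\le 4\,\P_\rho\bigl(\eta\neq\tilde\eta \text{ on } B^1\cup B^2\bigr).
\]

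\textbf{Bounding the discrepancy.} A discrepancy at $B^j$ must be caused by influence propagating from $\partial\bar S_j$, at distance about $L/2$, into $S_j$ within time $T$. In attractive zero-range dynamics with this construction, discrepancies behave like second-class particles. Each moves along nearest-neighbour edges, with jump rates bounded by $\Gamma_+$ times the number of particles present.

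The key point is that the rate per discrepancy is bounded by $\Gamma_+$ if we track one discrepancy at a time via the basic coupling, using the increments bound. So a discrepancy path must make at least $L/2$ jumps in time $T$. By a Poisson tail bound, this has probability at most
\[
\P\bigl(\poisson(\Gamma_+ T)\ge L/2\bigr)\le e^{-cL\log(1+L/T)}.
\]

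\textbf{Counting discrepancies.} The number of discrepancies entering is controlled by the particles initially near the boundary, or crossing it, in time $T$. I would instead sum over boundary sites of $S_j$'s neighbourhood: there are $O(w^{d-1})$ face sites at each relevant distance. The expected number of particles, weighted by $\rho$, together with a union over time-discretisation and over starting points, gives the prefactor $w^{d-1}\rho T^{d/2}$. Here $T^{d/2}$ absorbs a diffusive-scale sum over distances $r\ge L/2$ of $r^{d-1}e^{-cr\log(1+r/T)}$ and the slab thickness.

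\textbf{Main obstacle.} The hardest step is making the discrepancy count rigorous. The coupling must produce only discrepancies whose number is linear in the particles crossing the boundary, and each must move at rate at most $\Gamma_+$ independently of occupancy. For this I would rely on attractiveness, using $g$ nondecreasing with increments in $[\Gamma_-,\Gamma_+]$, and on a comparison of each discrepancy's trajectory with a rate-$\Gamma_+$ random walk.
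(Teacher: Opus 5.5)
\textbf{There is a genuine gap: the key displacement estimate is false in the diffusive regime.} Your architecture is otherwise the paper's. The absorbed processes on disjoint neighbourhoods, driven by their own clocks, are the paper's $\hat\eta^L,\hat\eta^R$. Your discrepancies sitting on the top levels of the basic coupling are exactly the lower-priority particles of the slot-priority representation (\cref{prop:order_dependence}); each occupies a single slot and so jumps at rate at most $\Gamma_+$.

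The problem is the central estimate. You bound the chance that a discrepancy covers distance $L/2$ by $\P(\poisson(\Gamma_+T)\ge L/2)$ and assert this is at most $e^{-cL\log(1+L/T)}$. Whenever $L/2\le \Gamma_+T$, the left-hand side is bounded below by a constant. For instance, with $L=T^{3/4}$ the right-hand side is of order $e^{-cL^2/T}=e^{-c\sqrt T}$. Counting jumps only captures the ballistic regime $L\gtrsim T$.

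The diffusive regime is genuinely needed later. In \cref{lem:decoupling} with $k=0$, pairs of blocks can be at horizontal distance of order $L_0^{5/4}$ while $T=2T_0=2L_0^2$. The missing ingredient is that each jump goes to a uniformly chosen neighbour. Hence a particle's maximal displacement is dominated by $\max_{k\le Y}\|S_k\|$, with $S$ a simple random walk independent of $Y\sim\poisson(\Gamma_+T)$. The reflection principle plus Hoeffding (for $Y\le 2\Gamma_+T$) and the Poisson tail (otherwise) then give $e^{-cs\log(1+s/T)}$; this is \cref{lem:crossing_bound}.

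\textbf{The discrepancy bookkeeping is also left open, and your geometry makes it harder than necessary.} This is the step you flag as the main obstacle. Because $\tilde\eta$ starts from all of $\eta_0|_{\bar S_j}$ and is absorbed exactly at the edge of $\bar S_j$, discrepancies are created at random times whenever a particle near that edge steps out. They can appear in unbounded number, at distance about $L/2$ from $S_j$. They are not ``particles initially near the boundary''. You would have to attach each one to a labelled particle and argue that this particle moved at least $L/4$ from its own starting point (out, then back into $S_j$).

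The paper sidesteps this with nested regions. Set $d_0=\lfloor \dist_H/4\rfloor$. Particles starting in the $d_0$-neighbourhood $A_1$, in $A_2$, and elsewhere receive priorities $1,2,3$. Only priority-$j$ particles feed the decoupled process, which is absorbed on $M_j$ at distance $2d_0$. The bad event is then a union of three single-particle crossings, each of length at least $d_0$:
\begin{itemize}
\item priority-$1$ particles reaching $M_1$;
\item priority-$2$ particles reaching $M_2$;
\item priority-$3$ particles entering $S_1\cup S_2$.
\end{itemize}
A union bound over the target sites in $M_j$ and on the boundary of $S_j$, combined with \cref{lem:crossing_bound}, gives the prefactor $w^{d-1}\rho T^{d/2}$ after absorbing powers of $d_0$ into the exponential.

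A minor point: $\lfloor L/2\rfloor$-neighbourhoods need not be disjoint when $L$ is even, so use a strictly smaller radius.
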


For the one-dimensional setting, in the particular case where $B^1$ and $B^2$ are space-time squares of side length $s$ and their horizontal separation satisfies $\dist_H \ge s^\alpha$ for some $\alpha > \frac{1}{2}$, the bound \eqref{eq:horiz_decoupling} simplifies to
  \begin{equation}
  \label{eq:horizontal_decoup_1d}
    \abs{\cov_\rho(f_1(\eta), f_2(\eta))} \le \ubc{C:horiz_decoupling}\rho\sqrt{s}
    e^{-\uc{c:horiz_decoupling} s^\alpha \log(1+s^{\alpha-1})}.
  \end{equation}
  The polynomial factor $\sqrt{s}$ can be absorbed into the exponential by adjusting the positive constant $\uc{c:horiz_decoupling}$.
  This yields an upper bound in terms of $s$ that is  stretched-exponential, exponential or super-exponential depending on the value of $\alpha$.

  Let us compare \eqref{eq:horizontal_decoup_1d} to the horizontal decoupling for the one-dimensional zero-range process derived in~\cite[Proposition 1.6]{BaldassoTeixeira20}.
  There the upper bound on the covariance is only super-polynomial.
  Moreover, the horizontal separation between the boxes needs to grow linearly with their sizes.
  In addition, they have to restrict their density $\rho$ to range over a previously fixed compact interval $[\rho_-, \rho_+]$, and the constants corresponding to $\ubc{C:horiz_decoupling}$ and $\uc{c:horiz_decoupling}$ (called respectively $c_3$ and $c_3^{-1}$ there) depend implicitly on $\rho_-$ and $\rho_+$.

  We now turn to the vertical decoupling inequality. 
Unlike its horizontal counterpart, the vertical decoupling handles temporal separation and is not framed as a direct covariance bound. Instead, it requires a sprinkling argument, which entails changing the reference measure by slightly perturbing the underlying particle density $\rho$.
Here we state it for increasing functions, but an analogous result can also be obtained for decreasing ones. We say a function $f$ of the trajectories is increasing if, for any two trajectories $\xi$ and $\eta$,
\begin{equation}
    \xi \preceq \eta \text{ implies } f(\xi) \leq f(\eta),
\end{equation}
where $\xi \preceq \eta$ means that $\xi_{t}(x) \leq \eta_{t}(x)$, for all $(x, t) \in \Z^{d} \times \R$. Analogously, $f$ is decreasing if $-f$ is increasing.

\newbigconstant{C:vertical_decoupling}
\newbigconstant{C:V}
\newconstant{c:vertical_decoupling}
\begin{theorem}[Vertical decoupling with sprinkling]
  \label{thm:vertical_decoupling}
  There exist positive constants $\ubc{C:vertical_decoupling}$, $\uc{c:vertical_decoupling}$ and $\ubc{C:V}$ depending only on the dimension and $\Gamma_+$ and $\Gamma_-$, such that, for any two space-time boxes $B^1,B^2$ of side length $w>0$ with vertical distance $\dist_V = \dist_V(B^1, B^2) \ge \ubc{C:V}$, any two non-decreasing bounded measurable functions $f_1, f_2: \N_0^{\Z^d \times \R_+} \to [0, 1]$ with supports in $B^1$ and $B^2$, any $\rho >0$, and any sprinkling increment $\varepsilon \in (0,1)$, we have
  \begin{equation}
  \label{eq:vertical_decoupling_bound}
    \begin{split}
    \E_\rho & [f_1(\eta) f_2(\eta)] - \E_{\rho(1 + \varepsilon)}[f_1(\eta)] \E_{\rho(1 + \varepsilon)}[f_2(\eta)] \\
    &\quad \leq \ubc{C:vertical_decoupling} (w + \dist_V)^{d + 1} \Big(\rho \exp\braces{-\uc{c:vertical_decoupling} \dist_V^{1/3}} + \exp\braces{-\uc{c:vertical_decoupling} \rho\varepsilon^2 \dist_V^{1/3}}\Big).
    \end{split}
  \end{equation}
\end{theorem}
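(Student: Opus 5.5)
We prove \cref{thm:vertical_decoupling} by a coupling argument that conditions on the configuration at the top of the earlier box and lets the dynamics mix over the time gap. Assume without loss of generality that $B^1$ lies below $B^2$, with $B^1 \subset \Z^d \times [0,t_1]$ and $B^2 \subset \Z^d \times [t_1 + \dist_V, \infty)$. Write $S$ for the spatial projection of $B^2$, and let $\Lambda$ be its enlargement by $L = \dist_V^{2/3}$, a scale chosen to balance diffusive spreading against the time available. Conditioning on $\mathcal F_{t_1}$ and using the Markov property gives
\[
\E_\rho[f_1 f_2] = \E_\rho\big[f_1(\eta)\, h(\eta_{t_1})\big],
\qquad h(\zeta) = \E_\zeta[f_2],
\]
where $h$ is non-decreasing by attractiveness of the ZRP. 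This holds because $g$ is non-decreasing, and the basic coupling preserves order. It therefore suffices to show that, outside an event of small probability, $h(\eta_{t_1}) \le \E_{\rho(1+\varepsilon)}[f_2]$. Once this is established we bound $f_1 \le 1$ on the bad event and integrate $f_1$ against $\mu_\rho$ elsewhere. The remaining gap between $\E_\rho[f_1]$ and $\E_{\rho(1+\varepsilon)}[f_1]$ is harmless, since $f_1$ is increasing and $\mu_\rho \preceq \mu_{\rho(1+\varepsilon)}$ stochastically, which again uses monotonicity of $g$.

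The key step is a \emph{local equilibration with sprinkling}. Starting from $\eta_{t_1}$, which is typical under $\mu_\rho$, we build a coupling with a stationary process $\xi$ distributed as $\mathbb P_{\rho(1+\varepsilon)}$ such that $\eta_t \le \xi_t$ on $\Lambda$ for all $t \ge t_1 + \dist_V$ over the time window of $B^2$, with high probability.

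\begin{enumerate}
\item We first require $\xi_{t_1}$ to dominate $\eta_{t_1}$ in the integrated sense on mesoscopic blocks of side $\ell = \dist_V^{1/3}$ covering a neighborhood of $\Lambda$ of size $\Gamma_+\dist_V$. Each block must carry at least $\rho(1+\varepsilon/2)\ell^d$ more $\xi$-particles than $\eta$-particles would require. By concentration for the product measures $\mu_\rho$ and $\mu_{\rho(1+\varepsilon)}$, whose marginals have exponential tails uniformly in $\rho$ thanks to the bounds on $g$, this fails in a given block with probability $\exp\{-c\rho\varepsilon^2\ell^d\} \le \exp\{-c\rho\varepsilon^2\dist_V^{1/3}\}$. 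Union bounding over the $(w+\dist_V)^{d}$ blocks contributes the second term of \eqref{eq:vertical_decoupling_bound}.
\item Next we use the basic coupling of two ordered-rate ZRPs, in which discrepancies are created only by jumps of the larger configuration. Positive discrepancies ($\xi$ ahead) and negative ones annihilate when they meet. Each discrepancy performs a random walk whose jump rate lies between $\Gamma_-$ and $\Gamma_+$. Within time $\dist_V$, an excess mass in a block of width $\ell \ll \sqrt{\dist_V}$ diffuses over distance $\sqrt{\dist_V} \gg \ell$, so every negative discrepancy meets and is cancelled by the surplus of positive ones.
\item Finally we handle particles entering from outside, which travel at finite speed. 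Large deviations of a Poisson clock with rate at most $\Gamma_+$ say that nothing moves farther than $C\dist_V$. This, together with atypically large occupations controlled by the $\rho e^{-c\dist_V^{1/3}}$ term, gives the first error term. The polynomial prefactor $(w+\dist_V)^{d+1}$ comes from union bounds over space-time cells.
\end{enumerate}

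The main obstacle is step (2): proving that negative discrepancies are all annihilated within time $\dist_V$, with a probability estimate stretched-exponential in $\dist_V^{1/3}$. Discrepancies do not move independently, because their rates depend on occupations. We would first try a second-class-particle comparison, in which each negative discrepancy is tagged and a nearby positive one is shown to hit it. In one dimension this amounts to the rate-bounded random walk difference hitting zero. In general dimension we would use an averaging argument instead: the $\xi-\eta$ mass in each $\ell$-block stays positive with high probability, proved via a martingale for the block mass flux with jumps bounded by the rates. If this is too delicate, a fallback is a multi-scale argument that halves the discrepancy count at each dyadic time scale.
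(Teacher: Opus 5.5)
Your outer reduction matches the paper's. You condition on the configuration at the end of $B^1$, couple with an independent stationary process of density $\rho(1+\varepsilon)$, get block-mass domination from \cref{lem:tail_bounds}, discard far-away particles, and pass from $\E_\rho[f_1]$ to $\E_{\rho(1+\varepsilon)}[f_1]$ by monotonicity. One small correction: your first paragraph should conclude $h(\eta_{t_1})\le \E_{\rho(1+\varepsilon)}[f_2]+\P(\text{coupling fails}\mid \eta_{t_1})$, not a pointwise inequality off a bad event; your coupling does give that.

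The genuine gap is step (2), which is the entire content of the theorem and which you leave open. The heuristic that the excess ``diffuses over distance $\sqrt{\dist_V}\gg\ell$'' gives nothing near a failure probability $e^{-c\dist_V^{1/3}}$ per particle. Take a tagged negative and a tagged positive discrepancy at distance $\ell$, with one window of length $\dist_V$:
\begin{itemize}
\item for $d\ge 2$, their meeting probability is bounded away from $1$ (of order $\ell^{2-d}$ for $d\ge 3$);
\item for $d=1$, they fail to meet with probability of order $\ell/\sqrt{\dist_V}$, which is only polynomially small.
\end{itemize}
Extra time does not change this. Letting a negative discrepancy use the whole surplus instead turns the question into a two-species annihilation system of interacting walkers with comparable densities of both species, a hard many-body problem. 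Your averaging alternative does not help either. Positivity of $\xi-\eta$ block masses says nothing about site-wise order, and since $f_2$ is a non-decreasing function of site occupations you need $\eta_t(x)\le\xi_t(x)$ for every $(x,t)\in B^2$. As a result, your error budget never accounts for the failure of step (2).

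The missing idea is a renewal structure that converts a weak one-shot meeting probability into a stretched-exponential bound. The paper splits $[0,\dist_V]$ into $N=\lfloor \dist_V/T\rfloor$ windows and uses blocks of side $L=\lfloor\sqrt{\kappa T}\rfloor$, tied to the window length rather than a fixed $\ell$. The argument then runs as follows.
\begin{enumerate}
\item At each time $s_k=kT$, block domination holds again (failure at most $e^{-cL^d\rho\varepsilon^2}$ per block), simply because both marginals are still stationary.
\item Every lower-density particle not yet sitting with its partner is re-matched to a fresh partner in its block.
\item Unmatched particles of the two processes run on independent clock families $\mathcal P^1,\mathcal P^2$. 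So the relative position of a pair is a symmetric walk with rate at least $2\Gamma_-$, and \cref{lem:meeting} gives a meeting within the window with conditional probability at least $cL^{-\gamma_d}$.
\item Coincident pairs are kept at the bottom of their stacks and jump together forever.
\end{enumerate}
A given particle therefore stays unmatched with probability at most $(1-cL^{-\gamma_d})^N\le\exp\{-c\dist_V T^{-1-\gamma_d/2}\}$. Balancing this against the sprinkling exponent $T^{d/2}$ gives $T=\dist_V^{2/(d+2+\gamma_d)}$ and exponent $\dist_V^{d/(d+2+\gamma_d)}\ge \dist_V^{1/3}$. A union bound over the $O(\rho(w+\dist_V)^d)$ particles then produces the term $\rho e^{-c\dist_V^{1/3}}$. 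Your dyadic-halving fallback points in this direction, but as written the proposal does not prove the theorem.
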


A vertical decoupling for the zero-range process was obtained in~\cite[Theorem 1.3]{BaldassoTeixeira20}. 
The main difference between their result and \cref{thm:vertical_decoupling} lies in the uniformity of the constants $\ubc{C:vertical_decoupling}$ and $\uc{c:vertical_decoupling}$ (which correspond, respectively, to the constants $C_1$ and $c_3$ therein) on the density parameter $\rho$.
In fact, as made explicit in their statement, there these constants depend implicitly on a previously fixed upper bound $\rho_{+}$ for the set of possible densities $\rho$.
In contrast, we provide a new version where $\ubc{C:vertical_decoupling}$ and $\uc{c:vertical_decoupling}$ can be chosen uniformly over $\rho_{+}$.
This is in fact crucial for us as it allows for the analysis of high-density regimes (for instance in \cref{thm:contagion_high_rho}), by enabling us to drive the density to sufficiently high values without losing control on the term appearing in the right-hand side in \eqref{eq:vertical_decoupling_bound}.

\subsection{Motivation and Background}
\label{sec:motivation}

This study is partially inspired by the papers of Kesten and Sidoravicius~\cite{KestenSidoravicius05, KestenSidoravicius06, KestenSidoravicius08} on the  spread of an infection (or a rumor) when the underlying population evolves as an independent Poisson system of continuous-time simple symmetric random walks.
In the absence of a recovery mechanism, the spread of the infection behaves as a stochastic growth model reminiscent of the  Richardson model.
In~\cite{KestenSidoravicius08}, an asymptotic shape theorem was established under the assumption that both infected ($A$) and susceptible ($B$) particles share the same diffusion rate ($D_A = D_B$).
When infected and susceptible particles jump at different positive rates ($D_A \neq D_B$), a linear growth rate was established by Dauvergne and Sly~\cite{DauvergneSly23}.
Furthermore, the case when the underlying environment consists of biased random walks was investigated by Baldasso and Stauffer~\cite{BaldassoStauffer20}.
In the case where particles perform independent random walks on random conductances in $\mathbb{Z}^d$, Gracar and Stauffer~\cite{GracarStauffer19} proved a linear lower bound for the displacement of the front.
Linear upper and lower bounds for the position of the front of the infection when the IRW environment is replaced by the ZRP in one dimension were established by Baldasso and Teixeira~\cite{BaldassoTeixeira20}.

In addition to the expansion of the front, in the presence of recovery, the interest shifts to the study of the survival-extinction phase transition.
For the IRW environment, this was first treated by Kesten and Sidoravicius in~\cite{KestenSidoravicius06}, and subsequently 
by Gracar and Stauffer~\cite{GracarStauffer19}, and by Baldasso and Stauffer~\cite{BaldassoStauffer21}.
When the graph is the Sierpinski gasket, Drewitz, Gallo, and Gracar~\cite{DGG26} proved that the infection survives if the recovery rate is sufficiently low.
A variation involving the removal of recovered particles (the SIR model) was studied by Dauvergne and Sly~\cite{DauvergneSly22}, and a similar setup where particles move via independent Brownian motions with removal, was developed by Grimmett and Li~\cite{GrimmettLi22}.
To the best of our knowledge, the present manuscript is the first to study the survival-extinction phase transition for the ZRP environment instead of IRW.

Broadly, the aforementioned infection models can be viewed as members of a family of processes featuring the $A+B \to 2A$ transition rule whose appearance in the mathematics literature dates back at least to~\cite{BCDFLS86}.
It encompasses a variety of systems including the so-called frog-model (or combustion process), the multi-particle diffusion-limited aggregation (mDLA), and the activated random-walks.
Within this framework, the physical interpretation often changes according to the particular variations in the underlying particle transport mechanism, and the microscopic rules of evolution (e.g.\ the relative diffusion rates assigned to each particle type).
However, each one of these processes can be regarded essentially as a model for the spread of an infection or rumor; the $A$ and $B$ particles representing infected and susceptible (or healthy), respectively.
In most of these, the underlying dynamics does not simply act as a passive background. 
Rather, it serves simultaneously as a mechanism for displacing the infection and as a source of spatial inhomogeneities to which the epidemic can be highly sensitive.
Together with the complicated space-time correlations that arise from the motion of the particles, this not only results in a remarkably rich spectrum of behaviors, but also makes this class of processes challenging to study.
Given the vastness of the literature on this topic, the exposition that we present next is necessarily non-exhaustive.
For alternative perspectives and a more comprehensive list of references, we refer the reader to the introductory sections of the works cited herein.

Most of the works so far were confined to the case when the particles of the medium move as independent random walks diffusing with rates $D_A$ and $D_B$ depending on their $A/B$ state. 
As already mentioned, when $D_A > 0$ and $D_B > 0$, the model represents the spread of an infection or rumor through a mobile population, as analyzed in~\cite{KestenSidoravicius05, KestenSidoravicius08, DauvergneSly23}.
If one instead freezes the target particles by setting $D_B = 0$, the process becomes the frog model~\cite{AMP02} or combustion process~\cite{BerardRamirez10, CQR07, RamirezSidoravicius02, RamirezSidoravicius04}.
Reversing this regime by setting $D_A = 0$ yields multi-particle diffusion-limited aggregation (mDLA) in which the  environment consists of mobile particles that attach to a growing aggregate at the moment they attempt to jump onto its boundary~\cite{KestenSidoravicius08a, Sly21} (see also~\cite{SidoraviciusStauffer19, Voss84} where, instead of IRW, particles move as a symmetric simple exclusion process).

The introduction of recovery as in~\cite{KestenSidoravicius06, BaldassoStauffer21} corresponds to the addition of an extra transition of the type $A\to B$.
This has also been considered in the case when $D_B=0$, leading to the model known as activated random walks. 
 Rolla and Sidoravicius~\cite{RollaSidoravicius12} showed that this model undergoes an absorbing-state phase transition. 
 Much of the interest in this model stems from the fact that it features self-organized criticality (see also~\cite{Rolla20} for a survey, and  Hoffman, Johnson, and Junge~\cite{HJJ24} and Forien~\cite{Forien25} for recent developments).
 
Also related to our interest are models of contact processes on dynamic random environments.
One recent example is the interchange-and-contact process recently introduced by Hilário, Ungaretti, Valesin, and Vares~\cite{HUVV25}. In that model, sites can be vacant, healthy, or infected, with transmission occurring across neighbors and the particle population evolving dynamically according to an underlying interchange process at rate $\nu$. 
Apart from the fact that the environment is conservative, transmissions in their model do not occur instantaneously upon contact as they do in our setup. 
In the interchange-and-contact process, the transmission rate $\lambda$ is treated as a free parameter alongside the density $\rho$ and the interchange rate $\nu$, with the goal being to characterize the critical transmission threshold in the rapid-interchange limit ($\nu \to \infty$). 
Conversely, in the present paper, we fix the intrinsic evolution rate of the ZRP and focus on the interplay between the healing rate and the background density, obtaining a global characterization of the corresponding phase diagram.

\subsection{Sketch of the proofs}
\label{sec:sketch}

To establish the extinction regime of \cref{thm:immunity_low_rho} and the survival regimes of \cref{thm:contagion_high_rho,thm:contagion_low_delta}, we rely on multi-scale renormalization arguments.
In broad terms, we show that the occurrence of a specific ``bad event'' in a large space-time region entails the occurrence of similar bad events in two well-separated regions at a smaller scale.
These bad events are suitably defined depending on whether the objective is to prove extinction or survival. 
For the extinction phase, a bad event corresponds to the infection crossing a large space-time box---namely, that when initialized along the boundary, the infection manages to span the box either in the temporal direction or along one of the spatial coordinates. 
Conversely, to prove survival, we map the spread of the infection to a multi-scale oriented percolation model, defining a bad event as the failure of the infection to traverse a given space-time block.

In both cases, by applying the appropriate decoupling bounds, we are able to establish a recursive inequality of the form $p_{k + 1} \leq C p_k^2$, where $p_k$ denotes the probability of a bad event occurring at scale $k$.
Once such recurrence is in force, verifying that the initial probability $p_0$ is sufficiently small guarantees that $p_k$ decays fast, and therefore the bad event will not occur for sufficiently large scales, establishing our results.

At this stage, the problem reduces to bounding the probability $p_0$ to initiate the renormalization cascade, a step commonly referred to as \emph{triggering}.
This is accomplished by tuning the parameters of the model according to the regime under consideration.

To establish \cref{thm:immunity_low_rho}, we choose the parameters so that the infection is likely to die out locally before spanning the base-scale box.

\emph{\cref{thm:immunity_low_rho}:} 
By taking the density $\rho$ to be sufficiently small, we show that particles at the base-scale box spend most of their time in small clusters that do not interact with one another. 
Assuming this, we prove that the infection dies out exponentially fast inside such clusters and therefore no crossings occur.

To establish survival (\cref{thm:contagion_high_rho,thm:contagion_low_delta}), we must ensure that an initial infection spreads across the base-scale block with high probability:

\emph{\cref{thm:contagion_high_rho}:} 
By taking the density $\rho$ to be large, we can assume that all sites in the base-scale box contain at least two particles at all times.
Because particles can only heal when alone on a site, this crowding prevents recoveries and the infection easily crosses the appropriate region.

\emph{\cref{thm:contagion_low_delta}:} 
By choosing a large enough base-scale length and duration, we can ensure that starting from an initial infected region the infection will successfully spread with high probability, provided that no healing marks occur. 
This gives that the probability of crossing a box at the base-scale is sufficiently high when $\delta=0$.
However, since the length and height of the box are fixed, a finite-size criterion argument allows us to slightly raise $\delta$ to a positive value still keeping the spread probability sufficiently high.

To obtain the recursive inequalities we heavily rely on space-time decoupling estimates, whose proofs follow distinct strategies depending on the geometry of the separation:

\emph{\cref{thm:horizontal_decoupling}:} 
The horizontal decoupling is obtained by introducing a priority-based graphical representation, where particles starting in different regions are assigned distinct priorities.
We show that, with high probability, particles from well-separated spatial regions never interact.
On this non-interaction event, the trajectories in these regions are entirely independent, allowing us to bound the covariance of locally supported functions.

\emph{\cref{thm:vertical_decoupling}:} 
For the vertical decoupling, we employ a coupling strategy between two processes initialized with slightly different densities.
By applying an iterative matching procedure in space-time blocks, we show that uncoupled particles eventually meet and couple due to the diffusive properties of the zero-range random walks.
This establishes domination of the lower-density process by the slightly higher-density one with high probability after a sufficient time separation.

Finally, let us remark that, while the proofs here are carried out for the environment given by the zero-range process, the main requirement for the environment is that one has access to correlation decay estimates like the ones from \cref{thm:horizontal_decoupling,thm:vertical_decoupling}. In particular, one can use the techniques developed here to analyze infection processes on top of other conservative particle systems like the exclusion process.

\subsection*{About constants}
Throughout the text, $C, C', \dots$ and $c, c', \dots$ are used to denote generic constants appearing during the proofs, whose value may change from one proof to another.
Numbered constants such as $C_1, C_2, \dots$ and $c_1, c_2, \ldots$ have their value fixed at their first appearance, and their values remain fixed throughout the text.
All the constants may depend on the dimension and on fixed parameters of the model like $\Gamma_+$ and $\Gamma_-$.
For some of these constants we may indicate the dependence on extra parameters of the model, for instance, we may write $C_j=C_j(\rho)$ to indicate that $C_j$ is a constant whose value depends on the parameter $\rho$ (and possibly on the dimension and fixed parameters of the model).

\subsection*{Acknowledgments}
The research of RB was partially supported by CNPq grants ``Projeto Universal'' (402952/2023-5), (408529/2025-3), ``Produtividade em Pesquisa'' (308018/2022-2) and ``Jovem Cientista do Nosso Estado'' (204.276/2025) from FAPERJ.
MH was partially supported by CNPq grant ``Projeto Universal'' (401314/2025-1), ``Produtividade em Pesquisa'' (312566/2023-9) and FAPEMIG grant ``Demanda Universal'' (APQ-01214-21).
IO thanks FAPEMIG and PPGMAT UFMG.
IO also thanks PUC Rio for its hospitality during a visit where part of this collaboration took place. 
The authors thank Weberson Arcanjo, Alexander Glazman, Renato dos Santos and Alexandre Stauffer for helpful discussions.

\section{The zero-range and the infection processes}
\label{sec:zrp}

The main goal of this section is to provide the mathematical construction of the infection process introduced in \cref{sec:intro}.
This is done in \cref{sec:representation_infection}.
The rest of the section is divided as follows.
In \cref{sec:zr_invariant_measures} we present the standard construction of the zero-range process based on its infinitesimal generator, and characterize the invariant measures.
\cref{sec:zr_slot_representation} is dedicated to presenting a particular graphical representation for the zero-range process that will be convenient for our purposes.
\cref{sec:zr_bounds} collects useful deviation bounds for the invariant measures of the zero-range process and for its site occupation.

\subsection{Zero-range process and invariant measures}
\label{sec:zr_invariant_measures}

The zero-range process (ZRP) is a continuous-time Markov process $(\eta_t)_{t \geq 0}$ taking values in the configuration space $\mathbb{N}_0^{\mathbb{Z}^d}$.
For an element $\eta = (\eta(x))_{x \in \mathbb{Z}^d}$, the coordinate $\eta(x)$ is interpreted as the number of particles at site $x$.
The dynamics of the process are governed by the Markov generator $L$, which acts on local functions $f: \N_0^{\Z^d} \to \R$ as
\begin{equation}\label{eq:zrp_gen}
  Lf(\eta) = \sum_{x \in \Z^d} g(\eta(x)) \sum_{y: |y-x|=1} \frac{1}{2d} \bigl[ f(\eta^{x, y}) - f(\eta) \bigr],
\end{equation}
where $\eta^{x, y} \in \mathbb{N}_0^{\mathbb{Z}^d}$ is the configuration obtained from $\eta$ by displacing a single particle from site $x$ to site $y$:
\begin{equation}
  \eta^{x, y}(z) =
  \begin{cases}
    \eta(x) - 1 & \text{if } z = x, \\
    \eta(y) + 1 & \text{if } z = y, \\
    \eta(z) & \text{otherwise}.
  \end{cases}
\end{equation}

As already mentioned, we will always assume that the jump rate function $g:\N_0\to\R_+$ satisfies $g(0)=0$ and
\begin{equation}
\label{eq:g_cond}
  \Gamma_- \le g(k) - g(k - 1) \le \Gamma_+, \qquad k \in \N,
\end{equation}
for constants $0 < \Gamma_- \le 1 \le \Gamma_+$.
This condition on the increments of $g$ ensures well-posedness of the process  (see~\cite{Andjel82}). 
Furthermore, for every parameter $\phi > 0$, there exists an invariant product measure $\nu_\phi$ with i.i.d.\ marginal distributions given by
\begin{equation}
  \label{eq:nu_phi}
  \nu_\phi ( \eta(0) = n) = \frac{1}{Z(\phi)} \frac{\phi^n}{g(n)!},
\end{equation}
where $g(0)! = 1$, $g(n)! = g(1) \cdots g(n)$, and $Z(\phi) = \sum_{k=0}^{\infty} \frac{\phi^k}{g(k)!}$ is the partition function, which is finite for all $\phi > 0$ under \eqref{eq:g_cond}.
We refer the reader to~\cite{Andjel82} for more information on this family of invariant measures.

The mean density of particles under $\nu_\phi$ is given by the strictly increasing bijection $R(\phi) = \E_{\nu_\phi}[\eta(0)]$. 
It is often more convenient to parameterize the invariant measures by their density $\rho > 0$ rather than by $\phi$. 
Thus, we define $\mu_\rho = \nu_{R^{-1}(\rho)}$ to be the invariant product measure with density $\rho$.

\subsection{The slot representation for the zero-range process}
\label{sec:zr_slot_representation}

Alongside the infinitesimal generator, important tools to analyze an interacting particle system are the graphical representations. 
They provide a way to sample the system through a space-time percolation-type structure based on the arrivals of Poisson point processes.
We present a graphical representation for the zero-range process that introduces an infinite collection of \emph{slots} at each site. 
While equivalent to the representation found in~\cite[Section 2.1]{BaldassoTeixeira20}, this approach makes explicit the ordering of particles at every site.
In \cref{sec:slot_priority} we modify this construction to obtain the \emph{slot-priority} representation which will allow us to derive good decoupling estimates for the zero-range process.

Let us start by defining the notion of slots.
\begin{definition}[Slot configuration]
  A \emph{slot} is a pair of coordinates $(x,i) \in \Z^d \times \N$. The sequence of slots of the form $\{(x,i)\}_{i \geq 1}$ represents a stack of positions built over site $x$.
  A \emph{slot configuration} is an element
  \begin{equation}
    \sigma = (\sigma(x, i))_{x \in \Z^d, i \in \N} \in \{0, 1\}^{\Z^d \times \N},
  \end{equation}
  assigning to each slot either $0$ (vacant) or $1$ (occupied).
  Denote by $n(\sigma, x) = \sum_{i=1}^\infty \sigma(x, i)$ the number of particles placed in slots over $x$.

  A configuration $\sigma$ is called \emph{packed} if, for every $x \in \Z^d$, the occupied slots are exactly those at the bottom of the stack, that is,
  \begin{equation}
    \sigma(x, i) = \textbf{1}_{ \{ i \le n(\sigma, x) \} } = 
    \begin{cases}
      1, & i \le n(\sigma, x),\\
      0, & i > n(\sigma, x).
    \end{cases}
  \end{equation}
  Therefore, when $\sigma$ is packed, $n(\sigma, x)$ represents the \emph{height} of the stack of particles at $x$.
  Denote by $\Sigma_{\text{packed}} \subset \{0, 1\}^{\Z^d \times \N}$ the state space of all
  packed slot configurations. Given a configuration $\eta \in \N_0^{\Z^d}$, its \emph{canonical slot embedding} is the unique packed slot configuration $\sigma$ with $n(\sigma, x)=\eta(x)$, for every $x\in \Z^d$.
\end{definition}

We now move to the graphical representation of the zero-range process using slots. For any $\sigma \in \Sigma_{\text{packed}}$, slot $(x,i)$ with $1 \le i \le n(\sigma, x)$ and a target site $y \in \Z^d$, let $\sigma^{(x, i) \to y} \in \Sigma_{\text{packed}}$ denote the
configuration resulting from moving the particle at $(x,i)$ to the top of the stack at site $y$.
That is, the departing particle is removed from slot $(x,i)$, and placed at $(y, n(\sigma, y) + 1)$. The particles at $x$ which occupy slots $(x,j)$ for $j > i$ are shifted down to $(x,j - 1)$. All other particles remain in place.
We now move to the definition of the dynamics using slots.

\begin{definition}[Slot dynamics]
  \label{def:slot_dynamics}
  Assign independently to each slot $(x,i)$ and each nearest neighbor $y$ with $|y-x|=1$ a Poisson clock $\mathcal{P}_{x,y}^i$ on $\R_+$ with rate
  \begin{equation}
    \lambda_i = \frac{1}{2d}\bigl(g(i) - g(i - 1)\bigr).
  \end{equation}
  The \emph{slot dynamics} is defined as follows. 
  Starting from a packed slot configuration $\sigma_0$, whenever the Poisson clock $\mathcal{P}_{x,y}^i$ rings at time $t$:
  \begin{itemize}
    \item If $i \le n(\sigma_{t-}, x)$, the particle at $(x,i)$ jumps to the target site $y$. The system transitions to the new state $\sigma_t = (\sigma_{t-})^{(x, i) \to y}$.
    \item If $i > n(\sigma_{t-}, x)$, the slot $(x,i)$ is empty. 
    The clock ring is suppressed and nothing happens ($\sigma_t = \sigma_{t-}$).
  \end{itemize}
\end{definition}

The infinitesimal generator $L^{\text{slot}}$ of the slot representation acts on local functions $f: \Sigma_{\text{packed}} \to \R$ as
\begin{equation}\label{eq:ig_slot}
    L^{\text{slot}}f(\sigma) = \sum_{x \in \Z^d} \sum_{i = 1}^{n(\sigma, x)} \lambda_i \sum_{y\sim x} \bigl[ f(\sigma^{(x,i) \to  y}) - f(\sigma) \bigr].
\end{equation}
Assume now $\eta_0 \in \N_0^{\Z^d}$ is an initial configuration drawn from $\mu_\rho$, and $(\sigma_t)_{t\ge 0}$ is the slot dynamics of \cref{def:slot_dynamics} started from the canonical slot embedding of $\eta_0$. 
We now prove that the process $\eta_t(x) = n(\sigma_t, x)$ is the nearest-neighbor zero-range process with rate function $g$ and density $\rho$.

\begin{proposition}[Equivalence with the zero-range process]
\label{prop:slot_is_zrp} 
Let $\eta_0\in\N_0^{\Z^d}$ be an initial configuration drawn from $\mu_\rho$ for some $\rho>0$, and let $(\sigma_t)_{t\ge 0}$ be the slot dynamics (as in \cref{def:slot_dynamics}) started from the canonical slot embedding of $\eta_0$.
Define $\eta_t(x) = n(\sigma_t, x)$ for each $x \in \Z^d$ and $t \ge 0$.
Then $(\eta_t)_{t \ge 0}$ has the law of the zero-range process on $\Z^d$ with rate function $g$ and initial condition $\eta_0$.
\end{proposition}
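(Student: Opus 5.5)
The plan is to show that the projection $\sigma \mapsto n(\sigma,\cdot)$ sends the slot dynamics to a Markov process whose generator is exactly $L$ from \eqref{eq:zrp_gen}, and then to invoke uniqueness of the zero-range process with this generator (Andjel's well-posedness under \eqref{eq:g_cond}). The packed property is preserved by the dynamics: the operation $\sigma^{(x,i)\to y}$ shifts down the particles above slot $i$ at $x$ and places the moved particle on top of the stack at $y$. Hence $\sigma_t$ stays in $\Sigma_{\text{packed}}$ and is a function of $\eta_t$ alone, namely its canonical embedding. In particular $\sigma \mapsto n(\sigma,\cdot)$ is a bijection between $\Sigma_{\text{packed}}$ and $\N_0^{\Z^d}$. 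The projected process is therefore trivially Markov, and its generator is obtained by pushing forward $L^{\text{slot}}$.

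\emph{Generator computation.} For a local $f$ on $\N_0^{\Z^d}$, set $F(\sigma)=f(n(\sigma,\cdot))$. Every transition $\sigma\to\sigma^{(x,i)\to y}$ with $i\le n(\sigma,x)$ produces $n(\sigma^{(x,i)\to y},\cdot)=\eta^{x,y}$, independently of $i$. Summing over $i$ in \eqref{eq:ig_slot} gives
\[
L^{\text{slot}}F(\sigma)=\sum_{x}\sum_{y\sim x}\Bigl(\sum_{i=1}^{\eta(x)}\lambda_i\Bigr)\bigl[f(\eta^{x,y})-f(\eta)\bigr],
\]
and the inner sum telescopes: $\sum_{i=1}^{\eta(x)}\lambda_i=\frac{1}{2d}\bigl(g(\eta(x))-g(0)\bigr)=\frac{g(\eta(x))}{2d}$, using $g(0)=0$. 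This is exactly $Lf(\eta)$.

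\emph{Rigor on infinite volume.} This is the main obstacle. The slot dynamics on $\Z^d$ involves infinitely many clocks, so two things must be shown. First, the graphical construction must be well defined, which needs some finite-range or Harris-type percolation argument. Second, the identification of generators must lift to an identification of laws. My plan is to avoid redoing the existence theory through the following steps.
\begin{enumerate}[label=(\roman*)]
\item Observe that the slot construction is a relabeling of the graphical representation in~\cite[Section 2.1]{BaldassoTeixeira20}. At each site, the superposition over occupied slots $i\le\eta(x)$ of the clocks $\mathcal P^i_{x,y}$ is a Poisson process of total rate $g(\eta(x))/2d$ toward $y$. This matches the thinned construction used there, so well-posedness and the law of $(\eta_t)$ can be imported from that construction.
\item Alternatively, work directly. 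First restrict the dynamics to a finite box $\Lambda_N$, with clocks outside $\Lambda_N$ suppressed. There the process is a countable-state Markov chain; since $\eta_0$ is a.s.\ finite on $\Lambda_N$, it is a finite chain up to the conserved particle number, and the generator identity above identifies its law with the finite-volume ZRP.
\item Pass $N\to\infty$. For each fixed finite window and time horizon, the finite-volume slot processes stabilize a.s. The reason is that $\mu_\rho$ has exponential moments and rates are bounded by $\Gamma_+\eta(x)$, which gives the usual Andjel-type control of the influence from far away sites. The finite-volume ZRPs likewise converge to the infinite-volume ZRP by~\cite{Andjel82}.
\end{enumerate}
Matching the two limits yields that $(\eta_t)$ has the law of the zero-range process started from $\eta_0$.
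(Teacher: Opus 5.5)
Your proposal is correct and follows the paper's route. Both establish the intertwining $L^{\text{slot}}(f\circ\pi)=(Lf)\circ\pi$ via the telescoping identity $\sum_{i\le \eta(x)}\lambda_i=g(\eta(x))/2d$, and both defer the infinite-volume identification of laws to Andjel's well-posedness. Your extra observations simply make explicit what the paper leaves to its remark: that $\pi$ is a bijection on $\Sigma_{\text{packed}}$ (so the projection is trivially Markov), and the finite-volume approximation sketch.
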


\begin{proof}
  Consider the projection $\pi: \Sigma_{\text{packed}} \to \N_0^{\Z^d}$ defined by $\pi(\sigma)(x) =
  n(\sigma, x)$. For any local function $f: \N_0^{\Z^d} \to \R$, a straightforward computation gives
  $L^{\text{slot}}(f \circ \pi) = (Lf) \circ \pi$, where $L$ is the zero-range generator. Indeed,
  since $\pi(\sigma^{(x, i) \to y}) = \pi(\sigma)^{x, y}$ for any $i \in \{1, \ldots, n(\sigma,
  x)\}$, we can sum the rates over $i$ to obtain
  \begin{equation}
      \begin{split}        
    L^{\text{slot}} (f\circ\pi)(\sigma)
    &= \sum_{x \in \Z^d} \sum_{i = 1}^{n(\sigma, x)} \lambda_i \sum_{y\sim x} \bigl[ f(\pi(\sigma)^{x, y}) - f(\pi(\sigma)) \bigr] \\
    &= \sum_{x \in \Z^d} g(n(\sigma, x)) \sum_{y\sim x} \frac{1}{2d} \bigl[ f(\pi(\sigma)^{x, y}) - f(\pi(\sigma)) \bigr] \\
    &= (L f)(\pi(\sigma)).
    \end{split}
  \end{equation}
  This relation implies that the projected process $\eta_t = \pi(\sigma_t)$ is a Markov process with
  generator $L$. Since the initial configuration is matched by construction, $(\eta_t)$ is the
  zero-range process.
\end{proof}

\begin{remark}
  While the algebraic relation $L^{\text{slot}}(f \circ \pi) = (Lf) \circ \pi$ holds for all finite
  configurations, proving that the respective infinite-volume processes share the same law requires
  the corresponding martingale problems to be uniquely well-posed. For the zero-range process,
  well-posedness on the infinite lattice holds whenever the initial conditions do not grow too fast as the spatial coordinates go to infinity, see~\cite{Andjel82}. 
  Since we draw $\eta_0$ from the invariant product
  measure $\mu_\rho$, the initial configuration $\eta_0$ is almost surely inside the regime of
  uniqueness defined therein.
\end{remark}

We denote by $\P^{\eta_0}$ the law of the slot representation started from the canonical slot embedding of a configuration $\eta_0 \in \N_0^{\Z^d}$, and $\E^{\eta_0}$ the corresponding expectation operator. When the initial configuration is drawn from the invariant measure $\mu_\rho$, we denote the resulting law by $\P_\rho$ and its expectation by $\E_\rho$.

\subsection{Healing marks and the infection process}
\label{sec:representation_infection}

We augment the slot representation of the zero-range process by adding an independent Poisson point process $\mathcal{R}_x$ of rate $\delta \geq 0$ to every site $x\in\Z^d$.
The arrival marks of $\mathcal{R}_x$ are called the \emph{healing marks} at $x$.
The infection process is defined using this augmented graphical representation through the so-called infection paths, as we define next.

Following the terminology in~\cite{KestenSidoravicius05} we start with the definition of genealogical paths and $J$-paths.
See \cref{fig:jump-paths}.

\begin{figure}[tpb]
  \centering
  \begin{subfigure}[t]{0.48\textwidth}
    \centering
    \includegraphics[width=\linewidth]{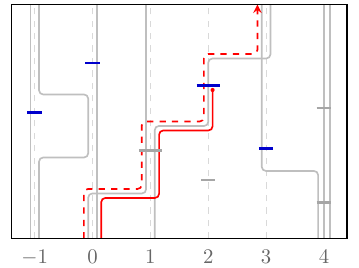}
    \caption{A $J$-path is drawn as a dashed red line along with its longest corresponding genealogical path in solid red.}
    \label{fig:jump-paths}
  \end{subfigure}\hfill
  \begin{subfigure}[t]{0.48\textwidth}
    \centering
    \includegraphics[width=\linewidth]{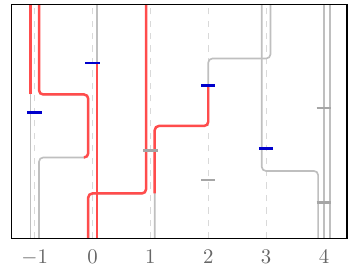}
    \caption{The infected particles for the infection process started at the origin.}
    \label{fig:infection}
  \end{subfigure}
  \caption{Graphical representation for the zero-range process with particles depicted as solid gray lines, effective healing marks as blue horizontal lines and ineffective healing marks as gray horizontal lines.}
\end{figure}

\begin{definition}[Genealogical paths]
  \label{def:jump_path}
  Given a healing rate $\delta \in [0, \infty]$, a \emph{genealogical path} is a càdlàg function
  $\gamma:[s_{0},s_{1}] \to \Z^d$ ($0 \leq s_{0} \leq s_{1}$) such that:
  \begin{enumerate}[label=(\roman*)]
    \item whenever $\gamma(t)\neq\gamma(t-)$ for $t \in [s_{0}, s_{1}]$, a particle at position $\gamma(t-)$ jumps to $\gamma(t)$ at
      time $t$;
    \item $\eta_t(\gamma(t)) \ge 1$ for all $t \in [s_{0}, s_{1}]$;
    \item for every $t \in [s_0,s_1]$ such that $\eta_{t}(\gamma(t)) = 1$, $t$ is not an arrival of the Poisson process $\mathcal{R}_{\gamma(t)}$.
  \end{enumerate}
\end{definition}

\begin{remark}
\label{rmk:healing_suppression}
  If a healing mark coincides with a site occupied by exactly one particle, we call it an \emph{effective healing mark}. Condition (iii) states that the path must avoid all effective healing marks. This captures the requirement that healings are only effective at isolated particles, since any healing attempt in the presence of other particles is ignored due to instantaneous reinfection. In particular, when $\delta = \infty$, the set of healing marks is $\R_+ \times \Z^d$. Thus, the infection can only be sustained if the path strictly avoids isolated particles, resulting in the requirement that $\eta_t(\gamma(t)) \ge 2$ for all $t$.
\end{remark}

\begin{definition}[$J$-path]
  \label{def:weak_jump_path}
  A \emph{$J$-path} is a càdlàg function that satisfies conditions (i) and (ii) of \cref{def:jump_path}.
\end{definition}

Unlike a genealogical path, a $J$-path does not take into account the healing marks, requiring only the presence of at least one particle along its trajectory.
In particular, every genealogical path is also a $J$-path.

\begin{definition}[Infection path]\label{def:infection_path} An \emph{infection path} starting at a set
  $A\subset\Z^d$ at time $0$ is a genealogical path $\gamma:[0,T]\to\Z^d$ with $\gamma(0)\in A$.
\end{definition}

Since the conditions appearing in the definition of genealogical paths (\cref{def:jump_path}) are monotone in $\eta$, the existence of a
genealogical path traversing a given space-time region is an increasing event in $\rho$ and a decreasing event in $\delta$.

Furthermore, due to the local nature of these paths, if there are two genealogical paths
$\gamma_1: [s_0, s_1] \to \Z^d$ and $\gamma_2: [s_1, s_2] \to \Z^d$ such that $\gamma_1(s_1) =
\gamma_2(s_1)$, then their concatenation $\gamma: [s_0, s_2] \to \Z^d$ is also a
genealogical path.

We consider the zero-range process started from the invariant product measure $\mu_\rho$ of density $\rho$, and with all particles at the origin (if any) declared infected at time $0$ --- that is, we consider $A=\{0\}$.

We say a site $y$ is \emph{infected at time $t$} if there exists an infection path starting from $A=\{0\}$ at time $0$ that reaches $y$ at time $t$.
See \cref{fig:infection}.
We say that the infection \emph{dies out} if there exists $T < \infty$ such that no
site is infected at time $T$.
Naturally, the infection dies out already at time $t=0$ if the initial configuration assigns no particle to the origin, that is if $\eta_0(0)=0$.

We denote by $\P_{\delta}^{\eta_0}$ the joint law of the slot representation and the infection process with healing rate $\delta$, starting from a fixed particle configuration $\eta_0$. When the initial configuration is drawn from the invariant measure, we integrate over $\mu_\rho$ and write
\begin{equation}
    \P_{\rho, \delta} (\cdot) = \int \P_{\delta}^{\eta_0} (\cdot) \, \textnormal{d} \mu_{\rho} (\eta_0).
\end{equation}

We can simultaneously couple all healing rates $\delta \ge 0$ in a single graphical representation by using an independent Poisson point process on $\R_+ \times \R_+$ with Lebesgue intensity for each site $x$, and defining $\mathcal{R}_x$ as the projection onto the time coordinate of marks below level $\delta$. Because of this natural coupling, we will frequently suppress the parameter $\delta$ from our notation, writing simply $\P^{\eta_0}$ and $\P_\rho$ when the specific healing rate is clear from context.

\subsection{Bounds for the zero-range process}
\label{sec:zr_bounds}

In this section, we collect several bounds for the zero-range process. 
The proofs of these bounds are deferred to \cref{sec:deferred_proofs} and \cref{sec:exponential_families}.

We begin with two static bounds concerning the particle distribution under the invariant measure $\mu_\rho$. Since the invariant measure is a product measure, the particle counts at any $n \ge 1$ distinct sites are distributed as independent copies $X_1, \dots, X_n$ of the marginal distribution $X$ given as in \eqref{eq:nu_phi}. 
We denote their empirical mean by $\bar{X} = \frac{1}{n}\sum_{i=1}^n X_i$. For these static bounds, it will be convenient to define the ratio of the jump-rate bounds as $\Gamma = \Gamma_+ / \Gamma_-$.

\begin{proposition}[Poisson domination]
\label{lem:poisson_domination}
For any density $\rho > 0$, the sum $\sum_{i = 1}^n  X_i$ is stochastically dominated by a Poisson random variable with mean $\Gamma \rho n$.
\end{proposition}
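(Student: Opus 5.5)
Because Poisson laws are closed under independent sums and stochastic domination is preserved under independent convolution, it suffices to treat a single site: we show $X\preceq \poisson(\Gamma\rho)$. Then $\sum_{i=1}^n X_i\preceq \sum_{i=1}^n Y_i$ with $Y_i$ i.i.d.\ $\poisson(\Gamma\rho)$, and $\sum_i Y_i\sim\poisson(\Gamma\rho n)$. The independent-sum step is a standard monotone coupling: couple each $X_i\le Y_i$ separately and take the product coupling.

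For the single-site domination, write $\phi=R^{-1}(\rho)$, so that $P(X=n)\propto \phi^n/g(n)!$. We use the likelihood-ratio criterion: if $p_n/q_n$ is non-increasing in $n$, then $p$ is stochastically dominated by $q$ (monotone likelihood ratio implies first-order dominance). We compare with $q_n=e^{-\lambda}\lambda^n/n!$, where $\lambda=\phi/\Gamma_-$. The ratio is
\[
\frac{p_n}{q_n}\propto \frac{\Gamma_-^n\, n!}{g(n)!}=\prod_{k=1}^n\frac{\Gamma_- k}{g(k)}.
\]
By \eqref{eq:g_cond} and $g(0)=0$ we have $g(k)\ge \Gamma_- k$. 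Hence each factor is at most $1$, so the ratio is non-increasing, and $X\preceq\poisson(\phi/\Gamma_-)$.

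It remains to show $\phi/\Gamma_-\le \Gamma\rho=\Gamma_+\rho/\Gamma_-$, i.e.\ $\phi\le\Gamma_+\rho$. The identity for the density is $\rho=E[X]=\phi\,Z'(\phi)/Z(\phi)$. There is also the standard relation $E[g(X)]=\phi$, obtained by the shift $\sum_n g(n)\phi^n/g(n)! = \phi\sum_{n\ge1}\phi^{n-1}/g(n-1)!$. Since $g(k)\le\Gamma_+k$ (again from \eqref{eq:g_cond} and $g(0)=0$), we get $\phi=E[g(X)]\le\Gamma_+E[X]=\Gamma_+\rho$. This gives the claim.

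The only potentially delicate point is this identification of $\phi$ in terms of $\rho$. The identity $E[g(X)]=\phi$ resolves it cleanly, so no real obstacle is expected. The monotone likelihood ratio step needs only $g(k)\ge\Gamma_-k$, and the finiteness of $Z(\phi)$ is already granted.
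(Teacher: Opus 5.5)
Your proof is correct and follows essentially the same route as the paper: a likelihood-ratio comparison with a Poisson law based on $g(k)\ge\Gamma_- k$, together with the identity $\E[g(X)]=\phi$ and $g(k)\le\Gamma_+ k$ to get $\phi\le\Gamma_+\rho$, and then summation over independent sites. The only cosmetic difference is that the paper compares directly with $\poisson(\Gamma\rho)$ through the consecutive ratios $\phi/g(k+1)\le\Gamma\rho/(k+1)$, whereas you first compare with $\poisson(\phi/\Gamma_-)$ and then implicitly use that Poisson laws are stochastically increasing in their mean (a standard fact you may want to state explicitly).
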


We also need standard concentration inequalities to bound the probability of observing moderate deviations from the mean. In \cref{sec:exponential_families} we verify that the invariant measures $\mu_{\rho}$ form an exponential family and derive the following tail bounds.

\begin{proposition}[Tail bounds]\label{lem:tail_bounds}
  For any relative deviation $\varepsilon \in (0, 1)$, the following upper and lower tail bounds hold:
  \begin{equation}
    \begin{gathered}
    \P\big(\bar{X} \ge \rho(1+\varepsilon)\big) \le \exp\braces*{-n \frac{\rho \varepsilon^2}{3\Gamma}},\\
     \P\big(\bar{X} \le \rho(1-\varepsilon)\big) \le \exp\braces*{ -n \frac{\rho \varepsilon^2}{2\Gamma}}.
     \end{gathered}
  \end{equation}
\end{proposition}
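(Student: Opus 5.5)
The plan is to exploit the exponential-family structure of the marginals $\nu_\psi$ in \eqref{eq:nu_phi}, run a Chernoff bound, and control the log-moment generating function through a variance bound of the form $\var \le \Gamma\cdot\text{mean}$. Fix $\phi = R^{-1}(\rho)$, so that $X \sim \nu_\phi$.

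First, I will compute the log-MGF. Since $\nu_\phi(X=n)\propto \phi^n/g(n)!$, tilting by $e^{\theta X}$ gives $\nu_{\phi e^\theta}$. Hence
\[
\Lambda(\theta) := \log \E[e^{\theta X}] = \log Z(\phi e^\theta) - \log Z(\phi),
\]
with $\Lambda'(\theta) = R(\phi e^\theta)$ and $\frac{d}{d\theta} R(\phi e^\theta) = \var_{\nu_{\phi e^\theta}}(X)$.

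The key step, and the one I expect to be the main obstacle, is the uniform bound
\[
\var_{\nu_\psi}(X) \le \Gamma\, R(\psi) \qquad \text{for every } \psi>0 .
\]
I would prove it via the Stein-type identity $\E_{\nu_\psi}[g(X)h(X)] = \psi\,\E_{\nu_\psi}[h(X+1)]$, which is immediate from the form of the weights. Taking $h\equiv 1$ gives $\E[g(X)]=\psi$, and taking $h(n)=n$ gives $\E[g(X)X]=\psi(R(\psi)+1)$. Therefore $\cov(g(X),X)=\psi$.

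By \eqref{eq:g_cond}, the function $n\mapsto g(n)-\Gamma_- n$ is nondecreasing. Two nondecreasing functions of a single random variable are positively correlated (Chebyshev's inequality), so
\[
\Gamma_-\var(X) \le \cov(g(X),X) = \psi = \E[g(X)] \le \Gamma_+ R(\psi),
\]
where the last inequality uses $g(n)\le \Gamma_+ n$. This gives the claim. A naive second-moment estimate only yields something like $\Gamma\rho(\rho+1)-\rho^2$, which is why this covariance trick is the crux.

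Consequently $\frac{d}{d\theta}\log R(\phi e^\theta) \le \Gamma$. For $\theta\ge 0$ this gives $R(\phi e^\theta)\le \rho e^{\Gamma\theta}$, and for $\theta\le 0$ it gives $R(\phi e^\theta)\ge \rho e^{\Gamma\theta}$. Integrating from $0$ (where $\Lambda(0)=0$) yields in both cases
\[
\Lambda(\theta) \le \frac{\rho}{\Gamma}\bigl(e^{\Gamma\theta}-1\bigr).
\]
For $\theta<0$ this follows from $\Lambda(\theta) = -\int_\theta^0 R \le -\rho(1-e^{\Gamma\theta})/\Gamma$.

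Finally, I apply the Chernoff bound for the i.i.d. sum and substitute $u=\Gamma\theta$. For the upper tail,
\[
\P(\bar X\ge \rho(1+\varepsilon)) \le \exp\Bigl\{-n\tfrac{\rho}{\Gamma}\bigl[u(1+\varepsilon)-e^u+1\bigr]\Bigr\}.
\]
Optimizing at $u=\log(1+\varepsilon)$ gives the exponent $n\frac{\rho}{\Gamma}\bigl[(1+\varepsilon)\log(1+\varepsilon)-\varepsilon\bigr]$. The elementary inequality $(1+\varepsilon)\log(1+\varepsilon)-\varepsilon \ge \frac{\varepsilon^2}{2(1+\varepsilon/3)} \ge \frac{3\varepsilon^2}{8}\ge\frac{\varepsilon^2}{3}$ on $(0,1)$ then gives the first bound.

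For the lower tail, take $u=\log(1-\varepsilon)<0$. The exponent becomes $n\frac{\rho}{\Gamma}\bigl[(1-\varepsilon)\log(1-\varepsilon)+\varepsilon\bigr]$. The inequality $(1-\varepsilon)\log(1-\varepsilon)+\varepsilon\ge \varepsilon^2/2$ (check at $0$ and compare second derivatives, $\frac{1}{1-\varepsilon}\ge 1$) gives the second bound.
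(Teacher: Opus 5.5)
Your proposal is correct and follows essentially the same route as the paper. Both arguments obtain $\var \le \Gamma\cdot\text{mean}$ from $\cov(X,g(X))=\phi$, integrate to the CGF bound $\frac{\rho}{\Gamma}(e^{\Gamma\lambda}-1)$, and then reduce to the Poisson rate-function inequalities. The differences are cosmetic: you derive $\cov(X,g(X))=\phi$ from a Stein-type identity instead of differentiating in the natural parameter, and you cite Chebyshev's association inequality where the paper writes out the symmetrized form $\frac12\E[(X_1-X_2)(g(X_1)-g(X_2))]\ge\Gamma_-\var(X)$.
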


Next, we establish bounds on the dynamic behavior of the process. 
In the slot representation, at any given instant each particle occupies a single slot. It follows from~\eqref{eq:g_cond} that each particle has a jump rate bounded from above by $\Gamma_+$ and from below by $\Gamma_-$. Furthermore, a particle's slot may change over time, but the memoryless property of the exponential distribution ensures that, after each
such reassignment, the residual waiting time for the particle's next jump is still an exponential random variable of rate at most $\Gamma_+$. Therefore the number of jumps performed by each single particle within a time interval of length $T$ is stochastically dominated by $Y \sim
\poisson(\Gamma_+ T)$, regardless of the behavior of the other particles. Since all jumps are to nearest neighbors, the maximal displacement $\sup_{0 \le s \le T} \|X_s - X_0\|$---where $X_t$ denotes the particle position at time $t$---is stochastically dominated by $\max_{0 \le k \le Y} \|S_k\|$, where $S$ denotes the discrete-time simple random walk.

This individual displacement bound can be used to obtain a bound on the probability that any particle from a distant site reaches the origin, as stated below for an arbitrary dimension.

\newbigconstant{C:displacement}
\newconstant{c:displacement}
\newbigconstant{C:crossing}
\newconstant{c:crossing}
\begin{proposition}[Displacement bound and crossing probability]\label{lem:crossing_bound}
  There exist constants $\ubc{C:displacement}(\Gamma_+, d), \uc{c:displacement}(\Gamma_+, d) > 0$ such that, if $Y \sim \poisson(\Gamma_+ T)$ is independent of the random walk $S$ on $\Z^d$,
  \begin{equation}\label{eq:displacement_bound}
    \P\Big( \max_{0 \le k \le Y} \|S_k\| \ge s \Big)
    \le \ubc{C:displacement} e^{-\uc{c:displacement} \, s \log(1+s/T)},
    \qquad T \ge 1.
  \end{equation}
  In particular, there exists constants $\ubc{C:crossing}(\Gamma_+, d), \uc{c:crossing}(\Gamma_+, d) > 0$ such that
  \begin{equation}\label{eq:crossing_bound}
    \P_\rho \Bigg(
      \begin{array}{c}
        \text{some particle starting at a site $x \in \Z^d$}\\
        \text{with $\normI{x} \ge s$ reaches $0$ by time $T$}
    \end{array} \Bigg)
    \le \ubc{C:crossing} \rho T^{d/2} e^{-\uc{c:crossing} s\log(1 + s/T)},
  \end{equation}
  for any $T \geq 1$ and $s \ge 1$.
\end{proposition}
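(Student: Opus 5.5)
For \eqref{eq:displacement_bound}, the plan is to condition on $Y$ and split according to whether $Y$ is unusually large. On $\{Y \le m\}$, with $m$ to be chosen, the event $\{\max_{k\le Y}\|S_k\|\ge s\}$ is contained in $\{\max_{k\le m}\|S_k\|\ge s\}$. For each coordinate of the walk, Doob's maximal inequality applied to $e^{\lambda S^{(j)}_k}$ together with a union bound over coordinates and signs gives
\begin{equation*}
  \P\Big(\max_{k\le m}\|S_k\|\ge s\Big)\le 2d\, e^{-c s^2/m} \qquad \text{for } s\le m,
\end{equation*}
and this probability is zero for $s>m$, since the walk has nearest-neighbour steps. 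For the Poisson tail I would use the Chernoff bound $\P(Y\ge m)\le e^{-m\log(m/(e\Gamma_+T))}$. The choice of $m$ depends on the regime.

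\emph{Regime $s\le C\Gamma_+ T$.} Here $\log(1+s/T)\asymp s/T$, so the target is $e^{-cs^2/T}$. Take $m=2e\Gamma_+T$. Then $\P(Y\ge m)\le e^{-m}\le e^{-cs^2/T}$ because $s^2/T\lesssim T$ in this regime. The walk term is $e^{-cs^2/(2e\Gamma_+T)}$, which is of the desired form.

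\emph{Regime $s> C\Gamma_+T$.} Take $m=s$. The walk term vanishes unless $Y\ge s$, so the whole probability is at most $\P(Y\ge s)\le e^{-s\log(s/(e\Gamma_+T))}$. For $C$ large this is at most $e^{-c s\log(1+s/T)}$.

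Combining the two regimes and adjusting constants (the condition $T\ge1$ keeps the constants uniform) gives \eqref{eq:displacement_bound}. The main care needed is this regime bookkeeping; everything else is standard.

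For \eqref{eq:crossing_bound}, take a union bound over the initial sites $x$ with $\normI{x}\ge s$ and the particles initially there. For each such particle, the discussion preceding the statement shows that its displacement up to time $T$ is dominated by $\max_{k\le Y}\|S_k\|$. This domination is uniform in the behaviour of the other particles, so the union bound needs no independence. Reaching $0$ requires displacement at least $\normI{x}$ (using $\|\cdot\|\ge\normI{\cdot}$, or adjusting constants if $\|\cdot\|$ denotes another norm). By \eqref{eq:displacement_bound}, each such particle reaches $0$ with probability at most $\ubc{C:displacement} e^{-\uc{c:displacement} r\log(1+r/T)}$, where $r=\normI{x}$. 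The expected number of particles at $x$ is $\rho$. Counting roughly $r^{d-1}$ sites at distance $r$, the probability is bounded by
\begin{equation*}
  C\rho\sum_{r\ge s} r^{d-1}e^{-c r\log(1+r/T)}.
\end{equation*}

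The remaining step, which I expect to be the main obstacle, is bounding this sum by $C\rho T^{d/2}e^{-c' s\log(1+s/T)}$. I would split $r^{d-1}e^{-cr\log(1+r/T)}\le r^{d-1}e^{-\frac c2 r\log(1+r/T)}\cdot e^{-\frac c2 s\log(1+s/T)}$. This is valid because $r\mapsto r\log(1+r/T)$ is increasing, so $r\ge s$ controls the second factor. It then suffices to show $\sum_{r\ge1}r^{d-1}e^{-\frac c2 r\log(1+r/T)}\le CT^{d/2}$. For $r\le T$ the exponent is at least $c r^2/T$, so that part of the sum is at most
\begin{equation*}
  \sum_{r} r^{d-1}e^{-cr^2/T}\lesssim T^{d/2},
\end{equation*}
by comparison with a Gaussian integral. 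For $r>T$ the exponent is at least $c'r$, so that part is at most $\sum_{r>T}r^{d-1}e^{-c'r}$, which is $O(1)$ and hence $O(T^{d/2})$ since $T\ge1$. This yields \eqref{eq:crossing_bound}.
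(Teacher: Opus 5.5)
Your proposal is correct and follows essentially the same route as the paper: condition on the Poisson number of jumps, combine a Gaussian maximal bound for the walk with the Poisson tail, and then sum over starting sites, splitting the resulting series at $r \approx T$ to extract the factor $T^{d/2}$. The differences are immaterial: the paper uses super-additivity of $u\mapsto u\log(1+u/T)$ and a coordinatewise product where you use shell counting and halve the exponent. One trivial slip is that with $m=2e\Gamma_+T$ your own Chernoff bound gives $\P(Y\ge m)\le e^{-m\log 2}$ rather than $e^{-m}$, but this is still at most $e^{-cs^2/T}$ in that regime, so nothing changes.
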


The displacement bounds provided above govern movement of individual particles. We now provide a bound estimating the probability that two particles meet.
For this purpose, we define the dimension-dependent exponent $\gamma_d \ge 0$ as $\gamma_1 = 0$, $\gamma_2 = 1$, and $\gamma_d = d-2$ for $d \ge 3$.

\newbigconstant{C:meeting}
\newbigconstant{C:meeting_distance}
\begin{proposition}[Particle meeting]\label{lem:meeting}
  Let $(W_t)_{t \ge 0}$ be a continuous-time nearest-neighbor symmetric random walk on $\Z^d$, absorbed at the origin, with a time-dependent total jump rate that is bounded below by $2\Gamma_- > 0$ almost surely. There exist constants $\ubc{C:meeting_distance}(\Gamma_-, d) \in (0, 1)$ and $\ubc{C:meeting}(d) >0$ such that for any initial position $W_0$ with $\|W_0\| \le R$ where $R \ge 1$, and any time $T > 0$ satisfying $R \le \sqrt{\ubc{C:meeting_distance} T}$, the walk hits the origin by time $T$ with probability
  \begin{equation}
    \P\Big( \inf_{t \in [0, T]} \|W_t\| = 0 \Big) \ge \ubc{C:meeting} R^{-\gamma_d}.
  \end{equation}
\end{proposition}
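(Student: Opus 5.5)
The plan is to reduce to a hitting estimate for a time-changed discrete-time simple random walk, and then feed in standard bounds on the probability that a walk started at distance $R$ reaches the origin within about $R^2$ steps. Let $N_t$ be the number of jumps of $W$ up to time $t$. Since $W$ is nearest-neighbour and symmetric at every instant, the embedded jump chain $(S_k)_{k\ge0}$ is a discrete-time simple random walk on $\Z^d$ started at $W_0$ and absorbed at $0$. The time-dependent rate only affects the clock $N_t$. I would realise this by building $W$ from an independent simple random walk $S$ together with a clock process whose rate is bounded below by $2\Gamma_-$. With this realisation,
\[
\Bigl\{ \inf_{t\in[0,T]}\|W_t\|=0 \Bigr\} \supseteq \{S \text{ hits } 0 \text{ within } m \text{ steps}\}\cap\{N_T\ge m\}
\]
for any integer $m$.

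\textbf{Step 1 (clock).} Since the total jump rate of $W$ is at least $2\Gamma_-$ at all times, $N_T$ stochastically dominates a $\poisson(2\Gamma_- T)$ variable. I would take $m=\lfloor \Gamma_- T\rfloor$. A Chernoff bound then gives $\P(N_T<m)\le e^{-cT}$. If the clock is allowed to depend on the path, independence of the two events is not needed: it suffices to bound the intersection from below by $\P(\text{hit within } m)-\P(N_T<m)$.

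\textbf{Step 2 (hitting in $m$ steps).} I need a lower bound, valid for $\|x\|\le R$ and $m\ge R^2/C$ with $R\ge1$:
\[
\P_x(S \text{ hits } 0 \text{ within } m \text{ steps})\ \ge\ c\,R^{-\gamma_d}.
\]
I would obtain this from a second-moment argument on the local time $L=\sum_{k\le m}\mathbf 1\{S_k=0\}$ for the unabsorbed walk. The probability of hitting is at least $\E_x[L]/\sup_y\E_0[L]$ by the strong Markov property. Using the local CLT, $\E_x[L]\ge c\sum_{R^2\le k\le m}k^{-d/2}$. Taking $m$ comparable to $R^2$ with a constant large enough to make this range nonempty gives order $R^{2-d}$ when $d\ge3$. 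In $d=2$, taking $m\ge R^2$ gives order $1$. In $d=1$, taking $m\ge R^2$ gives order $1$ as well. The denominator $\E_0[L]$ is of order $1$ for $d\ge3$, $\log m$ for $d=2$, and $\sqrt m$ for $d=1$. This yields $R^{2-d}$, $1/\log R$ and a constant, respectively.

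The resulting bound is at least $R^{-\gamma_d}$ in every dimension: in $d=2$, $1/\log R\ge c/R$, which is why $\gamma_2=1$ is a safe, non-sharp choice. The parity issue for $d=1$ and other small cases would be handled by summing over pairs of steps. Small $R$ is absorbed into the constant.

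\textbf{Step 3 (matching constants).} The condition $R\le\sqrt{\ubc{C:meeting_distance} T}$ gives $m\approx\Gamma_- T\ge \Gamma_- R^2/\ubc{C:meeting_distance}$. Choosing $\ubc{C:meeting_distance}$ small depending on $\Gamma_-$ makes $m\ge KR^2$ for the $K$ required in Step 2. It remains to check that the error $e^{-cT}$ from Step 1 is negligible compared with $cR^{-\gamma_d}$. This holds for $T$ large since $R\le\sqrt T$, and for bounded $T$ (hence bounded $R$) a direct estimate works: make one step toward the origin at a time within a fixed time window.

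\textbf{Main obstacle.} I expect the hardest part to be the rigorous decoupling of the time change from the embedded walk when the rate is random and path-dependent. The symmetry of each jump still makes the jump sequence a simple random walk, by a martingale or optional-stopping argument at the jump times. The lower bound on the rate controls the clock via a thinning or coupling with a rate-$2\Gamma_-$ Poisson process.
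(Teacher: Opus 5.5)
Your proposal is correct and follows essentially the paper's route. The paper asserts that the hitting probability of $W$ dominates that of a constant-rate-$2\Gamma_-$ simple random walk, quotes the standard bounds (a constant, $(\log R)^{-1}$, $R^{2-d}$) under $T \gtrsim R^2$, and uses $(\log R)^{-1}\ge cR^{-1}$ in $d=2$. Your embedded-chain plus Poisson-clock argument and the ratio bound $\P_x(\tau_0\le m)\ge \E_x[L_m]/\E_0[L_m]$ (a first-moment, not second-moment, argument) simply prove the facts the paper cites; the $e^{-cT}$ clock error is absorbed uniformly in $R\ge 1$ by taking $\ubc{C:meeting_distance}$ small relative to $\Gamma_-$, so you need no separate bounded-$T$ case.
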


Also needed are bounds on the extremal number of particles a site can contain in a continuous time interval.

\newconstant{c:occupation}\begin{proposition}[Continuous-time occupation bounds]\label{lem:occupation_bounds}
  There exists a constant $\uc{c:occupation}(\Gamma_+) > 0$ such that, for any time horizon $T \ge 0$ and any site $x$, the occupation at $x$ satisfies the following bounds.
  
  For any threshold $M \ge 1$ and any choice of $K \le \uc{c:occupation} M$:
  \begin{equation}
    \P_\rho\Big( \sup_{t \in [0, T]} \eta_t(x) \ge M \Big) \le (T + 1) \Big[ \P_\rho\big( \eta_0(x) \ge K \big) + e^{-\uc{c:occupation} M} \Big].
  \end{equation}
  
  For any threshold $m \ge 1$ and any choice of $K \ge \uc{c:occupation}^{-1} m$:
  \begin{equation}
    \P_\rho\Big( \inf_{t \in [0, T]} \eta_t(x) \le m \Big) \le (T + 1) \Big[ \P_\rho\big( \eta_0(x) \le K \big) + e^{-\uc{c:occupation} K} \Big].
  \end{equation}
\end{proposition}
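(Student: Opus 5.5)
The plan is to discretize time into unit intervals and pay a union bound over them, which is where the factor $T+1$ comes from. For $k=0,1,\dots,\lceil T\rceil-1$ set $I_k=[k,k+1]$; by stationarity of $\mu_\rho$ under the dynamics, $\eta_k$ has law $\mu_\rho$ for every $k$. The upper bound then reduces to
\begin{equation}
  \P_\rho\Big(\sup_{t\in[0,1]}\eta_t(x)\ge M\Big)\le \P_\rho\big(\eta_0(x)\ge K\big)+e^{-\uc{c:occupation}M},
\end{equation}
and the lower bound to the analogous statement for $\inf_{t\in[0,1]}\eta_t(x)\le m$.

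For the upper bound, suppose $\eta_0(x)<K\le \uc{c:occupation}M$ and yet $\eta_t(x)\ge M$ for some $t\le 1$. Then at least $M-K\ge M/2$ particles must have entered $x$ during $[0,1]$, taking $\uc{c:occupation}\le 1/2$. Each such entrance is a jump out of one of the $2d$ neighbours $y$ of $x$. In the slot representation, a jump from $y$ towards $x$ through slot $(y,i)$ is triggered by the clock $\mathcal{P}^i_{y,x}$ of rate $\lambda_i\le\Gamma_+/(2d)$. The difficulty is that the occupation of $y$ is itself unbounded, so the number of active slots is not controlled.

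To handle this, I would use a two-scale split. Either some neighbour $y$ has $\sup_{[0,1]}\eta_t(y)\ge M'$, or all neighbours stay below $M'$ and the number of rings in the bounded family of clocks $\{\mathcal{P}^i_{y,x}: y\sim x,\ i\le M'\}$ during $[0,1]$ exceeds $M/2$. That count is Poisson with mean at most $M'\Gamma_+$, so choosing $M'=cM$ with $c$ small makes its probability at most $e^{-c'M}$. The first alternative is a recursion on neighbours, and it seems cleaner to avoid it. Instead, I would bound the total number of particles that can enter $x$ in time $1$ from the initial mass in a ball of radius $r$, plus the particles coming from outside that ball. Particles from distance $\ge r$ reach $x$ with probability $e^{-c r\log r}$ by \cref{lem:crossing_bound}. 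Particles initially within distance $r$ number at most a sum of $|B_r|$ marginals, which by \cref{lem:poisson_domination} is dominated by a Poisson variable of mean $\Gamma\rho|B_r|$.

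The main obstacle is that this last route produces a $\rho$-dependent term rather than $\P_\rho(\eta_0(x)\ge K)$. If the clean form is needed, I would fall back on the clock argument above and control the neighbour maxima through the same recursion, truncated after a bounded number of steps.

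The lower bound follows the same pattern. If $\eta_0(x)>K$ and $\inf_{[0,1]}\eta_t(x)\le m$ with $m\le \uc{c:occupation}K$, then at least $K-m\ge K/2$ particles must leave $x$. While $\eta_t(x)\le K'$, departures from $x$ can only be triggered by the clocks $\mathcal{P}^i_{x,y}$ with $i\le K'$. I would stop the process at the first time the occupation exceeds $2K$. Before that time only slots $i\le 2K$ can fire, giving at most a Poisson$(2K\Gamma_+)$ number of departures, which is not small. So this alone is insufficient and needs refinement.

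The better observation is that departures happen in occupation order: to go from height at least $K$ down to height $m$, the top-most occupied slots must fire in sequence. The successive departure times from heights $K,K-1,\dots,m+1$ are therefore each at least an exponential of rate $g(j)\le\Gamma_+ j$, ignoring arrivals, which can only help. The time needed is then dominated below by a sum of independent exponentials with rates at most $\Gamma_+ j$ for $j=m+1,\dots,K$. Its mean is at least $\Gamma_+^{-1}\log(K/m)$, which is large once $\uc{c:occupation}$ is small, and a Chernoff bound gives that this sum is below $1$ with probability at most $e^{-\uc{c:occupation}K}$ after adjusting constants. Combining with the union bound over unit intervals finishes the proof. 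The same ordering idea, applied to arrivals above height $K$ on a neighbour-capped event, should also be what closes the upper bound.
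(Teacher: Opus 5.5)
\textbf{The lower (infimum) bound is sound, but the upper (supremum) bound is not proved, and none of the routes you sketch can be completed as stated.}

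Comparing with the occupation at the \emph{start} of the unit interval forces you to count arrivals at $x$. Their rate $\sum_{y\sim x} g(\eta_t(y))/(2d)$ depends on the unbounded neighbour occupations, and your three routes all run into this:
\begin{itemize}
\item The recursion on neighbours has no base case. Truncating it after $j$ steps leaves a term $\P_\rho(\sup_{t\in[0,1]}\eta_t(z)\ge c^jM)$ of exactly the kind you are trying to bound.
\item The ball argument produces a term like $\P(\poisson(\Gamma\rho|B_r|)\ge M/2)$. Killing the outside contribution requires $r\log r\gtrsim M$, so this term is of order one as soon as $\rho|B_r|\gtrsim M$ (e.g.\ $\rho\gtrsim\log M$ in $d=1$). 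In that regime $\P_\rho(\eta_0(x)\ge K)+e^{-cM}$ is still exponentially small, and the statement must hold for every $\rho$ with $c$ depending only on $\Gamma_+$.
\item ``The same ordering idea on a neighbour-capped event'' again presupposes control of the neighbour maxima, which is the problem itself.
\end{itemize}

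\textbf{The missing idea is to compare with the occupation at the \emph{end} of the unit interval.} With $\tau=\inf\{t\in[n,n+1):\eta_t(x)\ge M\}$,
\[
\P_\rho(\tau<\infty)\le\P_\rho\big(\eta_{n+1}(x)\ge K\big)+\P_\rho\big(\tau<\infty,\ \eta_{n+1}(x)<K\big).
\]
By stationarity, the first term equals $\P_\rho(\eta_0(x)\ge K)$.

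The second term is controlled by the strong Markov property at $\tau$ together with your own descent estimate from the infimum part. At $\tau$ the height is exactly $\lceil M\rceil$. From then on it dominates a pure-death chain driven by the same departure clocks, with rates $g(j)\le\Gamma_+ j$. Hence it falls below $K$ by time $n+1$ with probability at most $\P(\bin(\lceil M\rceil,e^{-\Gamma_+})<K)\le e^{-cM}$ when $K\le cM$. No information about neighbours or arrivals is needed.

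This is the paper's proof. It phrases the same estimate as: each of the $\lceil M\rceil$ particles present at $\tau$ stays at $x$ until $n+1$ with probability at least $e^{-\Gamma_+}$, independently. By reversibility of $\mu_\rho$, your start-of-interval reduction is actually equivalent to this end-of-interval one, so the reduction was fine. It is bounding it by counting arrivals that stalls.

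For the infimum, your pure-death comparison is a valid alternative to the paper's binomial count of surviving initial particles, and the two give the same estimate. By the R\'enyi representation, $\sum_{j=m+1}^{K}\mathrm{Exp}(\Gamma_+ j)$ is the time for $K$ independent $\mathrm{Exp}(\Gamma_+)$ lifetimes to drop to $m$ survivors. Your Chernoff step is therefore exactly $\P(\bin(K,e^{-\Gamma_+})\le m)\le e^{-cK}$. One small correction: any occupied slot can fire, not just the top one, but only the total departure rate $g(j)$ matters for your argument.
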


\section{Horizontal decoupling}\label{sec:horizontal_decoupling}

In this section we present a graphical representation for the zero-range process, called the
\emph{slot-priority representation}, and use it to derive a decoupling inequality for events 
depending on the process inside regions that are well-separated in the space coordinate. 
The main idea is to assign priority labels to the particles that compose the environment in a way that the dynamics of higher-priority particles is unaffected by the dynamics of the lower-priority ones.
That property, combined with moderate deviation bounds on the displacement of individual particles from \cref{lem:crossing_bound}, yields the desired decoupling inequality.

\subsection{Priorities and order dependence}
\label{sec:slot_priority}

We modify the slot representation by assigning \emph{priority} levels to individual particles,
and defining the \emph{slot-priority dynamics} in which particles within each stack are chosen to jump based on their level.
The central result of this section (\cref{prop:order_dependence}) states that
the trajectory of any particle with a given priority is entirely determined by the initial
positions of the particles with higher priorities and the Poisson clocks that they use. 
In particular, it does not depend on the positions and the displacements of lower-priority particles.

\begin{definition}[Priority assignment and dynamics]\label{def:priority} A \emph{priority assignment} for
  a configuration $\eta_0$ is a map that assigns to every particle a value in $\R$. We write $\xi(x,j) \in \R \cup \{\varnothing\}$ for the priority of the particle at the slot $(x,j)$, assigning priority $\varnothing$ to empty slots, and adopt the convention that smaller values correspond to higher priority and that $r < \varnothing$ for every $r \in \R$.

  Let $n(\xi, x) = \sum_{i=1}^\infty \charf{\xi(x, i) \neq \varnothing}$ denote the \emph{height} of
  the stack at $x$, and for $r \in \R$, let $n_{\le r}(\xi, x) =
  \sum_{i=1}^\infty \charf{\xi(x, i) \le r}$ denote the number of particles at $x$ with priority at most $r$.

  Let $\Xi_{\text{packed}} \subset (\R \cup \{ \varnothing \})^{\Z^d \times \N}$ denote the space of
  packed and priority-sorted configurations. 
  By being priority-sorted we mean that for every site $x$  the inequalities
  \begin{equation}
    \xi(x, 1) \le \xi(x, 2) \le \cdots \le \xi(x, n(\xi, x))
  \end{equation}
  hold, and $\xi(x, j) = \varnothing$ for all $j > n(\xi, x)$. 
  This implies that the particles with priority at
  most $r$ occupy slots $1, \ldots, n_{\le r}(\xi, x)$, a requirement which is compatible with any packed initial configuration $\eta_0$, since it only prescribes the ordering within each stack.

  The \emph{slot-priority dynamics} is given by the Markov process on $\Xi_{\text{packed}}$
  generated by
  \begin{equation}
    L^{\text{prio}} H(\xi) = \sum_{x \in \Z^d} \sum_{i = 1}^{n(\xi, x)}  \lambda_i \sum_{y\sim x} \bigl[ H(\xi^{(x, i) \to y}) - H(\xi) \bigr],
  \end{equation}
  acting on local functions $H: \Xi_{\text{packed}} \to \R$. Here $\xi^{(x, i) \to y}$ is the
  priority configuration resulting from the particle in $(x, i)$ jumping to site $y$, defined as
  follows:
  \begin{itemize}
    \item The departing particle with priority $r = \xi(x, i)$ is removed from slot $(x, i)$.
    \item Every remaining particle at site $x$ in a slot $(x, j)$ for $j > i$ shifts down to slot $j - 1$, and we set $\xi(x, n(\xi, x)) = \varnothing$.
    \item At the target site $y$, the arriving particle is placed in slot $(y, k + 1)$, where
      $k=n_{\le r}(\xi, y)$. The particles originally at slots $(y, j)$ for $j \in \{k+1,
      \ldots, n(\xi, y)\}$ shift one slot up to $(y, j + 1)$.
  \end{itemize}

  Equivalently, in terms of the graphical representation, we associate to each slot $(x, i)$ and each nearest neighbor $y$ with $|y-x|=1$ an
  independent Poisson clock $\mathcal{P}_{x,y}^i$ with rate
  $\lambda_i$. When the clock $\mathcal{P}_{x,y}^i$ rings, if $i \le n(\xi, x)$, the system jumps to $\xi^{(x, i) \to y}$.
\end{definition}

\begin{remark}\label{rmk:priority_packed} The slot-priority dynamics preserves the packed and sorted
  property: a departure from slot $i$ shifts all higher-indexed particles one slot down, and an
  arrival inserts the particle into the correct position to maintain sorting, shifting
  lower-priority particles up by one.
\end{remark}

\begin{remark} When all particles share the same priority, the arrival rule reduces to placing the
  particle on top of the stack, recovering the standard slot dynamics of \cref{def:slot_dynamics}.
\end{remark}

\begin{proposition}[Order dependence]\label{prop:order_dependence} Let $(\xi_t)_{t\ge 0}$ be the
  slot-priority dynamics. Fix a priority level $r^\star\in \R$, and let $\pi^\star:
  \Xi_{\text{packed}} \to \N_0^{\Z^d}$ be the projection that counts particles with priority at most
  $r^\star$, defined as $\pi^\star(\xi)(x) = n_{\le r^\star}(\xi, x)$. The projected process
  $\zeta_t = \pi^\star(\xi_t)$ satisfies:
  \begin{enumerate}[label=(\alph*)]
    \item $(\zeta_t)_{t\ge 0}$ is a zero-range process.
    \item For each realization, the trajectory of $(\zeta_t)_{t\ge
      0}$ is a deterministic function of the initial configuration $\zeta_0$ and the full set of
      Poisson clocks $(\mathcal{P}_{x,y}^i)_{x\in\Z^d, y\sim x, i\in\N}$.
  \end{enumerate}
\end{proposition}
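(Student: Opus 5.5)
The plan is to show that the low-priority particles are invisible to the counting process $\zeta_t=\pi^\star(\xi_t)$, and then to compare generators. The starting observation is structural. Because $\xi_t$ is packed and priority-sorted, at every time $t$ and every site $x$ the particles of priority at most $r^\star$ occupy exactly the slots $1,\dots,\zeta_t(x)$. All lower-priority particles sit in slots with index strictly larger than $\zeta_t(x)$.

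I would first check that every transition $\xi\mapsto\xi^{(x,i)\to y}$ acts on $\zeta$ in one of two ways.
\begin{itemize}
\item If $i\le \zeta(x)$, then $\zeta$ becomes $\zeta^{x,y}$. The departing particle has priority $r\le r^\star$. Its removal shifts down only slots above $i$, so slots $i,\dots,\zeta(x)-1$ are now occupied by the high-priority particles formerly in $i+1,\dots,\zeta(x)$. At $y$ it is inserted at slot $n_{\le r}(\xi,y)+1\le \zeta(y)+1$, so high-priority particles at $y$ again fill slots $1,\dots,\zeta(y)+1$. Lower-priority particles at $y$ are only shifted up.
\item If $i>\zeta(x)$, the jumping particle has priority $>r^\star$. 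Its removal shifts only slots above $i>\zeta(x)$. Its insertion at $y$ occurs at position $n_{\le r}(\xi,y)+1>\zeta(y)$, since $n_{\le r}\ge n_{\le r^\star}$ for $r>r^\star$. Hence $\zeta$ is unchanged.
\end{itemize}
This case split is the key lemma. One must use the sorting convention (ties and the insertion position $k=n_{\le r}$) carefully, and I expect this bookkeeping to be the main point to get right.

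Part (b) follows from the lemma by a pathwise construction. The configuration $\zeta$ changes exactly at rings of clocks $\mathcal P^i_{x,y}$ with $i\le\zeta_{t-}(x)$, and it changes to $\zeta_{t-}^{x,y}$. So $\zeta$ solves the same deterministic update rule as the ordinary slot dynamics of \cref{def:slot_dynamics}, driven by the same clocks and started from $\zeta_0$. In the slot dynamics only the occupation numbers matter for which clocks are active, so $\zeta$ is a function of $\zeta_0$ and the clocks alone. To make this rigorous in infinite volume, I would use the standard Harris-type argument. For a fixed small time window, the space-time region decomposes a.s.\ into finite clusters that do not interact. Alternatively, one can approximate by finite boxes, using the growth condition on $\mu_\rho$-typical initial data from \cite{Andjel82} as in the remark after \cref{prop:slot_is_zrp}. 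On each finite cluster the evolution is a finite sequence of the updates above, determined by $\zeta_0$ and the clocks.

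For part (a), I would compute $L^{\text{prio}}(f\circ\pi^\star)$ for local $f$. By the lemma, only the terms with $i\le\zeta(x)$ contribute, each equal to $f(\zeta^{x,y})-f(\zeta)$. Summing $\lambda_i$ over $i\le\zeta(x)$ gives $\frac{1}{2d}g(\zeta(x))$, hence $L^{\text{prio}}(f\circ\pi^\star)=(Lf)\circ\pi^\star$. This is exactly as in \cref{prop:slot_is_zrp}, and well-posedness again yields that $\zeta$ is a zero-range process. Alternatively, (a) follows directly from (b): $\zeta$ is the slot dynamics started from $\zeta_0$, which by \cref{prop:slot_is_zrp} is a zero-range process.
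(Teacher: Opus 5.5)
Your proposal is correct and follows the paper's proof essentially verbatim. Sortedness places the priority-$\le r^\star$ particles in slots $1,\dots,\zeta(x)$, so a ring of $\mathcal{P}^i_{x,y}$ changes $\zeta$ to $\zeta^{x,y}$ exactly when $i\le\zeta(x)$; this gives the pathwise update rule for (b), and summing $\lambda_i$ over $i\le\zeta(x)$ gives $L^{\text{prio}}(f\circ\pi^\star)=(Lf)\circ\pi^\star$ for (a).

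Your infinite-volume justification goes beyond what the paper does. The Harris-type cluster decomposition does not work as stated, though: each edge carries infinitely many slot clocks with rates $\lambda_i\ge\Gamma_-/(2d)$, so it rings infinitely often in every time window. The finite-box approximation using the growth condition of \cite{Andjel82} is the route to take.
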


\begin{proof}
  Since the slot-priority dynamics preserves the sorting property (\cref{rmk:priority_packed}), the
  high-priority particles at $x$ always occupy exactly the bottom $\zeta_t(x)$ slots. 
  To prove (a), consider a local function $F$ depending on $\xi$ only through the high-priority counts, meaning $F
  = f \circ \pi^\star$ for some local function $f: \N_0^{\Z^d} \to \R$. Under the transition of a
  particle from $(x, i)$ to $y$, the projection changes if and only if $i \le \pi^\star(\xi)(x)$, in
  which case it becomes $\pi^\star(\xi)^{x, y}$. The action of $L^{\text{prio}}$ on $F$ thus
  simplifies to:
  \begin{equation}
  \begin{split}
    L^{\text{prio}} (f \circ \pi^\star)(\xi) &= \sum_{x \in \Z^d} \sum_{i = 1}^{n(\xi, x)} \lambda_i \sum_{y\sim x} \bigl[ f(\pi^\star(\xi^{(x, i) \to y})) - f(\pi^\star(\xi)) \bigr] \\
    &= \sum_{x \in \Z^d} \sum_{i = 1}^{\pi^\star(\xi)(x)} \lambda_i \sum_{y \sim x} \bigl[ f(\pi^\star(\xi)^{x, y}) - f(\pi^\star(\xi)) \bigr] \\
    &= (L f)(\pi^\star(\xi)),
  \end{split}
  \end{equation}
  where $L$ is the standard zero-range generator from \Cref{eq:zrp_gen}. Since the generator
  projects properly, it follows that $(\zeta_t)_{t\ge 0}$ is a zero-range process.

  To prove (b), simply note that in the graphical representation, if $\mathcal{P}_{x,y}^i$ rings at time $t$:
  \begin{itemize}
    \item If $i \le \zeta_{t-}(x)$, then $\zeta_t(x)$ decreases by one and $\zeta_t(y)$ increases by one.
    \item If $i > \zeta_{t-}(x)$, the transition leaves $\zeta_t$ unchanged.
  \end{itemize}
  This shows that $\zeta_t$ is entirely determined by $\zeta_0$ and the clocks up to time $t$.
\end{proof}

The same proof also holds for the projections given by $n_{< r^\star}(\xi, x)$, which counts the number of particles with priority strictly less than $r^\star \in \R \cup \{\varnothing\}$. In particular, taking
$r^\star = \varnothing$ we have $n_{< \varnothing}(\xi, x) = n(\xi, x)$, which proves the following corollary:

\begin{corollary}[Priority dynamics is a zero-range process] Let $\eta_0 \in \N_0^{\Z^d}$ be an
  initial configuration drawn from $\mu_\rho$ for some $\rho > 0$, and let $(\xi_t)_{t \ge 0}$ be
  the slot-priority dynamics of \cref{def:priority} started from any priority assignment compatible
  with $\eta_0$. Define $\eta_t(x) = n(\xi_t, x)$ as the number of occupied slots at site $x$ at
  time $t$.

  Then $(\eta_t)_{t \ge 0}$ has the law of the zero-range process on $\Z^d$ with rate function $g$ and initial condition $\eta_0$.
\end{corollary}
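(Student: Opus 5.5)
The corollary follows from \cref{prop:order_dependence} applied with the extended threshold $r^\star = \varnothing$. The plan is to recheck that the projection argument still works when the projection counts \emph{all} particles.

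First I fix the projection $\pi(\xi)(x) = n_{<\varnothing}(\xi,x) = n(\xi,x)$. Because priority-sorted configurations are packed, the occupied slots at $x$ are exactly $1,\dots,n(\xi,x)$.

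Next I compute $L^{\text{prio}}(f\circ\pi)$ for a local $f$. For every $i\le n(\xi,x)$, the transition $\xi\mapsto\xi^{(x,i)\to y}$ removes one particle from $x$ and inserts one at $y$. The insertion position at $y$ is determined by priorities, but this does not affect counts, so $\pi(\xi^{(x,i)\to y}) = \pi(\xi)^{x,y}$. Summing over slots then gives
\begin{equation}
  \sum_{i=1}^{n(\xi,x)}\lambda_i=\frac{1}{2d}\,g(n(\xi,x)),
\end{equation}
using $g(0)=0$ and telescoping. Hence $L^{\text{prio}}(f\circ\pi) = (Lf)\circ\pi$, exactly as in \cref{prop:slot_is_zrp}. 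The initial condition matches by construction, since any priority assignment compatible with $\eta_0$ has $n(\xi_0,x)=\eta_0(x)$.

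The main obstacle is passing from this generator identity to equality in law in infinite volume. I would handle it as in the remark following \cref{prop:slot_is_zrp}. By \cite{Andjel82}, the martingale problem for $L$ is well-posed for initial data drawn from $\mu_\rho$, which almost surely lies in Andjel's uniqueness class. It also has to be checked that the slot-priority dynamics is itself well-defined in infinite volume, which I take as implicit in \cref{def:priority}.

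Given this, $f(\eta_t)-\int_0^t Lf(\eta_s)\,ds$ is a martingale for each local $f$. This follows because $f\circ\pi$ is a local function on $\Xi_{\text{packed}}$, so the corresponding process is a martingale for the slot-priority process. Uniqueness for the martingale problem then identifies the law of $(\eta_t)_{t\ge0}$ as that of the zero-range process with rate $g$ started from $\eta_0$.
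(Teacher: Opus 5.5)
Your proposal is correct and is essentially the paper's argument. The paper observes that the generator computation in the proof of \cref{prop:order_dependence} goes through unchanged for the projection $n_{<r^\star}(\xi,x)$, and taking $r^\star=\varnothing$ gives $n_{<\varnothing}(\xi,x)=n(\xi,x)$, so that $L^{\text{prio}}(f\circ\pi)=(Lf)\circ\pi$ via the same telescoping $\sum_{i=1}^{n}\lambda_i=\frac{1}{2d}g(n)$. Your martingale-problem paragraph makes explicit the infinite-volume justification that the paper delegates to the remark following \cref{prop:slot_is_zrp}, and, like the paper, it takes the well-definedness of the slot-priority dynamics for granted.
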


\subsection{Decoupling geometry}

Let $B^1, B^2 \subset \Z^d \times [0, T]$ be space-time regions with spatial projections $S_1, S_2$ satisfying $\diam(S_j) \le w$ for $j=1,2$, and $T \ge 1$.
Recall the \emph{horizontal separation} $\dist_H = \dist_H(B^1, B^2)$ defined in \eqref{eq:distances}.
We assume henceforth that $\dist_H \ge 8$ and write $d_0 = \floor*{\frac{\dist_H}{4}}$.

For a point $x \in \Z^d$ and a set $S \subset \Z^d$, we denote their distance in the infinity norm by $\dist_\infty(x, S) = \inf_{y \in S} \normI{x - y}$. We define the $d_0$-neighborhoods of $S_1$ and $S_2$ as the sets
\begin{equation}
  A_1 = \{x \in \Z^d : \dist_\infty(x, S_1) \le d_0\}, \qquad A_2 = \{x \in \Z^d : \dist_\infty(x, S_2) \le d_0\}.
\end{equation}
Since $\dist_H \ge 4d_0$, the sets $A_1$ and $A_2$ are disjoint and separated by a distance of at least $2d_0$.

\begin{definition}[Priority partition]\label{def:priority_partition} Given an initial configuration
  $\eta_0$ drawn from $\mu_\rho$, assign priorities to particles as follows.
  \begin{itemize}
    \item Priority 1 (highest): all particles with initial position inside $A_1$.
    \item Priority 2: all particles with initial position inside $A_2$.
    \item Priority 3 (lowest): all particles with initial position outside $A_1 \cup A_2$.
  \end{itemize}
  We write $\eta^{(k)}_t$ for the configuration restricted to particles of priority $k$, so that
  $\eta_t = \eta^{(1)}_t + \eta^{(2)}_t + \eta^{(3)}_t$.
\end{definition}

We establish horizontal decoupling by splitting the space into distinct regions and defining an event under which particles originating in different regions never interact. Since the priority-based dynamics prevents lower-priority particles from influencing higher-priority ones, such a non-interaction event allows us to decouple observations from the two regions.

For this purpose, define the boundaries $M_1 = \{x \in \Z^d : \dist_\infty(x, S_1) = 2d_0\}$ and $M_2 = \{x \in \Z^d : \dist_\infty(x, S_2) = 2d_0\}$. Let $V_1$ and $V_2$ denote the interiors bounded by $M_1$ and $M_2$, respectively. Since $\dist_\infty(S_1, S_2) \ge 4d_0$, the regions $V_1$ and $V_2$ are disjoint.

\begin{definition}[Splitting event]\label{def:split_event}
  Define the event
  \begin{equation}
    \S = \S_1 \cap \S_2 \cap \S_3,
  \end{equation}
  where
  \begin{equation}
  \begin{gathered}
    \S_1 = \bigl\{\text{no priority-1 particle reaches } M_1 \text{ during }[0,T]\bigr\},\\
    \S_2 = \bigl\{\text{no priority-2 particle reaches } M_2 \text{ during }[0,T]\bigr\},\\
    \S_3 = \bigl\{\text{no priority-3 particle enters }S_1 \cup S_2\text{ during }[0,T]\bigr\}.        
  \end{gathered}
  \end{equation}
\end{definition}
See \cref{fig:horizontal_decoupling_setup} for an illustration of these events.

On the event $\S$, the priority-1 particles never leave $V_1$, the priority-2 particles never leave $V_2$, and the priority-3 particles never reach the base regions $S_1$ and $S_2$.

\newbigconstant{C:split_event}
\begin{proposition}\label{prop:split_event} There exists a constant $\ubc{C:split_event} =
  \ubc{C:split_event}(\Gamma_+, d) > 0$ such that, for all $\rho > 0$, $T \ge 1$, and $\dist_H \ge 8$,
  \begin{equation}\label{eq:split_event_bound}
    \P_\rho\bigl[\S^c\bigr] \le \ubc{C:split_event} w^{d-1} \rho T^{d/2} e^{-\uc{c:crossing}
    \dist_H \log(1+\dist_H/T)}.
  \end{equation}
\end{proposition}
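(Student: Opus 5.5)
We bound $\P_\rho[\S^c] \le \P_\rho[\S_1^c] + \P_\rho[\S_2^c] + \P_\rho[\S_3^c]$ by a union bound and treat each term with the single-particle displacement estimate of \cref{lem:crossing_bound}. The input is the observation recorded before that proposition. In the slot(-priority) representation, the number of jumps of any given particle in $[0,T]$ is dominated by $\poisson(\Gamma_+T)$, whatever the other particles do, and its trajectory is a nearest-neighbour walk. Hence its maximal $\ell^\infty$ displacement is dominated by $\max_{k\le Y}\|S_k\|$. This holds for the priority dynamics too: the jump rate of a particle in slot $i$ is $2d\lambda_i\le\Gamma_+$ and the target is uniform, so the dominations apply verbatim.

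For $\S_1$, every priority-$1$ particle starts in $A_1$, at $\ell^\infty$-distance at most $d_0$ from $S_1$. Reaching $M_1$ therefore forces a displacement of at least $d_0\ge \dist_H/4-1\ge c\,\dist_H$. Conditioning on $\eta_0$, a union bound over particles gives
\begin{equation*}
\P_\rho[\S_1^c]\le \E_\rho\Big[\sum_{x\in A_1}\eta_0(x)\Big]\,\ubc{C:displacement}e^{-\uc{c:displacement}d_0\log(1+d_0/T)}=\rho|A_1|\,\ubc{C:displacement}e^{-c\dist_H\log(1+\dist_H/T)}.
\end{equation*}
Here $|A_1|\le (w+2d_0+1)^d$, which is polynomial in $w$ and $\dist_H$. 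To reach the prefactor $w^{d-1}\rho T^{d/2}$ I will not sum over all of $A_1$. The particles deep inside $A_1$ are handled by a shell decomposition: sites at distance $j$ from $M_1$ form a set of size $\lesssim (w+j)^{d-1}$, and each contributes $e^{-c j\log(1+j/T)}$. Summing over $j\ge d_0$ gives $\lesssim w^{d-1}\dist_H^{d-1}e^{-c\dist_H\log(1+\dist_H/T)}$. The polynomial factor $\dist_H^{d-1}$ is absorbed into the exponential by shrinking the constant, which is possible since $\dist_H\log(1+\dist_H/T)\ge c'\dist_H^2/(T+\dist_H)$. Where that absorption fails because $T$ is large, the factor $T^{d/2}$ compensates. $\S_2$ is identical.

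For $\S_3$, priority-$3$ particles start outside $A_1\cup A_2$, hence at distance greater than $d_0$ from $S_1\cup S_2$. I plan to cover $S_j$ by its sites $z$ and use a translate of \eqref{eq:crossing_bound} for each target. Pointwise this is not efficient, since summing over $|S_j|\sim w^d$ targets gives $w^d$ rather than $w^{d-1}$. The better route is a shell decomposition over starting sites at distance $j> d_0$ from $S_j$. The number of such sites is $\lesssim (w+j)^{d-1}$, each carries mean $\rho$ particles, and each particle must travel distance $j$, which costs $e^{-cj\log(1+j/T)}$ by \eqref{eq:displacement_bound}. Summing over $j>d_0$ gives $\lesssim \rho\,(w+\dist_H)^{d-1}e^{-c\dist_H\log(1+\dist_H/T)}$. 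The factor $(w+\dist_H)^{d-1}\le w^{d-1}\dist_H^{d-1}$, using $w\ge 1$, is absorbed as before. The $T^{d/2}$ enters exactly as in the proof of \eqref{eq:crossing_bound}, where the sum over $j\lesssim T$ is dominated using $e^{-cj^2/T}$-type bounds.

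The main obstacle is bookkeeping rather than probability. The constant in the exponent must stay equal to $\uc{c:crossing}$, as stated in \eqref{eq:split_event_bound}, rather than being degraded by the absorptions and by the passage from $d_0$ to $\dist_H/4$. My first attempt is to reproduce the internal computation of \eqref{eq:crossing_bound} with $s=d_0$ and a target set in place of $\{0\}$. If that proves awkward, I will instead read the stated bound as holding after suitably redefining $\uc{c:crossing}$, which is harmless for \cref{thm:horizontal_decoupling}.
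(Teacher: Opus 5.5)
Your proof is correct, but you organize the union bound differently from the paper. The paper sums over \emph{target} sites. For $\S_1^c$ it applies the packaged crossing bound \eqref{eq:crossing_bound} with $s=d_0$ at each $z\in M_1$, which gives $C(w+2d_0)^{d-1}\rho T^{d/2}e^{-\uc{c:crossing}d_0\log(1+d_0/T)}$. For $\S_3^c$ it sums only over the boundary sites of $S_1$ and $S_2$, since a particle entering $S_j$ must first land on $\partial S_j$. That yields the $w^{d-1}$ prefactor at once and makes both your $w^d$ worry and your growing-shell computation unnecessary.

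You instead sum over \emph{starting} particles, grouped by their distance to the target set, using the raw estimate \eqref{eq:displacement_bound}. This counts each particle once rather than once per target site. So for $\S_1^c$ you get $\rho(w+\dist_H)^{d-1}\sqrt{T}\,e^{-cd_0\log(1+d_0/T)}$, a factor $T^{(d-1)/2}$ better than the paper. That spare factor is exactly what pays for $\dist_H^{d-1}$ when $\dist_H\lesssim\sqrt{T}$, where the exponential is of order one. Your write-up drops the $\sqrt{T}$ produced by the radial sum over $j\ge d_0$; keep it, so that the total budget is $\sqrt{T}\cdot T^{(d-1)/2}=T^{d/2}$.

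The paper's remark that ``any polynomial factor in $d_0$ can be absorbed'' is only justified when $\dist_H\gtrsim\sqrt{T\log\dist_H}$. This is harmless for $d=1$ and in the regimes where \cref{thm:horizontal_decoupling} is later applied, but it is not uniform in $T$ for $d\ge 2$. So your bookkeeping is the more careful one, while the paper's is shorter because it reuses \eqref{eq:crossing_bound} verbatim. Both versions implicitly need $w\ge 1$.

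Your concern about the constant is resolved as you suggest. The bound \eqref{eq:crossing_bound} stays true for any smaller constant, so you may shrink $\uc{c:crossing}$ once and for all by a factor depending only on $d$. This shrinking is needed anyway to pass from $d_0$ to $\dist_H$, via $d_0\log(1+d_0/T)\ge(\dist_H/64)\log(1+\dist_H/T)$, and the paper's proof does the same tacitly.
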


\begin{proof}
  By a union bound, $\P_\rho[\S^c] \le \P_\rho[\S_1^c] + \P_\rho[\S_2^c] + \P_\rho[\S_3^c]$. We bound each term separately.

  The event $\S_1^c$ requires some priority-1 particle to reach $M_1$. Every priority-1 particle starts inside $A_1$, meaning its initial distance to $M_1$ is at least $d_0$.
  By taking a union bound over all sites on $M_1$, and applying \cref{lem:crossing_bound} with $s = d_0$, we obtain $\P_\rho[\S_1^c] \le C (w+2d_0)^{d-1} \rho T^{d/2} e^{-\uc{c:crossing} d_0 \log(1+d_0/T)}$. Since $d_0 = \floor{\dist_H/4} \ge \dist_H/8$, and any polynomial factor in $d_0$ can be absorbed into the exponential decay, this gives the required bound. A symmetric argument bounds $\P_\rho[\S_2^c]$.

  For $\S_3^c$, a priority-3 particle starting outside $A_1 \cup A_2$ must enter $S_1 \cup S_2$.
  Such a particle must travel an initial distance of at least $d_0$ to reach $S_1 \cup S_2$. Taking a union bound over the boundaries of $S_1$ and $S_2$, and applying \cref{lem:crossing_bound} provides $\P_\rho[\S_3^c] \le C w^{d-1} \rho T^{d/2} e^{-\uc{c:crossing} d_0 \log(1+d_0/T)}$. Summing the three bounds and relabeling the constant proves \eqref{eq:split_event_bound}.
\end{proof}

On the splitting event $\S$, functions supported on $B^1$ and $B^2$ can be replaced by independent counterparts.

\begin{figure}[tpb]
  \centering
  \begin{subfigure}[t]{0.48\textwidth}
    \centering
    \includegraphics[width=\linewidth]{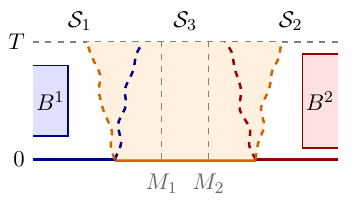}
    \caption{On the event $\S$, priority-1 (blue) and priority-2 (red) particle displacements are bounded by $M_1$ and $M_2$. Priority-3 (orange) particles do not reach the boxes $B^1$ and $B^2$.}
    \label{fig:horizontal_decoupling_setup}
  \end{subfigure}\hfill
  \begin{subfigure}[t]{0.48\textwidth}
    \centering
    \includegraphics[width=\linewidth]{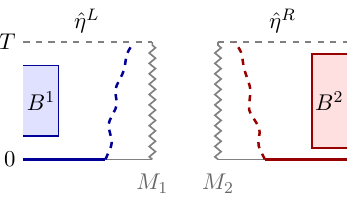}
    \caption{Conditioned on $\S$, we couple the dynamics to independent processes $\hat{\eta}^L$ and $\hat{\eta}^R$, isolated by absorbing boundary conditions at $M_1$ and $M_2$.}
    \label{fig:horizontal_decoupling_decoupled}
  \end{subfigure}
  \caption{Illustration of the horizontal decoupling scheme. The boundaries $M_1$ and $M_2$ divide the space to prevent information from propagating between the regions containing $B^1$ and $B^2$.}
  \label{fig:horizontal_decoupling}
\end{figure}

\begin{definition}[Decoupled processes] Consider the slot-priority dynamics with the priority partition of
  \cref{def:priority_partition}. Define two auxiliary processes.
  \begin{enumerate}[label=(\roman*)]
    \item $\hat\eta^L_t$ is the zero-range process on $V_1$ with an absorbing boundary condition at $M_1$ (particles that jump to $M_1$ are removed), started from
      \[
        \hat\eta^L_0(x) = \eta^{(1)}_0(x) \text{ for } x \in V_1.
      \]

    \item $\hat\eta^R_t$ is the zero-range process on $V_2$ with an absorbing boundary condition at $M_2$, started from
      \[
        \hat\eta^R_0(x) = \eta^{(2)}_0(x) \text{ for } x \in V_2.
      \]
  \end{enumerate}
  Both processes are driven by the same Poisson clocks as the original process, restricted to their respective regions.
\end{definition}

\begin{remark}[Independence of the decoupled processes]\label{rmk:independence_decoupled} The
  process $\hat\eta^L$ depends on the initial data $(\eta_0(x))_{x \in A_1}$ and the internal clocks
  $(\mathcal{P}_{x,y}^i)_{x \in V_1, i \in \N}$, while $\hat\eta^R$ depends on $(\eta_0(x))_{x \in A_2}$ and $(\mathcal{P}_{x,y}^i)_{x \in V_2, i \in \N}$. The initial measure $\mu_\rho$ is a product
  measure and the sets $A_1$ and $A_2$ are disjoint, so the two groups of
  initial conditions are independent. The Poisson clocks internal to $V_1$ and $V_2$ are
  independent as well, since these are disjoint sets. Therefore $\hat\eta^L$ and $\hat\eta^R$ are
  independent.
\end{remark}

\begin{lemma}[Agreement on $\S$]
\label{lem:agreement} 
Let $f_1, f_2:\N_0^{\Z^d\times\R_+}\to\R$ be functions with support in $B^1$ and $B^2$ respectively. Define
  \begin{equation}
    \hat f_1 = f_1\bigl(\hat\eta^L\bigr), \qquad \hat f_2 = f_2\bigl(\hat\eta^R\bigr),
  \end{equation}
  where $f_1(\hat\eta^L)$ means $f_1$ evaluated on the trajectory $\hat\eta^L$ extended by zero
  outside $V_1$, and similarly for $f_2$.

  Then, on the event $\S$,
  \begin{equation}
    f_1(\eta) = \hat f_1 \quad\text{and}\quad f_2(\eta) = \hat f_2.
  \end{equation}
\end{lemma}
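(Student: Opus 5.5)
By the support property \eqref{eq:support}, it suffices to show that on $\S$ the occupation numbers of $\eta$ coincide with those of $\hat\eta^L$ at every $(x,t)\in B^1$, that is, $\eta_t(x)=\hat\eta^L_t(x)$ for $x\in S_1$ and $t\in[0,T]$, and symmetrically $\eta_t(x)=\hat\eta^R_t(x)$ on $B^2$. We treat $B^1$; the other case is identical with priority 2 in place of priority 1.

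First, the priority-3 particles play no role inside $S_1$. On $\S_3$ no priority-3 particle is in $S_1\cup S_2$ during $[0,T]$. Priority-3 particles that start in $S_1$ are excluded, since they start outside $A_1\supset S_1$. Hence $\eta_t(x)=\eta^{(1)}_t(x)+\eta^{(2)}_t(x)$ for $x\in S_1$. Next, on $\S_2$ the priority-2 particles stay in $V_2$, which is disjoint from $V_1\supset S_1$, so $\eta^{(2)}_t(x)=0$ there. Thus $\eta_t(x)=\eta^{(1)}_t(x)$ on $B^1$.

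It remains to identify $\eta^{(1)}$ with $\hat\eta^L$ on $V_1\times[0,T]$ when $\S_1$ holds. Apply \cref{prop:order_dependence} with $r^\star=1$. The projection $\zeta_t=\eta^{(1)}_t$ is a deterministic function of $\zeta_0$ and the clocks, and the update rule is explicit: when $\mathcal{P}^i_{x,y}$ rings, $\zeta$ moves a particle from $x$ to $y$ if and only if $i\le\zeta_{t-}(x)$. The process $\hat\eta^L$ follows exactly the same rule, using the same clocks $\mathcal{P}^i_{x,y}$ for $x\in V_1$ and starting from the same data $\eta^{(1)}_0$ restricted to $V_1$. The restriction loses nothing, because $\eta^{(1)}_0$ is supported on $A_1\subset V_1$. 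The only difference is that $\hat\eta^L$ kills particles that jump onto $M_1$.

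Here is the main step, a pathwise induction over the successive clock rings that affect $V_1$. Suppose the two configurations agree on $V_1$ just before a ring of $\mathcal{P}^i_{x,y}$ with $x\in V_1$. Both processes make the same decision, since it depends only on whether $i$ is at most the common value of $\zeta_{t-}(x)$. If $y\in V_1$, they stay equal. If $y\in M_1$ and a jump occurs, then a priority-1 particle reaches $M_1$, which $\S_1$ forbids. Rings at $x\notin V_1$ do not affect $\hat\eta^L$, and they do not affect $\zeta$ on $V_1$ either. Indeed, $\zeta$ vanishes outside $V_1$ up to that time, and a jump into $V_1$ from outside would require a priority-1 particle to have crossed $M_1$.

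The technical obstacle is making this induction rigorous in infinite volume, where there are infinitely many rings in any time interval. I would handle it by noting that only clocks at sites of the finite set $V_1\cup M_1$ with slot index $i\le$ the total number of priority-1 particles can matter. This number is finite almost surely, since it is bounded by the initial count on the finite set $A_1$. So only finitely many relevant rings occur in $[0,T]$, and the induction runs over a finite ordered sequence. We conclude that $\eta_t(x)=\hat\eta^L_t(x)$ for $(x,t)\in B^1$ on $\S$, and hence $f_1(\eta)=\hat f_1$. The same argument gives $f_2(\eta)=\hat f_2$.
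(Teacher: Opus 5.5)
Your proposal is correct and follows the same route as the paper's proof. You use $\S_2$ and $\S_3$ to show that only priority-1 particles occupy $B^1$, and then \cref{prop:order_dependence} with $r^\star=1$ together with $\S_1$ to identify $\eta^{(1)}$ with $\hat\eta^L$, since the absorbing boundary at $M_1$ is never triggered. Your ring-by-ring induction, made finite because the priority-1 population in the finite set $A_1$ is finite, spells out a step that the paper states in a single sentence.
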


\begin{proof}
  We first prove $f_1(\eta) = \hat f_1$ on $\S$. 
  Since $f_1$ has support in $B^1$ and $S_1 \subset V_1$, it suffices to show that $\eta_t(x) =
  \hat\eta^L_t(x)$ for all $(x, t) \in B^1$ on the event $\S$.

  We first observe that, on $\S$, only priority-1 particles contribute to the occupation numbers in $B^1$. The event $\S_3$ prevents any priority-3 particle from entering $S_1$. As for priority-2 particles: each one starts inside $A_2$, and any priority-2 particle reaching $S_1$ would have to traverse $M_2$ to leave $V_2$. But $\S_2$ ensures that no priority-2 particle reaches $M_2$ during $[0,T]$, so no priority-2 particle can enter $B^1$ on $\S$. Therefore, on $\S$,
  \begin{equation}\label{eq:only_p1}
    \eta_t(x) = \eta^{(1)}_t(x) \quad \text{for all } (x, t) \in B^1.
  \end{equation}

  Next, by \cref{prop:order_dependence} applied with $r^\star=1$, the site trajectory of every priority-1 particle is a deterministic function of the initial positions of priority-1 particles and the Poisson clocks. In particular, the configuration $\eta^{(1)}_t$ restricted to any spatial region is the same whether or not priority-2 and priority-3 particles are present in the system.

  We now compare $\eta^{(1)}$ with $\hat\eta^L$. Since the process $\hat\eta^L$ is driven by the same Poisson clocks (restricted to $V_1$) and starts from the same priority-1 particles in the same positions, the only potential divergence would be due to the different boundary conditions. But, on $\S_1$, no priority-1 particle reaches $M_1$ during $[0,T]$, so the absorbing boundary condition of $\hat\eta^L$ is never triggered, and no particle is removed.

  Combining these observations, on $\S$ the priority-1 particles in $S_1$ follow identical trajectories under $\eta$ and $\hat\eta^L$:
  \begin{equation}
    \eta_t(x) = \eta^{(1)}_t(x) = \hat\eta^L_t(x)
    \qquad \text{for all } (x, t) \in B^1,
  \end{equation}
  and hence $f_1(\eta)=f_1(\hat\eta^L)=\hat f_1$.

  By applying a completely symmetric argument, on $\S$, priority-1 and priority-3 particles never enter $S_2$, and the priority-2 particles are unaffected by the absorbing boundary condition of $\hat\eta^R$, yielding identical trajectories in $B^2$ and thus $f_2(\eta) = \hat f_2$.
\end{proof}

\subsection{Proof of Theorem~\ref{thm:horizontal_decoupling}}

We now combine the previous results into the main decoupling inequality.

\begin{proof}[Proof of \cref{thm:horizontal_decoupling}]
  Let $\hat f_1$ and $\hat f_2$ be as in \cref{lem:agreement}, and let $\S$ be the splitting event of
  \cref{def:split_event}. Since $\hat f_1$ and $\hat f_2$ are independent
  (\cref{rmk:independence_decoupled}), we have $\E_\rho[\hat f_1 \hat f_2] = \E_\rho[\hat f_1]
  \E_\rho[\hat f_2]$. On the event $\S$, \cref{lem:agreement} gives $f_j = \hat f_j$ for $j = 1, 2$.
  Since $|f_j|, |\hat f_j| \le 1$, it follows that
  \begin{equation}\label{eq:product_approx}
    \abs{\E_\rho[f_1 f_2] - \E_\rho[\hat f_1 \hat f_2]} \le \P_\rho[\S^c]
  \end{equation}
  and
  \begin{equation}\label{eq:f_replacement}
    \abs{\E_\rho[f_j] - \E_\rho[\hat f_j]} \le \P_\rho[\S^c], \qquad j = 1, 2.
  \end{equation}
  Combining \eqref{eq:product_approx} and \eqref{eq:f_replacement} with the triangle inequality, and
  using that all expectations are bounded by $1$ in absolute value, we can bound
  \begin{align*}
    \abs{\cov_\rho(f_1, f_2)} &= \abs{\E_\rho[f_1 f_2] - \E_\rho[f_1] \E_\rho[f_2]}\\
    &\le \abs{\E_\rho[f_1 f_2] - \E_\rho[\hat f_1 \hat f_2]}\\
    &\quad + \abs{\E_\rho[\hat f_1] \E_\rho[\hat f_2] - \E_\rho[f_1] \E_\rho[f_2]}\\
    &\le \P_\rho[\S^c] + \abs{\E_\rho[\hat f_2] - \E_\rho[f_2]}
    + \abs{\E_\rho[\hat f_1] - \E_\rho[f_1]}\\
    &\le 3 \, \P_\rho[\S^c].
  \end{align*}
  The statement follows from \cref{prop:split_event} by setting $\ubc{C:horiz_decoupling} = 3 \,
  \ubc{C:split_event}$ and $\uc{c:horiz_decoupling} = \uc{c:crossing}$.
\end{proof}

\begin{remark}\label{rmk:horizontal-graphical}
    Since our proof does not alter the slot representation besides assigning priorities, and by construction the graphical representation within a box is independent for disjoint boxes, the theorem still holds even if $f_i$ is a function which also depends on the slot representation (with healing marks) within $B^i$.
\end{remark}

\section{Vertical decoupling}
\label{sec:improved_vertical_decoupling}

In this section we prove \cref{thm:vertical_decoupling}, which provides decay of correlations
for events supported on vertically separated space-time boxes. This complements the horizontal
decoupling established in \cref{thm:horizontal_decoupling}.

The proof generalizes the coupling strategy introduced in~\cite{BaldassoTeixeira18} for the exclusion process and
applied to the one-dimensional zero-range process in~\cite{BaldassoTeixeira20}, where the density was restricted
to a bounded interval. 
Here we extend the argument to arbitrary dimensions and remove the restriction to bounded densities. 
This latter extension is needed for the high-density survival regime.

\subsection{Coupling strategy and finite-volume reduction}

Fix two space-time boxes $B^1, B^2$ with spatial side length $w>0$ and temporal length $w$. Denote by $\dist_V = \dist_V(B^1, B^2)$ their vertical distance. We assume, without loss of generality, that
\begin{equation}
  B^{1} \subset \Z^{d} \times \R_{-} \quad \text{and} \quad B^{2} = [-w/2, w/2]^{d} \times [\dist_{V}, \dist_{V}+w].
\end{equation}

Consider two independent initial configurations $\xi$ and $\xi'$ with densities $\rho$ and $\rho(1+\varepsilon)$, respectively. Suppose we construct a coupling of the evolutions $\xi_{t}$ and $\xi'_{t}$ in which $(\xi'_{t})_{t \geq 0}$ is independent of $\xi_{0}$. Define the domination event
\begin{equation}\label{eq:domination_event}
  D = \big\{ (\xi, \xi'): \xi_{t}(x) \leq \xi'_{t}(x) \text{ for all } (x,t) \in B^{2} \big\}.
\end{equation}
Since $B^1$ lies entirely before $B^2$ in the time coordinate, the Markov property gives
\begin{equation}
  \begin{split}
    \E_{\rho}[f_{1}f_{2}] &= \E[ f_{1}(\xi)f_{2}(\xi)] = \E\big[ f_{1}(\xi) \E[f_{2}(\xi) \mid \xi_{0}] \big] \\
    & \leq \E\big[ f_{1}(\xi) \E[f_{2}(\xi') + \charf{D^{c}}(\xi, \xi') \mid \xi_{0}] \big] \\
    & \leq \E[f_{1}(\xi)] \E[f_{2}(\xi')] + \P \big( (\xi, \xi') \in D^{c} \big) \\
    & \leq \E_{\rho}[f_{1}] \E_{\rho(1+\varepsilon)}[f_{2}] + \P \big( (\xi, \xi') \in D^{c} \big).
  \end{split}
\end{equation}

It remains to construct a coupling achieving~\eqref{eq:domination_event} and to bound $\P(D^c)$.
We begin by restricting to a finite spatial window. Let
\begin{equation}\label{eq:shaded}
  S = [-2w-\dist_{V}, 2w+\dist_{V}]^{d},
\end{equation}
and let $\mathcal{I}$ be the event that some particle of $\xi$ lying outside $S$ at time $\dist_{V}$ enters $B^{2}$ during the interval $[\dist_{V}, \dist_{V}+w]$. Then
\begin{equation}\label{eq:domination_split}
  \P \big( (\xi, \xi') \in D^{c} \big) \leq \P\big( \xi_{\dist_{V}}(x) > \xi'_{\dist_{V}}(x) \text{ for some } x \in S \big) + \P \big( \mathcal{I} \big).
\end{equation}

\newbigconstant{C:invasion}
\newconstant{c:invasion}
\begin{lemma}[Finite-volume restriction]\label{lem:invasion}
  There exist constants $\ubc{C:invasion} > 0$ and $\uc{c:invasion} > 0$ such that
  \begin{equation}\label{eq:invasion}
    \P \big( \mathcal{I} \big) \leq \ubc{C:invasion}\rho \exp\braces{-\uc{c:invasion} (w+\dist_{V})}.
  \end{equation}
\end{lemma}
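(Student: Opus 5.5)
We bound $\P(\mathcal{I})$ by the probability that some particle travels a large distance in a short time. A particle of $\xi$ lying outside $S$ at time $\dist_V$ must, to enter $B^2 \subset [-w/2,w/2]^d \times [\dist_V,\dist_V+w]$, cover an $\ell^\infty$-distance of at least $s := 2w+\dist_V - w/2 \ge w+\dist_V$ within a time window of length $w$. By stationarity of $\mu_\rho$ under the zero-range dynamics (and the Markov property at time $\dist_V$), the configuration $\xi_{\dist_V}$ is again distributed as $\mu_\rho$. Hence $\P(\mathcal I)$ equals the probability, under $\P_\rho$, that some particle starting outside $S$ at time $0$ enters $[-w/2,w/2]^d$ before time $w$.

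The first step is a union bound over target sites. We apply the crossing bound \eqref{eq:crossing_bound} of \cref{lem:crossing_bound}, translated to each site $y \in [-w/2,w/2]^d$, with time horizon $T=\max(w,1)$ and distance $s$. This gives
\begin{equation*}
  \P(\mathcal I) \le (w+1)^d \, \ubc{C:crossing}\,\rho\, \max(w,1)^{d/2} \, e^{-\uc{c:crossing} s \log(1+s/\max(w,1))}.
\end{equation*}

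Next we check that the exponent is linear in $w+\dist_V$. Since $s \ge w + \dist_V$ and $s/\max(w,1) \ge 3/2$ when $w\ge1$ (because $s \ge 3w/2$), we have $\log(1+s/T) \ge \log(5/2) > 0$. If $w<1$, then $T=1$ and $s\ge \dist_V\ge \ubc{C:V}$, so $\log(1+s)$ is again bounded below by a positive constant. Thus the exponent is at least $c(w+\dist_V)$, and the polynomial prefactor $(w+1)^{d}(w+1)^{d/2}$ is absorbed by halving $c$, which yields $\ubc{C:invasion}\rho e^{-\uc{c:invasion}(w+\dist_V)}$.

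The only delicate point is the order of events in $\mathcal I$. The particle must be outside $S$ at time $\dist_V$ and then reach $B^2$ during $[\dist_V,\dist_V+w]$, so only displacement over a window of length $w$ matters, which is exactly what the crossing bound controls via stationarity. The remaining care is making sure that the constants, including those handling $w$ small, depend only on $d$ and $\Gamma_+$ and not on $\rho$. The factor $\rho$ enters linearly through \cref{lem:crossing_bound}, as required.
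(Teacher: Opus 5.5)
Your proof is correct and follows the paper's argument: time-stationarity reduces to particles starting outside $S$ at time $0$, then a union bound over target sites in $[-w/2,w/2]^d$, the crossing bound of \cref{lem:crossing_bound}, and absorption of the polynomial prefactor into the exponential. The differences are cosmetic. The paper sums only over the roughly $2dw^{d-1}$ boundary sites of $S_2$ rather than all of $S_2$. Your choice $T=\max(w,1)$ explicitly covers $w<1$, where the crossing bound's hypothesis $T\ge 1$ would otherwise fail, a case the paper passes over.
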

\begin{proof}
  To enter $B^2$ during the interval $[\dist_V, \dist_V + w]$, a particle must cross the boundary of its spatial projection $S_2 = [-w/2, w/2]^d$. By translation invariance in time, the probability is bounded by considering particles starting at time $0$ outside $S$ and entering $S_2$ by time $w$. Since the distance from the complement of $S$ to $S_2$ is strictly greater than $w + \dist_V$, taking a union bound over $x \in \partial S_2$ and applying \cref{lem:crossing_bound} yields
  \begin{equation}
    \begin{split}
      \P \big( \mathcal{I} \big)  &\leq \sum_{x \in \partial S_2} \P\parens*{
        \begin{array}{c}
          \text{some particle at $(y, 0)$ with $\normI{y - x} \geq w+\dist_{V}$}\\
          \text{reaches $x$ in the time interval $[0, w]$}
      \end{array}} \\
      &\leq 2d w^{d - 1} \ubc{C:crossing} \rho w^{d/2}e^{-\uc{c:crossing} (w+\dist_V)\log(1+(w+\dist_V)/w)} \leq \ubc{C:invasion}\rho e^{-\uc{c:invasion} (w+\dist_{V})}. \qedhere
    \end{split}
  \end{equation}
\end{proof}

We couple $(\xi, \xi')$ up to time $\dist_V$ and use identical graphical constructions from that time onward. Combining~\eqref{eq:domination_split} and~\eqref{eq:invasion} then gives
\begin{equation}\label{eq:bad_coupling}
  \P \big( (\xi, \xi') \in D^{c} \big) \leq \P \big( \mathcal{D}^{c} \big) + \ubc{C:invasion}\rho e^{-\uc{c:invasion} (w+\dist_{V})},
\end{equation}
where $\mathcal{D} = \{ \xi_{\dist_V}(x) \leq \xi'_{\dist_V}(x) \text{ for all } x \in S \}$.

\subsection{Block decomposition and the matching coupling}

To bound $\P(\mathcal{D}^c)$, we partition space and time into blocks. Fix a time step $T \in [1, \dist_V]$ and set $L = \lfloor \sqrt{\kappa T} \rfloor$, where we set $\kappa = \ubc{C:meeting_distance} / d$ and $\ubc{C:meeting_distance} \in (0, 1)$ is the specific constant from \cref{lem:meeting}.
By setting the constant $\ubc{C:V} = (d / \ubc{C:meeting_distance})^{\frac{d+2+\gamma_d}{2}}$ (which depends only on $d$, $\Gamma_+$, and $\Gamma_-$), the assumption $\dist_V \ge \ubc{C:V}$ from \cref{thm:vertical_decoupling} guarantees that $L \ge 1$.

We partition the lattice into blocks of side length $L$:
\begin{equation}
  H(i) = iL + [0,L)^{d}, \qquad i \in \Z^{d}.
\end{equation}
Define the extended spatial region $S_{\mathrm{ext}} = [-2w-4\dist_{V}, 2w+4\dist_{V}]^{d}$, and let $I$ denote the set of all block indices intersecting it:
\begin{equation}
  I = \big\{ i \in \Z^d : H(i) \cap S_{\mathrm{ext}} \neq \emptyset \big\}.
\end{equation}
The region where we require domination is $\bar{S} = \bigcup_{i \in I} H(i)$, and $|I| \leq C \big(\frac{w+\dist_V}{L}\big)^{d}$.

We say that a block $i \in I$ \emph{fails domination} if $\sum_{x \in H(i)} \xi(x) > \sum_{x \in H(i)} \xi'(x)$, and that the pair $(\xi, \xi')$ \emph{fails domination globally} if some block $i \in I$ fails domination.

\newbigconstant{C:init_dom}
\newconstant{c:init_dom}
\begin{lemma}[Initial block domination]\label{lem:initial_domination}
  Assuming $\varepsilon \in (0, 1)$, there exist constants $\ubc{C:init_dom} > 0$ and $\uc{c:init_dom} > 0$ such that
  \begin{equation}\label{eq:no_domination}
    \P\big( (\xi_{0}, \xi'_{0}) \text{ fails domination} \big) \leq \ubc{C:init_dom} \Big(\frac{w+\dist_V}{L}\Big)^{d} e^{-\uc{c:init_dom} L^{d}\rho \varepsilon^{2}}.
  \end{equation}
\end{lemma}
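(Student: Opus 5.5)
We will prove \eqref{eq:no_domination} with a union bound over the blocks $i \in I$, combined with the tail bounds of \cref{lem:tail_bounds} applied to the independent product configurations $\xi_0 \sim \mu_\rho$ and $\xi'_0 \sim \mu_{\rho(1+\varepsilon)}$.

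Fix a block $i \in I$ and write $n = L^d = |H(i)|$. We introduce the intermediate level $\rho(1+\varepsilon/3)$. If block $i$ fails domination, then one of two things must happen:
\begin{equation*}
\sum_{x\in H(i)}\xi_0(x) \ge n\rho\bigl(1+\tfrac{\varepsilon}{3}\bigr)
\quad\text{or}\quad
\sum_{x\in H(i)}\xi'_0(x) \le n\rho\bigl(1+\tfrac{\varepsilon}{3}\bigr).
\end{equation*}

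For the first event we use the upper tail bound of \cref{lem:tail_bounds} at density $\rho$ with relative deviation $\varepsilon/3 \in (0,1)$. This gives a bound of $\exp\{-n\rho\varepsilon^2/(27\Gamma)\}$.

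For the second event we write $\rho' = \rho(1+\varepsilon)$. Then
\begin{equation*}
\rho\bigl(1+\tfrac{\varepsilon}{3}\bigr) = \rho'(1-\varepsilon'),
\qquad
\varepsilon' = \frac{2\varepsilon/3}{1+\varepsilon} \in \bigl(\tfrac{\varepsilon}{3}, 1\bigr).
\end{equation*}
The lower tail bound of \cref{lem:tail_bounds} at density $\rho'$ then gives
\begin{equation*}
\exp\bigl\{-n\rho'\varepsilon'^2/(2\Gamma)\bigr\} \le \exp\bigl\{-n\rho\varepsilon^2/(18\Gamma)\bigr\},
\end{equation*}
where we used $\rho' \ge \rho$ and $\varepsilon' \ge \varepsilon/3$.

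Both exponents are therefore at least $c\,L^d\rho\varepsilon^2$ with $c = 1/(27\Gamma)$. This constant depends only on $\Gamma_\pm$, and in particular it is uniform in $\rho$; this uniformity is exactly the improvement claimed over \cite{BaldassoTeixeira20}.

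Summing over the blocks, the bound on $|I|$ gives $|I| \le C\bigl((w+\dist_V)/L\bigr)^d$. This yields \eqref{eq:no_domination} with $\ubc{C:init_dom} = 2C$ and $\uc{c:init_dom} = 1/(27\Gamma)$.

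The only point requiring care is to check that the relative deviations fed into \cref{lem:tail_bounds} lie in $(0,1)$ for every $\varepsilon \in (0,1)$, which holds here. Beyond that, the argument is a routine Chernoff-type estimate.
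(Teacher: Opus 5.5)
Your proposal is correct and follows the paper's own argument. The paper likewise takes a union bound over $i \in I$, compares both block sums to an intermediate level, and applies \cref{lem:tail_bounds}; the only difference is that it uses the midpoint $\rho(1+\varepsilon/2)$, giving relative deviations $\varepsilon/2$ and $\frac{\varepsilon/2}{1+\varepsilon}\ge\varepsilon/4$, whereas your level $\rho(1+\varepsilon/3)$ changes only the value of $\uc{c:init_dom}$.
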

\begin{proof}
  Since $\xi$ and $\xi'$ have densities $\rho$ and $\rho(1+\varepsilon)$, domination fails in a block $H(i)$ only if the total mass of $\xi$ in $H(i)$ exceeds that of $\xi'$. We compare both sums to the midpoint $\rho(1 + \varepsilon/2)|H(i)|$.

  The relative deviation needed for $\xi$ to reach this midpoint is $\varepsilon/2$, while for $\xi'$ to fall to it the required relative deviation is $\frac{\varepsilon/2}{1+\varepsilon} \ge \varepsilon/4$, where the last inequality uses $\varepsilon < 1$.

  Set $N_0 = \sum_{x \in H(0)} \xi(x)$, and define $N_0'$ analogously. A union bound over $i \in I$ together with the tail bounds of \cref{lem:tail_bounds} gives
  \begin{equation}
    \begin{aligned}
      \P\big( (\xi_{0}, \xi'_{0}) \text{ fails domination} \big)
      &\leq |I| \P\bigg( N_0 \geq |H(0)| \rho (1+\frac{\varepsilon}{2}) \bigg) \\
      &\quad + |I| \P\bigg( N_0' < |H(0)| \rho(1+\varepsilon)(1-\frac{\varepsilon}{4}) \bigg) \\
      &\leq \ubc{C:init_dom} \Big(\frac{w+\dist_V}{L}\Big)^{d} e^{-\uc{c:init_dom} L^{d}\rho \varepsilon^{2}},
    \end{aligned}
  \end{equation}
  where the constants from \cref{lem:tail_bounds} have been absorbed into $\uc{c:init_dom}$.
\end{proof}

Set $s_{k} = k T$ for $k = 0, 1, \dots, N$, where $N = \lfloor \dist_V / T \rfloor$. We now describe the coupling of $(\xi, \xi')$ on the interval $[0, \dist_V]$.

Introduce two independent graphical constructions $\mathcal{P}^{1}$ and $\mathcal{P}^{2}$, each as in \cref{def:slot_dynamics}. The process $\xi'$ always evolves according to $\mathcal{P}^{2}$, so it is independent of $\xi_{0}$. The process $\xi$ alternates between the two sets of clocks according to matching rules defined as follows.

Sample $(\xi_{0}, \xi'_{0})$ independently. If the pair fails global domination, evolve $\xi$ entirely with $\mathcal{P}^{1}$. Otherwise, pair the particles of $\xi_{0}$ in $\bar{S}$ to particles of $\xi'_{0}$ in any deterministic manner which respects the following rules:
\begin{itemize}
  \item \emph{Local matching:} first, match particles that share the same site.
  \item \emph{Regional matching:} any remaining particles of $\xi_{0}$ at a site are matched to available particles of $\xi'_{0}$ within the same block $H(i)$.
\end{itemize}

Given this pairing, the evolution of $\xi$ proceeds as follows:
\begin{itemize}
  \item Matched particles that occupy the same site are placed at the bottom of the stack and both evolve according to $\mathcal{P}^{2}$, so they jump together. Upon landing, they are again placed at the bottom of the stack.
  \item A particle of $\xi$ that does not share a site with its match evolves according to $\mathcal{P}^{1}$ and lands on top of the stack.
\end{itemize}
At each time $s_{k}$, the matching is redefined according to the same rules.

\subsection{Proof of Theorem~\ref{thm:vertical_decoupling}}

\begin{proof}[Proof of \cref{thm:vertical_decoupling}]
  Recall the event $\mathcal{D}^c$ from~\eqref{eq:bad_coupling}. Under our coupling, if $(\xi, \xi') \in \mathcal{D}^{c}$, then at least one of the following must occur:
  \begin{enumerate}
    \item Some particle lies outside $\bar{S}$ at some time $s_{k}$ and enters $S$ by time $\dist_{V}$.
    \item The pair $(\xi_{s_{k}}, \xi'_{s_{k}})$ fails global block domination for some $k \leq N$.
    \item Some particle of $\xi$ that remains inside $\bar{S}$ at all times $s_k \le \dist_V$ fails to meet its match during all $N$ intervals.
  \end{enumerate}

  The first event is bounded by the same argument as in \cref{lem:invasion}. The second is bounded by \cref{lem:initial_domination} applied at each time $s_k$, using the Markov property. For the third, a pair of matched particles at time $s_{k}$ belongs to the same block $H(i)$ and is therefore separated by a distance of at most $\sqrt{d}\, L$. Their relative displacement evolves as a continuous-time random walk with jump rate at least $2\Gamma_-$. We apply \cref{lem:meeting} with spatial separation $R = \sqrt{d} L$. The condition $R \le \sqrt{\ubc{C:meeting_distance} T}$ is satisfied since $L = \lfloor \sqrt{\kappa T} \rfloor$ implies $\sqrt{d} L \le \sqrt{d \kappa T} = \sqrt{\ubc{C:meeting_distance} T}$. The lemma ensures they meet during $[s_k, s_{k+1}]$ with probability at least $\ubc{C:meeting} (\sqrt{d} L)^{-\gamma_d}$. By absorbing the dimension-dependent factor $d^{-\gamma_d / 2}$ into $\ubc{C:meeting}$, we can bound the probability they do not meet by $1 - \ubc{C:meeting} L^{-\gamma_d}$. The strong Markov property then bounds the probability of the third event by
  \begin{equation}
    (1 - \ubc{C:meeting} L^{-\gamma_d})^N \leq \exp\braces{-\ubc{C:meeting} N L^{-\gamma_d}} \leq \exp\braces{-c \dist_V T^{-1 - \gamma_d/2}}.
  \end{equation}

  A union bound over the three contributions above gives \eqref{eq:union_bound_D_c} below. The first term follows from taking a union bound over the $N \le \dist_V$ time steps and the $C(w+\dist_V)^{d-1}$ sites on the boundary of $S$. The second term is obtained by summing the bound of \cref{lem:initial_domination} over the $N$ steps. The final term bounds the probability that at least one of the expected $\rho |\bar{S}| < C \rho (w+\dist_V)^d$ particles fails to meet its match.
  \begin{equation}
  \label{eq:union_bound_D_c}
    \begin{split}
      \P \big( \mathcal{D}^{c} \big) & \leq C \rho \dist_V (w+\dist_V)^{d-1} e^{-c \dist_V} \\
      & \quad + C \frac{\dist_V}{T} \Big(\frac{w+\dist_V}{\sqrt{T}}\Big)^{d} e^{-c T^{d/2}\rho \varepsilon^{2}} \\
      & \quad + \rho (w+\dist_V)^{d} e^{-c \dist_V T^{-1 - \gamma_d/2}}.
    \end{split}
  \end{equation}

  We choose $T$ so that the second and third exponential terms have the same order. Equating $T^{d/2} = \dist_V T^{-1 - \gamma_d/2}$ and solving gives
  \begin{equation}
    T = \dist_V^{\frac{2}{d + 2 + \gamma_d}}.
  \end{equation}
  Writing $\beta = \frac{d}{d + 2 + \gamma_d}$, the common value of both exponents is
  \begin{equation}
    \dist_V T^{-1 - \gamma_d/2} = T^{d/2} = \dist_V^{\beta}.
  \end{equation}

  A direct check gives $\beta \ge 1/3$ in every dimension.  After absorbing the polynomial factors into the constants, we obtain
  \begin{equation}
    \begin{split}
      \P(\mathcal{D}^{c}) &\leq C (w + \dist_V)^{d + 1} \Big(\rho \exp\braces{-c \dist_V^{1/3}} + \exp\braces{-c \rho\varepsilon^2 \dist_V^{1/3}}\Big).
    \end{split}
  \end{equation}

  Recall the definition of the event $D$ in~\eqref{eq:domination_event}. Comparing the estimate above with~\eqref{eq:bad_coupling} yields
  \begin{equation}
    \begin{split}
      \P\big( (\xi, \xi') \in D^{c}) & \leq \ubc{C:vertical_decoupling} (w + \dist_V)^{d + 1} \Big(\rho \exp\braces{-\uc{c:vertical_decoupling} \dist_V^{1/3}} + \exp\braces{-\uc{c:vertical_decoupling} \rho\varepsilon^2 \dist_V^{1/3}}\Big),
    \end{split}
  \end{equation}
  simply adjusting the constants. This completes the proof.
\end{proof}

\begin{remark}\label{rmk:vertical-graphical}
    Note that the above construction only specifies a graphical representation for the particles within the time interval $[0, \dist_V]$, so it is compatible for the slot representation to be used for both $\xi$ and $\xi'$ outside that interval. This shows that the theorem also holds if the functions $f_i$ are allowed to depend on the slot representation within $B_i$, similar to \cref{rmk:horizontal-graphical}.
\end{remark}

\section{Immunity regime}\label{sec:immunity_regime}

In this section, we prove \cref{thm:immunity_low_rho}.
The proof is based on a multi-scale renormalization argument inspired by the framework developed in~\cite{FMUV23, HUVV22} to investigate the renewal contact process and contact processes in dynamic random environments.
For simplicity, in this section we opted to carry out our proofs for spatial dimension $d = 1$, but they generalize straightforwardly to arbitrary $d \ge 1$.

Let us briefly outline the mechanism.
We define a sequence of space-time boxes whose side lengths grow geometrically, and show that the probability of an infection path crossing these boxes (in either space or time) at each scale satisfies a recursive contraction inequality.
This recursion is established by relating the occurrence of a crossing at scale $k+1$ to the coexistence of a pair of crossings within well-separated boxes at scale $k$.
The key tools for upper bounding the probability of these simultaneous crossings are the horizontal and vertical decoupling inequalities of \cref{sec:horizontal_decoupling,sec:improved_vertical_decoupling}, respectively.
Once the recursive inequality is established, it remains to prove that the probability of a half-crossing at the baseline scale is arbitrarily small.
\cref{thm:immunity_low_rho} is then proved by choosing the particle density $\rho$ sufficiently close to zero.

\subsection{Multi-scale geometry and half-crossing events}

We start by introducing a base-scale parameter $\ell_0 \in \N$ and a scaling parameter $\alpha = 8$. This value is simply chosen to be large enough for our geometric arguments to hold, and any larger even integer would suffice. The even restriction guarantees that all boxes will be positioned at integer coordinates.

The constant $\ell_0$ serves as the base height scale, and we define the corresponding base width scale as $w_0 = \ell_0^2$.

Having defined the base-scale dimensions, we define the higher-scale dimensions recursively for $k \ge 0$ by
\begin{equation}\label{eq:length_scales}
      \ell_{k+1} = \alpha \ell_k \qquad\text{ and }\qquad w_{k+1} = \alpha w_k.
\end{equation}
Under these definitions, the temporal and spatial scales grow exponentially fast according to $\ell_k = \alpha^k \ell_0$ and $w_k = \alpha^k w_0$.

The space-time rectangle
\begin{equation}\label{eq:scale_k_box}
  B_k = [-w_k, w_k] \times [0,\ell_k],
\end{equation}
and its translates $(x_0, t_0) + B_k = [x_0 - w_k, x_0 + w_k] \times [t_0, t_0 + \ell_k]$ for $(x_0, t_0) \in \mathbb{Z} \times \mathbb{R}_+$, are referred to as scale-$k$ boxes.
We assume these boxes are subsets of $\mathbb{Z} \times \mathbb{R}$ (discrete in space and continuous in time).
Since $w_k = \ell_k \ell_0$, the aspect ratio of every scale-$k$ box is uniformly given by $2\ell_0$.

\begin{figure}[tpb]
  \centering
  \begin{subfigure}[t]{0.48\textwidth}
    \centering
    \includegraphics[width=0.8\linewidth]{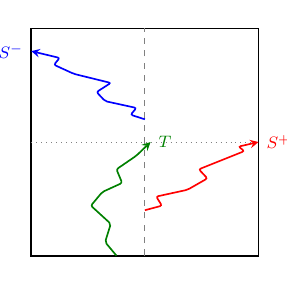}
    \caption{Half-crossing events.}
    \label{fig:half-crossings}
  \end{subfigure}\hfill
  \begin{subfigure}[t]{0.48\textwidth}
    \centering
    \includegraphics[width=0.8\linewidth]{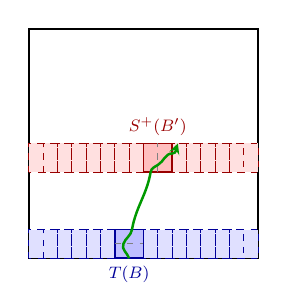}
    \caption{Cascading of half-crossings.}
    \label{fig:cascading}
  \end{subfigure}
  \caption{Illustration of the three half-crossing events (a) and the cascading decomposition (b).}
\end{figure}

We now define the half-crossing events, which were originally introduced in~\cite{FMUV23}.
Throughout this section, these are treated as \emph{bad events}. 
Our objective is to prove that their probabilities decay  fast as the scale index $k$ increases. These events are depicted in \cref{fig:half-crossings}.

\begin{definition}[Half-crossing events]
  \label{def:half_crossing}

  Let $B=[-w,w]\times[s,s+h]$ be a space-time box, and let $B^+=[0,w]\times[s,s+h]$ and $B^-=[-w,0]\times[s,s+h]$ denote its right and left halves, respectively.

  We define the following events with respect to the genealogical paths introduced in \cref{def:jump_path}:

  \begin{description}
    \item[Right spatial half-crossing] $S^+(B)$ is the event that there exists a genealogical path starting from $\{0\}\times[s,s+h]$ that reaches $\{w\}\times[s,s+h]$ while remaining entirely inside $B^+$.

    \item[Left spatial half-crossing] $S^-(B)$ is the event that there exists a genealogical path starting from $\{0\}\times[s,s+h]$ that reaches $\{-w\}\times[s,s+h]$ while remaining entirely inside $B^-$.

    \item[Temporal half-crossing] $T(B)$ is the event that there exists a genealogical path starting from $[-w,w] \times \{s\}$ that reaches $[-w,w] \times \{s+h/2\}$ while remaining entirely inside $B$.
  \end{description}
  The \emph{half-crossing event} of $B$ is defined as the union
  \begin{equation}\label{eq:half_crossing_union}
    H(B) = S^+(B) \cup S^-(B) \cup T(B).
  \end{equation}
\end{definition}

The events $S^\pm(B)$ and $T(B)$ are formulated explicitly in terms of genealogical paths and depend exclusively on the realization of the zero-range process and the Poisson healing/jump clocks inside $B$.
Therefore, $H(B)$ is supported in the box $B$ (see the definition around \cref{eq:support}).

These half-crossing events serve as a natural local proxy for the global survival of the infection, as formalized by the following lemma.

\begin{lemma}[Survival implies half-crossings]
\label{lem:survival_implies_H} 
If the infection initialized at the origin survives past time $\ell_k/2$, then the event $H(B_k)$ must occur.
\end{lemma}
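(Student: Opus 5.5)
If the infection survives past time $\ell_k/2$, there is an infection path $\gamma:[0,\ell_k/2]\to\Z$ with $\gamma(0)=0$; by definition it is a genealogical path. I would examine whether $\gamma$ stays inside $B_k=[-w_k,w_k]\times[0,\ell_k]$ over the whole interval $[0,\ell_k/2]$, and split into two cases.

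\emph{Case 1: $\gamma$ stays inside.} Suppose $|\gamma(t)|\le w_k$ for all $t\in[0,\ell_k/2]$. Then $\gamma$ itself starts at $(0,0)\in[-w_k,w_k]\times\{0\}$ and reaches $[-w_k,w_k]\times\{\ell_k/2\}$ while remaining in $B_k$. This is exactly the temporal half-crossing $T(B_k)$.

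\emph{Case 2: $\gamma$ exits.} Otherwise let $\tau=\inf\{t\le \ell_k/2:|\gamma(t)|\ge w_k\}$. Since paths make nearest-neighbour jumps, $|\gamma(\tau)|=w_k$; say $\gamma(\tau)=w_k$, the other sign being symmetric. Let
\begin{equation*}
\sigma=\sup\{t\le\tau:\gamma(t)=0\},
\end{equation*}
which exists because $\gamma(0)=0$. Jumps are nearest-neighbour and the path is càdlàg, so the set of times at which $\gamma$ equals $0$ is a finite union of intervals closed on the left. Hence the supremum can be taken so that $\gamma(\sigma)=0$; concretely, use the start of the last visit interval to $0$ before $\tau$, or its last point if $\gamma$ sits at $0$ up to a jump time.

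On $(\sigma,\tau]$ the path never returns to $0$. It moves by $\pm1$ steps and ends at $w_k>0$, so it stays in $[1,w_k]$ there, and in $[0,w_k]$ on $[\sigma,\tau]$. The restriction $\gamma|_{[\sigma,\tau]}$ is still a genealogical path, since the conditions of \cref{def:jump_path} are local in time. It starts at $(0,\sigma)\in\{0\}\times[0,\ell_k]$, ends at $(w_k,\tau)\in\{w_k\}\times[0,\ell_k]$, and remains in $B_k^+$ because $\tau\le\ell_k/2\le\ell_k$. This gives $S^+(B_k)$, and symmetrically $S^-(B_k)$ when $\gamma(\tau)=-w_k$.

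In either case $H(B_k)$ occurs. The only delicate point is the càdlàg bookkeeping in Case 2: choosing $\sigma$ so that $\gamma(\sigma)=0$ genuinely holds, and making sure no jump can skip over $0$ or past $w_k$. Both follow from the paths being nearest-neighbour with finitely many jumps on bounded intervals, which holds almost surely in the graphical construction.
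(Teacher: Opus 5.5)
Your proof is correct and follows the paper's argument essentially verbatim. Both split into a temporal half-crossing when $\gamma$ stays in $[-w_k,w_k]$, and otherwise restrict $\gamma$ from its last visit interval at $0$ up to the first hitting time of $\pm w_k$ to get $S^\pm(B_k)$. One wording slip: $\sup\{t\le\tau:\gamma(t)=0\}$ is generally not attained, so you are really using the left endpoint of that last visit interval (the paper's $t_0'$), on which $\gamma$ sits at $0$ for a while rather than ``never returning on $(\sigma,\tau]$''. This does not affect the conclusion that the path stays in $[0,w_k]$.
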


\begin{proof}
  Suppose the infection survives past time $\ell_k/2$.
  Then there exists a genealogical path $\gamma:[0,T] \to \mathbb{Z}$ with $\gamma(0)=0$ and $T > \ell_k/2$.

  If $\gamma$ remains inside the spatial interval $[-w_k,w_k]$ up to time $\ell_k/2$, then the restriction $\gamma|_{[0,\ell_k/2]}$ is a genealogical path starting from $[-w_k,w_k]\times\{0\}$ that is contained entirely within $B_{k}$ and reaches height $\ell_k/2$. This directly witnesses the occurrence of the temporal half-crossing event $T(B_k)$.

  Otherwise, let $t_1 \leq \ell_k/2$ be the first time at which $\gamma$ hits the boundary set $\{-w_k, w_k\}$.
  Assume without loss of generality that this exit occurs through the right boundary, so that $\gamma(t_1) = w_k$. Let $t_0 = \sup\{t \leq t_1 : \gamma(t) = 0\}$ be the last time the path is at the origin prior to time $t_{1}$. 
  The discrete nature of the jump dynamics implies the existence of some $t_0' < t_0$ such that $\gamma(t) = 0$ for all $t \in [t_0', t_0)$. Thus, the restricted path $\gamma|_{[t_0',t_1]}$ constitutes a valid genealogical path that starts at the origin at time $t_0'$, remains within $[0,w_k]$, and reaches $w_k$ at time $t_1$. This guarantees the occurrence of the right spatial half-crossing event $S^+(B_k)$.

  A symmetric argument yields the occurrence of $S^-(B_k)$ if the exit occurs through $-w_k$.
  Hence, in all cases, the event $H(B_k)$ occurs.
\end{proof}

In view of \cref{lem:survival_implies_H}, to prove extinction it suffices to show that the probability of $H(B_k)$ vanishes as $k \to \infty$, justifying why these are viewed as \emph{bad events}.

A crucial step for bounding the probabilities of the half-crossing events from above relies on the decoupling inequalities derived in the next subsection. For this purpose, we will require the following monotonicity property.

\begin{lemma}[Monotonicity of half-crossings]
\label{lem:monotonicity} 
For any space-time box $B$, the event $H(B)$ is non-decreasing in the following sense: if $(\eta_t)_{t \ge 0}$ and $(\bar\eta_t)_{t \ge 0}$ are coupled such that $\eta_t(x) \le \bar\eta_t(x)$ for all $(x, t) \in B$, every jump in $\eta$ also occurs in $\bar\eta$, and both processes share the same configuration of healing marks, then the occurrence of $H(B)$ under $\eta$ implies its occurrence under $\bar\eta$.
\end{lemma}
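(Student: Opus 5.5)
The plan is to show directly that every genealogical path for $\eta$ lying inside $B$ is also a genealogical path for $\bar\eta$. Once this holds, each of the three events $S^+(B)$, $S^-(B)$, $T(B)$ transfers from $\eta$ to $\bar\eta$, and so does their union $H(B)$. Indeed, all three events are defined as the existence of a genealogical path with prescribed start set, end set and confinement region inside $B$. Those geometric constraints do not refer to the environment, so transferring the path itself suffices.

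Fix such a path $\gamma:[s_0,s_1]\to\Z$ for $\eta$, with graph contained in $B$. I would check the three conditions of \cref{def:jump_path} one at a time for $\bar\eta$.

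Condition (i): whenever $\gamma(t)\neq\gamma(t-)$, some particle of $\eta$ jumps from $\gamma(t-)$ to $\gamma(t)$ at time $t$. By hypothesis every jump of $\eta$ also occurs in $\bar\eta$, so a particle of $\bar\eta$ makes the same jump at the same time.

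Condition (ii): for $t\in[s_0,s_1]$ we have $(\gamma(t),t)\in B$, hence $\bar\eta_t(\gamma(t))\ge\eta_t(\gamma(t))\ge1$.

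Condition (iii): this is the only step where monotonicity is not immediate, because effective healing marks are defined through the occupation being exactly one. Suppose $t$ is an arrival of $\mathcal{R}_{\gamma(t)}$; the healing marks are shared by the two processes. There are two cases.
\begin{itemize}
\item If $\bar\eta_t(\gamma(t))\ge2$, the mark is ineffective for $\bar\eta$, and there is nothing to check.
\item If $\bar\eta_t(\gamma(t))=1$, then $1\le\eta_t(\gamma(t))\le\bar\eta_t(\gamma(t))=1$, so $\eta_t(\gamma(t))=1$. In that case $t$ could not have been a mark arrival at $\gamma(t)$, since $\gamma$ satisfies (iii) for $\eta$. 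This contradicts the assumption.
\end{itemize}
So the key point is that an effective mark for $\bar\eta$ on the path is automatically effective for $\eta$, because the occupation is squeezed between $1$ and $1$. For $\delta=\infty$ the same argument reads as $\bar\eta_t\ge\eta_t\ge2$.

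The main subtlety I expect is the convention at jump times. One must use the same one-sided value of $\eta_t$ (the càdlàg value at $t$) in both processes, and $B$ must contain the relevant points, including the endpoint $(\gamma(s_1),s_1)$. Since all these points lie in $B$ by the confinement requirement, the comparison $\eta\le\bar\eta$ is available at every point needed. With this settled, the argument closes.
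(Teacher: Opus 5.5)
Your proposal is correct and follows the same route as the paper: any witness path for $H(B)$ under $\eta$ is itself a witness under $\bar\eta$, and the only nontrivial point is that healing marks effective for $\bar\eta$ along the path are also effective for $\eta$. Your squeeze $1\le\eta_t(\gamma(t))\le\bar\eta_t(\gamma(t))=1$ is slightly more precise than the paper's one-line claim that the effective marks of $\bar\eta$ form a subset of those of $\eta$; that inclusion holds only at points where $\eta$ is occupied, which are exactly the points a genealogical path visits.
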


\begin{proof}
    Since the healing marks are the same in both processes, and $\eta_t(x) \le \bar \eta_t(x)$ for all $(x, t) \in B$, the effective healing marks of the second process are a subset of the effective healing marks of the first process. This implies that any witness for the half-crossing event in $\eta$ is also a witness for the event in $\bar \eta$.
\end{proof}

Another important property that we will exploit to bound the half-crossing probabilities is the so-called cascading property: a half-crossing at scale $k+1$ implies that two distinct half-crossings at scale $k$ occur within well-separated boxes. This is the content of \cref{lem:temporal_cascading} below, which adapts~\cite[Lemma 2.5]{HUVV22} to our current setting.

We begin by establishing this property for temporal half-crossings through the following lemma, which is depicted in \cref{fig:cascading}.

\begin{lemma}[Cascading property for temporal half-crossings]
  \label{lem:temporal_cascading}
  Fix $k \ge 0$.
  There exist collections $\mathcal{B}_0$ and $\mathcal{B}_0'$ of scale-$k$ boxes such that
  \begin{equation}
    T(B_{k+1}) \subset \bigcup_{(B,B') \in \mathcal{B}_0 \times \mathcal{B}_0'} \left( H(B) \cap H(B') \right).
  \end{equation}
  Moreover, $|\mathcal{B}_0| = |\mathcal{B}_0'| = 2\alpha+3$, and the vertical separation between any box in $\mathcal{B}_0$ and any box in $\mathcal{B}_0'$ is at least $\ell_{k+1}/4$.
\end{lemma}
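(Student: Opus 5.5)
The plan is to take two horizontal rows of scale-$k$ boxes inside the time window of $B_{k+1}$: one row at the bottom and one just below height $\ell_{k+1}/2$. I then show that any temporal half-crossing path of $B_{k+1}$ produces a half-crossing of some box in each row. Recall $\alpha=8$, so $\ell_{k+1}=8\ell_k$ and $w_{k+1}=8w_k$.

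\textbf{The two rows.} Set
\[
\mathcal{B}_0=\{(jw_k,0)+B_k : |j|\le \alpha+1\}, \qquad \mathcal{B}_0'=\{(jw_k,3\ell_k)+B_k : |j|\le \alpha+1\}.
\]
Each collection has $2\alpha+3$ boxes. The boxes in $\mathcal{B}_0$ occupy the times $[0,\ell_k]$ and those in $\mathcal{B}_0'$ occupy $[3\ell_k,4\ell_k]$. Hence the vertical separation is $2\ell_k=\ell_{k+1}/4$, as required. The top row ends exactly at height $4\ell_k=\ell_{k+1}/2$, which is the height a temporal half-crossing of $B_{k+1}$ must reach.

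\textbf{Bottom row.} On $T(B_{k+1})$ there is a genealogical path $\gamma$ inside $B_{k+1}$ with $\gamma(0)=x\in[-w_{k+1},w_{k+1}]$ that reaches height $\ell_{k+1}/2\ge \ell_k$. Put $j=\lfloor x/w_k\rfloor\in\{-\alpha-1,\dots,\alpha\}$, so that $x\in[jw_k,(j+1)w_k]$. Let $B=(jw_k,0)+B_k$ and $B^\sharp=((j+1)w_k,0)+B_k$, both in $\mathcal{B}_0$. I use the same last-visit trick as in \cref{lem:survival_implies_H}.

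\emph{Case 1:} $\gamma$ stays in $[(j-1)w_k,(j+1)w_k]$ up to time $\ell_k/2$. Then $T(B)$ occurs. Moreover, if it never touches $(j+1)w_k$ before $\ell_k/2$, it stays strictly inside the box.

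\emph{Case 2:} $\gamma$ first hits $(j-1)w_k$. Since $x\ge jw_k$, the path passes the centre $jw_k$ beforehand. Restricting $\gamma$ to the stretch starting at its last visit to $jw_k$, extended backwards by the small holding interval as in \cref{lem:survival_implies_H}, gives $S^-(B)$.

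\emph{Case 3:} $\gamma$ hits $(j+1)w_k$ at a time $t_1\le \ell_k/2$. This point is the centre of $B^\sharp$, and on $[0,t_1]$ the path lies in $[x,(j+1)w_k]$, which is inside the spatial range of $B^\sharp$. Now look at $B^\sharp$ over the whole interval $[0,\ell_k]$. If $\gamma$ leaves the range of $B^\sharp$ after $t_1$ and before $\ell_k$, it does so starting from the centre, and the last-visit argument gives $S^+(B^\sharp)$ or $S^-(B^\sharp)$. Otherwise $\gamma|_{[0,\ell_k/2]}$ starts on the bottom face of $B^\sharp$ and remains inside it, giving $T(B^\sharp)$.

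In every case $H$ occurs for a box of $\mathcal{B}_0$. Each witness is a restriction of $\gamma$, so it is again a genealogical path: restriction to a time interval preserves conditions (i)--(iii) of \cref{def:jump_path}.

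\textbf{Top row.} I repeat the argument for the shifted path $\gamma|_{[3\ell_k,4\ell_k]}$, starting from $x'=\gamma(3\ell_k)\in[-w_{k+1},w_{k+1}]$. Its duration $\ell_k$ is exactly what the case analysis uses, so it yields some $B'\in\mathcal{B}_0'$ with $H(B')$. Both events hold simultaneously for the same $\gamma$, which gives the stated inclusion.

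\textbf{Main obstacle.} The delicate point is Case 3, where $x$ lies to the right of the centre and $\gamma$ exits to the right without crossing a full half-width. This is why I switch to the neighbouring box centred at the exit point, and why the index range runs up to $\alpha+1$ (hence $2\alpha+3$ rather than $2\alpha+1$ boxes). I also need to check the timing in that case: a temporal half-crossing of $B^\sharp$ requires the path to start on its bottom face. That holds because before $t_1$ the path was already inside the range of $B^\sharp$, and I use the full height $\ell_k$ of $B^\sharp$ to allow spatial exits after $t_1$. One remaining bookkeeping point is that boxes with $|j|=\alpha+1$ stick out of $B_{k+1}$. This is harmless: the path itself stays in $B_{k+1}$, and the half-crossing events are defined relative to the smaller box.
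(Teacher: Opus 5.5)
Your collections $\mathcal{B}_0,\mathcal{B}_0'$, the two time rows $[0,\ell_k]$ and $[3\ell_k,4\ell_k]$, the separation $2\ell_k=\ell_{k+1}/4$ and the last-visit device are exactly those of the paper's proof. However, Case 3 rests on a false claim. You assert that on $[0,t_1]$ the path lies in $[x,(j+1)w_k]$. Nothing prevents $\gamma$ from first moving left of $x$, and even below $jw_k$ (it only has to avoid $(j-1)w_k$), before it reaches $(j+1)w_k$ at time $t_1$. In that scenario $\gamma|_{[0,\ell_k/2]}$ is not contained in the spatial range $[jw_k,(j+2)w_k]$ of $B^\sharp$, so your ``otherwise'' branch does not give $T(B^\sharp)$. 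Concretely, suppose $\gamma$ dips to $jw_k-1$, then reaches $(j+1)w_k$, then stays just to the right of $(j+1)w_k$. Your argument lands in the ``otherwise'' branch and claims $T(B^\sharp)$, yet neither $T(B^\sharp)$ nor $T(B)$ holds. The same unjustified claim is used again in your ``Main obstacle'' paragraph.

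The repair is one extra sub-case. If $\gamma$ visits $jw_k$ at some time in $[0,t_1]$, take the last such visit, with the holding-interval adjustment. After it the path stays in $(jw_k,(j+1)w_k)$ until $t_1$, which gives $S^+(B)$ for $B=(jw_k,0)+B_k$. If it does not visit $jw_k$, then $x>jw_k$ and $\gamma$ stays in $(jw_k,(j+1)w_k]$ on $[0,t_1]$, which is what your $B^\sharp$ argument actually needs.

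The paper avoids this bookkeeping by organizing the cases around the \emph{range} of $\gamma|_{[0,\ell_k]}$ rather than its starting point. Either that range fits inside a single window $[(j-1)w_k,(j+1)w_k]$, giving $T(B)$ for that window. Or it meets four windows, hence contains two consecutive multiples of $w_k$, and the last-visit argument then gives a spatial half-crossing of the box centred at one of them. With the fix, your route is the same argument in a slightly more hands-on form.
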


\begin{proof}
  The event $T(B_{k+1})$ requires a genealogical path $\gamma: [0, \ell_{k + 1}/2] \to \Z$ such that $\gamma(t) \in I_{k + 1} = [-w_{k+1}, w_{k+1}]$ for all $t$. In particular, $\gamma$ is defined both
  during the initial interval $[0, \ell_k]$ and during the later interval $[\ell_{k+1}/2 - \ell_k, \ell_{k+1}/2]$. The vertical gap between these two intervals is
  \begin{equation}
    \frac{\ell_{k+1}}{2} - 2\ell_k = \Bigl(\frac{\alpha}{2}-2\Bigr) \ell_k \geq \frac{\alpha}{4} \ell_{k} = \frac{1}{4}\ell_{k+1},
  \end{equation}
  since $\alpha = 8$.

  We now show that the path produces a half-crossing of a scale-$k$ box during each time interval, by applying an argument similar to the one in \cref{lem:survival_implies_H}.

  We cover $I_{k+1}$ with the $2\alpha + 3$ windows $W_j = [(j-1)w_k, (j + 1)w_k]$ for $j = -\alpha - 1, -\alpha, \ldots, \alpha + 1$. Each $W_j$ has width $2w_k$, consecutive windows are spaced $w_k$ apart, and $\bigcup_j W_j \supset I_{k + 1}$.

  Consider the time interval $[0, \ell_k]$. In case $\gamma|_{[0,\ell_k]}$ does not intersect 4 distinct $W_j$, then $\gamma|_{[0, \ell_k]} \subset [(j - 1)w_k, (j + 1)w_k]$ for some $j$, and its trace lies entirely within $W_j\times[0,\ell_k]$, providing a temporal half-crossing $T(B)$ for the scale-$k$ box $B=W_j\times[0,\ell_k]$.

  If instead it intersects four distinct windows $W_{j}, W_{j+1}, W_{j + 2}, W_{j + 3}$, let $t_1 \leq \ell_k$ be the smallest time for which $\gamma|_{[0, t_1]}$ still does so. Observe that in this event $\gamma|_{[0, t_1]}$ must cross at least one of the intervals $[jw_k, (j + 1)w_k]$ and $[(j + 1)w_k, (j+2)w_k]$. Suppose it crosses from $(j+1)w_k$ to $(j+2)w_k$ and let $t_0 = \sup\{t \leq t_1: \gamma(t) = (j+1)w_k\}$. The discrete nature of the jumps implies there exists some time interval $[t_0', t_0)$ for which $\gamma(t) = (j+1)w_k$ for all $t \in [t_0', t_0]$. The restriction $\gamma|_{[t_0',t_1]}$ starts at the centerline $(j+1)w_k$ of $B=W_{j+1} \times [0,\ell_k]$ at time $t_0'$ and reaches the right boundary $(j+2)w_k$ at time $t_1$ while remaining in the right half $[(j+1)w_k,(j+2)w_k]$, yielding $S^+(B)$. The symmetric case gives $S^-(B)$. In all cases $H(B)$ occurs for some $B$ in the collection $\mathcal{B}_0 = \{W_j \times [0, \ell_k] : j = -\alpha - 1, \ldots, \alpha + 1\}$.

  An identical argument applied to the time interval $[\ell_{k+1}/2 - \ell_k, \ell_{k+1}/2]$ produces a
  second collection $\mathcal{B}_0'$. Every pair $(B, B')$ with $B \in \mathcal{B}_0$ and $B' \in
  \mathcal{B}_0'$ has vertical separation at least $\ell_{k+1}/2 - 2\ell_k \geq \ell_{k + 1}/4$, and both
  collections have cardinality $2\alpha + 3$, proving the lemma.
\end{proof}

\begin{lemma}[Cascading property for spatial half-crossings]
  \label{lem:spatial_cascading}
  Fix $k \ge 0$.
  There exist collections
  $\mathcal{B}_1$ and $\mathcal{B}_1'$ of scale-$k$ boxes such that
  \begin{equation}
    S^+(B_{k+1}) \subset \bigcup_{(B, B') \in \mathcal{B}_1 \times \mathcal{B}_1'} H(B)
    \cap H(B').
  \end{equation}
  Moreover, $|\mathcal{B}_1| = |\mathcal{B}_1'| = 2\alpha + 3$, and any pair of boxes in $\mathcal{B}_1$ and $\mathcal{B}_1'$ have horizontal separation at least $w_{k+1} / 2$. An analogous statement holds for $S^-(B_{k+1})$ with collections $\mathcal{B}_2$ and $\mathcal{B}_2'$.
\end{lemma}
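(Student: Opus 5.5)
The plan is to mirror the proof of \cref{lem:temporal_cascading}, replacing the two separated time windows by two separated spatial windows. Recall that $S^+(B_{k+1})$ provides a genealogical path $\gamma:[t_0,t_1]\to\Z$ with $\gamma(t_0)=0$, $\gamma(t_1)=w_{k+1}$, staying in $[0,w_{k+1}]$ and with all times in $[0,\ell_{k+1}]$. Since the path is nearest-neighbour, it must visit every site between $0$ and $w_{k+1}$. I will focus on two spatial strips of width $2w_k$. The first, $J=[w_k,3w_k]$, sits near the left end. The second, $J'=[w_{k+1}-3w_k,\,w_{k+1}-w_k]$, sits near the right end. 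Their horizontal gap is $w_{k+1}-6w_k=(\alpha-6)w_k=w_{k+1}/4$ for $\alpha=8$. This is not quite $w_{k+1}/2$, so I will shrink the strips toward the ends: take the centres at $w_k$ and $w_{k+1}-w_k$, so the strips are $[0,2w_k]$ and $[w_{k+1}-2w_k,w_{k+1}]$. The gap is then $w_{k+1}-4w_k=w_{k+1}/2$ exactly, which is what the statement requires.

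Next I will produce a scale-$k$ half-crossing inside each strip. Let $c=w_k$ be the centre of the first strip. Let $\tau$ be the first time $\gamma$ hits $2w_k$, and set $\sigma=\sup\{t\le\tau:\gamma(t)=w_k\}$. By the discreteness argument of \cref{lem:survival_implies_H}, $\gamma$ sits at $w_k$ on some interval $[\sigma',\sigma)$. Then $\gamma|_{[\sigma',\tau]}$ stays in $[w_k,2w_k]$ and goes from the centre to the right boundary. There is one issue: this restriction need not lie within a single time window of length $\ell_k$. So I discretize time: I cover $[0,\ell_{k+1}]$ by the boxes $[0,2w_k]\times[m\ell_k/2,\,m\ell_k/2+\ell_k]$, for $m=0,\dots,2\alpha$ (this gives about $2\alpha+1$ boxes, and I pad to $2\alpha+3$). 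Within such a box, if the crossing segment lasts longer than $\ell_k/2$, then its initial piece of duration $\ell_k/2$ lies in the box $\bigl(w_k,m\ell_k/2\bigr)+B_k$, where $m$ is chosen with $\sigma'\in[m\ell_k/2,(m+1)\ell_k/2)$. That piece starts at the bottom of the box, remains within $[w_k\pm w_k]$ (it stays in $[w_k,2w_k]$ until $\tau$), and reaches height $\ell_k/2$. This yields a temporal half-crossing, except that $T(B)$ is defined as starting from the bottom line. To fix this I will instead use boxes with bottoms at $\sigma'$-adapted grid times, and argue as in \cref{lem:temporal_cascading}: either the segment finishes within time $\ell_k/2$, giving $S^+$ in the box based at time $m\ell_k/2$, or it survives $\ell_k/2$ inside the strip starting from the grid time $(m+1)\ell_k/2$, giving $T$ for the box based there. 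In either case $H(B)$ occurs for some $B\in\mathcal B_1$. The same argument applied to the last visit of $w_{k+1}-w_k$ before $\gamma$ reaches $w_{k+1}$ produces $B'\in\mathcal B_1'$. The case of $S^-$ follows by reflection.

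I expect the main obstacle to be the bookkeeping of time windows. The spatial crossing may take an arbitrarily long time, up to $\ell_{k+1}$. It must be converted into a half-crossing of a box with grid-aligned bottom and height $\ell_k$, while keeping exactly $2\alpha+3$ boxes in each collection and ensuring that the temporal half-crossing starts from the box's bottom edge. Verifying that the grid spacing $\ell_k/2$ and the counts match $2\alpha+3$ (for example, by using spacing $\ell_k/2$ over $[0,\ell_{k+1}]$ plus boundary boxes) is where care is needed. The horizontal separation itself is immediate from the choice of strips.
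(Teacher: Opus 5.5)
Your proposal is correct and follows the paper's route. You use the same two strips $[0,2w_k]$ and $[w_{k+1}-2w_k,w_{k+1}]$, with gap $w_{k+1}-4w_k=w_{k+1}/2$, and the same last-visit-of-the-centre device. Your time-grid argument supplies the step the paper only calls ``analogous reasoning''. One fix: state the dichotomy as $\tau\le (m+2)\ell_k/2$, giving $S^+$ in the box based at $m\ell_k/2$, versus $\tau>(m+2)\ell_k/2$, giving $T$ in the box based at $(m+1)\ell_k/2$. As you phrased it, the case $\sigma'+\ell_k/2<\tau<(m+2)\ell_k/2$ is not literally covered.
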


\begin{proof}
  On $S^+(B_{k+1})$, a genealogical path $\gamma$ crosses the right half $B_{k+1}^+=[0,
  w_{k+1}] \times [0, \ell_{k+1}]$ from left to right. In particular, $\gamma$ crosses both the left
  strip $[0, 2w_k] \times [0, \ell_{k+1}]$ and the right strip $[w_{k+1}-2w_k, w_{k+1}] \times [0,
  \ell_{k+1}]$. The horizontal distance between these strips is $w_{k+1} - 4w_k \geq w_{k+1}/2$, and a reasoning analogous to the previous lemma proves the statement.
\end{proof}

\cref{lem:temporal_cascading,lem:spatial_cascading} imply that on each of the half-crossing events $S^+(B_{k+1})$, $S^-(B_{k+1})$, and $ T(B_{k+1})$ that entail the occurrence of $H(B_{k+1})$ one can find a pair of well-separated $k$-scale boxes where half-crossings take place.
The number of choices for such a pair is at most $(2\alpha + 3)^2$ for each one of these $3$ half-crossing events, accounting for a total of $3(2\alpha + 3)^2$ choices.
The following lemma records the result where the entropy constant $\ubc{C:entropy}=\ubc{C:entropy}(\alpha)$ captures the total number of candidate pairs.

\newbigconstant{C:entropy}
\begin{lemma}[Cascading half-crossings]
  \label{lem:cascading}
  For any $k \geq 0$,
  \begin{equation}\label{eq:cascading_union}
    H(B_{k+1}) \subset \bigcup_{j = 0}^{2} \bigcup_{(B, B') \in \mathcal{B}_j \times
    \mathcal{B}_j'} H(B) \cap H(B').
  \end{equation}
  In words, whenever $H(B_{k+1})$ occurs, there exists a pair of scale-$k$ boxes $B,B'$ that are separated vertically
  by at least $\ell_{k+1}/4$ or horizontally by at least $w_{k+1}/2$ such that $H(B)$ and $H(B')$ occur.
  The total number of candidate pairs of boxes $B$, $B'$ is at most
  \begin{equation}\label{eq:entropy_bound}
    \ubc{C:entropy} = 3(2\alpha + 3)^2.
  \end{equation}
\end{lemma}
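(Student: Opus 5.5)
This is essentially a bookkeeping consequence of \cref{lem:temporal_cascading,lem:spatial_cascading}, and I would prove it directly from the definition of $H$. By \eqref{eq:half_crossing_union}, $H(B_{k+1}) = S^+(B_{k+1}) \cup S^-(B_{k+1}) \cup T(B_{k+1})$. The plan is to treat each of the three events separately and then take the union.

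\emph{Temporal half-crossings.} \cref{lem:temporal_cascading} gives the inclusion of $T(B_{k+1})$ into $\bigcup_{\mathcal{B}_0\times\mathcal{B}_0'} H(B)\cap H(B')$. Every pair in this family has vertical separation at least $\ell_{k+1}/4$.

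\emph{Right spatial half-crossings.} \cref{lem:spatial_cascading} gives the corresponding inclusion of $S^+(B_{k+1})$, with collections $\mathcal{B}_1,\mathcal{B}_1'$. Every pair there has horizontal separation at least $w_{k+1}/2$.

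\emph{Left spatial half-crossings.} The analogous statement for $S^-(B_{k+1})$, with collections $\mathcal{B}_2,\mathcal{B}_2'$, follows by the reflection $x\mapsto -x$. This symmetry maps genealogical paths to genealogical paths, since the nearest-neighbour dynamics and the healing marks are symmetric.

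Taking the union of the three inclusions yields \eqref{eq:cascading_union}. The verbal restatement about separation then follows pair by pair, using the separation guarantees recorded in each case.

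For the count, each $\mathcal{B}_j$ and $\mathcal{B}_j'$ has cardinality $2\alpha+3$. Hence each $j$ contributes at most $(2\alpha+3)^2$ pairs, and summing over $j\in\{0,1,2\}$ gives at most $3(2\alpha+3)^2=\ubc{C:entropy}$ pairs, which is \eqref{eq:entropy_bound}.

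The only step needing some care is the one \cref{lem:spatial_cascading} only sketches: checking that a left-to-right crossing of the strips $[0,2w_k]\times[0,\ell_{k+1}]$ and $[w_{k+1}-2w_k,w_{k+1}]\times[0,\ell_{k+1}]$ really produces a half-crossing of a scale-$k$ box from a collection of size $2\alpha+3$. These strips are taller than a scale-$k$ box ($\ell_{k+1}$ versus $\ell_k$), so I would cover the time direction by windows of height $\ell_k$ and, within each, argue exactly as in \cref{lem:temporal_cascading}. The path either stays in one spatial window long enough to give a temporal half-crossing, or performs a spatial half-crossing from a window centerline, using the last-visit-time argument of \cref{lem:survival_implies_H}.

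If this covering uses more than $2\alpha+3$ boxes, only the numerical value of $\ubc{C:entropy}$ changes; it remains a constant depending on $\alpha$ alone. That is all that will matter in the recursion $p_{k+1}\le \ubc{C:entropy}\,p_k^2+\text{(decoupling error)}$. Since the paper takes the cardinalities from \cref{lem:spatial_cascading} as stated, the count $3(2\alpha+3)^2$ follows directly.
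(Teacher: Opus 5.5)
Your proposal is correct and follows the paper's own route: the paper records this lemma as an immediate consequence of \cref{lem:temporal_cascading,lem:spatial_cascading}, with the $S^-$ case being the mirror image of $S^+$, by taking the union of the three inclusions and counting at most $(2\alpha+3)^2$ pairs for each of the three events. Your side remark is also well founded: the strips in \cref{lem:spatial_cascading} have height $\ell_{k+1}=\alpha\ell_k$, so time-translates of scale-$k$ boxes are genuinely needed there, and the time windows should overlap (e.g.\ offsets $\ell_k/4$) because the crossing segment need not fill a whole window; the cardinality $2\alpha+3$ is therefore not really justified by the sketch, but as you say this only changes the value of $\ubc{C:entropy}$ and is harmless in \cref{prop:recursive,lem:contraction}.
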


\subsection{Sprinkling and recursive estimates}

We now derive the recursive inequality for the half-crossing probabilities. To do so, we bound from above the probability of two joint half-crossing events in well-separated boxes by employing decoupling inequalities. 
When these boxes are separated horizontally, we apply horizontal decoupling (\cref{thm:horizontal_decoupling}), whereas for vertically separated boxes, we use vertical decoupling (\cref{thm:vertical_decoupling}). 
This latter step requires a slight increase in the density---a sprinkling procedure---when descending from scale $k+1$ to scale $k$, which is instrumental in achieving the decoupling. 
Indeed, the covariance of the pair of half-crossings decays much slower than needed here. This slow decay stems from the conservation of particles, as a fraction of them can potentially visit both boxes and introduce long-range dependencies.

Fix the sprinkling parameter $s = 1/10$ and the limit density $\rho_\infty = \ell_0^{-s}$. Define the sprinkling increments
\begin{equation}
\label{eq:sprinkling}
  \delta_k = \ell_k^{-s}, \qquad k \geq 0.
\end{equation}
Since $\ell_k=\alpha^k \ell_0$ grows exponentially, the series $\sum_{k=0}^\infty \delta_k$ converges.
We define the \emph{decreasing} sequence of densities $(\rho_k)_{k=0,\ldots,\infty}$ as
\begin{equation}\label{eq:density_sequence}
  \rho_k = \rho_\infty + \sum_{j = k}^\infty \delta_j.
\end{equation}
In particular, the initial density is given by
\newbigconstant{C:rho0}
\begin{equation}\label{eq:rho0_bound}
  \rho_0  = \rho_\infty + \ubc{C:rho0} \ell_0^{-s} = \Bigl(1 +\frac{1}{1 - \alpha^{-s}}\Bigr) \ell_0^{-s}.
\end{equation}

For each $k \geq 0$, define the half-crossing probability at density $\rho_k$ by
\begin{equation}\label{eq:pk_def}
  p_k = \P_{\rho_k} \big( H(B_k) \big).
\end{equation}

We wish to show that the sequence $p_k$ vanishes as $k$ increases.
The first step towards that goal is to obtain the following recursive inequality.
\newconstant{c:eps_decay}
\newbigconstant{C:eps_decay}
\begin{proposition}[Recursive inequality]
  \label{prop:recursive} For each $k \ge 0$
  \begin{equation}
    \label{eq:recursive}
    p_{k+1} \le \ubc{C:entropy} p_k^2 + \ubc{C:entropy} \varepsilon_k,
  \end{equation}
  where the decoupling error $\varepsilon_k$ is given by
  \begin{equation}\label{eq:eps_decay}
    \varepsilon_k = \ubc{C:eps_decay} \ell_k^4 \Big( \rho_0 \exp\braces{-\uc{c:eps_decay} \ell_k^{1/3}} + \exp\braces{-\uc{c:eps_decay} \rho_0^{-1} \ell_k^{1/3 - 2s}} \Big)
  \end{equation}
  for constants $\ubc{C:eps_decay} = \ubc{C:eps_decay}(s, \Gamma_+, \Gamma_-)$ and $\uc{c:eps_decay} = \uc{c:eps_decay}  (s, \Gamma_+, \Gamma_-)>0$ depending only on the parameters indicated.
\end{proposition}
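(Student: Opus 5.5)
We apply \cref{lem:cascading} with a union bound over the at most $\ubc{C:entropy}$ candidate pairs. This gives
\begin{equation*}
  p_{k+1} = \P_{\rho_{k+1}}\big(H(B_{k+1})\big) \le \sum_{j=0}^{2}\sum_{(B,B')\in\mathcal{B}_j\times\mathcal{B}_j'} \P_{\rho_{k+1}}\big(H(B)\cap H(B')\big).
\end{equation*}
It then suffices to show, for every candidate pair, that $\P_{\rho_{k+1}}(H(B)\cap H(B'))\le p_k^2+\varepsilon_k$. The events $H(B)$ and $H(B')$ are supported in $B$ and $B'$. By \cref{rmk:horizontal-graphical,rmk:vertical-graphical}, the decoupling theorems apply to their indicators even though they also depend on the jump clocks and healing marks inside the boxes. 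By \cref{lem:monotonicity}, half-crossing events are non-decreasing in the particle configuration (with healing marks fixed). By translation invariance of $\mu_\rho$ and of the graphical construction, $\P_\rho(H(B))=\P_\rho(H(B_k))$ for every scale-$k$ box $B$. Finally, since $\rho_{k+1}\le\rho_k$, monotonicity in the density gives $\P_{\rho_{k+1}}(H(B_k))\le p_k$.

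For the \emph{vertically separated} pairs (those from $\mathcal{B}_0\times\mathcal{B}_0'$), we apply \cref{thm:vertical_decoupling} with density $\rho=\rho_{k+1}$ and sprinkling $\varepsilon=\delta_k/\rho_{k+1}$, so that $\rho(1+\varepsilon)=\rho_k$. This yields $\P_{\rho_{k+1}}(H(B)\cap H(B'))\le p_k^2+\text{error}$. The scale-$k$ boxes are not cubes; they have width $2w_k$ and height $\ell_k$. We therefore enclose them in space-time boxes of side $w=2w_k=2\ell_0\ell_k$; only the side length enters the theorem. The vertical distance is $\dist_V\ge\ell_{k+1}/4\ge\ell_k$. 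We need $\varepsilon<1$, i.e.\ $\delta_k<\rho_{k+1}$, which holds because $\rho_{k+1}\ge\rho_\infty=\ell_0^{-s}\ge\ell_k^{-s}=\delta_k$. The equality case $k=0$ can be avoided by a harmless adjustment of constants, or by noting the theorem only needs $\varepsilon\le 1$.

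The two error terms are handled as follows.
\begin{itemize}
  \item The first, $\rho\exp\{-c\dist_V^{1/3}\}$, is at most $\rho_0 e^{-c\ell_k^{1/3}}$, since $\rho_{k+1}\le\rho_0$.
  \item The second has exponent $\rho\varepsilon^2\dist_V^{1/3}=\delta_k^2\rho_{k+1}^{-1}\dist_V^{1/3}\ge \rho_0^{-1}\ell_k^{-2s}\ell_k^{1/3}$, which is exactly the second term in \eqref{eq:eps_decay}.
  \item The polynomial prefactor is $(w+\dist_V)^{d+1}$ with $d=1$, which is $O\big((\ell_0\ell_k+\ell_k)^2\big)$. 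Since $\ell_0\le\ell_k$, this is $\le C\ell_k^4$.
\end{itemize}

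For the \emph{horizontally separated} pairs (from $\mathcal{B}_1\times\mathcal{B}_1'$ and $\mathcal{B}_2\times\mathcal{B}_2'$), no sprinkling is needed. We write $\P_{\rho_{k+1}}(H(B)\cap H(B'))\le \P_{\rho_{k+1}}(H(B))^2+|\cov_{\rho_{k+1}}|$, and the first term is $\le p_k^2$ by density monotonicity. We then apply \cref{thm:horizontal_decoupling} with $w=2w_k$, $T=\ell_{k+1}$ (we may shift time so the boxes sit in $[0,T]$), and $\dist_H\ge w_{k+1}/2=4\ell_0\ell_k\ge 8$. Since $\dist_H/T\ge\ell_0/2\ge 1$, the bound is at most $C\ell_0\ell_k\,\rho_0\,\ell_k^{1/2}e^{-c\ell_0\ell_k}\le C\ell_k^4\rho_0e^{-c\ell_k^{1/3}}$. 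This is dominated by the first term of $\varepsilon_k$.

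Combining both cases and taking $\ubc{C:eps_decay}$ large and $\uc{c:eps_decay}$ small enough to cover all the constants gives \eqref{eq:recursive}. The main obstacle is the vertical step. There one must check carefully that the sprinkling increment is admissible ($\varepsilon<1$), and that the error depends on $\rho_{k+1}$ only through the upper bound $\rho_0$ and the lower bound on $\rho_{k+1}$ used above. This is precisely where the uniformity in $\rho$ of \cref{thm:vertical_decoupling} is needed. A minor additional point is confirming that the decoupling theorems apply to non-cubic boxes after enclosing them in larger cubes.
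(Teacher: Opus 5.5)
Your proposal is correct and follows the paper's proof essentially step for step. Both use the cascading lemma plus a union bound, vertical decoupling with $\varepsilon=\delta_k/\rho_{k+1}$ (so that $\rho_{k+1}(1+\varepsilon)=\rho_k$) for the temporally separated pairs, horizontal decoupling for the spatially separated pairs, and the bound $\rho_{k+1}\le\rho_0$ together with monotonicity in the density to obtain $p_k^2$ and the stated error terms.

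Two small remarks.
\begin{itemize}
\item There is no equality case at $k=0$, since $\rho_{k+1}>\rho_\infty=\ell_0^{-s}\ge\delta_k$ holds strictly.
\item If you enclose the scale-$k$ boxes in cubes of side $2w_k=2\ell_0\ell_k$, that side exceeds the vertical gap of $2\ell_k$. You must therefore extend the earlier box backward and the later box forward in time, using stationarity, to keep the vertical separation. The paper avoids this by taking $w=2w_k$ as the spatial width only; the proof of the vertical decoupling needs only temporal length at most $w$.
\end{itemize}
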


\begin{proof}
  Using \cref{lem:cascading} and a union bound, we get
  \begin{equation}
    \label{eq:union_bound_cascading}
    p_{k+1} = \P_{\rho_{k+1}}\big( H(B_{k+1}) \big)
    \le \ubc{C:entropy} \max_{(B, B')} \P_{\rho_{k+1}} \big( H(B) \cap H(B') \big),
  \end{equation}
  where the maximum ranges over the collection of at most $\ubc{C:entropy}$ pairs of scale-$k$ boxes $(B, B')$ separated vertically by at least distance $\ell_{k+1} / 4$ or horizontally by at least distance $w_{k+1} / 2$.
  Since $H(B)$ is non-decreasing (see \cref{lem:monotonicity}) and $\mu_\rho$ is stochastically increasing in $\rho$, the monotone coupling gives $\P_{\rho_{k+1}}(H(B)) \leq \P_{\rho_k}(H(B)) = p_k$ for every scale-$k$ box $B$.

  We now bound the right-hand side of \eqref{eq:union_bound_cascading} by dividing it into two cases depending on the separation between $B$ and $B'$.

  For pairs that are vertically well-separated, we apply \cref{thm:vertical_decoupling}.
  The theorem requires a multiplicative sprinkling increment, so we set $\varepsilon = \delta_k / \rho_{k+1}$, which gives $\rho_{k+1}(1 + \varepsilon) = \rho_{k+1} + \delta_k = \rho_k$.
  To bound the error terms, we note that $\rho_{k+1} \le \rho_0 \le \rho_\infty + \ubc{C:rho0}$, uniformly over all choices of $k$ and $\ell_0$.
  Applying \cref{thm:vertical_decoupling} with the spatial width parameter $w = 2w_k$ and vertical distance $\ell_{k+1}/4 \leq \dist_V \leq \ell_{k+1}$, we obtain
  \begin{equation}
    \P_{\rho_{k+1}} \big( H(B)\cap H(B') \big) \leq p_k^2 + \varepsilon_k^{(V)},
  \end{equation}
  where, since $\delta_k = \ell_k^{-s}$ and $\ell_{k+1} = \alpha \ell_k$,
  \begin{equation}
    \begin{aligned}\label{eq:eps_V}
      \varepsilon_k^{(V)} &= \ubc{C:vertical_decoupling} (2w_k + \ell_{k+1})^2 \Big( \rho_{k+1} \exp\braces{-\uc{c:vertical_decoupling} (\ell_{k+1}/4)^{1/3}} \\
      &\hspace{100pt}+ \exp\braces{-\uc{c:vertical_decoupling} \rho_{k+1} \varepsilon^2 (\ell_{k+1}/4)^{1/3}} \Big) \\
      &\le C \ell_k^4 \Big( \rho_0 \exp\braces{-c \ell_k^{1/3}} + \exp\braces{-c \rho_{k+1} (\delta_k/\rho_{k+1})^2 \ell_k^{1/3}} \Big) \\
      &\le C \ell_k^4 \Big( \rho_0 \exp\braces{-c \ell_k^{1/3}} + \exp\braces{-c\, \rho_0^{-1} \ell_k^{1/3-2s}} \Big).
    \end{aligned}
  \end{equation}

  For horizontally separated pairs, the horizontal distance satisfies $\dist_H\ge w_{k+1}/2$ and the temporal projections lie in an interval of length at most $\ell_{k+1}$. Applying \cref{thm:horizontal_decoupling} gives
  \begin{equation}
    \P_{\rho_{k+1}} \big( H(B)\cap H(B') \big) \leq p_k^2 + \varepsilon_k^{(H)},
  \end{equation}
  where, since $\rho_{k+1} \le \rho_0$ and $w_{k+1} = \alpha \ell_0 \ell_k \ge \ell_k$,
  \begin{equation}\label{eq:eps_H}
    \begin{aligned}
      \varepsilon_k^{(H)} &= \ubc{C:horiz_decoupling} \rho_{k+1} \sqrt{\ell_{k+1}} \exp\braces{-\uc{c:horiz_decoupling} (w_{k+1}/2) \log(1+w_{k+1}/(2\ell_{k+1}))} \\
      &\le C \rho_0 \ell_k^{1/2} \exp\braces{-c \ell_k}.
    \end{aligned}
  \end{equation}

  Finally, we can define constants $\ubc{C:eps_decay}, \uc{c:eps_decay} > 0$ so that the sum $\varepsilon_k^{(V)} + \varepsilon_k^{(H)}$ is bounded by $\varepsilon_k$ as defined in \eqref{eq:eps_decay}. This is possible since the exponent $\ell_k$ in the horizontal error grows much faster than $\ell_k^{1/3}$, meaning the horizontal error is easily absorbed into the first term of $\varepsilon_k^{(V)}$.
\end{proof}

\begin{lemma}[Contraction]\label{lem:contraction} Suppose $p_0 \leq 1/(4 \ubc{C:entropy})$ and
  $\ubc{C:entropy}^2 \varepsilon_k \leq (1/2)^{k + 4}$ for all $k \geq 0$. Then
  $p_k \leq (1/2)^{k + 2}/\ubc{C:entropy}$ for every $k \geq 0$.
\end{lemma}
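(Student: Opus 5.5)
The plan is a straightforward induction on $k$, using the recursive inequality of \cref{prop:recursive} as the only input. The claim $p_k \le (1/2)^{k+2}/\ubc{C:entropy}$ holds at $k=0$ directly from the hypothesis $p_0 \le 1/(4\ubc{C:entropy}) = (1/2)^{2}/\ubc{C:entropy}$. For the inductive step, I assume $p_k \le (1/2)^{k+2}/\ubc{C:entropy}$ and plug this into \eqref{eq:recursive}.

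The two terms of the recursion are handled separately. For the quadratic term,
\[
\ubc{C:entropy} p_k^2 \le \ubc{C:entropy}\cdot \frac{(1/2)^{2k+4}}{\ubc{C:entropy}^2} = \frac{(1/2)^{2k+4}}{\ubc{C:entropy}} \le \frac{(1/2)^{k+4}}{\ubc{C:entropy}},
\]
using $2k+4 \ge k+4$ for $k\ge 0$. For the error term, the hypothesis $\ubc{C:entropy}^2\varepsilon_k \le (1/2)^{k+4}$ gives
\[
\ubc{C:entropy}\varepsilon_k \le \frac{(1/2)^{k+4}}{\ubc{C:entropy}}.
\]

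Adding the two bounds gives
\[
p_{k+1} \le \frac{2\,(1/2)^{k+4}}{\ubc{C:entropy}} = \frac{(1/2)^{k+3}}{\ubc{C:entropy}},
\]
which is exactly the claim at level $k+1$, closing the induction.

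There is no real obstacle. The only thing to check is that the exponents line up: the squaring gains a factor $(1/2)^{k+2}$, which is more than enough, and the error hypothesis was calibrated so that the two halves sum to the target. The genuine difficulty, verifying $\ubc{C:entropy}^2\varepsilon_k\le(1/2)^{k+4}$ for all $k$ and $p_0$ small by choosing $\ell_0$ large, is deferred to the application of this lemma and is not part of this statement.
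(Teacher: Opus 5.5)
Your proposal is correct and matches the paper's proof: both run an induction on $k$ driven by the recursive inequality $p_{k+1} \le \ubc{C:entropy} p_k^2 + \ubc{C:entropy}\varepsilon_k$, bounding each of the two terms by half the target $(1/2)^{k+3}/\ubc{C:entropy}$. The only cosmetic difference is that the paper first rescales to $q_k = \ubc{C:entropy} p_k$, so the recursion reads $q_{k+1} \le q_k^2 + \ubc{C:entropy}^2\varepsilon_k$, whereas you keep $p_k$ and carry the factor $1/\ubc{C:entropy}$ through the computation.
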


\begin{proof}
  Set $q_k = \ubc{C:entropy} p_k$. Multiplying \eqref{eq:recursive} by $\ubc{C:entropy}$ gives
  $q_{k+1} \leq q_k^2 + \ubc{C:entropy}^2 \varepsilon_k$. The hypothesis $p_0 \leq 1/(4
  \ubc{C:entropy})$
  gives $q_0 \leq 1/4 = (1/2)^2$. Suppose inductively that $q_k \leq (1/2)^{k + 2}$. Then
  $q_k^2 \le (1/2)^{2k + 4} \leq (1/2)^{k + 4}$, and together with
  $\ubc{C:entropy}^2 \varepsilon_k \leq (1/2)^{k + 4}$ we obtain
  \begin{equation}
    q_{k+1} \leq (1/2)^{k + 4} + (1/2)^{k + 4} = (1/2)^{k + 3}.
  \end{equation}
  This concludes the proof by induction on $k$.
\end{proof}

\subsection{Local cluster structure and the triggering event for small densities}

In the induction procedure from \cref{lem:contraction}  the assumption that the probability of a half-crossing event in the base-scale $p_0$ is small was given as an input.
To complete the proof that the sequence $p_k$ contracts exponentially fast in $k$, we now need to show that $p_0$ can indeed be taken small by driving the density at the base-scale to zero.
This is however a delicate procedure due to the fact that particles are needed in order for the sprinkling in the vertical decoupling to be carried out.
In other words, on one hand we would like to have very few particles, but on the other hand we would like to have enough of them to be able to perform the sprinkling.

Our argument relies on understanding the initial particle configuration under $\mu_{\rho_{0}}$.
For controlling these configurations we introduce the cluster structure and the triggering event.
We start by fixing some parameters whose values will be specified during the proof of \cref{thm:immunity_low_rho} in \cref{sec:proof_immunity_low_rho}:
\begin{itemize}
   \item $\beta \in (0, 1)$, a connectivity exponent for the cluster;
  \item $m \in \mathbb{N}$, a cluster capacity threshold;
\end{itemize}
\begin{definition}[Clusters and triggering event]\label{def:clusters_good_event}
  Given an initial configuration $\eta_0$, we partition the occupied sites into \emph{clusters}. 
  Two occupied sites $x$ and $x'$ belong to the same cluster if they can be connected by a chain of occupied sites $x=x_0< x_1 < \cdots < x_k=x'$ where consecutive
  sites are at distance at most $\ell_0^\beta$, that is $|x_{i+1}-x_{i}| < \ell_0^{\beta}$.

  Let $I_{\mathrm{ext}} = [-4w_0, 4w_0]$ be the extended base-scale box and let $I_{\mathrm{in}} = [-2w_0, 2w_0]$.
  The \emph{triggering event} is the intersection of four events
  \begin{equation}
    \T = \Ecluster \cap \Eout \cap \Ein \cap \Eheal,
  \end{equation}
  where:
  \begin{itemize}
    \item $\Ecluster$: every cluster intersecting $I_{\mathrm{ext}}$ contains strictly less than $m$
      particles;
    \item $\Eout$: no particle starting outside $I_{\mathrm{ext}}$ visits $I_{\mathrm{in}}$ during the time interval
      $[0, \ell_0^{\beta/2}]$;
    \item $\Ein$: every particle initially in $I_{\mathrm{ext}}$ displaces by at most $\ell_0^{\beta/2}$ from its
      starting position during $[0, \ell_0^{\beta/2}]$;
    \item $\Eheal$: every particle in any cluster initially intersecting $[-w_0, w_0]$ fully heals by time
      $\ell_0^{\beta/2}$.
  \end{itemize}
\end{definition}

\begin{figure}[tpb]
  \centering
  \includegraphics[width=\linewidth]{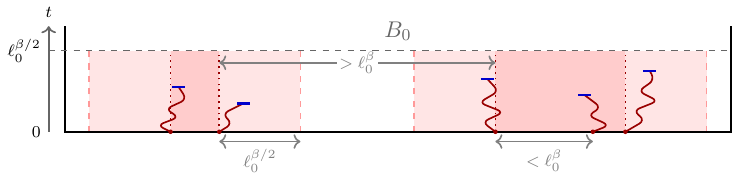}
  \caption{Cluster structure and the triggering event at the base-scale.}
  \label{fig:cluster-healing}
\end{figure}

\begin{remark}\label{rmk:triggering-event}
The event $\T$ (depicted in \cref{fig:cluster-healing}) is contained in the event $T(B_0)^c$: on $\T$, all particles in any cluster initially intersecting $[-w_0, w_0]$ heal by time $T = \ell_0^{\beta/2} < \ell_0/2$. Moreover, $\Ein$ ensures that no particle in these clusters leaves $I_{\mathrm{in}}$ or shares a site with a particle from another cluster that intersects $I_{\mathrm{ext}}$ during $[0, T]$, while $\Eout$ prevents any particle from outside $I_{\mathrm{ext}}$ entering $I_{\mathrm{in}}$. Any genealogical path starting from $[-w_0,w_0]\times\{0\}$ is therefore completely isolated and killed before time $T$.
\end{remark}

The previous remark implies that bounding $\T^c$ is sufficient for bounding $T(B_0)$. This will be done over the following lemmas.  

\newbigconstant{C:big_cluster}
\begin{lemma}[Small clusters]\label{lem:big_cluster} There exists a constant $\ubc{C:big_cluster}>0$
  such that
  \begin{equation}\label{eq:big_cluster}
    \P_{\rho_0} \big( \Ecluster^c \big)
    \le \ubc{C:big_cluster} \ell_0^{2 + (m - 1)\beta - ms}.
  \end{equation}
\end{lemma}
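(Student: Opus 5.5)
We bound the probability of a big cluster by a union bound over possible "skeletons" of $m$ particles, using the Poisson domination of \cref{lem:poisson_domination}. If $\Ecluster$ fails, some cluster $\mathcal{C}$ intersecting $I_{\mathrm{ext}}$ contains at least $m$ particles. Ordering its occupied sites $x_0<x_1<\dots<x_k$ from left to right, consecutive sites are at distance less than $\ell_0^\beta$. We extract a minimal witness: walking from $x_0$ to the right and accumulating particles, stop at the first site $x_j$ at which the total count reaches $m$. Then the window $[x_0,x_j]$ contains at least $m$ particles. Since each of the $j\le m-1$ gaps is below $\ell_0^\beta$ (as $x_0,\dots,x_{j-1}$ each contribute at least one particle), its length is at most $(m-1)\ell_0^\beta$.

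To keep the count of configurations small, I would encode the witness as a "chain" event rather than a window event. Fix the leftmost site $x_0$ and the successive gaps $g_1,\dots,g_{j}\in[1,\ell_0^\beta)$. Fix also the multiplicities $n_0,\dots,n_j\ge1$ with $\sum n_i\ge m$, truncated so that $\sum n_i=m$ after lowering the last one. The event that $\eta_0(x_i)\ge n_i$ for all $i$ has probability $\prod_i \P(\eta_0(0)\ge n_i)$ by the product structure of $\mu_{\rho_0}$.

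By \cref{lem:poisson_domination} with $n=1$, each factor satisfies $\P(\eta_0(0)\ge n_i)\le \P(\poisson(\Gamma\rho_0)\ge n_i)\le (\Gamma\rho_0)^{n_i}$. Hence the product is at most $(\Gamma\rho_0)^m\le C\ell_0^{-ms}$, using $\rho_0\le C\ell_0^{-s}$ from \eqref{eq:rho0_bound}.

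The choices are counted as follows:
\begin{itemize}
\item $x_0$ ranges over at most $C w_0 = C\ell_0^2$ sites. The cluster must intersect $I_{\mathrm{ext}}$, and a cluster of fewer than $m$ sites before stopping extends at most $m\ell_0^\beta\ll w_0$ beyond it, so $x_0\in[-5w_0,5w_0]$ suffices. If the cluster starts far left of $I_{\mathrm{ext}}$, I would instead start the witness at the first cluster site to the left of $I_{\mathrm{ext}}$, or at the first one inside it.
\item The gaps give at most $\ell_0^{\beta j}\le \ell_0^{\beta(m-1)}$ choices.
\item The multiplicities give a constant $C(m)$ number of choices.
\end{itemize}
Multiplying yields $C\,\ell_0^{2}\cdot\ell_0^{(m-1)\beta}\cdot\ell_0^{-ms}$, which is \eqref{eq:big_cluster}.

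The main subtlety is the anchoring of the witness so that the count of starting points stays $O(w_0)$ and the number of gaps stays at most $m-1$. A cluster may be enormous, and may begin far outside $I_{\mathrm{ext}}$. I would handle this by anchoring at the leftmost cluster site lying in $[-4w_0-m\ell_0^\beta,4w_0]$ and reading $m$ particles in whichever direction stays connected. If fewer than $m$ particles lie to the right within the chain, I would read leftwards from the rightmost such site instead. Either way the witness is a connected chain of at most $m$ sites with at least $m$ particles, started from $O(\ell_0^2)$ positions.

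The constant $\ubc{C:big_cluster}$ will then depend on $m$, $\Gamma$ and $s$, which is acceptable since $m$ is fixed.
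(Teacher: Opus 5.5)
Your proof is correct and gives the same exponent, but the bookkeeping differs from the paper's.

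The paper never enumerates a chain. It covers the enlarged interval $[-4w_0-m\ell_0^\beta,\,4w_0+m\ell_0^\beta]$ by $O(\ell_0^{2-\beta})$ overlapping windows of length $w=2m\ell_0^\beta$, staggered by $w/2$. It observes that $m$ particles sitting on consecutive cluster sites span at most $w/2$, so they fall in a single window. It then bounds the probability that a window holds at least $m$ particles by $\lambda^m/m!$ with $\lambda=\Gamma\rho_0 w$, using the $n$-site form of \cref{lem:poisson_domination}.

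The two counts trade off against each other:
\begin{itemize}
\item you pay $\ell_0^{2}$ for the anchor and $\ell_0^{(m-1)\beta}$ for the gaps, against $(\Gamma\rho_0)^m$;
\item the paper pays $\ell_0^{2-\beta}$ for the window and $\ell_0^{m\beta}$ inside $\lambda^m$.
\end{itemize}
The products agree.

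The window route is shorter, since there are no gap sequences or multiplicity vectors to enumerate, and it carries over verbatim to $d\ge1$ with cubes. Your skeleton route needs only the one-site marginal and the product structure of $\mu_{\rho_0}$. It also makes explicit the anchoring that the paper passes over quickly when it says the $m$ particles ``must lie within the enlarged interval'': for a long cluster they have to be chosen near $I_{\mathrm{ext}}$.

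Your first bullet, which starts the walk at the leftmost site of the cluster, does not by itself give $O(w_0)$ anchors. What makes the count valid is the two-sided anchoring of your last paragraph: a run of at most $m$ consecutive cluster sites carrying at least $m$ particles, with one endpoint within $m\ell_0^\beta$ of $I_{\mathrm{ext}}$. That is the version to write down.
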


\begin{proof}
  On the event $\Ecluster^c$, some cluster intersecting $I_{\mathrm{ext}} = [-4w_0, 4w_0]$ has at least $m$ particles.
  This requires $m$ particles $x_1 \le \cdots \le x_m$ with $x_{i + 1} - x_i \le \ell_0^\beta$ for each
  $1 \le i \le m - 1$, so their span is at most $m \ell_0^\beta$. Since the cluster intersects $I_{\mathrm{ext}}$,
  these $m$ particles must lie within the enlarged interval $I = [-4w_0 - m \ell_0^\beta, 4w_0 + m \ell_0^\beta]$.

  Set $w = 2m \ell_0^\beta$ and cover $I$ with overlapping intervals $W_j$ of length $w$, staggered by
  $w/2$. The number of such intervals is $N_w \le 2|I|/w + 2 \le C \ell_0^{2 - \beta}$. Any set of
  diameter at most $w/2$ is contained in at least one $W_j$, so $\Ecluster^c$ implies that some
  $W_j$ contains at least $m$ particles.

  By \cref{lem:poisson_domination}, the particle count in $W_j$ is dominated by a Poisson random
  variable $Z$ with mean $\lambda = \Gamma \rho_0 w \le C \ell_0^{\beta - s}m$. The tail bound
  $\P(Z \ge m) \le \lambda^m/m! \le C \ell_0^{m\beta - ms}$ and a union bound over the $N_w$ intervals
  gives
  \begin{equation}
    \P_{\rho_0} \big( \Ecluster^c \big)
    \le N_w \cdot C \ell_0^{m\beta - ms}
    \le \ubc{C:big_cluster} \ell_0^{2 + (m - 1)\beta - ms}. \qedhere
  \end{equation}
\end{proof}

The next lemma ensures that particles from outside the extended interval do not interfere with the
local dynamics.

\newbigconstant{C:outside}
\begin{lemma}[Isolation from the outside]\label{lem:outside_particles} There exists a constant
  $\ubc{C:outside}$ such that
  \begin{equation}\label{eq:outside_bound}
    \P_{\rho_0} \big( \Eout^c \big)
    \le \ubc{C:outside} \rho_0 \ell_0^{\beta/4} e^{-\uc{c:crossing} \ell_0^2}.
  \end{equation}
\end{lemma}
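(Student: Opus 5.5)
The plan is to reduce $\Eout^c$ to crossing events for each boundary site of $I_{\mathrm{in}}$ and then apply \cref{lem:crossing_bound} with time horizon $T=\ell_0^{\beta/2}$.

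\textbf{Step 1 (reduction to two boundary sites).} On $\Eout^c$, some particle starting outside $I_{\mathrm{ext}}=[-4w_0,4w_0]$ visits $I_{\mathrm{in}}=[-2w_0,2w_0]$ during $[0,\ell_0^{\beta/2}]$. We work in $d=1$ with nearest-neighbour jumps, so such a particle must first hit one of the two endpoints $\pm 2w_0$. Its starting point $x$ satisfies $\abs{x-(\pm 2w_0)}\ge 2w_0=2\ell_0^2$ for the endpoint it reaches. Hence $\Eout^c$ is contained in the union, over $z\in\{-2w_0,2w_0\}$, of the events that some particle starting at distance at least $2w_0$ from $z$ reaches $z$ by time $T=\ell_0^{\beta/2}$.

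\textbf{Step 2 (apply the crossing bound).} By translation invariance of $\mu_{\rho_0}$ and of the dynamics, each of these two events has the probability appearing in \eqref{eq:crossing_bound}, with $s=2\ell_0^2$ and $T=\ell_0^{\beta/2}\ge1$. This gives, for each $z$, a bound
\[
\ubc{C:crossing}\rho_0 T^{1/2} e^{-\uc{c:crossing} s\log(1+s/T)}
=\ubc{C:crossing}\rho_0\,\ell_0^{\beta/4}\, e^{-\uc{c:crossing}\,2\ell_0^2\log(1+2\ell_0^{2-\beta/2})}.
\]

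\textbf{Step 3 (simplify the exponent).} Since $\beta<1$ and $\ell_0\ge1$, we have $2\ell_0^{2-\beta/2}\ge 2$, so $\log(1+2\ell_0^{2-\beta/2})\ge\log 3\ge 1/2$. The exponent is therefore at most $-\uc{c:crossing}\ell_0^2$. Summing over the two endpoints and setting $\ubc{C:outside}=2\ubc{C:crossing}$ gives \eqref{eq:outside_bound}.

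\textbf{Main obstacle.} The only delicate point is Step 1: the bound \eqref{eq:crossing_bound} controls particles reaching a single site, not a whole interval. In one dimension this is resolved by noting that entering the interval forces a visit to one of its two endpoints. No union bound over many sites is needed, so no polynomial factor in $w_0$ appears and the constant stays independent of $\ell_0$. In general dimension one would instead sum over boundary sites, and the resulting polynomial factor can be absorbed into the exponential by shrinking the constant.
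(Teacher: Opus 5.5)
Your proof is correct and follows the paper's argument: you reduce $\Eout^c$ to particles hitting the endpoints $\pm 2w_0$ of $I_{\mathrm{in}}$ from distance at least $2w_0$, then apply the crossing bound \eqref{eq:crossing_bound} with $s=2\ell_0^2$ and $T=\ell_0^{\beta/2}$ on each side. Your explicit check that $\log(1+2\ell_0^{2-\beta/2})\ge\log 3\ge 1/2$ supplies the exponent simplification that the paper leaves implicit.
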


\begin{proof}
  The event $\Eout^c$ requires that some particle initially outside $I_{\mathrm{ext}} = [-4w_0, 4w_0]$ reaches $I_{\mathrm{in}} = [-2w_0, 2w_0]$ by time $T = \ell_0^{\beta/2}$. We consider the contribution from the right half-line $(4w_0, \infty)$. A particle starting at a site $y > 4w_0$ must travel a distance of at least $y - 2w_0 \ge 2w_0$ to reach $2w_0$.

  Applying \cref{lem:crossing_bound} with $s = 2w_0 = 2\ell_0^2$ bounds the crossing probability:
  \begin{equation}
    \P_{\rho_0} \Big(
      \begin{array}{c} \text{some particle from } (4w_0, \infty) \\ \text{ reaches } 2w_0 \text{ before time } T
    \end{array} \Big)
    \le \ubc{C:crossing} \rho_0\sqrt{T} e^{-\uc{c:crossing} 2\ell_0^2\log(1+2\ell_0^2/T)}.
  \end{equation}

  A symmetric reasoning applies to the probability of a particle from $(-\infty, -4w_0)$ reaching $-2w_0$. Adding both contributions and simplifying the exponent yields the claimed bound.
\end{proof}

\newbigconstant{C:confinement}
\begin{lemma}[Particle confinement]\label{lem:particle_confinement}
  There exists a constant $\ubc{C:confinement}>0$ such that
  \begin{equation}\label{eq:confinement_bound}
    \P_{\rho_0} \big(\Ein^c)
    \le \ubc{C:confinement} \rho_0 \ell_0^{2 + \beta/4} e^{-\uc{c:displacement} \ell_0^{\beta/2}}.
  \end{equation}
\end{lemma}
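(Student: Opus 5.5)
The plan is to reduce the event $\Ein^c$ to a union over initial particles in $I_{\mathrm{ext}}$ and bound each term with the single-particle displacement estimate \eqref{eq:displacement_bound} of \cref{lem:crossing_bound}. The event $\Ein^c$ means that some particle whose initial position lies in $I_{\mathrm{ext}} = [-4w_0,4w_0]$ moves farther than $\ell_0^{\beta/2}$ from its starting point during $[0,T]$, where $T=\ell_0^{\beta/2}$. As discussed before \cref{lem:crossing_bound}, in the slot representation each particle's number of jumps in $[0,T]$ is stochastically dominated by $Y\sim\poisson(\Gamma_+T)$, whatever the other particles do. Its maximal displacement is therefore dominated by $\max_{0\le k\le Y}\|S_k\|$.

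First, I will condition on the initial configuration $\eta_0$ and take a union bound over the $\sum_{x\in I_{\mathrm{ext}}}\eta_0(x)$ particles initially in $I_{\mathrm{ext}}$. For each particle, \eqref{eq:displacement_bound} with $s=T=\ell_0^{\beta/2}$ gives
\begin{equation*}
\P\big(\text{displacement}\ge \ell_0^{\beta/2}\big)\le \ubc{C:displacement}e^{-\uc{c:displacement}\ell_0^{\beta/2}\log 2}.
\end{equation*}
The constant $\log 2$ is absorbed into the exponent's constant or kept as a harmless factor; if needed, the exponent can be written exactly as stated by adjusting $\uc{c:displacement}$. The bound \eqref{eq:displacement_bound} requires $T\ge1$, which holds since $\ell_0\ge1$.

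Second, I will take expectation over $\eta_0\sim\mu_{\rho_0}$. The expected number of particles in $I_{\mathrm{ext}}$ is $\rho_0|I_{\mathrm{ext}}|\le C\rho_0 w_0 = C\rho_0\ell_0^2$. This yields
\begin{equation*}
\P_{\rho_0}(\Ein^c)\le C\rho_0\ell_0^{2}e^{-c\,\ell_0^{\beta/2}},
\end{equation*}
which is the claimed bound up to the extra harmless factor $\ell_0^{\beta/4}$, since $\ell_0^{\beta/4}\ge1$ only weakens the inequality. Alternatively, that factor arises naturally if one uses the $\sqrt{T}=\ell_0^{\beta/4}$ prefactor of \eqref{eq:crossing_bound} instead, for instance by treating each site $y$ and the event that a particle from it reaches $y\pm\ell_0^{\beta/2}$.

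The only delicate point, and what I expect to be the main obstacle, is justifying that the per-particle domination by $\poisson(\Gamma_+T)$ jumps survives conditioning on $\eta_0$ and the union over a random number of particles. Linearity of expectation handles this: the bound is the expected count of particles violating the displacement condition, and each particle's violation probability is bounded uniformly by the memoryless-clock argument, regardless of the rest of the configuration.
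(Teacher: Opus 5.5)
Your proposal is correct and follows the paper's proof essentially verbatim. Like the paper, you condition on $\eta_0$, union-bound over the $N=\sum_{x\in I_{\mathrm{ext}}}\eta_0(x)$ particles using \eqref{eq:displacement_bound} with $s=T=\ell_0^{\beta/2}$, and then take expectations with $\E_{\rho_0}[N]\le C\rho_0\ell_0^2$. The paper's bound carries an extra $\sqrt{T}=\ell_0^{\beta/4}$ prefactor that \eqref{eq:displacement_bound} does not actually produce, so your slightly sharper bound implies the stated one; the $\log 2$ factor is absorbed by choosing $\uc{c:displacement}$ small enough from the outset.
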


\begin{proof}
  We condition on $\eta_0$, the number of particles initially in $I_{\mathrm{ext}} = [-4w_0, 4w_0]$ is $N = \sum_{x \in I_{\mathrm{ext}}} \eta_0(x)$. The event $\Ein^c$ requires at least one particle initially in $I_{\mathrm{ext}}$ to displace by more than $T = \ell_0^{\beta/2}$ during $[0, T]$. By a union bound and the displacement estimate
  \eqref{eq:displacement_bound} applied with $s = T$ and time horizon $T$,
  \begin{equation}
    \P^{\eta_0} \big( \Ein^c \big) \le N \ubc{C:displacement} \sqrt{T} e^{-\uc{c:displacement} T}.
  \end{equation}
  Taking expectations gives
  \begin{equation}
    \P_{\rho_0} \big( \Ein^c \big)
    \le \ubc{C:displacement} \sqrt{T} e^{-\uc{c:displacement} T} \E_{\rho_0} \big[N \big] \le \ubc{C:confinement} \rho_0 \ell_0^{2 + \beta/4} e^{-\uc{c:displacement} \ell_0^{\beta/2}},
  \end{equation}
  concluding the proof.
\end{proof}

\newconstant{c:single_cluster}
\begin{lemma}[Local cluster healing]\label{lem:local_cluster_healing}
  Consider a zero-range process on $\Z$ starting from an initial configuration $\eta_0$ with $k \le m$ particles. Assume all particles are infected at time 0. Then there exists a constant $\uc{c:single_cluster}>0$, depending only on $m$, $\Gamma_\pm$, and the healing rate $\delta > 0$, such that, for all $T \geq 1$,
  \begin{equation}
    \P^{\eta_0}\big( \text{infection survives up to time $T$} \big) \le e^{-\uc{c:single_cluster} T}.
  \end{equation}
\end{lemma}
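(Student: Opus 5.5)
We will prove the lemma with a uniform-in-state one-step killing bound, iterated over unit time intervals via the Markov property. Consider the finite system of $k\le m$ particles on $\Z$; no other particles are present. The joint state consists of the particle configuration together with the set of infected particles. Since particles on a common site share their infection status instantly, this is a finite-dimensional Markov process up to translation. The key claim is that there is $p=p(m,\Gamma_\pm,\delta)>0$ such that, from \emph{any} state with at most $m$ particles, the probability that all particles are healthy at time $1$ is at least $p$. Given this, the Markov property applied at the integer times $1,2,\dots,\lfloor T\rfloor$ yields
\[
\P^{\eta_0}(\text{infection survives up to time }T)\le (1-p)^{\lfloor T\rfloor}\le e^{-\uc{c:single_cluster} T},
\]
for $T\ge1$, with a suitable $\uc{c:single_cluster}>0$. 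Here we use that the healed state is absorbing for the infection, since no external infected particles exist.

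To prove the one-step claim, we exhibit an explicit scenario in the slot graphical representation of \cref{def:slot_dynamics}, augmented with the healing marks. Let $x_1\le\dots\le x_k$ denote the initial positions.

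\emph{Phase 1: separation during $[0,1/2]$.} We want every site to hold at most one particle, with particles at mutual distance at least $2$. This can be done by a prescribed finite sequence of at most $C(m)$ jumps: repeatedly move the top particle of the leftmost multiply occupied stack one step left, or pile-ups toward the extremes. The number of required jumps depends only on $m$, and each jump is triggered by a clock of rate at least $\lambda_1=g(1)/2d\ge\Gamma_-/2d$. Moreover, all other clocks of the at most $m$ occupied slots have total rate at most $m\Gamma_+$. Hence the probability that exactly the prescribed rings occur in $[0,1/2]$, in order, and nothing else rings, is bounded below by a constant depending only on $m,\Gamma_\pm$.

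\emph{Phase 2: healing during $[1/2,1]$.} No particle jumps during $[1/2,1]$; this has probability at least $e^{-m\Gamma_+/2}$. Meanwhile, each of the at most $k$ occupied sites, each holding a single particle, receives a healing mark; this has probability at least $(1-e^{-\delta/2})^m$. These marks are effective because each particle is isolated, so every infected particle heals. Since particles never meet again before time $1$, no reinfection occurs.

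Combining the two phases gives the desired $p>0$, uniform over the initial state.

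The main point requiring care is the uniformity of Phase 1 over arbitrary initial configurations with at most $m$ particles, including large stacks. We handle it by noting that the required jumps only ever use slots with index at most $m$. Their rates are therefore bounded between $\Gamma_-/2d$ and $\Gamma_+/2d$, and the target sites form a deterministic finite choice given the configuration up to translation. There are finitely many configurations up to translation, since gaps larger than $2$ can be capped, so a minimum over them is positive. Independence of the healing marks from the clocks then gives the product lower bound.
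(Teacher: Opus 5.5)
Your proposal is correct and follows essentially the same route as the paper. You prove a uniform-in-state lower bound on the probability of complete healing within a unit interval: a number of prescribed separating jumps, bounded in terms of $m$ alone, occurs during $[0,1/2]$; then there are no jumps and a healing mark at every singly occupied site during $[1/2,1]$; finally you iterate over $\lfloor T\rfloor$ unit intervals via the Markov property.

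Two minor slips, neither affecting correctness. First, the designated jump is driven by the clock of the slot the particle currently occupies, whose rate is $\lambda_i\in[\Gamma_-/2d,\Gamma_+/2d]$ rather than $\lambda_1$; you in fact correct this at the end. Second, requiring mutual distance at least $2$ is unnecessary, since distinct sites suffice.
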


\begin{proof}
  We first show that from any initial configuration of $k \le m$ particles, the probability that the
  infection completely heals during a unit time interval is uniformly bounded from below.

  For any initial configuration with $k$ particles, we can construct a target configuration in which all $k$ particles occupy distinct sites and such that this configuration can be reached from the original one with a sequence of at most $r \le m^2$ specific nearest-neighbor jumps. Indeed, one can move each particle sequentially to an isolated site by performing at most $k - 1$ steps per particle.

  If $r = 0$, the particles are already at distinct sites and we simply require no jumps during $[0, 1/2]$, which has probability at least $e^{-m \Gamma_+/2}$.

  Assume now $r \ge 1$. In the slot representation, each particle is individually tracked and follows a Poisson
  trajectory. The rate at which a given particle jumps is bounded from below by $\Gamma_-$ and from
  above by $\Gamma_+$. We divide the time interval $[0,1/2]$ into $r$ equal sub-intervals of length
  $\Delta = \frac{1}{2r}$. Let $\mathcal{A}_{\mathrm{jump}}$ be the event that during the $j$-th
  sub-interval, the $j$-th required jump is performed by the designated particle, and
  no other particles jump.

  In any sub-interval of length $\Delta$, the designated particle jumps exactly once with
  probability at least $\Gamma_- \Delta e^{-\Gamma_+ \Delta}$. Given a jump, the particle moves
  in the required direction with probability $1/2$. Thus, the prescribed move occurs with
  probability at least $\frac{1}{2} \Gamma_- \Delta e^{-\Gamma_+ \Delta}$. The remaining $k - 1$
  particles do not jump at all with probability at least $e^{-(k - 1) \Gamma_+ \Delta}$. Multiplying
  these gives a conditional lower bound for the required outcome in each sub-interval:
  \begin{equation}
    \frac{1}{2} \Gamma_- \Delta e^{-k \Gamma_+ \Delta} \ge \frac{\Gamma_-}{4r}
    e^{-m \Gamma_+/(2r)}.
  \end{equation}
  By the independence of increments of the Poisson processes, the probability of the full successful
  sequence over the $r$ sub-intervals is at least
  \newcommand{\Ajump}{\mathcal{A}_{\mathrm{jump}}}
  \newcommand{\Aheal}{\mathcal{A}_{\mathrm{heal}}}
  \begin{equation}\label{eq:jump-occupied}
    \P(\Ajump)
    \ge \Big( \frac{\Gamma_-}{4r} e^{-m \Gamma_+/(2r)} \Big)^r
    = \Big(\frac{\Gamma_-}{4r}\Big)^r e^{-m \Gamma_+/2}.
  \end{equation}
  Taking the minimum of \cref{eq:jump-occupied} over $1 \le r \le m^2$ along with the previously obtained bound for $r = 0$, this probability is uniformly lower-bounded by a constant $p_{\mathrm{jump}} > 0$ depending only on $m$ and $\Gamma_\pm$.

  On $\Ajump$, the $k$ particles occupy mutually distinct sites at time $1/2$, so the healing
  mechanism is active.

  Let $\Aheal$ be the event that, during $[1/2, 1]$, no particle jumps and every healing clock
  associated with the $k$ occupied sites rings at least once. The probability of no jumps in $[1/2,
  1]$ is at least $e^{-m \Gamma_+/2}$. Independently, each healing clock rings with probability $1 -
  e^{-\delta/2}$. Thus,
  \begin{equation}
    \P( \Aheal \mid \Ajump)
    \ge e^{-m \Gamma_+/2} \big( 1 - e^{-\delta/2} \big)^m = p_{\mathrm{heal}} > 0.
  \end{equation}
  On $\Ajump \cap \Aheal$, all particles heal by time $1$. Let $p = p_{\mathrm{jump}} \cdot
  p_{\mathrm{heal}} > 0$. This uniform lower bound ensures that the probability of the infection
  surviving a unit interval is at most $1 - p$.

  By the Markov property, the probability that the infection survives up to time $T$ requires it to
  survive across $\lfloor T \rfloor$ consecutive unit intervals, each bounded conditionally by $1 -
  p$. Therefore, since $T \geq 1$,
  \begin{equation}
    \P\big( \text{infection survives up to time } T\ \big) \le (1 - p)^{\lfloor T \rfloor} \le e^{-\frac{p}{2} T}.\qedhere
  \end{equation}
\end{proof}

With the local healing mechanism understood, we bound the probability that healing fails for some
cluster.

\newconstant{c:healing_decay}
\begin{lemma}[Healing under the triggering event]\label{lem:healing_good_event} There exists a
  constant $\uc{c:healing_decay} > 0$ such that for all sufficiently large $\ell_0$,
  \begin{equation}\label{eq:healing_good_event}
    \P_{\rho_0}\big( \Eheal^c \cap \Ein \cap \Eout \cap \Ecluster\big) \le e^{-\uc{c:healing_decay} \ell_0^{\beta/2}}.
  \end{equation}
\end{lemma}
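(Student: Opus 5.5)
We will reduce the statement to \cref{lem:local_cluster_healing} applied cluster by cluster, combined with a union bound over the clusters that initially intersect $[-w_0,w_0]$. Set $T=\ell_0^{\beta/2}$. The first step is a deterministic isolation claim. On $\Ein\cap\Eout\cap\Ecluster$, consider a cluster $\mathcal{C}$ that initially intersects $[-w_0,w_0]$. By $\Ecluster$ it contains fewer than $m$ particles, so its diameter is less than $m\ell_0^\beta$ and it lies inside $I_{\mathrm{in}}$ for $\ell_0$ large. By $\Ein$ every particle of $\mathcal{C}$, and of any other cluster meeting $I_{\mathrm{ext}}$, moves by at most $T=\ell_0^{\beta/2}$ during $[0,T]$. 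Distinct clusters are initially at distance at least $\ell_0^\beta>2T$ when $\ell_0$ is large, so particles of $\mathcal{C}$ never share a site with particles of other clusters. By $\Eout$, no particle from outside $I_{\mathrm{ext}}$ enters $I_{\mathrm{in}}$, and by $\Ein$ the particles of $\mathcal{C}$ stay within distance $T$ of $I_{\mathrm{in}}$. (If this is too tight, the margin between $I_{\mathrm{in}}$ and the displacement of cluster particles can be handled by slightly enlarging the regions; we treat it as routine.)

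The second step is to make the dynamics of $\mathcal{C}$ autonomous. In the slot representation, the jumps of a particle at a site depend only on the stack at that site. Since the particles of $\mathcal{C}$ are never co-located with anyone else on this event, their trajectories and the healing marks they see coincide with those of an isolated zero-range system consisting only of the particles of $\mathcal{C}$, driven by the same clocks and healing marks. We therefore define, for each candidate cluster, the isolated process $\hat\eta^{\mathcal{C}}$ with all particles infected. The event that the real infection restricted to $\mathcal{C}$ fails to heal by time $T$, intersected with the good events, is contained in the event that the infection in $\hat\eta^{\mathcal{C}}$ survives up to $T$. Declaring every particle infected is monotone: it can only make healing harder, because infection status is the existence of genealogical paths from infected particles.

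The third step is the union bound. Conditionally on $\eta_0$, the isolated system for $\mathcal{C}$ has $k<m$ particles and is built from clocks independent of $\eta_0$. \cref{lem:local_cluster_healing} gives survival probability at most $e^{-\uc{c:single_cluster} T}$. The number of clusters intersecting $[-w_0,w_0]$ is at most $2w_0+1\le 3\ell_0^2$, since each cluster contains an occupied site there. Hence
\[
\P_{\rho_0}\big(\Eheal^c\cap\Ein\cap\Eout\cap\Ecluster\big)\le 3\ell_0^2 e^{-\uc{c:single_cluster}\ell_0^{\beta/2}}\le e^{-\uc{c:healing_decay}\ell_0^{\beta/2}}
\]
for $\ell_0$ large, taking for instance $\uc{c:healing_decay}=\uc{c:single_cluster}/2$.

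The main obstacle is the coupling in the second step. The good events are defined in terms of the true process, while the isolated process ignores other particles. One must check that, on $\Ein\cap\Eout\cap\Ecluster$, the true and isolated trajectories agree up to $T$. We will argue by induction over the (almost surely locally finite) sequence of clock rings near the cluster. As long as the trajectories agree, cluster particles are alone with their clustermates on their sites. Their slot indices within a stack are then determined only by clustermates, so the next ring acts identically in both systems. A subtle point is that the slot index, and hence the rate, could differ if foreign particles sat below them in a stack, but isolation rules this out. We also need to handle the fact that the cluster is random; summing over the deterministic possible clusters given $\eta_0$ resolves this.
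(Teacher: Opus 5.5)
Your proposal is correct and follows essentially the same route as the paper: on $\Ein\cap\Eout\cap\Ecluster$ each cluster meeting $[-w_0,w_0]$ evolves as an isolated system of fewer than $m$ particles, \cref{lem:local_cluster_healing} bounds its survival to time $T=\ell_0^{\beta/2}$ by $e^{-\uc{c:single_cluster}T}$, and a union bound over clusters finishes. The differences are cosmetic. You bound the number of clusters deterministically by $2w_0+1$, where the paper uses the expected particle count $3\rho_0\ell_0^2$ in $I_0$. Your hedge about the margin is unnecessary, since the cluster starts in $[-w_0-m\ell_0^\beta,\,w_0+m\ell_0^\beta]$, at distance of order $\ell_0^2\gg T$ from the complement of $I_{\mathrm{in}}$.
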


\begin{proof}
  As in the previous proofs, set $T=\ell_0^{\beta/2}$. We condition on the initial configuration
  $\eta_0 \in \Ecluster$. Let $\mathcal{C}$ be the set of clusters that initially intersect $[-w_0,
  w_0]$. Since $[-w_0, w_0] \subset I_{\mathrm{ext}}$, under $\Ecluster$, each cluster $K \in \mathcal{C}$ has size at
  most $m-1$ and is initially contained in $I_0 = [-w_0 - m\ell_0^\beta, w_0 + m\ell_0^\beta]
  \subset I_{\mathrm{in}} = [-2w_0, 2w_0]$. Furthermore, these clusters are separated from each
  other by gaps strictly greater than $\ell_0^\beta$.

  For each cluster $K \in \mathcal{C}$, let $\Eheal^K$ be the event that cluster $K$ heals by time
  $T$. A union bound gives
  \begin{equation}
    \P^{\eta_0}\big( \Eheal^c \cap \Ein \cap \Eout\big) \le \sum_{K \in \mathcal{C}}
    \P^{\eta_0}\big( (\Eheal^K)^c \cap \Ein \cap \Eout\big).
  \end{equation}
  On $\Ein \cap \Eout$, all particles in $K$ never share a site with particles outside of $K$ up to time $T$. This holds because internal inter-cluster gaps exceed $\ell_0^\beta$, particles in $I_{\mathrm{ext}}$ displace by at most $T < \ell_0^\beta/2$, and particles from outside $I_{\mathrm{ext}}$ are prevented by $\Eout$ from ever reaching $K \subset I_{\mathrm{in}}$. Consequently, the evolution of $K$ coincides with the \emph{isolated process} $\eta_0^K$ where only the particles of cluster $K$ are present. Then
  \begin{equation}
    \P^{\eta_0}\big((\Eheal^K)^c \cap \Ein \cap \Eout \big)
    \le \P^{\eta_0^K}\big( (\Eheal^K)^c \big).
  \end{equation}
  Since $K$ has fewer than $m$ particles, \cref{lem:local_cluster_healing} bounds this isolated
  failure probability by $e^{-\uc{c:single_cluster} T}$. Summing over the number of clusters $|\mathcal{C}|$ gives
  \begin{equation}
    \P^{\eta_0} \big( \Eheal^c \cap \Ein \cap \Eout \big) \le |\mathcal{C}| e^{-\uc{c:single_cluster} T}.
  \end{equation}
  Multiplying by $\charf{\Ecluster}$ and integrating over the initial measure $\mu_{\rho_0}$ gives
  \begin{equation}
    \P_{\rho_0} \big( \Eheal^c \cap \Ein \cap \Eout \cap \Ecluster \big)
    \le \E_{\rho_0}[|\mathcal{C}| \charf{\Ecluster}] e^{-\uc{c:single_cluster} T}.
  \end{equation}
  Since all clusters in $\mathcal{C}$ are contained in $I_0$, the number of such clusters is bounded by the number of particles in this interval. Thus
  $\E_{\rho_0}[|\mathcal{C}| \charf{\Ecluster}] \le 3 \rho_0 \ell_0^{2}$,
  and~\eqref{eq:healing_good_event} follows for $\ell_{0}$ large enough.
\end{proof}

\subsection{Proof of Theorem~\ref{thm:immunity_low_rho}}
\label{sec:proof_immunity_low_rho}

We now prove extinction for fixed healing rate $\delta$ and sufficiently low density $\rho$, \cref{thm:immunity_low_rho}.

\begin{proof}[Proof of \cref{thm:immunity_low_rho}]
  Recall $s = \frac{1}{10}$. We start by fixing the values for the parameters $\beta$ and $m$ in a way to satisfy the constraint $2 + (m - 1)\beta - ms < 0$ required so that the exponent in \cref{lem:big_cluster} is negative.
  The choice
  \begin{equation}
    \text{$m=40$ and $\beta=1/40$}
  \end{equation}
  satisfies this condition.
  The parameter $\ell_0$ remains free and will be chosen below.

  We first bound the temporal half-crossing probability using \cref{rmk:triggering-event}, so that $\P_{\rho_0} (T(B_0)) \le \P_{\rho_0} (\T^c)$. Decompose the complement $\T^c$ as
  \begin{equation}
    \begin{split}
      \T^c = \Ecluster^c \cup \Eout^c \cup \Ein^c
      \cup \bigl(\Eheal^c \cap \Ein \cap \Eout \cap \Ecluster\bigr).
    \end{split}
  \end{equation}
  By Lemmas~\ref{lem:big_cluster},~\ref{lem:outside_particles},~\ref{lem:particle_confinement}, and~\ref{lem:healing_good_event}, we
  have
  \begin{equation}
  \begin{aligned}
    \P_{\rho_0}(T(B_0)) \le \ubc{C:big_cluster} \ell_0^{2 + (m - 1)\beta - ms}
    + \ubc{C:outside} \rho_0 \ell_0^{\beta/4} e^{-\uc{c:crossing} \ell_0^2} \\
    + \ubc{C:confinement} \rho_0 \ell_0^{2+\beta/4} e^{-\uc{c:displacement} \ell_0^{\beta/2}}
    + e^{-\uc{c:healing_decay} \ell_0^{\beta/2}}.
  \end{aligned}
  \end{equation}
  Since $\rho_0 = \ubc{C:rho0} \ell_0^{-s}$, and our parameter choices ensure that the first exponent of $\ell_0$ is strictly negative while the remaining terms decay super-polynomially, we have $\P_{\rho_0}(T(B_0)) \to 0$ as $\ell_0$ grows.

  To bound the probability of the spatial half-crossings $S^\pm(B_0)$, we use a path-counting
  argument for the number of $J$-paths over the half-box $B_0^+ = [0, w_0] \times [0, \ell_0]$. Note that every genealogical path is also a $J$-path (see \cref{def:weak_jump_path}).

  For any fixed site $x \in [0, w_0]$, we first bound the maximum number of particles up to time
  $\ell_0$ using the occupation bounds of \cref{lem:occupation_bounds}:
  \begin{equation}
    \P_{\rho_0} \Big( \sup_{t \in [0, \ell_0]} \eta_t(x) \ge \log^2 \ell_0 \Big)
    \le (\ell_0 + 1) \Big[ \P_{\rho_0}\big( \eta_0(x) \ge \uc{c:occupation} \log^2 \ell_0 \big) + e^{-\uc{c:occupation} \log^2 \ell_0} \Big].
  \end{equation}
  Since $\rho_0 \le \ubc{C:rho0}$ is bounded, by \cref{lem:tail_bounds} the probability $\P_{\rho_0}(\eta_0(x) \ge \uc{c:occupation} \log^2 \ell_0)$ decays super-exponentially in $\log^2 \ell_0$. By a union bound over the $\ell_0^2 + 1$ sites in $[0, w_0]$, we obtain the global occupation bound
  \begin{equation}\label{eq:occupation_bound}
    \P_{\rho_0} \Bigg( \sup_{\substack{x \in [0,w_0] \\ t \in [0,\ell_0]}} \eta_t(x) \ge \log^2 \ell_0 \Bigg) \le C \ell_0^3 e^{-\uc{c:occupation} \log^2 \ell_0}.
  \end{equation}

  On the complement of this event, the jump rate from any site inside $[0, w_0]$ is uniformly
  bounded by $\lambda = \Gamma_+ \log^2 \ell_0$ up to time $\ell_0$. As the event $S^+(B_0)$
  requires a $J$-path to start at $0$ and reach $w_0$, by erasing any spatial loops formed
  by the infection trajectory, we can extract a self-avoiding spatial path of length $n \ge
  \ell_0^2$. Since the infection only advances when a particle jumps, the probability of traversing a specific path of length $n$ is bounded by the probability that a sequence of $n$ independent Poisson clocks of rate $\lambda$ ring sequentially in the time interval $[0, \ell_0]$, which is exactly $\P(\poisson(\lambda \ell_0) \ge n)$.

  The number of self-avoiding paths of length $n$ starting at $0$ is at most $2^n$. Taking a union
  bound over all such paths and summing over all possible lengths $n \ge \ell_0^2$, the probability
  of any spatial crossing is at most (in $d=1$, there is exactly one such path, but we proceed with the general bounding argument to accommodate $d \ge 1$):
  \begin{gather}\label{eq:path_bound}
    \begin{aligned}
      \sum_{n = w_0}^\infty 2^n \P\big( \poisson(\Gamma_+ \ell_0 \log^2 \ell_0) \ge n \big)
      &\le \sum_{n = w_0}^\infty 2^n \left( \frac{e \Gamma_+ \ell_0 \log^2 \ell_0}{n} \right)^n\\
      &\le \sum_{n = w_0}^\infty \left( \frac{2e \Gamma_+ \log^2 \ell_0}{\ell_0} \right)^n \\
      &\le (1/2)^{\ell_0^2 - 1},
    \end{aligned}
  \end{gather}
  where the last inequality holds since the base of the power is bounded by $1/2$ for all
  sufficiently large $\ell_0$.

  Combining the occupation bound \eqref{eq:occupation_bound} with the path bound
  \eqref{eq:path_bound}, and using the same bounds for $S^-(B_0)$ by symmetry, we obtain
  \begin{equation}
    \P_{\rho_0} \big( S^+(B_0) \cup S^-(B_0) \big) \le 2 \Big( C \ell_0^3 e^{-\uc{c:occupation} \log^2 \ell_0} + (1/2)^{\ell_0^2 - 1} \Big).
  \end{equation}

  Finally, since $H(B_0) = T(B_0) \cup S^+(B_0) \cup S^-(B_0)$, a union bound implies that $p_0 \to
  0$ as $\ell_0$ grows.

  By \cref{lem:contraction}, it suffices to verify $p_0 \le \frac{1}{4 \ubc{C:entropy}}$ and
  $\ubc{C:entropy}^2 \varepsilon_k \le (1/2)^{k + 5}$ for all $k \ge 0$. By
  \cref{prop:recursive}, the error $\varepsilon_k$ consists of exponential terms bounded by
  $\ubc{C:eps_decay} \ell_k^4 \exp\braces{ -\uc{c:eps_decay} \rho_0^{-1} (\alpha^k\ell_0)^{1/3 - 2s} }$. Since $1/3 - 2s > 0$, this tends to zero
  super-exponentially in $k \ge 0$, so by further increasing $\ell_0$ as needed, we can handle both
  the remaining terms and the bound on $p_0$.

  This establishes the recursive contraction $p_k \le (1/2)^{k + 2}/\ubc{C:entropy}$, so $p_k \to 0$
  as $k \to \infty$. For any fixed density $\rho \in (0, \rho_\infty)$, we have $\rho \le \rho_k$
  for all $k$. By the monotonicity of $H(B_k)$ (\cref{lem:monotonicity}),
  \begin{equation}
    \P_\rho(H(B_k)) \le \P_{\rho_k}(H(B_k)) \le p_k \to 0.
  \end{equation}
  By \cref{lem:survival_implies_H}, the infection dies out almost surely for every $\rho<
  \rho_{\infty}$, concluding the proof.
\end{proof}

\section{Contagion regime}\label{sec:survival}

\newcommand{\Uin}{\In}
\newcommand{\Uout}{\Out}
\newcommand{\Vin}{V_{\text{in}}}
\newcommand{\Vout}{V_{\text{out}}}
\newcommand{\Vblocks}{V_{\text{blocks}}}
\newcommand{\Vsink}{V_{\text{sink}}}
\newcommand{\Elocal}{\mathcal{E}_{\text{local}}}
\newcommand{\Edense}{\mathcal{E}_{\text{dense}}}
\newcommand{\Ephaseone}{{E}_{1}}
\newcommand{\Ephasetwo}{{E}_{2}}
\newcommand{\Eempty}{\mathcal{E}_{\text{empty}}}
\newcommand{\Nmax}{N_{\text{max}}}
\newcommand{\Tacc}{T_{\text{acc}}}
\newcommand{\pmeet}{p_{\text{meet}}}
\newcommand{\phit}{p_{\text{hit}}}
\newcommand{\plimit}{p_{\text{limit}}}

In this section we establish conditions under which the infection survives with positive
probability. We consider both the high-density regime, where the infection propagates indefinitely
for any healing rate, and the low-healing regime, where the infection survives at any given density
provided the healing rate is sufficiently small. The two regimes correspond to
\cref{thm:contagion_high_rho,thm:contagion_low_delta}, respectively; both share the renormalization
machinery developed below and differ only in how the base-scale probability is controlled.

We apply a multi-scale oriented percolation scheme to establish the survival of the infection.
Rather than tracking the infection directly across arbitrary distances, the strategy is to establish
that good scale-0 blocks---within which the infection is locally guaranteed to spread---percolate
across space-time. We do this by iteratively bounding the probability that a connected path of good
blocks fails to cross space-time regions of increasing sizes. We begin in \cref{subsec:geometry} by
defining the multi-scale geometry and establishing the transitivity properties of good blocks.
\cref{subsec:recursive_bounds} is devoted to proving a recursive inequality that bounds the failure
probability at scale $k+1$ in terms of the failure probability at scale $k$. Finally, in \cref{subsec:seeding_lemma,subsec:high_density}, we provide the triggering mechanisms,
proving that the base-scale failure probability can be made arbitrarily small by properly tuning the free parameters.

\subsection{Multi-scale geometry and block transitivity}\label{subsec:geometry}

Fix a sufficiently large base-scale $L_0 \in \N$ and set $T_0 = L_0^2$. For $k \ge 0$, define the
sequence of scale multipliers $\ell_k = 2\floor{L_k^{1/2}/2} + 1$, which ensures $\ell_k$ is always an odd
integer, and define the spatial and temporal dimensions recursively by
\begin{equation}
  L_{k+1} = \ell_k L_k \quad \text{and} \quad T_{k+1} = 5 \ell_k T_k.
\end{equation}

The scale-$k$ space-time block is given by $B_k = [-3L_k, 3L_k]^d \times [0, T_k]$. To define the
connectivity and percolation of good blocks through $B_k$, we define input and output sets, which we
call interfaces. The \emph{input interface} at time $t=0$ is
\begin{equation}
  \Uin(B_k) = \parens*{ [-L_k/2, L_k/2)^d \cap \Z^d } \times \{0\}.
\end{equation}
At time $t=T_k$, we define $3^d$ \emph{output interfaces}, indexed by displacement vectors $v =
(v_1, \dots, v_d) \in \{-1, 0, 1\}^d$:
\begin{equation}
  \Uout(B_k, v) = \parens*{v L_k + \parens*{[-L_k/2, L_k/2)^d \cap \Z^d }} \times \{T_k\}.
\end{equation}

\begin{figure}[tpb]
  \centering
  \includegraphics[width=\linewidth]{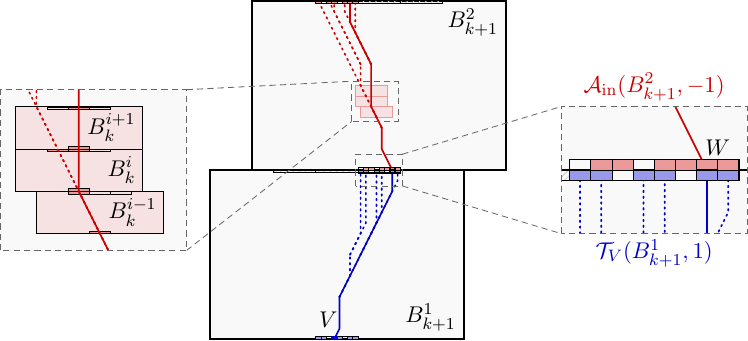}
  \caption{For two successive good blocks $B_{k+1}^1$ and $B_{k+1}^2$, the transitivity property
    guarantees that most sub-interfaces of $\Uin(B_{k+1}^1)$ are connected to most sub-interfaces of
    $\Uout(B_{k+1}^2, v)$ by a chain of good scale-$k$ blocks. Here we construct such a
    chain by first choosing any $V \in \A_{in}(B_{k+1}^1, 1)$, finding an element $W$ of
    $\T_V(B_{k + 1}^1, 1) \cap \A_{in}(B_{k + 1}^2, -1)$, and picking any good chain connecting
    $V$ to $W$, along with any good chain connecting $W$ to $\Uout(B_{k+1}^2, -1)$. This
    construction can be iterated to obtain a chain of good scale-0 blocks that
  traverses both blocks.}
  \label{fig:multi_scale_geometry}
\end{figure}

For $k \ge 1$, the interfaces of a scale-$k$ block $B_k$ naturally partition into $\ell_{k-1}^d$
disjoint translations of the scale-$(k-1)$ interfaces. We denote these collections of sub-interfaces
by $\L(\Uin(B_k))$ and $\L(\Uout(B_k, v))$, respectively.

\begin{definition}[Block connectivity]
  For two space-time blocks $B_k^1$ and $B_k^2$ of the same scale $k$, we say $B_k^2$ is a \emph{successor} of $B_k^1$ if an output interface of $B_k^1$ coincides with the input interface of $B_k^2$. A \emph{chain}
  of scale-$k$ blocks is a (possibly infinite) sequence $(B_k^1, B_k^2, \dots, B_k^n)$ where $B_k^{i + 1}$ is a successor of $B_k^{i}$ for all $i$. Such a path \emph{connects} an input interface $V$ of $B_k^1$ to an output interface $W$ of $B_k^n$ if it starts at $V$ and ends at $W$.
\end{definition}

\begin{definition}[Base-scale good block]
  Let $E(B)$ be an increasing event supported on the scale-$0$ block $B$. We say the scale-0 block $B$ is \emph{good} if the event $E(B)$ occurs.
\end{definition}

\begin{remark}
    In the proof of our results, it is important that the event $E(B)$ might be allowed to depend not only on the occupation of the zero-range process within $B$ but also on the graphical representation of this process. Fortunately, due to \cref{rmk:horizontal-graphical,rmk:vertical-graphical}, we can still apply the decoupling estimates for such events.
\end{remark}

\begin{definition}[Recursive good block]
  A scale-$k$ block $B$ is \emph{good} if, for each of its $3^d$ output interfaces $\Uout(B, v)$
  (where $v \in \{-1, 0, 1\}^d$), the following connectivity event holds: there exists a subset of input
  sub-interfaces $\A_{in}(B, v) \subseteq \L(\Uin(B))$ of size $|\A_{in}(B, v)| \ge \frac{3}{4}
  \ell_{k-1}^d$ such that, for every $V \in \A_{in}(B, v)$, there exists a target set $\T_V(B, v)
  \subseteq \L(\Uout(B, v))$ of size $|\T_V(B, v)| \ge \frac{3}{4} \ell_{k-1}^d$, such that $V$ is
  connected to each target $W \in \T_V(B, v)$ by a chain of good scale-$(k-1)$
  sub-blocks contained entirely within $B$.
\end{definition}

\begin{lemma}[Transitivity of connectivity]\label{lem:transitivity} For any $0 \le k' < k$, any
  chain of good scale-$k$ blocks contains a chain of good
  scale-$k'$ blocks traversing the same time interval.
\end{lemma}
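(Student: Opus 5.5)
By induction on $k-k'$ it suffices to treat $k'=k-1$: if every chain of good scale-$k$ blocks contains a chain of good scale-$(k-1)$ blocks over the same time interval, iterating the statement descends to any $k'<k$. So fix a chain $(B_k^1,\dots,B_k^n)$ of good scale-$k$ blocks, possibly infinite. For each $i$, let $v_i\in\{-1,0,1\}^d$ be such that $\Uout(B_k^i,v_i)=\Uin(B_k^{i+1})$. We will build a chain of good scale-$(k-1)$ blocks by concatenating sub-chains, one inside each $B_k^i$, following the scheme of \cref{fig:multi_scale_geometry}.

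The key step is a pigeonhole argument on the sub-interfaces. The output sub-interfaces $\L(\Uout(B_k^i,v_i))$ coincide with the input sub-interfaces $\L(\Uin(B_k^{i+1}))$, since both are the same partition of the same set into $\ell_{k-1}^d$ translates of scale-$(k-1)$ interfaces. By goodness, the set $\A_{in}(B_k^{i+1},v_{i+1})$ has at least $\frac34\ell_{k-1}^d$ elements. For any $V\in\A_{in}(B_k^i,v_i)$, the target set $\T_V(B_k^i,v_i)$ also has at least $\frac34\ell_{k-1}^d$ elements. Both sets lie in a common collection of size $\ell_{k-1}^d$, so their intersection has at least $\frac12\ell_{k-1}^d\ge1$ elements.

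We then proceed inductively. Choose any $V_1\in\A_{in}(B_k^1,v_1)$. Given $V_i\in\A_{in}(B_k^i,v_i)$, choose $V_{i+1}\in\T_{V_i}(B_k^i,v_i)\cap\A_{in}(B_k^{i+1},v_{i+1})$, and take the good scale-$(k-1)$ chain inside $B_k^i$ that connects $V_i$ to $V_{i+1}$, which exists by definition. For the last block of a finite chain, take any target $W\in\T_{V_n}(B_k^n,v)$ for a chosen output direction $v$. For an infinite chain we never stop; the choices are made sequentially, and countable choice is harmless.

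Concatenation works because the last block of the sub-chain in $B_k^i$ has $V_{i+1}$ as an output interface, and the first block of the sub-chain in $B_k^{i+1}$ has $V_{i+1}$ as its input interface. The successor relation therefore holds across the junction, and the concatenation is a chain of good scale-$(k-1)$ blocks.

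Finally, each sub-chain inside $B_k^i$ runs from the bottom interface (time $t_i$) to the top interface (time $t_i+T_k$) of $B_k^i$. Indeed, interfaces of scale-$(k-1)$ blocks sit at times that are multiples of $T_{k-1}$ within $B_k^i$, and the connection goes from $\Uin$ at the bottom to $\Uout$ at the top. So the concatenated chain traverses exactly $[t_1,t_n+T_k]$, the same time interval as the original chain.

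The only delicate point is the counting: the $\frac34$ thresholds must leave a nonempty overlap when matching targets of one block with admissible inputs of the next, and $\frac34+\frac34>1$ guarantees this. We should also check that the sub-interface partitions of adjacent blocks literally coincide. This holds because $\ell_{k-1}$ is odd and the interfaces are translates by multiples of $L_k=\ell_{k-1}L_{k-1}$, so the scale-$(k-1)$ grids align.
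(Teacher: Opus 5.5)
Your proof is correct and follows essentially the same route as the paper. You reduce to $k'=k-1$ by induction, pick $V_{i+1}$ in $\T_{V_i}(B_k^i,v_i)\cap\A_{in}(B_k^{i+1},v_{i+1})$ (nonempty since each set contains at least $\frac34\ell_{k-1}^d$ of the $\ell_{k-1}^d$ common sub-interfaces), and concatenate the sub-chains. Your extra checks on the time span and the alignment of sub-interfaces make explicit what the paper leaves implicit. One small ordering fix: for the last block of a finite chain, set $v_n:=v$ before choosing $V_n$, so that $V_n\in\A_{in}(B_k^n,v_n)$.
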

\begin{proof}
  By induction on $k - k'$, we only need to show this for $k' = k-1$. Let $(B_k^{1}, B_k^2, \dots)$ be a chain of good scale-$k$ blocks, where the connection from $B_k^{i}$ to $B_k^{i+1}$ has displacement $v_i$.

  Fix $V_1 \in \A_{in}(B_k^{1}, v_1)$. Since $B_k^{1}$ is good, we have a target set $\T_{V_1}(B_k^{1}, v_1)$ of size at least $\frac{3}{4}\ell_{k-1}^d$. The next block $B_k^{2}$ has an input set $\A_{in}(B_k^{2}, v_2)$ of size at least $\frac{3}{4}\ell_{k-1}^d$. Since both are subsets of a set of size $\ell_{k-1}^d$, their intersection contains at least $\frac{1}{2}\ell_{k-1}^d \ge 1$ sub-interfaces.

  Fix $V_2 \in \T_{V_1} \cap \A_{in}$. There is a chain of good scale-$(k-1)$ blocks in $B_k^{1}$ connecting $V_1$ to $V_2$. Because $V_2 \in \A_{in}(B_k^{2}, v_2)$, we can repeat this at the boundary between $B_k^{2}$ and $B_k^{3}$ to find $V_3$, and so on. Continuing in this manner, we obtain a chain of good scale-$(k-1)$ blocks through the scale-$k$ sequence (see \cref{fig:multi_scale_geometry} for an illustration of this construction).
\end{proof}

\subsection{Recursive bounds on failure probabilities}\label{subsec:recursive_bounds}

To use decoupling inequalities for decreasing events, we introduce a sequence of increasing densities. Fix a sprinkling parameter $s = 1/10$. Given the initial base density $\rho_0 > 0$, define
\begin{equation}
  \rho_{k+1} = \rho_k(1 + L_k^{-s}).
\end{equation}
Since $L_k$ grows super-exponentially, the product $\prod_{k=0}^\infty (1+L_k^{-s})$ converges, and thus the sequence $\rho_k$ converges to some finite limit density $\rho_\infty < \infty$. Let $p_k$ denote the probability that a scale-$k$ block is bad (i.e.\ not good) under the invariant product measure at density $\rho_k$. Because goodness is an increasing event, a block being bad is a decreasing event, and its probability is decreasing with respect to the particle density.

For $k \ge 1$, the failure of a scale-$k$ block implies the existence of a large number of bad scale-$(k-1)$ blocks within it. In this section, we establish the recursive bound for the failure probability $p_{k+1}$ at density $\rho_{k+1}$ in terms of $p_k$.

\subsubsection{Deterministic bounds on defects}

Consider a directed lattice graph $G^*$ with vertex set $([-2m, 2m]^d \cap \mathbb{Z}^d) \times \{0, 1, \dots, H\}$. Directed edges are drawn from $(x, t)$ to $(x', t+1)$ if and only if $\|x - x'\|_\infty \le 1$. Let $S$ and $T$ be arbitrary spatial boxes within $[-2m, 2m]^d \cap \mathbb{Z}^d$ of side length $m$. We define the source set $\Vin = S \times \{0\}$ and the target set $\Vout = T \times \{H\}$, and assume the maximal coordinate distance between any $x \in S$ and $y \in T$ is bounded by $H$.

Let $G$ be a spanning subgraph of $G^*$ formed by designating each vertex as either ``good'' (retaining all its outgoing edges) or ``bad'' (having all its outgoing edges deleted). A vertex $v \in \Vin$ is called \emph{conductive} if it can reach at least $\frac{3}{4}m^d$ vertices in $\Vout$ via a directed path in $G$. Otherwise, it is \emph{non-conductive}.

\begin{lemma}[Graph defect multiplicity]\label{lem:graph_defect}
  If $G$ contains at least $\frac{1}{4}m^d$ non-conductive vertices, then the total number of bad vertices in $G$ is at least $C_d m^d$, where $C_d = 2^{-(d+4)}$.
\end{lemma}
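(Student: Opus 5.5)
The plan is a double-counting argument. I fix an explicit family of paths in $G^*$, one for every source--target pair. Each pair $(v,w)$ with $v$ non-conductive and $w$ unreached from $v$ forces a bad vertex on its path, and no single vertex lies on too many of these paths.

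\emph{Step 1: canonical paths.} For $x\in S$ and $y\in T$, define
\[
\gamma_{x,y}(t)=\Bigl(\Bigl\lfloor x_i+\tfrac{t}{H}(y_i-x_i)\Bigr\rfloor\Bigr)_{i=1}^d,\qquad t=0,\dots,H,
\]
with the floor taken coordinatewise. I need to check three things.
\begin{itemize}
\item $\gamma_{x,y}(0)=x$ and $\gamma_{x,y}(H)=y$.
\item Consecutive points differ by at most $1$ in each coordinate. This holds because the real-valued interpolation moves by $|y_i-x_i|/H\le 1$ per step, and the floor of such a function changes by at most $1$.
\item The path stays in $[-2m,2m]^d$. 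This holds because $x_i+\tfrac tH(y_i-x_i)$ is a convex combination of integers in $[-2m,2m]$, and its floor therefore stays there.
\end{itemize}
So $\gamma_{x,y}$ is a directed path in $G^*$ from $(x,0)$ to $(y,H)$.

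\emph{Step 2: blocked pairs.} Let $N$ be the set of non-conductive vertices, so $|N|\ge \frac14 m^d$. Each $v=(x,0)\in N$ fails to reach more than $\frac14 m^d$ of the $m^d$ targets. This gives at least $\frac1{16}m^{2d}$ pairs $(x,y)$ for which $(y,H)$ is not reachable from $(x,0)$ in $G$. For each such pair, the path $\gamma_{x,y}$ must use a deleted edge. Hence some vertex $\gamma_{x,y}(t)$ with $t<H$ is bad.

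\emph{Step 3: load of a vertex.} This is the main step. Fix a vertex $(z,t)$ and put $\lambda=t/H$.
\begin{itemize}
\item If $\lambda\le 1/2$, fix $y$ and count the $x$ with $\gamma_{x,y}(t)=z$. Coordinatewise we need $(1-\lambda)x_i+\lambda y_i\in[z_i,z_i+1)$. So $x_i$ lies in a half-open interval of length $1/(1-\lambda)\le 2$, which contains at most $2$ integers. Thus there are at most $2^d$ such $x$, and summing over the $m^d$ choices of $y$ gives at most $2^d m^d$ pairs.
\item If $\lambda>1/2$, the symmetric count applies: fix $x$, and $y_i$ lies in an interval of length $1/\lambda<2$.
\end{itemize}
Either way, each vertex lies on at most $2^d m^d$ canonical paths.

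\emph{Conclusion.} Every blocked pair is charged to a bad vertex on its path, and each bad vertex is charged at most $2^d m^d$ times. Therefore
\[
\#\{\text{bad vertices}\}\ \ge\ \frac{m^{2d}/16}{2^d m^d}\ =\ 2^{-(d+4)}m^d,
\]
which is exactly the constant $C_d$ in the statement.

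The only delicate points are the integer-rounding counts in Step 3: I must use a half-open interval so that the bound is $2$ rather than $3$. I also need to confirm that the hypothesis relating $H$ to the coordinate distances between $S$ and $T$ is exactly what makes the interpolated steps legal.
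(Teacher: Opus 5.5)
Your proof is correct and follows the same double-counting scheme as the paper: at least $\frac{1}{16}m^{2d}$ disconnected source--target pairs, each blocked by a bad vertex on its canonical path, and each vertex lying on at most $2^d m^d$ canonical paths. The only difference is the path family. The paper steps greedily by $\operatorname{sgn}(y_i-\gamma(t)_i)$ in each coordinate and bounds the load by $2m$ per coordinate (either $y_i=z_i$, or $x_i$ is determined by $y_i$). Your floored linear interpolation gets the same load through the $\lambda\le 1/2$ versus $\lambda>1/2$ split, and as a small bonus it does not use the product structure of $S$ and $T$.
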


\begin{proof}
  Let $N \subseteq \Vin$ denote the set of non-conductive vertices, and let $B$ denote the set of bad vertices in $G$. Assume $|N| \ge \frac{1}{4}m^d$. We bound the number of disconnected pairs $M$ between $\Vin$ and $\Vout$ via a family of deterministic paths. By definition, each $v \in N$ fails to reach at least $\frac{1}{4}m^d$ distinct destination vertices in $\Vout$. Summing over all $v \in N$ yields the lower bound:
  \begin{equation}\label{eq:lower_bound_M}
    M \ge |N| \left( \frac{1}{4}m^d \right) \ge \frac{1}{16}m^{2d}.
  \end{equation}

  To establish an upper bound, construct a deterministic path $\gamma_{x,y}$ in $G^*$ for every pair $(x, y) \in \Vin \times \Vout$. Let $\gamma(t)$ denote the spatial coordinate of the path at layer $t$, initialized at $\gamma(0) = x$. For each dimension $i \in \{1, \dots, d\}$, the path steps greedily toward the target:
  \begin{equation}
    \gamma(t+1)_i = \gamma(t)_i + \operatorname{sgn}(y_i - \gamma(t)_i).
  \end{equation}
  Since the maximal coordinate distance is bounded by $H$, the sequence $\gamma(t)$ reaches $y$ at or before layer $H$ and remains there. Thus, $\gamma_{x,y}$ is a path in $G^*$.

  If $(x, y)$ is disconnected in $G$, its canonical path $\gamma_{x,y}$ must be obstructed by some bad vertex $b = (z, s) \in B$. We now bound the number of paths passing through a fixed $(z, s)$. Analyzing component-wise: for a fixed dimension $i$, if the path has not reached its target ($y_i \neq z_i$), the starting coordinate $x_i$ is uniquely determined by $y_i$ since the path is monotonically stepping towards it. This yields at most $m$ pairs for this dimension. If $y_i = z_i$, $y_i$ is fixed and there are at most $m$ choices for $x_i$. Thus, there are at most $2m$ pairs $(x_i, y_i)$ for each dimension, leading to at most $(2m)^d$ total paths obstructed by $(z, s)$.

  Since every disconnected path must be blocked by at least one bad vertex, the total number of disconnected paths is bounded by:
  \begin{equation}
    M \le |B| \cdot 2^d m^d.
  \end{equation}
  Combining this with~\eqref{eq:lower_bound_M} yields $|B| \ge \frac{M}{2^d m^d} \ge 2^{-(d+4)} m^d = C_d m^d$.
\end{proof}

\subsubsection{The directed graph of sub-blocks}

For a fixed $k \geq 1$, we cover the spatial extent $[-2L_{k+1}, 2L_{k+1}]^d$ of the block $B_{k+1}$ by a grid of scale-$k$ sub-blocks centered at horizontal positions $x L_k$ for $x \in \{-2\ell_k, \dots, 2\ell_k\}^d$. We partition the temporal extent $[0, T_{k+1}]$ into $5\ell_k$ consecutive layers of duration $T_k$, indexed by vertical positions $y \in \{0, \dots, 5\ell_k - 1\}$.

This arrangement defines a discrete lattice $\Vblocks = \{-2\ell_k, \dots, 2\ell_k\}^d \times \{0, \dots, 5\ell_k - 1\}$. We append a layer of sink vertices $\Vsink = \{-2\ell_k, \dots, 2\ell_k\}^d \times \{5\ell_k\}$ to represent the output interfaces at the final time $T_{k+1}$. Let $V = \Vblocks \cup \Vsink$. Each vertex $(x, y) \in \Vblocks$ corresponds to the space-time sub-block
\begin{equation}
  B^{(x,y)}_k = (xL_k, yT_k) + B_k.
\end{equation}
A vertex $(x, y) \in \Vblocks$ is \emph{good} if the corresponding block $B^{(x,y)}_k$ is good; otherwise, it is \emph{bad}. Let $B$ denote the set of all bad vertices.

The output interfaces of $B^{(x,y)}_k$ coincide with the input interfaces of $B^{(x+v, y+1)}_k$ for $v \in \{-1, 0, 1\}^d$. We define the directed graph $G = (V, E)$ by placing directed edges from each good vertex $(x, y) \in \Vblocks$ to $(x+v, y+1)$ for all $v \in \{-1, 0, 1\}^d$. Bad vertices have all outgoing edges removed.

The input interface $\Uin(B_{k+1}) = ([-L_{k+1}/2, L_{k+1}/2)^d \cap \Z^d) \times \{0\}$ corresponds to the set of \emph{input vertices} at the base layer $y=0$:
\begin{equation}
  \Vin = \braces*{(x, 0) \in \Vblocks : x \in [-\ell_k/2, \ell_k/2)^d \cap \mathbb{Z}^d}.
\end{equation}
Since $L_{k+1} = \ell_k L_k$, the input interfaces of the sub-blocks centered at these vertices tile the macroscopic input interface. Thus, each vertex in $\Vin$ corresponds bijectively to a scale-$k$ sub-interface in $\L(\Uin(B_{k+1}))$.

If the block $B_{k+1}$ is bad, it must fail the connectivity requirement for at least one of its $3^d$ output interfaces. Let $v \in \{-1, 0, 1\}^d$ be the displacement vector of a failing output interface. This output interface $\Uout(B_{k+1}, v)$ corresponds to the following set of \emph{target vertices} at the terminal layer $y=5\ell_k$:
\begin{equation}
  \Vout(v) = \braces*{(x, 5\ell_k) \in \Vsink : x \in \parens*{v\ell_k + [- \ell_k/2, \ell_k/2)^d} \cap \mathbb{Z}^d}.
\end{equation}
Similarly, each vertex in $\Vout(v)$ corresponds bijectively to a scale-$k$ sub-interface in $\L(\Uout(B_{k+1}, v))$.

\begin{lemma}[Defect multiplicity]\label{lem:defect_multiplicity}
  Let $G = (V, E)$ be the directed grid graph defined above, and $B \subset \Vblocks$ be the set of bad vertices. If the block $B_{k+1}$ is bad, then $|B| \geq C_d \ell_k^d$.
\end{lemma}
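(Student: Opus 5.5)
The plan is to reduce \cref{lem:defect_multiplicity} to the deterministic \cref{lem:graph_defect}, taking $m = \ell_k$ and $H = 5\ell_k$.

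\textbf{Step 1: find many non-conductive input vertices.} Assume $B_{k+1}$ is bad, and fix a displacement $v$ for which the connectivity requirement fails. Negating the definition of a good block: the set of input sub-interfaces $V$ that are connected by good chains to at least $\frac34\ell_k^d$ targets in $\L(\Uout(B_{k+1},v))$ has size strictly less than $\frac34\ell_k^d$. Otherwise this set itself would serve as $\A_{in}(B_{k+1},v)$.

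Using the bijections between $\Vin$ and $\L(\Uin(B_{k+1}))$, and between $\Vout(v)$ and $\L(\Uout(B_{k+1},v))$, we want to conclude that more than $\frac14\ell_k^d$ vertices of $\Vin$ are non-conductive in $G$. This needs one observation. A directed path in $G$ from $(x,0)$ to a sink $(x',5\ell_k)$ passes through good vertices, with consecutive vertices related by successor relations. It therefore corresponds exactly to a chain of good scale-$k$ sub-blocks connecting the corresponding interfaces. These sub-blocks lie within the spatial range $[-2L_{k+1},2L_{k+1}]^d$ of the grid, hence inside $B_{k+1}$. Consequently, reachability in $G$ coincides with connectivity by good chains inside $B_{k+1}$.

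\textbf{Step 2: match the setting of \cref{lem:graph_defect}.} The vertex set is $\{-2\ell_k,\dots,2\ell_k\}^d\times\{0,\dots,5\ell_k\}$, which is the graph $G^*$ with $m=\ell_k$ and $H=5\ell_k$. Edges go from $(x,y)$ to $(x+v,y+1)$ with $\|v\|_\infty\le1$, and bad vertices lose their outgoing edges. The sink layer has no outgoing edges, which is harmless since it is the final layer.

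The source and target sets are $\Vin=S\times\{0\}$ and $\Vout(v)=T\times\{H\}$. Here $S=[-\ell_k/2,\ell_k/2)^d\cap\Z^d$ and $T=v\ell_k+S$ are boxes of side $\ell_k$. Since $\ell_k$ is odd, each contains exactly $\ell_k^d$ points, and both lie within $[-2\ell_k,2\ell_k]^d$. The maximal coordinate distance between a point of $S$ and a point of $T$ is at most $2\ell_k\le H$, so the hypothesis of \cref{lem:graph_defect} holds.

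\textbf{Step 3: conclude.} \cref{lem:graph_defect} gives at least $C_d\ell_k^d$ bad vertices. These all lie in $\Vblocks$, because sink vertices are never designated bad.

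The main point to check is the exact correspondence in Step 1 between directed paths in $G$ and good chains inside $B_{k+1}$. In particular, the greedy paths used inside \cref{lem:graph_defect} must stay within the grid. This holds because each path coordinate moves monotonically between the coordinates of $x$ and $y$, so it never leaves $[-2\ell_k,2\ell_k]$. Everything else is bookkeeping.
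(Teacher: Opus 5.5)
Your proposal is correct and follows the paper's proof. For a failing displacement $v$ you extract more than $\frac14\ell_k^d$ non-conductive vertices of $V_{\text{in}}$ and apply \cref{lem:graph_defect} with $m=\ell_k$, $H=5\ell_k$, source $V_{\text{in}}$ and target $V_{\text{out}}(v)$; your extra checks (odd $\ell_k$, coordinate distance at most $2\ell_k-1\le H$, greedy paths staying in the grid, bad vertices lying in $V_{\text{blocks}}$) fill in details the paper leaves implicit.

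One small overstatement: reachability in $G$ does not \emph{coincide} with good-chain connectivity inside $B_{k+1}$. A good chain contained in $B_{k+1}$ may use sub-blocks centred at $xL_k$ with $|x_i|$ up to $3\ell_k-3$, which lie outside the grid. This does not affect the argument, because only the implication from directed paths in $G$ to good chains is needed, and that implication holds.
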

\begin{proof}
  By the exact definition of a recursive good block, if $B_{k+1}$ is bad, it must fail the connectivity requirement for at least one output interface displacement $v \in \{-1, 0, 1\}^d$. This means the number of conductive vertices in $\Vin$ (with respect to $\Vout(v)$) is strictly less than $\frac{3}{4} |\Vin|$. Consequently, $G$ contains at least $\frac{1}{4} \ell_k^d$ non-conductive vertices.

  Applying \cref{lem:graph_defect} with $m = \ell_k$, using $\Vin$ as the source set and $\Vout(v)$ as the target set, immediately yields $|B| \ge C_d \ell_k^d$.
\end{proof}

\begin{lemma}[Separation of defective sub-blocks]\label{lem:defect_separation}
  If the block $B_{k+1}$ is bad, and $L_0$ is chosen sufficiently large, there exist at least two bad scale-$k$ sub-blocks $B_k^1$ and $B_k^2$ in $B_{k+1}$ separated by either a vertical distance $\dist_V(B_k^1, B_k^2) \ge T_k$ or a horizontal distance $\dist_H(B_k^1, B_k^2) \ge \frac{1}{2} \ell_k^{1/2} L_k$.
\end{lemma}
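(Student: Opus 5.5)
The plan is a pigeonhole argument on the set $B$ of bad vertices supplied by \cref{lem:defect_multiplicity}. If $B_{k+1}$ is bad, then $|B| \ge C_d \ell_k^d$. Suppose, towards a contradiction, that every pair of bad sub-blocks is close in both senses: vertical distance less than $T_k$ and horizontal distance less than $\frac12 \ell_k^{1/2} L_k$. We will show that all bad vertices then fit in a small region of the lattice $\Vblocks$, whose cardinality is $o(\ell_k^d)$ once $L_0$ (and hence every $\ell_k$) is large.

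First translate the two distances into lattice coordinates. Two sub-blocks $B^{(x,y)}_k$ and $B^{(x',y')}_k$ occupy the time intervals $[yT_k,(y+1)T_k]$ and $[y'T_k,(y'+1)T_k]$. Their vertical distance is therefore $(|y-y'|-1)_+ T_k$, so a vertical distance below $T_k$ forces $|y-y'| \le 1$.

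Horizontally, each sub-block has spatial projection $xL_k + [-3L_k,3L_k]^d$. Their horizontal distance is thus $(\normI{x-x'}L_k - 6L_k)_+$, so a horizontal distance below $\frac12\ell_k^{1/2}L_k$ forces $\normI{x-x'} < \frac12 \ell_k^{1/2} + 6$.

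Now fix one bad vertex $(x_0,y_0)$. Under the contradiction hypothesis, every other bad vertex $(x,y)$ satisfies $|y-y_0|\le 1$ and $\normI{x-x_0} < \frac12\ell_k^{1/2}+6$. Hence
\begin{equation}
|B| \le 3\,\bigl(\ell_k^{1/2}+13\bigr)^d .
\end{equation}
Comparing with $|B|\ge C_d\ell_k^d$ gives $C_d \ell_k^d \le 3(\ell_k^{1/2}+13)^d$. This is false as soon as $\ell_k$ exceeds a constant depending only on $d$, since the right side is of order $\ell_k^{d/2}$.

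It remains to choose $L_0$. Because $\ell_k = 2\floor{L_k^{1/2}/2}+1 \ge L_k^{1/2}-1$ and $L_k \ge L_0$ for all $k$, taking $L_0$ large makes $\ell_k$ large uniformly in $k$. The contradiction then yields two bad sub-blocks that are either vertically or horizontally separated.

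One detail needs care. The statement asks for a disjunction for a single pair, whereas the negation we used is "every pair is close in both senses". Strictly, a pair might be close horizontally but far vertically, or the reverse; either case already satisfies the lemma. So the negation we need is exactly: for every pair, both distances are small. That is the hypothesis the clustering argument refutes.

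I do not expect a serious obstacle; this is essentially a bookkeeping step. The only points requiring attention are the exact overlap of neighbouring sub-blocks (the width $6L_k$ against the spacing $L_k$), which merely adds the constant $6$, and making sure the largeness condition on $L_0$ is uniform in $k$. Whether the constant in \cref{lem:graph_defect} or \cref{lem:defect_multiplicity} is stated with $m=\ell_k$ or with slightly different boundary conventions only affects $C_d$, not the $\ell_k^{d}$ versus $\ell_k^{d/2}$ comparison.
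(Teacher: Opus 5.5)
Your proof is correct and follows the paper's argument: assume every pair of bad vertices is close, confine $B$ to a region of $O(\ell_k^{d/2})$ lattice points, and contradict the bound $|B| \ge C_d \ell_k^d$ from \cref{lem:defect_multiplicity}, with $L_0$ large enough uniformly in $k$. The only cosmetic difference is that you negate the conclusion directly in terms of $\dist_V$ and $\dist_H$, which gives $|y-y'|\le 1$ and $\normI{x-x'} < \frac12\ell_k^{1/2}+6$, whereas the paper negates with the grid thresholds $\Delta y \ge 2$ and $\Delta x \ge \ell_k^{1/2}$ and converts these to physical distances afterwards.
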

\begin{proof}
  Suppose for contradiction that every pair of bad sub-blocks is separated by strictly less than $\ell_k^{1/2}$ horizontally and strictly less than $2$ vertically in grid distance. Then all $|B|$ bad vertices must fit inside a single grid bounding box with side lengths bounded by $\ell_k^{1/2}$ in each spatial dimension and $2$ in the temporal dimension. This box contains at most $2\lceil \ell_k^{1/2} \rceil^d \le 2(\ell_k^{1/2}+1)^d$ lattice points. Since $|B| \ge C_d \ell_k^d$ by \cref{lem:defect_multiplicity}, this requires $C_d \ell_k^d \le 2(\ell_k^{1/2}+1)^d$. For all $d \ge 1$, this inequality is violated provided $L_0$ is chosen sufficiently large.

  Thus, there must exist at least two bad sub-blocks separated by a grid distance of either $\Delta y \ge 2$ vertically or $\Delta x \ge \ell_k^{1/2}$ horizontally. If $\Delta y \ge 2$, their temporal intervals of length $T_k$ satisfy $\dist_V \ge (\Delta y - 1)T_k \ge T_k$. If $\Delta y < 2$, they must be separated horizontally by $\Delta x \ge \ell_k^{1/2}$, meaning their spatial grid centers are separated by $\Delta x L_k \ge \ell_k^{1/2} L_k$. Since the spatial width of each sub-block is $6L_k$, their horizontal separation is
  \begin{equation}
    \dist_H \ge \ell_k^{1/2} L_k - 6L_k \ge \frac{1}{2} \ell_k^{1/2} L_k
  \end{equation}
  for sufficiently large $L_0$.
\end{proof}

\newbigconstant{C:decoupling}
\newconstant{c:survival_eps_decay}
\newbigconstant{C:survival_eps_decay}
\begin{lemma}[Recursive inequality]\label{lem:decoupling}
  For each $k \ge 0$,
  \begin{equation}
    p_{k+1} \le \ubc{C:decoupling} \ell_k^{2d+2} (p_k^2 + \varepsilon_k),
  \end{equation}
  where the decoupling error $\varepsilon_k$ satisfies
  \begin{equation}\label{eq:survival_eps_decay}
    \varepsilon_k \le \ubc{C:survival_eps_decay} T_{k+1}^{d+1} \parens*{ \rho_0 \exp\braces*{ -\uc{c:survival_eps_decay} L_k^{1/3} } + \exp\braces*{ -\uc{c:survival_eps_decay} \rho_0 L_k^{2/15} } }
  \end{equation}
  for constants $\ubc{C:survival_eps_decay}, \uc{c:survival_eps_decay} > 0$ depending only on the dimension.
\end{lemma}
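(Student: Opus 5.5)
The plan is to combine the deterministic defect-separation result with the decoupling inequalities, in the same way \cref{prop:recursive} does for the immunity regime, but now for decreasing events with densities increasing along the scales.

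\textbf{Step 1: reduction to a union over pairs.} If $B_{k+1}$ is bad, \cref{lem:defect_separation} gives two bad scale-$k$ sub-blocks $B_k^1,B_k^2$ inside $B_{k+1}$. These are separated either vertically, $\dist_V\ge T_k$, or horizontally, $\dist_H\ge \frac12\ell_k^{1/2}L_k$. The sub-blocks are indexed by $\Vblocks$, which has at most $(4\ell_k+1)^d\cdot 5\ell_k\le C\ell_k^{d+1}$ elements. A union bound over pairs therefore costs at most $C\ell_k^{2d+2}$, which is the prefactor in the statement. It then suffices to show, for each such pair,
\begin{equation*}
\P_{\rho_{k+1}}\big(B_k^1,B_k^2\text{ bad}\big)\le p_k^2+\varepsilon_k .
\end{equation*}
Here I use that badness is decreasing and supported in the block. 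By \cref{rmk:horizontal-graphical,rmk:vertical-graphical}, dependence on the graphical representation is allowed.

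\textbf{Step 2: vertical pairs.} For vertically separated pairs I apply the decreasing version of \cref{thm:vertical_decoupling}. For decreasing $f_1,f_2$ the roles of the densities swap: one bounds $\E_{\rho(1+\varepsilon)}[f_1f_2]$ by $\E_{\rho}[f_1]\E_{\rho}[f_2]$ plus the error. I take $\rho=\rho_k$ and $\varepsilon=L_k^{-s}$, so that $\rho(1+\varepsilon)=\rho_{k+1}$, and this yields $p_k^2$ as the main term.

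The spatial side of $B_k$ is $6L_k$ and the temporal side is $T_k$. Since the theorem is stated for boxes of equal side $w$, I enlarge both to cubes of side $w\le T_{k+1}$; support in a smaller box implies support in the larger one. The vertical distance is at least $T_k\ge L_k$, since $T_k=L_k^2\prod(5\ell_j/\ell_j^2)$ exceeds $L_k$ for large $L_0$. After possibly discarding the excess so that $\dist_V\le T_{k+1}$, the polynomial prefactor is at most $CT_{k+1}^{d+1}$.

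The two error terms are then controlled as follows.
\begin{itemize}
\item The first term is $\rho_k e^{-c\dist_V^{1/3}}\le \rho_\infty e^{-cL_k^{1/3}}$, and $\rho_\infty\le C\rho_0$ because the product $\prod(1+L_j^{-s})$ is bounded.
\item The second term is $\exp(-c\rho_k L_k^{-2s}L_k^{1/3})$. With $s=1/10$ the exponent is $1/3-1/5=2/15$, and $\rho_k\ge\rho_0$, which gives $\exp(-c\rho_0L_k^{2/15})$.
\end{itemize}

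\textbf{Step 3: horizontal pairs.} For horizontally separated pairs I apply \cref{thm:horizontal_decoupling} at density $\rho_{k+1}$. This gives the covariance bound, and monotonicity in density gives $\P_{\rho_{k+1}}(B_k^i\text{ bad})\le p_k$. The parameters are $w=6L_k$, time horizon $T_{k+1}$ and $\dist_H\ge\frac12\ell_k^{1/2}L_k\approx\frac12L_k^{5/4}$. The error is
\begin{equation*}
C L_k^{d-1}\rho_\infty T_{k+1}^{d/2}\exp\big(-c\dist_H\log(1+\dist_H/T_{k+1})\big).
\end{equation*}

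\textbf{Main obstacle.} The delicate point is that $T_{k+1}$ is much larger than $\dist_H$: roughly $T_k\sim L_k^2$ up to growing factors. The ratio $\dist_H/T_{k+1}$ is therefore small, and the log factor behaves like $\dist_H/T_{k+1}$, which yields only $\exp(-c\dist_H^2/T_{k+1})$.

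To handle this I would apply the theorem with the time horizon taken as the top of the pair of sub-blocks, which is at most $T_{k+1}$ relative to the bottom of $B_{k+1}$ (by stationarity). I would then check that $\dist_H^2/T_{k+1}\gtrsim L_k^{5/2}/(5\ell_kT_k)$ still dominates $L_k^{1/3}$.

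This needs $T_k\le L_k^{2}$ up to lower-order factors, tracking the products $T_k=L_k^2\prod_{j<k}5/\ell_j$. Since $\ell_j\ge5$ for large $L_0$, we get $T_k\le L_k^2$, hence $\dist_H^2/T_{k+1}\ge cL_k^{5/2}/L_k^{5/2}$, which is borderline. If this fails, I would fall back on sharpening the separation in \cref{lem:defect_separation}: horizontal grid distance $\ell_k^{1-\epsilon}$ is affordable, since $C_d\ell_k^d$ bad vertices cannot fit in a box of side $\ell_k^{1-\epsilon}$. That makes $\dist_H^2/T_{k+1}\gtrsim L_k^{1/2-\epsilon}$, which is absorbed into $\exp(-cL_k^{1/3})$ after adjusting constants.

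Finally, all polynomial prefactors are absorbed into $T_{k+1}^{d+1}$, completing the bound.
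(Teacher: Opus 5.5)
Your union bound and your vertical step coincide with the paper's proof. The paper also uses the decreasing form of \cref{thm:vertical_decoupling} at $\rho_{k+1}=\rho_k(1+L_k^{-s})$, with boxes of side $T_k$ and $L_k\le\dist_V\le T_{k+1}$, which yields the exponents $L_k^{1/3}$ and $\rho_0L_k^{2/15}$.

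\textbf{The horizontal step.} This is where you diverge, and your primary route fails outright rather than being borderline. Note first that $T_k=5^kL_0L_k$ exactly. So $T_k/L_k^2=\prod_{j<k}5/\ell_j$ is a decaying factor, not a growing correction. At $k=0$ you have
\[
\dist_H^2/T_1\approx\frac{\ell_0L_0^2/4}{5\ell_0L_0^2}=\tfrac1{20},
\]
so \cref{thm:horizontal_decoupling} with horizon $T_1$ gives no decay at all. The decaying product would rescue that route for $k\ge1$. However, \cref{prop:recursive_contraction} needs the inequality already at $k=0$, so your fallback is mandatory, not optional.

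\textbf{Your fallback works.} $C_d\ell_k^d$ bad vertices cannot fit in a box of grid side $\ell_k^{1-\epsilon}$; even $c_d\ell_k$ would do. So \cref{lem:defect_separation} can be upgraded to $\dist_H\ge\frac12\ell_k^{1-\epsilon}L_k$. Then
\[
\dist_H^2/T_{k+1}\ge \frac{\ell_k^{1-2\epsilon}L_k}{20\cdot5^kL_0}\ge\frac{\ell_k^{1-2\epsilon}}{20}\gtrsim L_k^{1/2-\epsilon},
\]
using $L_k\ge5^kL_0$. This beats $L_k^{1/3}$ once $\epsilon<1/6$, and the prefactors are absorbed into $T_{k+1}^{d+1}$.

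\textbf{The paper's route.} The paper leaves \cref{lem:defect_separation} unchanged and shrinks the time horizon instead. In the horizontal case one may assume $\dist_V<T_k$, since otherwise the vertical case applies. The two sub-blocks then lie in the same or adjacent layers, i.e.\ in a window of length $2T_k$. By stationarity, apply \cref{thm:horizontal_decoupling} with horizon $2T_k$, measured from the bottom of the lower sub-block rather than the bottom of $B_{k+1}$. Then $\dist_H/(2T_k)\gtrsim\ell_k^{1/2}/(5^kL_0)$, and the original separation $\frac12\ell_k^{1/2}L_k$ already gives $\exp\{-cL_k^{1/2}\}$.

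Your version keeps the crude horizon $T_{k+1}$ at the price of a stronger combinatorial lemma. The paper's version is more economical and gives the better rate.
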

\begin{proof}
  As shown in \cref{lem:defect_separation}, whenever $B_{k+1}$ is bad, there exists a pair of scale-$k$ blocks $B_k^1, B_k^2$ that are separated vertically by $\dist_V \ge T_k$ or horizontally by $\dist_H \ge \frac{1}{2} \ell_k^{1/2} L_k$ such that both are bad. Using a union bound, we get
  \begin{equation}
    \label{eq:union_bound_bad}
    p_{k+1} = \P_{\rho_{k+1}}(B_{k+1} \text{ bad})
    \le \ubc{C:decoupling} \ell_k^{2d+2} \max_{(B_k^1, B_k^2)} \P_{\rho_{k+1}} (\{B_k^1 \text{ bad}\} \cap \{B_k^2 \text{ bad}\}),
  \end{equation}
  where the maximum ranges over the collection of at most $(5 \ell_k (4\ell_k+1)^d)^2 \leq \ubc{C:decoupling} \ell_k^{2d+2}$ pairs of separated blocks.
  
  We now bound the right-hand side of \eqref{eq:union_bound_bad} by dividing it into two cases depending on the separation between $B_k^1$ and $B_k^2$.

  For pairs that are vertically separated by $\dist_V \ge T_k$, we apply \cref{thm:vertical_decoupling} with box size $w = T_k$. Evaluating the joint event under the density $\rho_{k+1} = \rho_k(1 + \varepsilon)$ with multiplicative sprinkling increment $\varepsilon = L_k^{-s}$, we obtain
  \begin{equation}
    \P_{\rho_{k+1}} (\{B_k^1 \text{ bad}\} \cap \{B_k^2 \text{ bad}\}) \le p_k^2 + \varepsilon_k^{(V)},
  \end{equation}
  where, using $T_k = 5^k L_k L_0$ to bound $L_{k} \leq \dist_V \leq T_{k+1}$ and bounding $\rho_0 \le \rho_k \le 2\rho_0$, gives
  \begin{equation}
    \begin{aligned}
      \varepsilon_k^{(V)} &= \ubc{C:vertical_decoupling} (T_k + \dist_V)^{d+1} \Big( \rho_k \exp\braces*{-\uc{c:vertical_decoupling} \dist_V^{1/3}} + \exp\braces*{-\uc{c:vertical_decoupling} \rho_k \varepsilon^2 \dist_V^{1/3}} \Big) \\
      &\le C T_{k+1}^{d+1} \Big( 2\rho_0 \exp\braces*{-\uc{c:vertical_decoupling} L_k^{1/3}} + \exp\braces*{-\uc{c:vertical_decoupling} \rho_0 L_k^{-1/5} L_k^{1/3}} \Big) \\
      &\le C T_{k+1}^{d+1} \Big( \rho_0 \exp\braces*{-\uc{c:vertical_decoupling} L_k^{1/3}} + \exp\braces*{-\uc{c:vertical_decoupling} \rho_0 L_k^{2/15}} \Big).
    \end{aligned}
  \end{equation}

  If they are not vertically separated, meaning $\dist_V < T_k$, their temporal projections lie in an interval of length at most $2 T_k$. Their horizontal separation must then satisfy $\dist_H \ge \frac{1}{2} \ell_k^{1/2} L_k$. Applying \cref{thm:horizontal_decoupling} with spatial width parameter $w = 6L_k$ and time horizon $T = 2 T_k$ gives
  \begin{equation}
    \P_{\rho_{k+1}} (\{B_k^1 \text{ bad}\} \cap \{B_k^2 \text{ bad}\}) \le \P_{\rho_{k+1}} (B_k \text{ bad})^2 + \varepsilon_k^{(H)} \le p_k^2 + \varepsilon_k^{(H)},
  \end{equation}
  where, using $\rho_{k+1} \le 2\rho_0$ and $T_k = 5^k L_k L_0$, the error is bounded by
  \begin{equation}
    \begin{aligned}
      \varepsilon_k^{(H)} &= \ubc{C:horiz_decoupling} (6L_k)^{d-1} \rho_{k+1} (2T_k)^{d/2} \exp\braces*{-\uc{c:horiz_decoupling} \dist_H \log\parens*{1+\frac{\dist_H}{2T_k}}} \\
      &\le C L_k^{d-1} \rho_0 T_k^{d/2} \exp\braces*{- c \ell_k^{1/2} L_k \log\parens*{1 + \frac{c'\ell_k^{1/2}}{5^k L_0}}} \\
      &\le C \rho_0 L_k^{d-1} (5^k L_k L_0)^{d/2} \exp\braces*{- c L_k^{1/2}}.
      \end{aligned}
  \end{equation}

  Taking $\varepsilon_k = \max(\varepsilon_k^{(H)}, \varepsilon_k^{(V)})$, the joint probability of any specific separated pair is bounded above by $p_k^2 + \varepsilon_k$. The vertical error determines the overall decay rate, since the horizontal decay decreases much faster. The polynomial factor in $\varepsilon_k^{(H)}$ is readily absorbed by $T_{k+1}^{d+1}$ in the vertical error bound. Adjusting the constants $\ubc{C:survival_eps_decay}$ and $\uc{c:survival_eps_decay}$ accordingly yields \eqref{eq:survival_eps_decay}.
\end{proof}

\newbigconstant{C:recursive_contraction}
\begin{proposition}[Recursive contraction]\label{prop:recursive_contraction}
  Fix an initial density $\rho_0 > 0$. There exists a constant $\ubc{C:recursive_contraction} > 0$ such that if $L_0 > \ubc{C:recursive_contraction}$, $p_0 \le L_0^{-5d}$,  and for all $k \ge 0$ the decoupling error satisfies
  \begin{equation}\label{eq:error_bound}
    \ubc{C:decoupling} \ell_k^{2d+2} L_{k+1}^{5d} \varepsilon_k \le \frac{1}{2},
  \end{equation}
  then $p_{k} \le L_{k}^{-5d}$ for all $k \ge 0$.
\end{proposition}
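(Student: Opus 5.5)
We argue by induction on $k$, in the same spirit as \cref{lem:contraction}. The base case $p_0 \le L_0^{-5d}$ is a hypothesis. Assume $p_k \le L_k^{-5d}$. By \cref{lem:decoupling},
\begin{equation*}
  p_{k+1} \le \ubc{C:decoupling} \ell_k^{2d+2} p_k^2 + \ubc{C:decoupling} \ell_k^{2d+2} \varepsilon_k .
\end{equation*}
It then suffices to show that each of the two terms is at most $\frac12 L_{k+1}^{-5d}$.

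The second term is handled directly by hypothesis \eqref{eq:error_bound}. Multiplying that inequality by $L_{k+1}^{-5d}$ gives exactly $\ubc{C:decoupling}\ell_k^{2d+2}\varepsilon_k \le \frac12 L_{k+1}^{-5d}$.

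The first term is where the geometry of the scales enters. We use $L_{k+1} = \ell_k L_k$ and $\ell_k \le L_k^{1/2}+1 \le 2L_k^{1/2}$, so that $L_{k+1} \le 2L_k^{3/2}$. The induction hypothesis gives
\begin{equation*}
  \ubc{C:decoupling}\ell_k^{2d+2}p_k^2 \le \ubc{C:decoupling}\,2^{2d+2} L_k^{d+1} L_k^{-10d},
\end{equation*}
while the target is
\begin{equation*}
  \tfrac12 L_{k+1}^{-5d} \ge \tfrac12\, 2^{-5d} L_k^{-15d/2}.
\end{equation*}
We therefore need $\ubc{C:decoupling}\,2^{7d+3} L_k^{d+1-10d+15d/2} \le 1$. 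The exponent is $1-3d/2$, which is negative for every $d\ge1$ (it equals $-1/2$ for $d=1$). Since $L_k \ge L_0$, the inequality holds as soon as $L_0$ exceeds a constant depending only on $d$ and $\ubc{C:decoupling}$; this fixes $\ubc{C:recursive_contraction}$. One should also check that $L_0$ large makes $\ell_k\ge1$ well defined and keeps the requirements of \cref{lem:defect_separation} in force, which are again conditions of the form ``$L_0$ large''.

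The only delicate point is this exponent bookkeeping. We need the squaring $p_k^2$ to beat both the entropy factor $\ell_k^{2d+2}$ and the growth $L_{k+1}\approx L_k^{3/2}$. The margin in $d=1$ is small, namely $L_k^{-1/2}$, so the bound $\ell_k \le 2L_k^{1/2}$ must be used carefully rather than lossily. Everything else is immediate, and summing the two halves closes the induction, giving $p_k \le L_k^{-5d}$ for all $k\ge0$.
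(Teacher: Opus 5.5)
Your proposal is correct and is essentially the paper's proof. It is an induction through \cref{lem:decoupling}. The $\varepsilon_k$ term is absorbed by hypothesis \eqref{eq:error_bound}. The $p_k^2$ term is controlled via $L_{k+1}=\ell_kL_k$ and $\ell_k\le 2L_k^{1/2}$, giving the exponent $1-3d/2\le -1/2$ and the threshold $L_0\ge(\ubc{C:decoupling}2^{7d+3})^2$. The only difference is cosmetic: you compare each term to $\frac12 L_{k+1}^{-5d}$, whereas the paper multiplies through by $L_{k+1}^{5d}$.
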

\begin{proof}
  We proceed by induction. The base case holds by assumption. For the inductive step, assume $p_k \le L_k^{-5d}$ for some $k \ge 0$. By \cref{lem:decoupling}, $p_{k+1} \le \ubc{C:decoupling} \ell_k^{2d+2} (p_k^2 + \varepsilon_k)$. Multiplying by $L_{k+1}^{5d}$ and substituting $L_{k+1} = \ell_k L_k$ alongside the bound $\ell_k \le 2L_k^{1/2}$ yields
  \begin{equation}
    \begin{aligned}
      L_{k+1}^{5d} p_{k+1} &\le \ubc{C:decoupling} \ell_k^{2d+2} (\ell_k L_k)^{5d} (L_k^{-10d} + \varepsilon_k) \\
      &\le \ubc{C:decoupling} 2^{7d+2} L_k^{\frac{7d+2}{2} - 5d} + \ubc{C:decoupling} \ell_k^{2d+2} L_{k+1}^{5d} \varepsilon_k.
    \end{aligned}
  \end{equation}
  Since $\frac{7d+2}{2} - 5d \le -\frac{1}{2}$ for all $d \ge 1$, the exponent of $L_k$ in the first term is strictly negative. Because $L_k$ is an increasing sequence, choosing $\ubc{C:recursive_contraction} \ge (\ubc{C:decoupling} 2^{7d+3})^2$ ensures that for $L_0 > \ubc{C:recursive_contraction}$, this first term is bounded by $1/2$. The second term is bounded by $1/2$ by assumption \eqref{eq:error_bound}. Therefore, $L_{k+1}^{5d} p_{k+1} \le 1/2 + 1/2 = 1$, completing the induction.
\end{proof}

\begin{theorem}[Oriented percolation]\label{thm:oriented_percolation}
  Suppose the bounds of \cref{prop:recursive_contraction} are satisfied. Then for the zero-range process initialized with the stationary measure at density $\rho_\infty$, almost surely there exists an infinite chain of good scale-$0$ blocks.
\end{theorem}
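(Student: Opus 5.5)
The plan is a Borel--Cantelli argument along the scales, followed by a compactness (König-type) extraction of an infinite chain. First reduce everything to density $\rho_\infty$. Goodness is an increasing event and $\rho_k \le \rho_\infty$ for every $k$. Hence, by the monotone coupling of the invariant measures (stochastic domination of $\mu_{\rho_k}$ by $\mu_{\rho_\infty}$, propagated to the whole trajectory through the monotone graphical construction), the probability under $\P_{\rho_\infty}$ that a given scale-$k$ block is bad is at most $p_k \le L_k^{-5d}$, using \cref{prop:recursive_contraction}.

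Next, consider the nested sequence of blocks $B_k$ (or suitable translates) that all share the same input region around the origin at time $0$. The input interface $\Uin(B_{k})$ is tiled by the sub-interfaces $\L(\Uin(B_k))$, so one can arrange a sequence of blocks $B^{(k)}$ of scale $k$ whose input interfaces all contain the origin's scale-$0$ input interface. Since $\sum_k L_k^{-5d} < \infty$, Borel--Cantelli gives that almost surely $B^{(k)}$ is good for all $k \ge k_0$. A good scale-$k$ block, applied to any output $v$, contains a chain of good scale-$(k-1)$ blocks spanning its full height $T_k$. By \cref{lem:transitivity}, this yields a chain of good scale-$0$ blocks spanning $[0,T_k]$, started from some input sub-interface lying in $\Uin(B^{(k)})$. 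In this way we obtain, for every $k \ge k_0$, a chain of good scale-$0$ blocks of time length $T_k \to \infty$, all started within a bounded-in-$k$? region. Boundedness here is the issue: the starting region grows with $k$.

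This is the main obstacle. The lengths of the chains are unbounded, but we need a single infinite chain. I would handle it using an ergodicity/zero-one argument in place of pinning the start. The event ``there exists an infinite chain of good scale-$0$ blocks starting somewhere in $\Z^d\times\{0\}$'' is invariant under spatial shifts by multiples of $L_0$. Moreover, finite chains of every length exist a.s. Any infinite sequence of chains of increasing length starting in a fixed finite set of scale-$0$ input positions yields, by König's lemma (finitely many successors per block, namely $3^d$), an infinite chain. To get a fixed finite starting set, I would use the fact that for each $k$ the chain found in $B^{(k)}$ starts in $\Uin(B^{(k)})$, and that with probability at least $1-\sum_{j\ge k}L_j^{-5d}$ the blocks $B^{(j)}$, $j\ge k$, are all good. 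Using the $\frac34$-fraction structure of $\A_{in}$, the chain built for scale $j$ can be chosen to pass through any prescribed $V\in\A_{in}(B^{(k)},v)$ at scale $k$. Concretely, when descending via \cref{lem:transitivity}, start by fixing $V_1$ inside the scale-$k$ input sub-interfaces at the top level, and propagate downward. Thus, on the event that all $B^{(j)}$, $j\ge k$, are good, there are chains of every length starting in the bounded set $\Uin(B^{(k)})$. König then gives an infinite chain with probability at least $1-\sum_{j\ge k}L_j^{-5d}$. Letting $k\to\infty$, or equivalently using $k=k_0$ random and a union over the countably many possible $k_0$, yields almost sure existence.
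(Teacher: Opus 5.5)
There is a genuine gap at the point you yourself flag as the main obstacle. Goodness of $B^{(j)}$ only guarantees that \emph{some} set $\A_{in}(B^{(j)},v)$ of at least $\frac34\ell_{j-1}^d$ scale-$(j-1)$ input sub-interfaces is conductive. Nothing forces the central one, $\In(B^{(j-1)})$, to belong to it. When you descend via the transitivity lemma, the first sub-interface $V_1$ must be taken in $\A_{in}(B^{(j)},v_1)$; you cannot prescribe it. The next level then starts inside whichever scale-$(j-1)$ block has input $V_1$, which in general is not $B^{(j-1)}$.

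So your claim that on $\{B^{(j)}\ \text{good for all}\ j\ge k\}$ there are chains of every length starting in $\In(B^{(k)})$ is unjustified, and in fact false. Take all scale-$0$ blocks good except one layer of bad ones in the slab $[T_k,T_k+T_0]$ covering the forward cone of $\In(B^{(k)})$. That cone has spatial radius at most $L_k/2+5^kL_k$, since $T_k/T_0=5^kL_k/L_0$. This wall kills every chain from $\In(B^{(k)})$. Yet it spoils only $O(5^{kd})$ scale-$k$ sub-blocks, all in a single layer, which is far fewer than the $C_d\ell_k^d$ needed to make $B^{(k+1)}$ bad. Hence $B^{(k)}$, $B^{(k+1)}$ and all larger $B^{(j)}$ remain good.

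The root cause is that all your blocks start at time $0$ and chains only move forward in time. There is therefore no room to travel from the origin to whichever sub-interface of $\A_{in}$ goodness happens to supply. The $0$--$1$ law remark does not rescue this, since it would still require positive probability of an infinite chain from a bounded region, which is exactly the missing point.

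The paper creates this room by staggering the scales in time. It builds an inverted pyramid $\B_0$ of scale-$0$ blocks rooted at the origin, of height $(\ell_0-1)/2$, whose top layer exactly covers the input interface of a scale-$1$ block. A pyramid $\B_1$ of scale-$1$ blocks is rooted there, and so on. Since $|\B_k|\le\ell_k^{d+1}$ and $\ell_k^{d+1}L_k^{-5d}\le 2^{d+1}L_k^{-4d}$ is summable, Borel--Cantelli applied to \emph{all} blocks of all pyramids gives a random $K$ such that every block of every $\B_k$, $k\ge K$, is good.

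Then, whatever $V_1\in\A_{in}$ transitivity selects inside a good chain of $\B_{K+n}$, it can be reached from the root of $\B_{K+n-1}$ through that entirely good pyramid, because its top layer covers every sub-interface. Iterating gives chains of unbounded length from the single root of $\B_K$. König's lemma plus one more use of transitivity then yield the infinite chain. Your monotonicity reduction, Borel--Cantelli and König steps are fine; what is missing is this time buffer that makes every sub-interface reachable.
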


\begin{proof}
  By \cref{prop:recursive_contraction}, the failure
  probability $p_k$ at density $\rho_k$ satisfies $p_k \le L_k^{-5d}$ for all $k \ge 0$. Since the
  process is initialized globally at density $\rho_\infty \ge \rho_k$ for all $k$, by monotonicity,
  the true failure probability of any specific scale-$k$ block in the process is bounded above by
  $p_k \le L_k^{-5d}$.

  We proceed by iteratively constructing a hierarchy of contiguous space-time blocks. Let $\B_0$
  denote an inverted pyramid of scale-$0$ blocks rooted at the origin, formed by following all $3^d$
  output interfaces for $(\ell_0 - 1)/2$ time steps. The top layer of $\B_0$ consists of $\ell_0^d$
  blocks, exactly covering the input interface of a single scale-$1$ block. Iteratively for each $k
  \ge 1$, we construct an inverted pyramid $\B_k$ of scale-$k$ blocks succeeding the top layer of $\B_{k-1}$, expanding from its root for $(\ell_k - 1)/2$ time steps.

  Let $\mathcal{N}_k = |\B_k|$ denote the total number of scale-$k$ blocks in this construction.
  Since $\B_k$ is a full pyramid of height $(\ell_k - 1)/2$, we have $\mathcal{N}_k \le \ell_k^{d+1}$. The
  probability that any specific block in $\B_k$ is bad is bounded by $p_k \le L_k^{-5d}$. Using $\ell_k
  \le 2L_k^{1/2}$, summing the failure probabilities over all blocks yields
  \begin{equation}
    \sum_{k=0}^\infty \mathcal{N}_k p_k \le \sum_{k=0}^\infty \ell_k^{d+1} L_k^{-5d} \le \sum_{k=0}^\infty 2^{d+1} L_k^{\frac{d+1}{2}} L_k^{-5d} \le \sum_{k=0}^\infty 2^{d+1} L_k^{-4d}.
  \end{equation}
  Since $L_{k+1} \ge \frac{1}{2} L_k^{3/2}$, the sequence $L_k$ grows super-exponentially, ensuring
  the sum converges. By the Borel--Cantelli lemma, almost surely only finitely many blocks in
  $\bigcup_{k=0}^\infty \B_k$ are bad.

  Consequently, there exists an almost-surely finite scale $K$ such that every block in $\B_k$ is
  good for all $k \ge K$.

  We now construct an infinite chain of good scale-0 blocks. Fix an arbitrary
  integer $n \ge 1$. Since all blocks in the pyramid $\B_{K+n}$ are good, we can trace a sequence of
  good scale-$(K+n)$ blocks from its base to its top layer. By \cref{lem:transitivity}, this
  macroscopic path contains a chain of good scale-$(K+n-1)$ blocks. Since the base
  of $\B_{K+n}$ coincides with the top layer of $\B_{K+n-1}$, and all blocks in $\B_{K+n-1}$ are
  also good, we can extend this scale-$(K+n-1)$ sequence backwards through $\B_{K+n-1}$ to its root.

  Iterating this argument down to scale $K$ yields a finite sequence of good scale-$K$ blocks
  originating at the root of $\B_K$ and terminating at the top layer of $\B_{K+n}$. As this holds
  for every $n \ge 1$, we obtain an infinite family of finite paths, all starting from the single
  root of $\B_K$ and reaching arbitrarily far forward in time. Because each scale-$K$ block has
  exactly $3^d$ output interfaces, the tree formed by all such paths is locally finite. By König's
  lemma, this tree must contain an infinite path, establishing the existence of an infinite chain
  of good scale-$K$ blocks originating from $\B_K$.

  Applying \cref{lem:transitivity} one final time to this infinite sequence provides the desired
  infinite chain of good scale-$0$ blocks originating from the base-scale.
\end{proof}

\begin{remark}\label{rmk:genealogical-survival}
If the scale-0 good event is defined so that an infinite chain of good scale-0 blocks produces an infinite genealogical path, then \cref{thm:oriented_percolation} implies that there exists an infinite genealogical path almost surely. By translation invariance and stationarity of the zero-range process and graphical construction, the probability that an infinite genealogical path starts from $(0,0)$ is therefore positive. Since all particles at the origin are initially infected, this implies survival of the infection with positive probability.
\end{remark}

\subsection{The seeding lemma}\label{subsec:seeding_lemma}

We define the seeding event $\S(B_0)$ for a scale-$0$ block $B_0$. This event guarantees that the
zero-range dynamics, ignoring any possible healing marks, effectively propagates the infection from the input to all required outputs.

\begin{definition}[Seeding event]\label{def:seeding_event} Consider a scale-$0$ block $B_0$ with
  input interface $\Uin(B_0)$ and output interfaces $\Uout(B_0, v)$. Let $\S(B_0)$ denote the \emph{seeding
  event}: for every $(x, 0) \in \Uin(B_0)$ and $v \in \{-1, 0, 1\}^d$, assuming we artificially place a particle at $x$ if it is
  initially empty, there exists at least one $J$-path  (recall \cref{def:weak_jump_path}) originating from $x$ and ending in $\Uout(B_0, v)$ at time $T_0$, such that this path is entirely contained within $B_0$.
\end{definition}

We now establish that the probability of the seeding event failing can be made arbitrarily small by
taking $L_0$ large enough. The $3^d$ output interfaces at time $T_0$ serve as our targets, which we
collect in the set $\mathcal{M}$. Each target $m_i \in \mathcal{M}$ is a spatial box of side length
$L_0$.

To facilitate the subsequent proofs, it is convenient to track these $J$-paths using a particle
coloring mechanism. For a given source vertex $x \in \Uin(B_0)$, suppose we color all particles initially
at $x$ green (adding a single green particle if $x$ is empty). We assume that any other particle
that shares a site with a green particle instantly becomes green, and that any particle exiting the
spatial boundary of $B_0$ instantly loses its green color. Under these rules, the existence of a
valid $J$-path from $x$ to a target $m_i \in \mathcal{M}$ strictly contained within $B_0$ is equivalent to the event that at least one green particle resides in $m_i$ at time $T_0$.

\begin{figure}[tpb]
  \centering
  \begin{subfigure}[b]{0.54\linewidth}
    \centering
    \includegraphics[width=\linewidth]{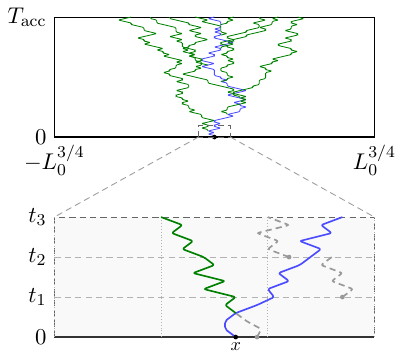}
    \caption{Stage 1: Accumulation}
    \label{fig:seeding_phase_1}
  \end{subfigure}
  \hfill
  \begin{subfigure}[b]{0.40\linewidth}
    \centering
    \includegraphics[width=\linewidth]{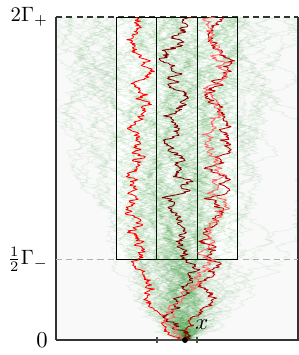}
    \caption{Stage 2: Diffusion}
    \label{fig:seeding_phase_2}
  \end{subfigure}
  \caption{The two stages of the seeding event. (a) A single seed particle locally spreads the infection, rapidly accumulating a large population of green particles. (b) These particles act as independent messengers, diffusing over a long time window to successfully reach all required macroscopic output targets.}
  \label{fig:seeding_event}
\end{figure}

To bound the failure probability of the seeding event, we divide the time interval $[0, T_0]$ into two stages: an accumulation stage and a diffusion stage, as illustrated in \cref{fig:seeding_event}.
\begin{itemize}
  \item Accumulation stage: During an initial short time window $[0, \Tacc]$ of length $\Tacc = L_0^{3/2}$, the single initial green particle infects a growing collection of particles. We show that, with high probability, this process creates a large number of green particles (proportional to $L_0^{1/4}$) within a small spatial neighborhood of the origin.
  \item Diffusion stage: In the remaining time $[\Tacc, T_0]$, the accumulated green particles act as messengers. To decouple their trajectories, we couple them to independent random walks constrained to remain within a space-time ``tunnel'' that ultimately confines them inside the target region for a large time interval. By invariance principles for Brownian motion, traversing this macroscopic tunnel has strictly positive probability, ensuring that at least one of the numerous messengers succeeds with high probability.
\end{itemize}

\newcommand{\barrho}{\bar{\rho}_0}
\newcommand{\Edisp}{\mathcal{E}_{\text{disp}}}

Before proving the lemmas that bound the failure probabilities of these events, we note a convention
regarding the base-scale $L_0$. As our goal is to select $L_0$ sufficiently large to trigger the
renormalization decay, any intermediate bounds or inequalities requiring $L_0$ to exceed an absolute
constant (independent of $\rho_0$) will be implicitly assumed to hold. This simplifies the
exposition by allowing us to absorb lower-order terms or apply asymptotic limits (such as the weak
convergence of random walks to Brownian motion) without repeatedly adding the qualification ``for
sufficiently large $L_0$''. For compactness, we denote the capped density by $\barrho = \min(1,
\rho_0)$.

\newbigconstant{C:spatial_containment}
\newconstant{c:spatial_containment}
\begin{lemma}[Spatial containment]\label{lem:spatial_containment} Let $\Tacc = L_0^{3/2}$. Denote by $\Elocal$ the event that no
  particle initially inside the box $\Lambda = [-2L_0, 2L_0]^d$ (including the possible additional seed particle) travels a distance
  greater than $L_0^{7/8}/2$ during $[0, \Tacc]$, and no particle initially outside $\Lambda$ enters
  the central box $[-L_0, L_0]^d$ by time $\Tacc$. There exist constants
  $\ubc{C:spatial_containment}, \uc{c:spatial_containment} > 0$ such that
  \begin{equation}
    \P_{\rho_0}(\Elocal^c) \le \ubc{C:spatial_containment} (\rho_0 + 1) \exp\braces*{-\uc{c:spatial_containment} L_0^{1/4}}.
  \end{equation}
\end{lemma}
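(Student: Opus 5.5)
We split $\Elocal^c$ into two pieces and bound each by a union bound: the event $\mathcal{E}_1^c$ that some particle initially in $\Lambda = [-2L_0,2L_0]^d$ (or the extra seed) moves farther than $L_0^{7/8}/2$ during $[0,\Tacc]$, and the event $\mathcal{E}_2^c$ that some particle initially outside $\Lambda$ enters $[-L_0,L_0]^d$ by time $\Tacc$. Both reduce to the single-particle displacement estimates of \cref{lem:crossing_bound}. These apply regardless of interactions, because in the slot representation each particle's jump count over a window of length $T$ is dominated by $\poisson(\Gamma_+ T)$ and its trajectory by a time-changed simple random walk.

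For $\mathcal{E}_1^c$ we condition on $\eta_0$ and let $N = 1 + \sum_{x\in\Lambda}\eta_0(x)$. A union bound with \eqref{eq:displacement_bound}, taking $s = L_0^{7/8}/2$ and $T = \Tacc = L_0^{3/2}$, gives
$$\P^{\eta_0}(\mathcal{E}_1^c) \le N\,\ubc{C:displacement}\,e^{-\uc{c:displacement} s\log(1+s/T)}.$$
Here $s/T \to 0$, so $s\log(1+s/T) \asymp s^2/T = L_0^{7/4-3/2}/4 = L_0^{1/4}/4$; this is exactly where the exponent $L_0^{1/4}$ comes from. Taking expectations, $\E_{\rho_0}[N] \le 1 + (4L_0+1)^d\rho_0$. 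The polynomial prefactor is absorbed into the exponential by shrinking the constant, which yields the bound $C(\rho_0+1)e^{-cL_0^{1/4}}$. We must check that \cref{lem:crossing_bound}'s bound indeed has the Gaussian regime $s^2/T$ when $s \ll T$. The form $s\log(1+s/T)$ gives this automatically, since $\log(1+u) \ge u/2$ for $u \le 1$.

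For $\mathcal{E}_2^c$, any particle starting outside $\Lambda$ must travel at least $L_0$ in $\infty$-norm to reach $[-L_0,L_0]^d$. We take a union bound over the $O(L_0^{d-1})$ boundary sites $x$ of $[-L_0,L_0]^d$ and apply \eqref{eq:crossing_bound} (after translating $x$ to the origin) with $s = L_0$ and $T = \Tacc$. This gives
$$\P_{\rho_0}(\mathcal{E}_2^c) \le C L_0^{d-1}\rho_0 \Tacc^{d/2} e^{-\uc{c:crossing} L_0\log(1+L_0^{-1/2})} \le C\rho_0 L_0^{d-1+3d/4} e^{-cL_0^{1/2}},$$
which is much smaller than needed once the polynomial is absorbed.

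The main technical point is not an obstacle but a bookkeeping check. The displacement bound for the extra seed particle, and for particles whose slots get reassigned, must still hold. This follows from the domination argument preceding \cref{lem:crossing_bound}, since adding a particle does not change the fact that each particle's jump rate is at most $\Gamma_+$. We also verify that using $\E_{\rho_0}[N]$ is legitimate (Fubini over $\mu_{\rho_0}$). Summing the two contributions and renaming constants gives the claimed bound with $\ubc{C:spatial_containment}$ and $\uc{c:spatial_containment}$ depending only on $d$ and $\Gamma_+$.
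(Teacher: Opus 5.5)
Your proof is correct and follows the paper's argument essentially step for step. For particles in $\Lambda$ you use a union bound with the displacement estimate \eqref{eq:displacement_bound}, where $s\log(1+s/\Tacc)$ is of order $L_0^{1/4}$ and sets the rate; for invaders from outside $\Lambda$ you use the crossing bound \eqref{eq:crossing_bound} with $s=L_0$, which decays like $e^{-cL_0^{1/2}}$. The differences are only cosmetic: you count the seed once instead of adding one particle per site, and you union-bound over the boundary sites of $[-L_0,L_0]^d$ rather than all of its sites.
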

\begin{proof}
  We first bound the probability of the event $\Edisp$ that any particle initially inside $\Lambda$ displaces by more than $s = L_0^{7/8}/2$ by time $\Tacc = L_0^{3/2}$. Conditioning on the initial configuration
  $\eta_0$, a union bound gives
  \begin{equation}
    \P^{\eta_0} (\Edisp) \le \sum_{x \in \Lambda} (\eta_0(x) + 1) \P\parens*{ \max_{0 \le t \le \Tacc} \|X_t(x) - x\| \ge s }.
  \end{equation}
  By the displacement bound \eqref{eq:displacement_bound} in \cref{lem:crossing_bound}, the probability on the right-hand side is at most $\ubc{C:displacement} \exp\braces*{-\uc{c:displacement} s \log(1+s/\Tacc)}$. Taking the expectation over $\eta_0$ and using $\E_{\rho_0}[\eta_0(x)] = \rho_0$, we obtain
  \begin{equation}\label{eq:interior_containment}
    \begin{aligned}
      \P_{\rho_0}(\Edisp )
      &\le \sum_{x \in \Lambda} (\rho_0 + 1) \ubc{C:displacement} \exp\braces*{-\uc{c:displacement} s \log(1+s/\Tacc)} \\
      &\le (4L_0+1)^d (\rho_0 + 1) \ubc{C:displacement} \exp\braces*{-\uc{c:displacement} (L_0^{7/8}/2)\log(1 + L_0^{-5/8}/2)} \\
      &\le C (\rho_0 + 1) L_0^d \exp\braces*{ - c L_0^{7/8} \cdot L_0^{-5/8} }\\
      &= C (\rho_0 + 1) L_0^d \exp\braces*{ - c L_0^{1/4} }.
    \end{aligned}
  \end{equation}
  
  For the exterior particles, any particle starting outside $\Lambda = [-2L_0, 2L_0]^d$
  that reaches the central box $[-L_0, L_0]^d$ must travel a distance of at least $s = L_0$.
  Applying the crossing bound \eqref{eq:crossing_bound} from \cref{lem:crossing_bound} and taking a
  union bound over all $(2L_0 + 1)^d$ sites in the central box, the probability of an exterior
  invasion is bounded by:
  \begin{equation}\label{eq:exterior_containment}
    (2L_0 + 1)^d \cdot \ubc{C:crossing} \rho_0 \Tacc^{d/2} \exp\braces*{ -\uc{c:crossing} L_0\log(1+L_0/\Tacc) } \le C \rho_0 L_0^{7d/4} \exp\braces*{ -c L_0^{1/2} }.
  \end{equation}

  If the event $\Elocal^c$ occurs, then at least one of the above events occur. Thus, the probability of $\Elocal^c$ is bounded by the sum of \eqref{eq:interior_containment} and \eqref{eq:exterior_containment}. The $L_0^{1/4}$ exponential decay in \eqref{eq:interior_containment} is the slowest decay of the two, absorbing the polynomial factor into the exponential yields the desired bound for an appropriate choice of constants.
\end{proof}

\newbigconstant{C:accumulation}
\newconstant{c:accumulation}
\begin{lemma}[Accumulation of infections]\label{lem:accumulation_infections} Suppose we initialize
  the process with a single green seed particle at a position $x$. Let $\Ephaseone$ be the event
  that by time $\Tacc = L_0^{3/2}$, there exist at least $N = \lceil \frac{\barrho}{4} L_0^{1/4} \rceil$ green
  particles located within a distance of $L_0^{7/8}$ from $x$. There exist constants
  $\ubc{C:accumulation}, \uc{c:accumulation} > 0$ such that
  \begin{equation}
    \P_{\rho_0}(\Ephaseone^c) \le \ubc{C:accumulation} (\rho_0 + 1) L_0^{2d} \exp\braces*{ -\uc{c:accumulation} \barrho L_0^{1/4} }.
  \end{equation}
\end{lemma}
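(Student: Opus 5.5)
We will prove the bound by splitting $[0,\Tacc]$ into $n \approx \barrho L_0^{1/4}$ sub-intervals of length $\tau = \Tacc/n$, and showing that in each sub-interval, independently of the past (up to a controlled error), the green population gains at least one new particle with probability bounded below by a constant $q>0$ uniform in $\rho_0$. A Chernoff bound then makes the number of successes below $N$ exponentially unlikely in $\barrho L_0^{1/4}$. We will work on the event $\Elocal$ of \cref{lem:spatial_containment}: every particle started in $\Lambda$, including the seed, stays within $L_0^{7/8}/2$ of its start. Hence every green particle (greens are only created by contact with the seed lineage) stays within $L_0^{7/8}$ of $x$, which gives the localization requirement in $\Ephaseone$ for free. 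The cost of $\Elocal^c$ is already of the claimed form.

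Fix a green particle at the start of a sub-interval. Call the sub-interval \emph{successful} if some currently non-green particle meets a green one during it. We lower bound this by tracking a single green particle $G$ and the nearest non-green particle. First we need a non-green particle within distance $R=\sqrt{\ubc{C:meeting_distance}\tau}$ of $G$. Under $\mu_{\rho_0}$, a box of side $R$ around any site is empty with probability at most $\exp\{-c\barrho R^d\}$ by \cref{lem:tail_bounds} or direct product structure. We will choose $\tau$ so that $\barrho R^d \gtrsim 1$, i.e.\ $\tau\approx \barrho^{-2/d}$, which is compatible with $n\tau\le \Tacc$ since $n\tau \approx \barrho L_0^{1/4}\barrho^{-2/d}\ll L_0^{3/2}$ up to taking $L_0$ large relative to a negative power of $\barrho$. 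If that fails for tiny $\rho_0$, we will instead use $\tau$ a power of $L_0$ with $\barrho R^d\ge L_0^{c}$.

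Stationarity makes the configuration at each time $\mu_{\rho_0}$-distributed, but the location of $G$ is correlated with it. To avoid this, we union bound over the at most $L_0^{2d}$ sites within $L_0^{7/8}$ of $x$ and over times. We discretize time into the $n$ starting times, and require that every such site has a particle within distance $R$ at every starting time. This costs $n L_0^{2d} e^{-c\barrho R^d}$, which explains the $L_0^{2d}$ prefactor. Given a nearby particle, their difference is a walk with rate at least $2\Gamma_-$, so \cref{lem:meeting} gives a meeting within time $\tau$ with probability at least $\ubc{C:meeting}R^{-\gamma_d}$.

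In $d\le 2$ this is essentially constant. For $d\ge3$ it decays, so instead of a single candidate we will use all of the roughly $\barrho R^d$ particles in the $R$-ball, which gives at least a constant chance that one of them meets $G$. This step is the main obstacle: the meeting events of different candidates are not independent, and the relative walk is not Markov given the environment.

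Our first attempt is a second-moment count of meetings. Alternatively, we can choose $\tau$ large enough that $\barrho R^{d}\cdot R^{-\gamma_d}=\barrho R^2\ge1$, i.e.\ $\tau\ge \barrho^{-1}$. We then apply \cref{lem:meeting} to one candidate at a time over disjoint sub-windows, so the strong Markov property in the graphical construction gives conditional independence. Finally, the successes are combined by a martingale (Azuma/Freedman) or Chernoff argument over the $n$ sub-intervals, and the three error terms ($\Elocal^c$, sparse configurations, and too few successes) are summed.
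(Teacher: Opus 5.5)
Your skeleton is the paper's: a local density event union-bounded over sites and window start times, one meeting attempt per window via \cref{lem:meeting} and the strong Markov property, Bernoulli domination plus Chernoff, and $\mathcal{E}_{\mathrm{local}}$ for localization. However, the scales you choose do not close the argument, and the step you flag as the main obstacle is left unresolved. There are three problems.
\begin{enumerate}
\item With $\tau\approx\bar\rho_0^{-2/d}$ (or $\tau\ge\bar\rho_0^{-1}$), so that $\bar\rho_0R^d\approx 1$, the union-bound cost $nL_0^{2d}e^{-c\bar\rho_0R^d}$ is at least of order $L_0^{2d}$, not small. To reach the claimed rate $e^{-c\bar\rho_0L_0^{1/4}}$ you must control density on a scale with $R^d\gtrsim L_0^{1/4}$, i.e.\ $R\approx L_0^{1/(4d)}$. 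This is exactly the paper's block side $L_0^{\nu}$, $\nu=1/(4d)$, and it is where the exponent $\bar\rho_0L_0^{1/4}$ comes from.
\item At that scale a single candidate meets $G$ with probability only $cR^{-\gamma_d}=cL_0^{-\gamma_d/(4d)}$. This tends to $0$ for $d\ge 2$ (in $d=2$ it is really $c/\log R$, still not constant). So with only $n\approx\bar\rho_0L_0^{1/4}$ windows you cannot reach $N$ successes without the multi-candidate argument, which you do not supply. Your fallback of trying one candidate per disjoint sub-window does not produce a constant either. A sub-window of length $\tau/M$ only admits separation $\sqrt{\ubc{C:meeting_distance}\tau/M}$ in \cref{lem:meeting}, not $R$.
\item You never treat the case where every particle near $G$ is already green. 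A density event with $\bar\rho_0R^d\approx1$ cannot convert that case into $N$ green particles.
\end{enumerate}

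The missing idea is that you do not need a constant success probability per window, because $T_{\mathrm{acc}}=L_0^{3/2}$ leaves room for far more windows than $N$. The paper takes $K=\lfloor L_0^{3/4}\rfloor$ windows of length $L_0^{3/4}$. Its density event $\mathcal{E}_{\mathrm{dense}}$ requires every block of side $L_0^{1/(4d)}$ in $[-2L_0,2L_0]^d$ to hold at least $\frac{\rho_0}{2}L_0^{1/4}$ particles at each window start; by \cref{lem:tail_bounds} this costs $CL_0^{d+1}e^{-c\bar\rho_0L_0^{1/4}}$. At each start there are two cases. If the seed's block has no uncolored particle, all of its at least $\frac{\rho_0}{2}L_0^{1/4}\ge N$ particles are green, an early success. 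Otherwise one uncolored particle is selected, and it meets the seed within the window with probability $p_{\mathrm{meet}}\ge cL_0^{-\gamma_d/(4d)}$ by \cref{lem:meeting}, since $\sqrt d\,L_0^{1/(4d)}\le\sqrt{\ubc{C:meeting_distance}L_0^{3/4}}$. The number of recruits then dominates a capped $\mathrm{Binomial}(K,p_{\mathrm{meet}})$ whose mean $Kp_{\mathrm{meet}}\gtrsim L_0^{1/2}$ dwarfs $N$. Chernoff gives $e^{-cL_0^{1/2}}$, and no dependence between candidates ever arises.

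A minor point: your claim that every green particle stays within $L_0^{7/8}$ of $x$ fails for secondary infections. Count only particles that meet the seed, as the paper does. Even these are only within $\frac32L_0^{7/8}$ of $x$ on $\mathcal{E}_{\mathrm{local}}$, by the triangle inequality, so the confinement radius needs a harmless adjustment.
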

\begin{proof}
  Fix the scaling exponents $\nu = \frac{1}{4d}$ and $\beta = \frac{1}{2}$, which satisfy
  $\frac{3}{2}\beta > 2\nu$ for all $d \ge 1$. Partition $[0, \Tacc]$ into $K = \lfloor
  \Tacc^{1-\beta} \rfloor = \lfloor L_0^{3/4} \rfloor$ intervals of length $\Tacc^\beta =
  L_0^{3/4}$, yielding times $t_j = j \Tacc^\beta$. Partition the spatial region $\Lambda = [-2L_0,
  2L_0]^d$ into disjoint spatial blocks $\mathcal{V}$ of side length $L_0^\nu$. Let $\Edense$ be
  the event that every block $V \in \mathcal{V}$ contains at least $\frac{\rho_0}{2} (L_0^\nu)^d$
  particles at all times $t_j \le \Tacc$. Applying the tail bounds from
  \cref{lem:tail_bounds} for each block $V$ at each discrete time $t_j$, the failure
  probability per block is bounded by $\exp\braces{-c \barrho (L_0^\nu)^d}$. A union bound over the $K \le
  L_0^{3/4}$ time steps and $|\mathcal{V}| \le C L_0^{d(1-\nu)}$ spatial blocks yields
  \begin{equation}
    \P_{\rho_0}(\Edense^c) \le C L_0^{d+1} \exp\braces*{-c \barrho L_0^{1/4}}.
  \end{equation}

  We track the recruitment process step-by-step over the intervals $t_j$. At each time $t_{j-1}$, we
  first check if the events $\Elocal$ and $\Edense$ hold up to that time. If either fails, we
  declare a \emph{failure} and abort the recruitment process. If both hold, let $V_j \in
  \mathcal{V}$ be the block containing the seed particle. If $V_j$ contains no uncolored particles,
  we declare an \emph{early success} and stop, having accumulated at least
  $\frac{\rho_0}{2}(L_0^\nu)^d \ge N$ green particles via $\Edense$. Otherwise, we
  deterministically select one uncolored particle $Y_j \in V_j$. The initial distance between the
  seed and $Y_j$ is bounded by the block diameter $L_0^\nu \sqrt{d}$. We apply \cref{lem:meeting} with spatial separation $R = L_0^\nu \sqrt{d}$ and time window $T = \Tacc^\beta = L_0^{3/4}$. For sufficiently large $L_0$, the required condition $R \le \sqrt{\ubc{C:meeting_distance} T}$ is satisfied since $T^{1/2} = L_0^{3/8}$ dominates $R$ for all $d \ge 1$. The meeting lemma ensures these two particles meet during $[t_{j-1}, t_j]$ with probability at least $\ubc{C:meeting} R^{-\gamma_d}$. By absorbing the dimension-dependent factor $d^{-\gamma_d/2}$ into the constant, we obtain $\pmeet \ge C L_0^{-\nu \gamma_d}$. By the strong Markov property and the definition of our
  stopping conditions, the total number of newly recruited green particles stochastically dominates
  \begin{equation}\label{eq:min_particles}
    \min\parens*{ \frac{\rho_0}{2}(L_0^\nu)^d, \sum_{j=1}^K B_j } \charf{\Elocal \cap \Edense},
  \end{equation}
  where the $B_j$ are independent $\ber(\pmeet)$ random variables.
  
  Since $K \ge L_0^{3/4}/2$, the expected value for the sum in \cref{eq:min_particles} is
  \begin{equation}
    \mu = K \pmeet \ge C L_0^{3/4 - \nu \gamma_d} = \begin{cases}
      C L_0^{3/4} & \text{if } d=1, \\
      C L_0^{5/8} & \text{if } d=2,\\
      C L_0^{1/2 + 1/(2d)} & \text{if } d \ge 3,
    \end{cases}
  \end{equation}
  so that $\mu \ge C L_0^{1/2}$ for all $d \ge 1$.

  The target threshold $N = \lceil \frac{\barrho}{4} L_0^{1/4} \rceil$ grows strictly slower than $\mu$, so a standard Chernoff bound asserts that $\P\parens*{ \sum_{j=1}^K B_j < N }$
  decays exponentially in $L_0^{1/4}$.

  By a simple union bound, the failure probability of the accumulation stage satisfies
  \begin{equation}
    \P_{\rho_0}(\Ephaseone^c) \le \P_{\rho_0}(\Elocal^c) + \P_{\rho_0}(\Edense^c) + \P\parens*{ \sum_{j=1}^K B_j < N }.
  \end{equation}
  Applying \cref{lem:spatial_containment} to bound the first term and combining it with the
  exponential bounds for the latter two terms yields the desired result.
\end{proof}

\newbigconstant{C:diffusion}
\newconstant{c:diffusion}
\begin{lemma}[Independent diffusion to targets]\label{lem:independent_diffusion} Let $\Ephasetwo$ be
  the event that each target $m_i \in \mathcal{M}$ contains at least one green particle at time
  $T_0$. There exist constants $\ubc{C:diffusion},
  \uc{c:diffusion} > 0$ such that
  \begin{equation}
    \P_{\rho_0}\parens*{ \Ephasetwo^c \mid \Ephaseone } \le \ubc{C:diffusion} \barrho L_0^{1/4} \exp\braces*{-\uc{c:diffusion} \barrho L_0^{1/4}}.
  \end{equation}
\end{lemma}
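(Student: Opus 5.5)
On $\Ephaseone$ we have $N = \lceil \frac{\barrho}{4} L_0^{1/4}\rceil$ green particles at time $\Tacc$, all within distance $L_0^{7/8}$ of the source $x \in \Uin(B_0)$. The plan is to show that each of them, independently up to a controlled error, lands in a prescribed target $m_i$ at time $T_0$ while staying inside $B_0$, with probability bounded below by a constant $q>0$ depending only on $d$ and $\Gamma_\pm$. Once we have this, the probability that a given target receives none of the messengers is at most $(1-q)^{N/3^d}$. We reserve $\lfloor N/3^d\rfloor$ messengers for each of the $3^d$ targets, and a union bound over $\mathcal{M}$ then gives a bound of the form $3^d e^{-c\barrho L_0^{1/4}}$. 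The prefactor $\barrho L_0^{1/4}$ in the statement leaves room for a union bound over the $N$ messengers as well.

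\textbf{Lower bound for a single messenger.} In the slot representation a green particle keeps its colour for as long as it stays inside the spatial projection of $B_0$. By \cref{def:slot_dynamics}, its trajectory is a continuous-time nearest-neighbour symmetric walk whose total jump rate is time dependent but lies in $[\Gamma_-,\Gamma_+]$.

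We pick a tunnel $\mathcal{U}_i$, a space-time tube of width of order $L_0$ running from a neighbourhood of $x$ at time $\Tacc$ to the interior of $m_i$ at time $T_0 = L_0^2$, and contained in $[-3L_0,3L_0]^d$. Write $\tau(t)=\int_{\Tacc}^t \lambda(s)\,ds$ for the particle's internal clock, where $\lambda$ is its jump rate. The spatial path, seen through the clock $\tau$, is a discrete simple random walk run at times $\Gamma_\pm T_0$, and $\tau(T_0) \in [\Gamma_- (T_0-\Tacc), \Gamma_+ T_0]$.

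We want a uniform lower bound for staying in the tube. Two ingredients suffice. First, by Donsker's invariance principle, the walk stays in a tube of width of order $L_0$ and ends in $m_i$ with probability bounded below, and this holds uniformly over all clock speeds. Second, to remove the dependence on the random time change, we choose the tunnel so that the particle should first reach the centre of $m_i$ and then remain inside a box of side $L_0/4$ around it. The second phase lasts an unknown internal time of order at most $\Gamma_+ L_0^2$. Staying in a box of side $L_0$ for that long has probability bounded below uniformly, again by the invariance principle. This yields $q>0$ uniform in $\rho_0$, in the environment and in $L_0$ large, because the starting offset $L_0^{7/8}$ is $o(L_0)$.

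\textbf{Independence of the messengers (main obstacle).} The paths of the green particles are not independent: their rates depend on each other's positions through the stack heights, and distinct green particles may share a site. I would handle this exactly as the sketch suggests, in two steps.

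First, the coin flips that choose the jump directions are independent across particles and across jumps, so each particle's spatial skeleton is an independent discrete simple random walk. Only the clocks $\tau$ are coupled.

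Second, the tunnel event is stated in a form that is robust to the time change, as above: the skeleton stays in a tube for all of the first $\Gamma_+ T_0$ steps and visits the centre of $m_i$ before step $\Gamma_-(T_0-\Tacc)/2$. Such an event is measurable with respect to the skeleton alone. Conditionally on $\Ephaseone$, and using the strong Markov property at time $\Tacc$, the indicators of these events are therefore i.i.d. Bernoulli variables with parameter at least $q$, whatever the clocks do.

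The delicate point is to verify that under this skeleton event the particle is indeed inside $m_i$ at time $T_0$. This requires the step count to satisfy $\tau(T_0)\in[\Gamma_-(T_0-\Tacc),\Gamma_+T_0]$, which holds deterministically, and the skeleton to stay near the centre of $m_i$ throughout the step window between these two values, which is built into the event. A Chernoff bound for the binomial count then finishes the argument.
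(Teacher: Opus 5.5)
Your architecture is the paper's: independent simple-random-walk skeletons for the messengers, a sandwich on the number of jumps in $[T_{\mathrm{acc}},T_0]$, Donsker for a uniformly positive success probability, binomial domination with a Chernoff/Hoeffding bound, and a union bound over the $3^d$ targets. There is one genuine gap. You claim the step count lies in $[\Gamma_-(T_0-T_{\mathrm{acc}}),\Gamma_+T_0]$ ``deterministically'', but that is false.

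What is deterministic is $\tau(T_0)=\int_{T_{\mathrm{acc}}}^{T_0}\lambda(s)\,ds$. This is the compensator of the messenger's jump-counting process $J$, not $J$ itself. The position at time $T_0$ is the skeleton evaluated at the random index $J(T_0)$, which fluctuates by order $L_0$ around $\tau(T_0)$. For instance, when $g(n)=n$ one has $J(T_0)\sim\mathrm{Poisson}(T_0-T_{\mathrm{acc}})$, which falls below $\Gamma_-(T_0-T_{\mathrm{acc}})$ with probability close to $1/2$. So an event measurable with respect to the skeleton alone does not by itself place the messenger in $m_i$ at time $T_0$.

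The repair is the paper's typical event $G_m$:
\begin{itemize}
\item Realise each messenger's jump times as a thinning of a rate-$\Gamma_+$ Poisson process $\Pi_m^+$ that always contains a fixed rate-$\Gamma_-$ thinning $\Pi_m^-$. Then $J_m^-\le J_m\le J_m^+$, with $J_m^\pm=\Pi_m^\pm([T_{\mathrm{acc}},T_0])$ independent of the skeletons.
\item Require the skeleton to lie in $m_i$ throughout the widened window $[\tfrac12\Gamma_-(T_0-T_{\mathrm{acc}}),\,2\Gamma_+(T_0-T_{\mathrm{acc}})]$, and in $[-3L_0,3L_0]^d$ up to its right end.
\item Pay $N e^{-c(T_0-T_{\mathrm{acc}})}$ for the event that some $J_m^\pm$ leaves the window.
\end{itemize}
That union bound over the $N$ messengers is where the prefactor $\bar\rho_0 L_0^{1/4}$ in the statement comes from. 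You noticed there was room for it but never used it.

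Alternatively, you can keep a deterministic window by working with the continuous-time process $X_m(u)$, the position of messenger $m$ at time $\tau_m^{-1}(u)$. Since distinct messengers never jump simultaneously, the multivariate random time-change theorem for point processes makes the $X_m$ independent rate-one walks. The position at $T_0$ is then $X_m(\tau_m(T_0))$, with $\tau_m(T_0)$ deterministically in the window. In that case the tunnel event must be phrased for $X_m$, not for the step-indexed skeleton.

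Two smaller points.
\begin{itemize}
\item ``Visits the centre of $m_i$'' cannot refer to a single lattice site. For $d\ge 2$ the probability of hitting a fixed site at distance of order $L_0$ within $O(L_0^2)$ steps tends to $0$, so $q$ would not be uniform in $L_0$. Ask instead for entering a small box around the centre. Or, as the paper does, apply Donsker directly to the single event ``stay in the tube up to the right end of the window and lie in $m_i$ throughout the window''; this makes the two-phase decomposition unnecessary.
\item Reserving $\lfloor N/3^d\rfloor$ messengers per target is not needed. For a fixed target all $N$ messengers can be used, and the union bound over targets requires no disjointness.
\item Your final comparison $3^d e^{-c\bar\rho_0 L_0^{1/4}}\le C\bar\rho_0 L_0^{1/4}e^{-c\bar\rho_0 L_0^{1/4}}$ and the paper's Hoeffding step both need $\bar\rho_0 L_0^{1/4}$ bounded below. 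This is harmless, since the lemma is only applied with $\bar\rho_0 L_0^{1/4}\to\infty$. In the opposite regime the stated bound cannot hold: essentially only the seed is green, so the $3^d$ disjoint targets cannot all be occupied.
\end{itemize}
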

\begin{proof}
  On the event $\Ephaseone$, there exist at least $N$ green particles within distance $L_0^{7/8}$ of
  $x$ at time $\Tacc$. We deterministically select exactly $N$ of these specific particles and refer
  to them as \emph{messengers}.
  The selected messengers are located at initial positions $Z_1, \dots, Z_N$ at time $\Tacc$,
  satisfying $|Z_m - x| \le L_0^{7/8}$ for all $m \le N$. Under diffusive scaling by $L_0$, all
  $Z_m$ are effectively localized at the source $x$.

  We apply a variant of the slot representation, assigning to each messenger an independent simple
  random walk $(Y^{(m)}_n)_{n \ge 0}$ starting at $Y^{(m)}_0 = Z_m$, and two Poisson processes
  $\Pi_m^-$ and $\Pi_m^+$ with rates $\Gamma_-$ and $\Gamma_+$, such that $\Pi_m^-$ is a thinning of
  $\Pi_m^+$. The total number of jumps $J_m$ executed by messenger $m$ in $[\Tacc, T_0]$ satisfies
  \begin{equation}
    J_m^- \leq J_m \leq J_m^+,
  \end{equation}
  where $J_m^\pm = \Pi_m^\pm([\Tacc, T_0])$. Let $G_m$ be the typical event that $J_m^- \ge
  \frac{1}{2}\Gamma_- (T_0 - \Tacc)$ and $J_m^+ \le 2\Gamma_+ (T_0 - \Tacc)$. Then
  \begin{equation}
    \P_{\rho_0}(G_m^c) \le \exp\braces*{-c (T_0 - \Tacc)}.
  \end{equation}

  Conditioned on the jump processes, let $A_{m,i}$ be the event that messenger $m$ resides in target
  $m_i$ at time $T_0$ and never exits the spatial boundary of $B_0$ during $[\Tacc, T_0]$. We bound
  this below by the event
  \begin{equation}
    \tilde{A}_{m,i} = \braces*{
      \begin{aligned}
        &Y^{(m)}_j \in m_i \text{ for all } j \in [J_m^-, J_m^+] \text{ and} \\
        &Y^{(m)}_j \in [-3L_0, 3L_0]^d \text{ for all } j \le J_m^+
    \end{aligned} }.
  \end{equation}
  The mutual independence of the random walks $Y^{(m)}$ and jump bounds across messengers ensures
  the events $\tilde{A}_{1,i}, \dots, \tilde{A}_{N,i}$ are independent.

  By Donsker's Theorem, the sequence of scaled trajectories $t \mapsto (Y^{(m)}_{\lfloor t L_0^2
  \rfloor} - x) / L_0$ converges weakly to standard Brownian motion. The targets $(m_i - x)/L_0$
  correspond to spatial boxes of unit side length at relative displacements $v = (m_i - x)/L_0$. The
  spatial boundary of $B_0$ corresponds to the box $[-3, 3]^d$. Since $x \in [-L_0/2, L_0/2]^d$ and
  the targets $m_i$ are selected from a finite set of positions strictly inside $[-3L_0, 3L_0]^d$,
  the displacements $v$ are confined to a compact subset of the interior of the shifted boundary
  $[-3-x/L_0, 3-x/L_0]^d$. A standard Brownian motion starting at $0$ has full topological support,
  so the probability that it stays strictly within the interior of this shifted boundary, enters a
  unit box at displacement $v$ before time $\frac{1}{2}\Gamma_-$, and remains inside until time
  $2\Gamma_+$ is strictly positive. By continuity, this probability attains a strictly positive
  minimum $\plimit > 0$ over the compact set of all possible initial positions $x$ and targets
  $m_i$.

  Consequently, the weak convergence to Brownian motion implies that for all sufficiently large
  $L_0$, the discrete probability that the random walk successfully enters and remains in the target
  is uniformly bounded below by a positive constant $\phit = \plimit/2$ for all $x$ and $m_i$. Let
  $W$ be the number of messengers reaching $m_i$. Then $W$ stochastically dominates a binomial random
  variable $\bin(N, \phit)$. Since $N = \lceil \barrho L_0^{1/4} \rceil$, Hoeffding's inequality
  yields that, conditional on the jump bounds $G_1, \dots, G_N$, the probability that no messengers
  reach the target while $\Ephaseone$ holds is bounded by:
  \begin{equation}
    \P_{\rho_0}\parens*{ \{W < 1\} \cap \Ephaseone \;\big|\; G_1 \dots G_N } \le \exp\braces*{ -2\frac{(\phit N - 1)^2}{N} } \le \exp\braces{-c \barrho L_0^{1/4}}.
  \end{equation}

  Removing the conditioning introduces an error of at most
  \begin{equation}
    N \exp\braces*{-c (T_0 - \Tacc)} \le C \barrho L_0^{1/4} \exp\braces*{-c L_0^2}.
  \end{equation}
  Summing over the probabilities, the conditional failure bound is dominated by $C \barrho L_0^{1/4} \exp\braces*{-c \barrho L_0^{1/4}}$.
\end{proof}

\newbigconstant{C:seeding}
\newconstant{c:seeding}
\begin{lemma}[Seeding Lemma]\label{lem:seeding}
  There exist constants $\ubc{C:seeding}, \uc{c:seeding} > 0$ such that
  \begin{equation}
    \P_{\rho_0}(\mathcal{S}^c) \le \ubc{C:seeding} (\rho_0 + 1) L_0^{3d} \exp\braces*{-\uc{c:seeding} \barrho L_0^{1/4}}.
  \end{equation}
\end{lemma}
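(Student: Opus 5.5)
The plan is to combine the two stage estimates, \cref{lem:accumulation_infections} and \cref{lem:independent_diffusion}, and then take a union bound over the finitely many sources and targets that make up the seeding event $\S(B_0)$.

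First fix one source $(x,0) \in \Uin(B_0)$ and color the particles at $x$ green, adding a single seed if $x$ is empty. By the coloring equivalence described before the lemma, the requirement of $\S(B_0)$ for this $x$ and all $v \in \{-1,0,1\}^d$ is exactly the event $\Ephasetwo$ defined for this source: every target $m_i \in \mathcal{M}$ contains a green particle at time $T_0$. Having more than one initial green particle only helps, since green particles form a monotone set. The only care needed is that particles leaving the spatial boundary of $B_0$ lose their color, and both lemmas already build confinement in. For the first stage, \cref{lem:accumulation_infections} uses $\Elocal$, and displacements of at most $L_0^{7/8}$ around $x \in [-L_0/2,L_0/2]^d$ stay well inside $[-3L_0,3L_0]^d$. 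For the second stage, the events $\tilde A_{m,i}$ in \cref{lem:independent_diffusion} explicitly keep the messengers in $[-3L_0,3L_0]^d$.

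For this fixed $x$ we then write
\begin{equation}
  \P_{\rho_0}(\Ephasetwo^c) \le \P_{\rho_0}(\Ephaseone^c) + \P_{\rho_0}(\Ephasetwo^c \mid \Ephaseone).
\end{equation}
The first term is at most $\ubc{C:accumulation}(\rho_0+1)L_0^{2d}e^{-\uc{c:accumulation}\barrho L_0^{1/4}}$. The second is at most $\ubc{C:diffusion}\barrho L_0^{1/4}e^{-\uc{c:diffusion}\barrho L_0^{1/4}}$, and since $\barrho \le 1$, this term is absorbed into a bound of the form $C(\rho_0+1)L_0^{2d}e^{-c\barrho L_0^{1/4}}$.

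Next we take a union bound over the $|\Uin(B_0)| \le L_0^d$ source sites; the $3^d$ targets are already handled inside $\Ephasetwo$. This contributes one more factor $L_0^d$ and gives
\begin{equation}
  \P_{\rho_0}(\S^c) \le C(\rho_0+1)L_0^{3d}e^{-c\barrho L_0^{1/4}},
\end{equation}
with $\ubc{C:seeding}$ and $\uc{c:seeding}$ the resulting constants. All the ``$L_0$ sufficiently large'' requirements are covered by the convention stated before \cref{lem:spatial_containment}.

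The main point to check is that conditioning on $\Ephaseone$ in \cref{lem:independent_diffusion} is legitimate uniformly in the source $x$. The messengers are chosen at time $\Tacc$ as a function of the past, so I would invoke the strong Markov property of the graphical construction at time $\Tacc$: the future clocks are independent of $\mathcal{F}_{\Tacc}$, so the conditional bound holds given any realization in $\Ephaseone$. I would also check that the constants in both lemmas do not depend on $x$, which holds because $\plimit$ was minimized over the compact set of allowed $x$. If the seed particle added at an empty $x$ is not accounted for by the invariant measure, the ``$+1$'' in the union bound of \cref{lem:spatial_containment} already covers it. That is why the bound carries the factor $(\rho_0+1)$ rather than $\rho_0$.
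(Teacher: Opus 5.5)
Your proposal is correct and follows essentially the same route as the paper. For each fixed source it bounds the failure probability by $\mathbb{P}_{\rho_0}(E_1^c)+\mathbb{P}_{\rho_0}(E_2^c\mid E_1)$ using Lemmas~\ref{lem:accumulation_infections} and~\ref{lem:independent_diffusion}, absorbs the diffusion term via $\bar\rho_0\le 1$, and takes a union bound over the sources in $\In(B_0)$ (you count $L_0^d$ of them where the paper uses $(2L_0+1)^d$). Your extra checks — the Markov property at time $T_{\mathrm{acc}}$, uniformity in $x$, and the artificially added seed — are details the paper leaves implicit.
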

\begin{proof}
  The seeding event $\mathcal{S}$ requires success across all $x \in \Uin(B_0)$ and all $3^d$
  targets. For a fixed $x$, the success is guaranteed by the intersection $\Ephaseone \cap
  \Ephasetwo$. By \cref{lem:accumulation_infections} and \cref{lem:independent_diffusion}, the
  failure probability from a single $x$ is bounded by
  \begin{equation}
    \begin{split}
      \P_{\rho_0}((\Ephaseone \cap \Ephasetwo)^c) &\le \P_{\rho_0}(\Ephaseone^c) + \P_{\rho_0}\parens*{ \Ephasetwo^c \mid \Ephaseone } \\
      &\le C (\rho_0 + 1) L_0^{2d} \exp\braces*{ -c \barrho L_0^{1/4} }.
    \end{split}
  \end{equation}
  Applying a union bound over all $x \in \Uin(B_0)$, which has size $|\Uin(B_0)| \le (2L_0+1)^d$,
  the total failure probability of the seeding event is:
  \begin{align}
    \P_{\rho_0}(\mathcal{S}^c) &\le |\Uin(B_0)| \P_{\rho_0}((\Ephaseone \cap \Ephasetwo)^c) \nonumber \\
    &\le C (\rho_0 + 1) L_0^{3d} \exp\braces*{-c \barrho L_0^{1/4}}.\qedhere
  \end{align}
\end{proof}

\subsection{Contagion at high densities}\label{subsec:high_density}

We now apply the renormalization framework to establish survival at any healing rate $\delta \in [0,
\infty]$, provided the initial density is sufficiently high. The strategy consists of showing that
at very high densities, all sites throughout the base-scale block contain at least 2 particles.
This property effectively suppresses healing marks even if they arrive at an infinite rate, allowing
us to propagate the infection through the seeding event.

\newcommand{\Eocc}{\mathcal{E}_{\mathrm{occ}}}
\begin{definition}[High density good block]\label{def:good_block_high_rho}
  Let $\S(B_0)$ be the seeding event for the block $B_0$, and let $\Eocc(B_0)$ be the event that the particle occupation satisfies $\eta_t(x) \ge 2$ for all
  $(x,t) \in B_0$. We define the event $E(B_0)$ that a scale-$0$ block $B_0$ is \emph{good} as the intersection of these two events:
  \begin{equation}
    E(B_0) = \S(B_0) \cap \Eocc(B_0).
  \end{equation}
\end{definition}

This event is increasing with respect to density. Since $\Eocc(B_0)$ ensures that every site contains at least two particles at all times, healing marks are entirely suppressed within the region (\cref{rmk:healing_suppression}). Consequently, the $J$-paths guaranteed by the seeding event are
valid genealogical paths. Thus, if two successive scale-$0$ blocks are good, an initially occupied site in the first block generates a valid genealogical path to an initially occupied site in the
second block.

\begin{proof}[Proof of \cref{thm:contagion_high_rho}]
  By monotonicity of the survival probability with respect to the healing rate, it suffices to prove
  the result for the case $\delta = \infty$. We set the initial density as a function of the base-scale $L_0$ by taking
  $\rho_0 = \sqrt{L_0}$. The parameter $L_0$ remains free and will be chosen sufficiently large
  below.

  The probability that the block $B_0$ is bad satisfies the union bound
  \begin{equation}
    p_0(\infty) \le \P_{\rho_0}(\Eocc(B_0)^c) + \P_{\rho_0}(\S(B_0)^c).
  \end{equation}

  Since the bounding box $B_0$ comprises $(6L_0+1)^d$ spatial sites and a time horizon of $T_0 =
  L_0^2$, we apply the infimum bound in \cref{lem:occupation_bounds} with threshold $m=2$ and free
  parameter $K = \rho_0 / 2$. This choice is valid since for large $L_0$, $K = \rho_0 / 2 \ge
  \uc{c:occupation}^{-1} 2$. For a single site $x$, the probability that the occupation drops below
  $2$ in $[0, L_0^2]$ is bounded by:
  \begin{equation}
    \P_{\rho_0}\Big( \inf_{t \in [0, L_0^2]} \eta_t(x) < 2 \Big) \le (L_0^2 + 1) \Big[ \P_{\rho_0}\big( \eta_0(x) \le \rho_0 / 2 \big) + \exp\braces*{-\uc{c:occupation} \rho_0 / 2} \Big].
  \end{equation}
  By \cref{lem:tail_bounds} with deviation $\varepsilon = 1/2$, the initial condition term is
  bounded by $\exp\braces*{-\frac{\rho_0}{8\Gamma}} = \exp\braces*{-\frac{\sqrt{L_0}}{8\Gamma}}$. Taking a union
  bound over the $(6L_0+1)^d$ sites in $B_0$, the failure of the sustained occupation event satisfies
  \begin{equation}\label{eq:sustained_occupation_high}
    \P_{\rho_0}(\Eocc(B_0)^c) \le (6L_0+1)^d (L_0^2 + 1) \Big[ \exp\braces*{-\frac{\sqrt{L_0}}{8\Gamma}} + \exp\braces*{-\uc{c:occupation} \frac{\sqrt{L_0}}{2}} \Big].
  \end{equation}

  By \cref{lem:seeding}, the failure of the seeding event is bounded by
  \begin{equation}
    \P_{\rho_0}(\S(B_0)^c) \le \ubc{C:seeding} (\rho_0 + 1) L_0^{3d} \exp\braces*{-\uc{c:seeding} \barrho L_0^{1/4}}.
  \end{equation}
  Evaluating this at $\rho_0 = \sqrt{L_0}$ yields
  \begin{equation}
    \P_{\rho_0}(\S(B_0)^c) \le \ubc{C:seeding} L_0^{3d+1/2} \exp\braces*{-\uc{c:seeding} L_0^{1/4}}.
  \end{equation}
  Both \eqref{eq:sustained_occupation_high} and the bound above decay stretched-exponentially in $L_0$. It remains to evaluate the decoupling error bound \eqref{eq:error_bound} for the
  recursive contraction. Since $\rho_0 \le \rho_k \le 2\rho_0 = 2\sqrt{L_0}$, substituting the
  uniform error bound \eqref{eq:survival_eps_decay} yields:
  \begin{equation}
  \begin{aligned}
    \ubc{C:decoupling} \ell_k^{2d+2} L_{k+1}^{5d} \varepsilon_k \le C \ell_k^{2d+2} L_{k+1}^{5d} T_{k+1}^{d+1} \Big( \sqrt{L_0} \exp\braces*{-\uc{c:survival_eps_decay} L_k^{1/3}} \\ + \exp\braces*{-\uc{c:survival_eps_decay} \sqrt{L_0} L_k^{2/15}} \Big).
  \end{aligned}
  \end{equation}
  This expression exhibits exponential decay in $L_k$, ensuring that it is strictly decreasing in
  $k \ge 0$ whenever $L_0$ is large enough. The supremum is thus attained at $k = 0$:
  \begin{equation}
    \sup_{k \ge 0} \ubc{C:decoupling} \ell_k^{2d+2} L_{k+1}^{5d} \varepsilon_k \le C L_0^{n} \Big( \sqrt{L_0} \exp\braces*{-\uc{c:survival_eps_decay} L_0^{1/3}} + \exp\braces*{-\uc{c:survival_eps_decay} L_0^{19/30}} \Big),
  \end{equation}
  for some dimension-dependent exponent $n > 0$.

  Since all of these failure bounds decay exponentially in $L_0$, we can choose a single base-scale
  $L_0$ sufficiently large so that $\P_{\rho_0}(\Eocc(B_0)^c) \le \frac{1}{2} L_0^{-5d}$ and
  $\P_{\rho_0}(\S(B_0)^c) \le \frac{1}{2} L_0^{-5d}$ (yielding $p_0(\infty) \le L_0^{-5d}$), while
  also bounding the decoupling error supremum by $1/2$. This satisfies all conditions of \cref{thm:oriented_percolation}.
  Applying the theorem guarantees the existence of an infinite
  chain of good scale-$0$ blocks a.s., which in turn implies the existence of an infinite genealogical path. \cref{rmk:genealogical-survival} then shows the infection survives with positive probability at density $\rho_\infty$.
  Fixing such an $L_0$, we set $\rho_\star = \rho_\infty(L_0) \le 2\sqrt{L_0}$. Since survival is an increasing event in $\rho$, the infection survives with positive probability for all $\rho \geq \rho_\star$.
\end{proof}

\subsection{Contagion at low healing rates}\label{subsec:low_healing}

We now apply the general renormalization framework to establish survival of the zero-range process
at arbitrary positive densities for sufficiently low healing rates.

\begin{definition}[Low healing rate good block]\label{def:good_block_low_delta}
  Let $\S(B_0)$ be the seeding event for $B_0$, $\Eempty^c$ be the event that the input interface of $B_0$ is not completely empty, and $\mathcal{E}_{\text{heal}}^c$ be the event that there are no healing marks inside $B_0$. We define the event $E(B_0)$ that a scale-$0$ block $B_0$ is \emph{good} as the intersection of these events:
  \begin{equation}
    E(B_0) = \S(B_0) \cap \Eempty^c \cap \mathcal{E}_{\text{heal}}^c.
  \end{equation}
\end{definition}

This event is increasing with respect to density. Since the event requires
the complete absence of healing marks, the $J$-paths guaranteed by the seeding event are valid genealogical paths. Thus, if two successive scale-$0$ blocks are good, an initially occupied site in the first block generates a valid genealogical path to an initially occupied site in the second block.

\begin{proof}[Proof of \cref{thm:contagion_low_delta}]
  Given $\rho > 0$, we choose a base density $\rho_0 < \rho$. The probability that the block $B_0$
  is bad at healing rate $\delta$ is bounded by
  \begin{equation}
    p_0(\delta) = \P_{\rho_0, \delta}(E(B_0)^c) \le \P_{\rho_0}(\Eempty) + \P_{\rho_0}(\S^c) + \P_\delta(\mathcal{E}_{\text{heal}}).
  \end{equation}
  Under the measure $\mu_{\rho_0}$, $\P_{\rho_0}(\Eempty) \le \exp\braces{-c \rho_0 L_0^d}$. By \cref{lem:seeding}, the failure of the seeding event satisfies
  \begin{equation}
    \P_{\rho_0}(\S^c) \le \ubc{C:seeding} (\rho_0 + 1) L_0^{3d} \exp\braces*{-\uc{c:seeding} \barrho L_0^{1/4}}.
  \end{equation}
  For any fixed $\rho_0 > 0$, the sum of these two terms decays exponentially in $L_0^{1/4}$. Thus,
  we can take $L_0$ sufficiently large such that
  \begin{equation}
    \P_{\rho_0}(\Eempty) + \P_{\rho_0}(\S^c) \le \frac{1}{2} L_0^{-5d}.
  \end{equation}
  Taking $L_0$ larger if necessary ensures that the decoupling bounds of
  \cref{prop:recursive_contraction} hold, and that the limit density satisfies $\rho_\infty = \rho_0
  \prod_{k=0}^\infty (1 + L_k^{-s}) \le \rho$.

  With $L_0$ fixed, the base-scale-$0$ block $B_0$ has finite space-time volume $V_0$. Since healing
  marks arrive as an independent Poisson point process with rate $\delta$, the probability of
  observing at least one healing mark inside $B_0$ is bounded by $\P_\delta(\mathcal{E}_{\text{heal}}) \le
  \delta V_0$. Thus, we can choose a critical threshold $\delta_{\star}(\rho) > 0$ such that
  $\delta_{\star}(\rho) V_0 \le \frac{1}{2} L_0^{-5d}$. For any $\delta \le \delta_{\star}(\rho)$,
  we have $p_0(\delta) \le L_0^{-5d}$.

  For any $\delta \le \delta_{\star}(\rho)$, the conditions of \cref{thm:oriented_percolation} are
  satisfied. Applying the theorem, for the process initialized at density $\rho_\infty \le \rho$,
  almost surely there exists an infinite chain of good scale-$0$ blocks. Concatenating
  the genealogical paths traversing each good block along this sequence yields almost surely an infinite
  genealogical path under $\P_{\rho_\infty, \delta}$, and hence under $\P_{\rho, \delta}$ by monotonicity in density, the proof is concluded by \cref{rmk:genealogical-survival}.
\end{proof}

\section{Concluding remarks and open questions}
\label{sec:open_problems}

In this paper we have characterized the survival-extinction phase diagram for the spread of an infection in the zero-range process as the pair of parameters $(\rho,\delta)$ varies in $[0,\infty)\times [0,\infty]$.
In particular we have obtained the existence of the critical curve $\delta \mapsto \rho_c(\delta)$ and some of its properties.

This work leaves some interesting open questions.

For example, in \cref{sec:intro} we have implicitly stated:
\begin{problem}
    Is the critical curve continuous and strictly increasing as illustrated in \cref{fig:phase_diagram_left}?
\end{problem}

\begin{problem}
\label{prob:equality_rho_c}
    Is it true that $\rho_c(\infty-) = \rho_c(\infty)$?
\end{problem}
Recall that a positive answer for \cref{prob:equality_rho_c} would imply the existence of a non-trivial critical healing threshold $\delta_c(\rho)$ throughout the interval $(0,\rho_c(\infty))$.

While \cref{thm:phase_transition_rho,thm:phase_transition_delta} yield a phase transition between an extinction and a global survival phase, we believe that local survival also holds. 
That is, almost surely on the event that the infection survives, the origin gets reinfected infinitely often.
We state that as a problem:
\begin{problem}
    On the event that the infection survives, does local survival hold a.s.?
\end{problem}

Another interesting question concerns the rate of propagation for the front of the infection which we believe to be linear, on the event of survival:
\begin{problem}
    On the event that the infection survives, prove linear lower and upper bounds for the speed of propagation of the infection.
\end{problem}
A much more challenging problem would be to prove a full shape theorem as in~\cite{KestenSidoravicius08}:
\begin{problem}
    Does the set of sites that have once been infected satisfy a shape theorem?
\end{problem}
Recall that, in~\cite{KestenSidoravicius08} cures are not allowed, and the environment is composed of IRW.

As mentioned in \cref{sec:motivation} for the IRW environment, in~\cite{DauvergneSly23} the authors considered the situation where infected and healthy particles evolve with different diffusion rates.
In the case of the zero-range process this can be achieved, for instance, by considering two different rate functions $g_A$ and $g_B$.
A simple setup would be to consider $g_i = D_i g$, for $i=A,B$.
This leads to the following
\begin{problem}
  What can one say about the phase diagram and the speed of propagation when $D_A \neq D_B$?
\end{problem}

Regarding the type of environment that we consider, the reader might have noticed that the main feature we use is that it satisfies the horizontal and vertical decoupling inequalities given in \cref{thm:horizontal_decoupling,thm:vertical_decoupling}, respectively.
It is worth noticing that the simple symmetric exclusion process also satisfies these types of inequalities, hence we expect our methods to be useful to study the same model on this environment.
However, such types of decoupling inequalities are not available for the asymmetric simple exclusion process (ASEP), including the totally asymmetric case (TASEP).

\begin{problem}
    What can one say about the phase diagram when the ZRP is replaced by the ASEP (or the TASEP) process?
\end{problem}

\appendix
\crefalias{section}{appendix}

\section{Deferred proofs for the zero-range process}
\label{sec:deferred_proofs}

This appendix collects the proofs of several technical lemmas from \cref{sec:zr_bounds} regarding the behavior of the zero-range process that were deferred from the main text.

\begin{proof}[Proof of \cref{lem:poisson_domination}]
  Using $\E_\phi[g(X)] = \phi$ along with the bounds~\eqref{eq:g_cond} we obtain $\Gamma_- \rho \leq \phi \leq \Gamma_+ \rho$.
  Set $\lambda = (\Gamma_+/\Gamma_-) \rho = \Gamma \rho$ and write $\nu
  = \poisson(\lambda)$
  for the Poisson distribution with mean $\lambda$. Since $g(k) \ge \Gamma_- k$, the consecutive ratios satisfy
  \begin{equation}
    \frac{\mu_\rho(k + 1)}{\mu_\rho(k)} = \frac{\phi}{g(k + 1)}
    \le \frac{\Gamma_+ \rho}{(k + 1)\Gamma_-} = \frac{\lambda}{k + 1}
    = \frac{\nu(k + 1)}{\nu(k)},
    \qquad k \ge 0.
  \end{equation}
  It follows that the likelihood ratio $r(k) = \mu_\rho(k)/\nu(k)$ is non-increasing in $k$, which
  means $\mu_\rho$ is dominated by $\nu$ in the likelihood ratio order. In particular, each $X_i$
  under $\mu_\rho$ is stochastically dominated by a $\poisson(\lambda)$ random variable (see, e.g.,~\cite[Theorem 1.C.1]{ShakedShanthikumar07}).
  Since the variables $X_1, \dots, X_n$ are independent, their sum is dominated by a sum of $n$ independent $\poisson(\lambda)$ variables, which has distribution $\poisson(\lambda n) = \poisson(\Gamma \rho n)$.
\end{proof}

\begin{proof}[Proof of \cref{lem:crossing_bound}]
  The bound in~\eqref{eq:displacement_bound}, follows by conditioning on $Y = n$. If $n \le 2\Gamma_+ T$, one combines the reflection principle applied to the projections together with Hoeffding's inequality. If $n > 2\Gamma_+ T$, the bound follows from the Poisson tail, which decays as $\exp\braces{-n \log(n / (e\Gamma_+ T))}$. In both regimes, the tail is upper bounded by $\ubc{C:displacement} \exp\braces{-\uc{c:displacement} s \log(1+s/T)}$ for appropriate constants.

  For the crossing bound in~\eqref{eq:crossing_bound}, let $V_s = \{x \in \Z^d : \normI{x} \geq s\}$ and notice that, in order for a particle initially at $x$ to reach the origin by time $T$, its displacement must be by at least $\norm{x} \ge s$. Conditioning on $\eta_0$, a union bound over the $\eta_0(x)$ particles at site $x$ and summing over $x \in V_s$ gives
  \begin{equation}
    \P^{\eta_0} \big(\text{some particle from $V_s$ reaches $0$} \big)
    \le \sum_{x \in V_s} \eta_0(x) \ubc{C:displacement} e^{-\uc{c:displacement} \norm{x} \log(1+\norm{x}/T)}.
  \end{equation}
  Taking expectations under $\mu_\rho$, using $\E_\rho[ \eta_0(x) ] = \rho$ for every $x$, and writing $V_s$ as the union of the sets $\{x \in \Z^d : \abs{x_i} \geq s\}$ for $i = 1, \ldots, d$, we obtain
  \begin{equation}
    \begin{aligned}
    \P_\rho &\big( \text{some particle from $V_s$ reaches $0$} \big) \\ 
    &\qquad\le 2 d \rho \ubc{C:displacement} \sum_{k = s}^{\infty} e^{-c k \log(1+k/T)} \parens*{\sum_{x \in \Z} e^{-c \abs{x} \log(1+\abs{x}/T)}}^{d - 1}.
    \end{aligned}
  \end{equation}
  Using the super-additivity of $u \mapsto u \log(1+u/T)$, the first sum can be bounded by the following series:
  \begin{equation}
    \begin{split}
      \sum_{k = s}^{\infty} e^{-c k \log(1+k/T)}
      & \le e^{-c s \log(1+s/T)} \sum_{j = 0}^{\infty} e^{-c j \log(1+j/T)} \\
      & \le e^{-c s \log(1+s/T)} \parens*{1 + \int_0^\infty e^{-c u \log(1+u/T)} \dd u }.
    \end{split}
  \end{equation}
  To bound the integral, we split the domain at $u=T$ and use the lower bounds $\log(1+u/T) \ge u/(2T)$ for $u \le T$ and $\log(1+u/T) \ge \log 2$ for $u > T$:
  \begin{equation}
    \begin{split}
      \int_0^\infty e^{-c u \log(1+u/T)} \dd u 
      &\le \int_0^T e^{-\frac{c}{2} \frac{u^2}{T}} \dd u + \int_T^\infty e^{-c (\log 2) u} \dd u \\
      &\le \sqrt{T} \int_0^\infty e^{-c' v^2} \dd v + C e^{-c' T} \le C' \sqrt{T},
    \end{split}
  \end{equation}
  where we substituted $v = u/\sqrt{T}$ in the first integral, and used $T \ge 1$. The second sum can be bounded similarly by $C' \sqrt{T}$, yielding a final bound of
  \begin{equation}
    \P_\rho \big( \text{some particle from $V_s$ reaches $0$} \big) \le C'' \rho \sqrt{T} e^{-c s \log(1+s/T)} T^{\frac{d - 1}{2}}.
  \end{equation}
  Combining the factors and relabeling the constants yields \eqref{eq:crossing_bound}, concluding the proof.
\end{proof}
\begin{proof}[Proof of \cref{lem:meeting}]
  The probability that the process $W_t$ hits the origin by time $T$ is stochastically bounded below by the hitting probability of a continuous-time simple symmetric random walk jumping at a constant rate $2\Gamma_-$. The condition $R \le \sqrt{\ubc{C:meeting_distance} T}$ implies $T \ge \ubc{C:meeting_distance}^{-1} R^2$. 
  By standard continuous-time random walk estimates, for $\ubc{C:meeting_distance}$ sufficiently small (depending only on $\Gamma_-$ and $d$), the hitting probability is bounded below by a positive constant when $d=1$, by $C (\log R)^{-1}$ when $d=2$, and by $C R^{-(d-2)}$ when $d \ge 3$, where $C(d) > 0$ is a dimension-dependent constant (see, for example,~\cite[Proposition 2.1.2]{LawlerLimic10}).
  
  In dimensions $d=1$ and $d \ge 3$, these bounds directly yield $C R^{-\gamma_d}$. In the critical dimension $d=2$, since $(\log R)^{-1} \ge C' R^{-1}$ for a suitable constant $C'$ and all $R \ge 1$, we also obtain a lower bound of the form $C' R^{-1} = C' R^{-\gamma_2}$. Taking $\ubc{C:meeting}(d)$ to be the minimum of these constants yields the unified lower bound $\ubc{C:meeting} R^{-\gamma_d}$ across all dimensions.
\end{proof}

\begin{proof}[Proof of \cref{lem:occupation_bounds}]
  We decompose the time interval $[0, T]$ into $\lfloor T \rfloor + 1$ sub-intervals of the form
  $[n, n+1)$ and apply a union bound over each sub-interval. Let $p = e^{-\Gamma_+}$.

  For the supremum, let $\tau_n = \inf\{ t \in [n, n+1) : \eta_t(x) \ge M \}$. Since particles jump
  one at a time, if $\tau_n < \infty$, the site $x$ contains exactly $\ceil{M}$ particles at time
  $\tau_n$. In the slot representation, each particle jump rate is uniformly bounded by $\Gamma_+$.
  Therefore, any particle present at time $\tau_n$ avoids jumping and remains at $x$ until
  time $n + 1$ with probability at least $e^{-\Gamma_+ (n + 1 - \tau_n)} \ge p$, independently of
  the other particles. Ignoring any newly arriving particles, the number of particles present at
  time $\tau_n$ that stay at $x$ up to time $n+1$ stochastically dominates a binomial random
  variable $B \sim \bin(\ceil{M}, p)$. Since $\eta_{n+1}(x)$ is bounded from below by this number of
  surviving particles, $\eta_{n+1}(x)$ also dominates $B$ conditionally on $\tau_n < \infty$.

  For any threshold $K \le M p / 2$, the Chernoff bound for the binomial distribution gives
  \begin{equation}
    \P\big( B \le K \big) \le \exp\braces*{ - \frac{ (M p - K)^2}{2 M p} } \le \exp\braces*{ - \frac{ (M p / 2)^2}{2 M p} } = e^{-p M / 8}.
  \end{equation}
  It follows that
  \begin{equation}
    \begin{split}
      \P\big(\tau_n < \infty\big)
      & \le \P\big( \eta_{n+1}(x) \ge K \big) + \P\big( \eta_{n+1}(x) < K \mid \tau_n < \infty \big) \\
      & \le \P_\rho\big( \eta_0(x) \ge K \big) + e^{-p M / 8},
    \end{split}
  \end{equation}
  where we used the stationarity of $\eta_t$ under $\P_\rho$. Summing this over $n = 0, \ldots,
  \lfloor T \rfloor$ yields the bound for the supremum.

  For the infimum, suppose there is some time $\tau_n \in [n, n+1)$ where $\eta_{\tau_n}(x) \le m$.
  Using the slot configuration, we track the trajectories of the $\eta_n(x)$ particles that
  were present at $x$ at time $n$. The number of these particles that do not leave site $x$ up to time $t$ is non-increasing in time. Thus, the number of such particles at time $n+1$ cannot
  exceed the total number of particles $\eta_{\tau_n}(x) \le m$. On the other hand, each of the
  $\eta_n(x)$ particles stays at $x$ until time $n+1$ with probability at least $p$, independently
  of the others. Hence, the conditional distribution of the number of survivors at $n+1$
  stochastically dominates a binomial random variable $\tilde B \sim \bin(\eta_n(x), p)$. For any threshold $K \ge 2m / p$, if $\eta_n(x) \ge K$, the probability of observing at most $m$ survivors
  is bounded by
  \begin{equation}
    \begin{aligned}
    \P\big( \tilde B \le m \mid \eta_n(x) \ge K \big) &\le \P\big( \bin(\ceil{K}, p) \le m \big) \\
    &\le \exp\braces*{ - \frac{(K p - m)^2}{2 K p} }\\
    &\le \exp\braces*{ - \frac{(K p / 2)^2}{2 K p} } = e^{-K p / 8}.
    \end{aligned}
  \end{equation}
  Consequently, we can bound the probability of the infimum event by
  \begin{equation}
    \begin{split}
      \P\Big( \inf_{t \in [n, n+1)} \eta_t(x) \le m \Big)
      & \le \P\big( \eta_n(x) < K \big) + \P\big( \tilde B \le m \mid \eta_n(x) \ge K \big) \\
      & \le \P_\rho\big( \eta_0(x) < K \big) + e^{-K p / 8}.
    \end{split}
  \end{equation}
  Taking a union bound over the $\lfloor T \rfloor + 1$ intervals concludes the proof for the infimum. Setting $\uc{c:occupation} = p/8$ ensures both statements hold.
\end{proof}

\section{Exponential families and tail bounds}
\label{sec:exponential_families}

Recall from \cref{sec:zr_invariant_measures} the measure $\nu_\phi$ with independent marginals given by~\eqref{eq:nu_phi} and the step-size condition~\eqref{eq:g_cond} on $g$. 
It is convenient to reparameterize this marginal as a natural exponential family with parameter $\eta = \log \phi$. Under this parametrization, the log-partition function $A(\eta) = \log Z(e^\eta)$ satisfies $A'(\eta) = \E_\eta[X] = \rho$ and $A''(\eta) = \var_\eta(X)$, and its cumulant generating function (CGF) is precisely $\psi(\lambda) = \log \E_\eta[e^{\lambda X}] = A(\eta+\lambda) - A(\eta)$.

For a sequence of $n$ independent copies $X_1, \dots, X_n$ of the marginal distribution, the sample mean $\bar{X} = \frac{1}{n}\sum_{i=1}^n X_i$ satisfies the standard Chernoff bounds:
\begin{equation}\label{eq:chernoff}
  \P(\bar{X} \ge x) \le e^{-n I(x)} \quad (x > \rho), \qquad \P(\bar{X} \le x) \le e^{-n I(x)} \quad (x < \rho),
\end{equation}
where $I(x) = \sup_{\lambda} [\lambda x - (A(\eta+\lambda) - A(\eta))]$ and the supremum is taken over the positive or negative reals, respectively. Because the exact functional form of $g(k)$ is unknown, we cannot compute $A(\eta)$ directly. Instead, we establish a differential inequality that bounds the variance $A''(\eta)$ in terms of the mean $A'(\eta)$.

\begin{lemma}[Variance bound]\label{lem:variance_bound}
  For any valid parameter $\eta$, the variance is bounded by $A''(\eta) \le \Gamma A'(\eta)$, where $\Gamma = \frac{\Gamma_+}{\Gamma_-}$.
\end{lemma}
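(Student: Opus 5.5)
The plan is to express the variance through the mean using the recurrence relation of the weights, and then to use the increment bounds on $g$ to compare $\E_\eta[X g(X)]$ with $\E_\eta[g(X)]$. Write $\phi = e^\eta$ and $\pi(k) = \phi^k/(Z(\phi) g(k)!)$.

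The key identity is $g(k)\pi(k) = \phi\,\pi(k-1)$ for $k \ge 1$. From it, for any function $h$ one obtains the Stein-type identity
\begin{equation*}
  \E_\eta[g(X) h(X)] = \phi\, \E_\eta[h(X+1)].
\end{equation*}
Taking $h \equiv 1$ gives $\E_\eta[g(X)] = \phi$. Taking $h(k) = k$ gives $\E_\eta[X g(X)] = \phi(\rho + 1)$, where $\rho = A'(\eta)$.

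Next I use the increment bounds. By \eqref{eq:g_cond} and $g(0) = 0$ we have $\Gamma_- k \le g(k) \le \Gamma_+ k$. Two consequences follow.
\begin{itemize}
  \item Since $X^2 \le \Gamma_-^{-1} X g(X)$, we get $\E_\eta[X^2] \le \Gamma_-^{-1}\phi(\rho+1)$.
  \item Since $\phi = \E_\eta[g(X)] \le \Gamma_+ \rho$, we get $\phi \le \Gamma_+ \rho$.
\end{itemize}
Combining these,
\begin{equation*}
  A''(\eta) = \E_\eta[X^2] - \rho^2 \le \Gamma_-^{-1}\phi(\rho + 1) - \rho^2 \le \Gamma\rho(\rho+1) - \rho^2 .
\end{equation*}

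This last bound is $\Gamma\rho + (\Gamma - 1)\rho^2$, which is not quite $\Gamma\rho$. This step is the main obstacle: the crude bound loses a quadratic term, so I need a sharper argument.

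To fix it, I would compute the variance through a covariance. Since $\rho + 1 = \E[X+1]$, the identity above gives $\E_\eta[X g(X)] = \E_\eta[g(X)]\,\E_\eta[X+1]$, so that
\begin{equation*}
  \cov_\eta(X, g(X)) = \phi .
\end{equation*}
The goal is then $\var(X) \le \Gamma_-^{-1}\cov(X, g(X))$, which together with $\phi \le \Gamma_+\rho$ yields $A''(\eta) \le \Gamma\rho$. This comparison follows from the increment condition. Write $g(k) = \Gamma_- k + u(k)$, where $u$ is non-decreasing because $g(k) - g(k-1) \ge \Gamma_-$. Then
\begin{equation*}
  \cov(X, g(X)) = \Gamma_- \var(X) + \cov(X, u(X)) \ge \Gamma_- \var(X),
\end{equation*}
since $\cov(X, u(X)) \ge 0$ by Chebyshev's association inequality for two non-decreasing functions of a single random variable.

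Finally, $\Gamma_- A''(\eta) \le \phi \le \Gamma_+ A'(\eta)$, which gives the claim. The only subtle point is integrability, which holds because the Poisson domination of \cref{lem:poisson_domination} ensures all moments are finite.
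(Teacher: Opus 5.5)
Your proof is correct and is essentially the paper's argument: both prove the chain $\Gamma_- \var_\eta(X) \le \cov_\eta(X, g(X)) = e^\eta \le \Gamma_+ \E_\eta[X]$. The differences are cosmetic. You obtain $\cov_\eta(X,g(X)) = \phi$ directly from the recursion $g(k)\pi(k) = \phi\,\pi(k-1)$, whereas the paper uses the exponential-family identity $\cov_\eta(X,f(X)) = \frac{d}{d\eta}\E_\eta[f(X)]$; and your Chebyshev association step with $g = \Gamma_-\,\mathrm{id} + u$ is the paper's symmetrization $\frac{1}{2}\E\bigl[(X_1-X_2)(g(X_1)-g(X_2))\bigr] \ge \frac{\Gamma_-}{2}\E\bigl[(X_1-X_2)^2\bigr]$ in another form.
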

\begin{proof}
Natural exponential families satisfy $\cov_\eta(X, f(X)) = \frac{d}{d\eta} \mathbb{E}_\eta[f(X)]$ for any $f$. Since $\E_\eta[g(X)] = e^\eta$ we have $\cov(X, g(X)) = e^\eta$. Expressing the covariance in terms of independent copies $X_1$ and  $X_2$ of $X$, we obtain
\begin{equation}
  e^\eta = \cov(X, g(X)) = \frac{1}{2}\E_\eta[(X_1 - X_2)(g(X_1) - g(X_2))].
\end{equation}
Condition~\eqref{eq:g_cond} implies $(x-y)(g(x)-g(y)) \ge \Gamma_- (x-y)^2$, yielding $e^\eta \ge \Gamma_- \var_\eta(X) = \Gamma_- A''(\eta)$. Conversely, $g(k) \le \Gamma_+ k$ implies $e^\eta \le \Gamma_+ \E_\eta[X] = \Gamma_+ A'(\eta)$. Combining these yields the result.\end{proof}

\begin{lemma}[Global CGF bound]\label{lem:cgf_bound}
  For any $\lambda \in \R$, $A(\eta+\lambda) - A(\eta) \le \frac{\rho}{\Gamma} \left( e^{\Gamma\lambda} - 1 \right)$.
\end{lemma}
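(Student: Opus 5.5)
The plan is to integrate the differential inequality of \cref{lem:variance_bound} along the natural parameter. Write $m(\theta) = A'(\theta)$ for the mean at natural parameter $\theta$, so that $m(\eta) = \rho$. \Cref{lem:variance_bound} gives $m'(\theta) = A''(\theta) \le \Gamma\, m(\theta)$ for every $\theta$. Since $m > 0$, this is equivalent to $(\log m)'(\theta) \le \Gamma$. This Grönwall-type bound is the backbone of the argument.

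First treat $\lambda \ge 0$. Integrating $(\log m)' \le \Gamma$ from $\eta$ to $\eta + u$ with $u \ge 0$ gives $m(\eta+u) \le \rho e^{\Gamma u}$. Then
\begin{equation}
  A(\eta+\lambda) - A(\eta) = \int_0^\lambda m(\eta+u)\,\dd u \le \rho \int_0^\lambda e^{\Gamma u}\,\dd u = \frac{\rho}{\Gamma}\bigl(e^{\Gamma\lambda}-1\bigr).
\end{equation}

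Now take $\lambda < 0$. Integrating the same inequality backwards, from $\eta + u$ to $\eta$ with $u \le 0$, gives $\log \rho - \log m(\eta+u) \le \Gamma(-u)$. Hence $m(\eta+u) \ge \rho e^{\Gamma u}$ for $u \le 0$, a lower bound on the mean at smaller parameters. In this case
\begin{equation}
  A(\eta+\lambda) - A(\eta) = -\int_\lambda^0 m(\eta+u)\,\dd u \le -\rho\int_\lambda^0 e^{\Gamma u}\,\dd u = -\frac{\rho}{\Gamma}\bigl(1-e^{\Gamma\lambda}\bigr),
\end{equation}
which is again $\frac{\rho}{\Gamma}(e^{\Gamma\lambda}-1)$. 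So the inequality holds for all real $\lambda$.

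There are two points to check. First, $A$ must be finite and smooth on all of $\R$, so that these integrals make sense. This holds because the partition function $Z(\phi)$ is finite for every $\phi > 0$ under \eqref{eq:g_cond}, since $g(n)! \ge \Gamma_-^n n!$. Second, $m$ must be strictly positive so that $\log m$ is defined, which is clear since $\phi > 0$. The only real subtlety is keeping track of the direction of the inequality when integrating backwards. That step yields a lower bound on $m$, and this is exactly what is needed once the negative sign of the integral is taken into account.
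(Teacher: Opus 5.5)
Your proposal is correct and follows essentially the same route as the paper: integrate $(\log A')' \le \Gamma$ from \cref{lem:variance_bound} to get $A'(\eta+s) \le \rho e^{\Gamma s}$ for $s>0$ and $A'(\eta+s) \ge \rho e^{\Gamma s}$ for $s<0$, then integrate $A'$ between $0$ and $\lambda$. You are also more explicit than the paper on the sign handling for $\lambda<0$ and on why $A$ is finite (via $g(n)! \ge \Gamma_-^n n!$) and why $A'>0$.
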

\begin{proof}
  Integrating the differential inequality $\frac{d}{ds} \log A'(\eta+s) = \frac{A''(\eta+s)}{A'(\eta+s)} \le \Gamma$ yields $A'(\eta+s) \le \rho e^{\Gamma s}$ for $s > 0$ and $A'(\eta+s) \ge \rho e^{\Gamma s}$ for $s < 0$. Integrating $A'(\eta+s)$ again over the interval between $0$ and $\lambda$ establishes the upper bound for all $\lambda \in \R$.
\end{proof}

Substituting this CGF bound into $I(x)$ and optimizing over $\lambda$ yields $\lambda^* = \frac{1}{\Gamma} \log(x/\rho)$, which provides the explicit rate function lower bound:
\begin{equation}\label{eq:rate-function}
  I(x) \ge \frac{1}{\Gamma} \left( x \log\left(\frac{x}{\rho}\right) - x + \rho \right) = \frac{1}{\Gamma} d(x, \rho),
\end{equation}
where $d(x, \rho)$ is the rate function of a Poisson random variable with parameter $\rho$. 

\subsection{Large deviation tails}

Substituting relative deviations $x = \rho(1 \pm \delta)$ and absolute deviations $x = \rho \pm \epsilon$ (where $\delta = \epsilon/\rho$) into \cref{eq:rate-function}, and applying the standard rational lower bounds for the Poisson rate function $d(\rho(1 + \delta), \rho) \geq \rho\frac{\delta^2}{2+2\delta/3}$ and $d(\rho(1 - \delta), \rho) \geq \rho\frac{\delta^2}{2}$, we obtain the following tail bounds.

\begin{corollary}[Upper tail bounds]\label{cor:upper_bounds}
  For any relative deviation $\delta > 0$ and corresponding absolute deviation $\varepsilon = \rho\delta > 0$:
  \begin{equation}
  \begin{gathered}
    \P(\bar{X} \ge \rho(1+\delta)) \le \exp\braces*{ -n \frac{\rho}{\Gamma} \frac{\delta^2}{2 + 2\delta/3} }, \\
    \P(\bar{X} \ge \rho + \varepsilon) \le \exp\braces*{ -\frac{n}{\Gamma} \frac{\varepsilon^2}{2 \rho + 2\varepsilon/3} }.
  \end{gathered}
  \end{equation}
  If restricted to $\delta \in (0, 1]$ (or $\varepsilon \le \rho$), the bounds simplify to the sub-Gaussian forms $\exp\braces{ -n \frac{\rho \delta^2}{3\Gamma} }$ and $\exp\braces{ -n \frac{\varepsilon^2}{3 \Gamma \rho} }$.
\end{corollary}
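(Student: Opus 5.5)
The statement to prove is \cref{cor:upper_bounds}. The plan is to specialize the Chernoff bound \eqref{eq:chernoff} using the rate function lower bound \eqref{eq:rate-function}. For the relative form, set $x=\rho(1+\delta)$ with $\delta>0$, so that $x>\rho$. Then \eqref{eq:chernoff} gives $\P(\bar X\ge x)\le e^{-nI(x)}$, where the supremum in $I$ runs over $\lambda>0$.

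The one point to check is that the optimizer from \cref{lem:cgf_bound}, namely $\lambda^*=\frac1\Gamma\log(x/\rho)$, is positive. This holds since $x>\rho$, so plugging $\lambda^*$ into $\lambda x-\frac{\rho}{\Gamma}(e^{\Gamma\lambda}-1)$ is admissible and yields $I(x)\ge \frac1\Gamma d(x,\rho)$. The Poisson rate function is
\[
d(\rho(1+\delta),\rho)=\rho\bigl((1+\delta)\log(1+\delta)-\delta\bigr).
\]
I would bound it using the standard inequality $(1+\delta)\log(1+\delta)-\delta\ge \frac{\delta^2}{2+2\delta/3}$ for $\delta\ge0$. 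This inequality is proved by setting $h(\delta)$ equal to the difference, checking $h(0)=h'(0)=0$, and comparing second derivatives; equivalently, it is the Bennett-to-Bernstein step. Combining gives the first displayed bound.

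The absolute form follows by substituting $\delta=\varepsilon/\rho$. Then
\[
\rho\frac{\delta^2}{2+2\delta/3}=\frac{\varepsilon^2}{2\rho+2\varepsilon/3},
\]
which gives the second bound.

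For the simplification, when $\delta\le1$ we have $2+2\delta/3\le 8/3\le 3$, so the exponent is at least $n\rho\delta^2/(3\Gamma)$. Likewise, when $\varepsilon\le\rho$ we have $2\rho+2\varepsilon/3\le 3\rho$. This gives both sub-Gaussian forms, and in particular recovers the upper tail in \cref{lem:tail_bounds}.

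No step is a real obstacle. The only care needed is the sign restriction on $\lambda$, which is resolved by $x>\rho$, and the elementary calculus inequality for the Poisson rate function.
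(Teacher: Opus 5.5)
Your proposal is correct and follows the paper's route: you combine the Chernoff bound \eqref{eq:chernoff} with the rate-function lower bound \eqref{eq:rate-function}, apply the Bernstein-type inequality $d(\rho(1+\delta),\rho)\ge \rho\,\delta^2/(2+2\delta/3)$, and then substitute $\delta=\varepsilon/\rho$. You also make explicit two points the paper leaves implicit: that $\lambda^*>0$ is admissible because $x>\rho$, and the second-derivative comparison behind the elementary inequality, which reduces to $(3+\delta)^3\ge 27(1+\delta)$ and so holds for all $\delta\ge 0$.
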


\begin{corollary}[Lower tail bounds]\label{cor:lower_bounds}
  For any relative deviation $\delta \in (0, 1)$ and corresponding absolute deviation $\varepsilon = \rho\delta \in (0, \rho)$:
  \begin{equation}
    \P(\bar{X} \le \rho(1-\delta)) \le \exp\braces*{ -n \frac{\rho \delta^2}{2\Gamma}},
    \qquad 
    \P(\bar{X} \le \rho - \varepsilon) \le \exp\braces*{ -n \frac{\varepsilon^2}{2 \Gamma \rho} }.
  \end{equation}
\end{corollary}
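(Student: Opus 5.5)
The plan is to run the standard exponential-family Chernoff argument from \eqref{eq:chernoff}, feeding in the global CGF bound of \cref{lem:cgf_bound}. For the lower tail I apply \eqref{eq:chernoff} with $x = \rho(1-\delta)<\rho$, where the rate function is a supremum over $\lambda<0$. Since \cref{lem:cgf_bound} holds for every $\lambda\in\R$, we get $\lambda x - (A(\eta+\lambda)-A(\eta)) \ge \lambda x - \frac{\rho}{\Gamma}(e^{\Gamma\lambda}-1)$. The maximizer of the right-hand side is $\lambda^* = \frac1\Gamma\log(x/\rho)$, which is negative precisely because $x<\rho$, so it is admissible. Substituting $\lambda^*$ gives $I(x)\ge \frac1\Gamma d(x,\rho)$ as in \eqref{eq:rate-function}.

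The first inequality then reduces to the elementary bound on the Poisson rate function,
\[
d(\rho(1-\delta),\rho)=\rho\big[(1-\delta)\log(1-\delta)+\delta\big]\ge \rho\delta^2/2, \qquad \delta\in(0,1).
\]
To verify it, set $h(\delta)=(1-\delta)\log(1-\delta)+\delta-\delta^2/2$. Then $h(0)=h'(0)=0$, and $h''(\delta)=\frac{1}{1-\delta}-1\ge 0$, so $h\ge0$ on $(0,1)$. This yields $\P(\bar X\le\rho(1-\delta))\le \exp\{-n\rho\delta^2/(2\Gamma)\}$.

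The absolute-deviation form needs no separate argument. Substituting $\delta=\varepsilon/\rho$ turns $\rho\delta^2$ into $\varepsilon^2/\rho$, which gives $\exp\{-n\varepsilon^2/(2\Gamma\rho)\}$.

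The only point needing care is that the lower-tail direction requires \cref{lem:cgf_bound} for negative $\lambda$. There the bound comes from $A'(\eta+s)\ge\rho e^{\Gamma s}$ for $s<0$: integrating from $\lambda$ to $0$ gives $A(\eta)-A(\eta+\lambda)\ge \frac{\rho}{\Gamma}(1-e^{\Gamma\lambda})$, which is exactly the sign needed. With this checked, the remaining steps are routine.
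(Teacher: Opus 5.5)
Your proposal is correct and follows the paper's route: apply the Chernoff bound \eqref{eq:chernoff} over $\lambda<0$, bound the CGF with the global estimate of \cref{lem:cgf_bound}, and optimize at $\lambda^*=\Gamma^{-1}\log(x/\rho)$ to get $I(x)\ge \Gamma^{-1}d(x,\rho)$. The remaining step is the elementary bound $d(\rho(1-\delta),\rho)\ge\rho\delta^2/2$, which the paper simply calls standard and you verify via $h''\ge 0$; you also check the sign of $\lambda^*$ and the negative-$\lambda$ case of the CGF bound, which the paper leaves implicit.
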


\bibliographystyle{plain}
\bibliography{mybib}

\end{document}